\newcounter{results}[section] 
\theoremstyle{plain}
\newtheorem{theorem}[results]{Theorem}
\newtheorem{lemma}[results]{Lemma}
\newtheorem{proposition}[results]{Proposition}
\newtheorem{corollary}[results]{Corollary}
\newtheorem*{theorem*}{Theorem}
\newtheorem*{lemma*}{Lemma}
\newtheorem*{proposition*}{Proposition}
\newtheorem*{corollary*}{Corollary}
\newtheorem*{exercise*}{Exercise}
\newtheorem*{fact*}{Fact}
\theoremstyle{remark}
\newtheorem{remark}[results]{Remark}
\newtheorem*{remark*}{Remark}
\newtheorem*{question*}{Question}
\theoremstyle{definition}
\newtheorem{definition}[results]{Definition}
\newtheorem{example}[results]{Example}
\newtheorem*{definition*}{Definition}
\newtheorem*{example*}{Example}
\numberwithin{equation}{section}
\crefname{figure}{Figure}{Figures}
\let\crefnoname\cref
\newcommand{\myref}[1]{
	\IfSubStr{#1}{,}{
		\crefnoname{#1}%
	}
	{
		\ifcsname r@#1\endcsname
			\edef\@RefInfo{\csname r@#1\endcsname}
			
\edef\@CompleteCounter{\expandafter\@fourthoffive\@RefInfo}
			
			\expandafter\IfBeginWith{\@CompleteCounter}{equation}{
				\crefnoname{#1}%
			}
			{
			\expandafter\IfBeginWith{\@CompleteCounter}{figure}{
				\crefnoname{#1}%
			}
			{
				
\edef\@RefName{\expandafter\@thirdoffive\@RefInfo}
				\StrLen{\@RefName}[\@RefNameLen]
				
				\ifthenelse{\@RefNameLen<2}{
					\crefnoname{#1}%
				}
				{
					\crefnoname{#1}~(\nameref*{#1})%
				}
			}
			}
		\else
			\crefnoname{#1}
		\fi
	}
}
\renewcommand{\cref}{\myref}
        \renewcommand{\comma}{\ensuremath{\, \text{, }}}
        \newcommand{\comma}{\ensuremath{\, \text{, }}}
\newcommand{\semicolon}{\ensuremath{\, \text{; }}}
\newcommand{\point}{\ensuremath{\, \text{. }}}
\newcommand{\pheq}{\ensuremath{\hphantom{{}={}}}}
\newcommand{\N}{\ensuremath{\mathbb N}}
\newcommand{\R}{\ensuremath{\mathbb R}}
\DeclarePairedDelimiter\ClOp{[}{)}
\DeclarePairedDelimiter\OpOp{(}{)}
\DeclarePairedDelimiter\ClCl{[}{]}
\newcommand{\co}[2]{\ensuremath{\ClOp{#1, #2}}}
\newcommand{\oo}[2]{\ensuremath{\OpOp{#1, #2}}}
\newcommand{\cc}[2]{\ensuremath{\ClCl{#1, #2}}}
\DeclarePairedDelimiter\abs{\lvert}{\rvert} 
\DeclarePairedDelimiter\norm{\lVert}{\rVert} 
\newcommand{\scal}[2]{\ensuremath{\langle #1 , #2 \rangle}} 
\newcommand \eps{\ensuremath{\varepsilon}}
\newcommand{\st}{\ensuremath{\ :\ }} 
\newcommand{\eqdef}{\ensuremath{\coloneqq}} 
\DeclareMathOperator{\supp}{supp}
\renewcommand{\d}{\ensuremath{d}} 
\newcommand{\de}{\ensuremath{\, d}} 
\let\div\undefined
\newcommand{\div}{\ensuremath{\mathrm{div}}} 
\newcommand{\grad}{\ensuremath{\nabla}} 
\newcommand{\lapl}{\ensuremath{\Delta}} 
\newcommand{\DerParz}[2]{\ensuremath{\frac{\partial #1}{\partial #2}}} 
\newcommand{\vf}{\ensuremath{\mathfrak X}} 
\newcommand{\cov}{\ensuremath{\nabla}} 
\DeclareMathOperator{\II}{I\!I} 
\newcommand{\Scal}{\ensuremath{R}} 
\DeclareMathOperator{\Ric}{Ric} 
\newcommand{\Haus}{\ensuremath{\mathscr H}} 
\newcommand{\ms}{\ensuremath{\Sigma}} 
\newcommand{\amb}{\ensuremath{M}} 
\DeclareMathOperator{\ind}{ind} 
\newcommand{\hypP}{\hyperref[HypP]{$(\mathfrak{P})$}} 
\newcommand{\hypC}{\hyperref[HypC]{$(\mathfrak{C})$}} 
\newcommand{\hypN}{\hyperref[HypN]{$(\mathfrak{N})$}} 
\newcommand{\lam}{\ensuremath{\mathcal L}} 
\newcommand{\set}{\ensuremath{\mathcal S}} 
\newcommand{\area}{\ensuremath{\Haus^2}} 
\newcommand{\length}{\ensuremath{\Haus^1}} 
\DeclareMathOperator{\genus}{genus} 
\DeclareMathOperator{\bdry}{boundaries} 
\newcommand{\Diff}{\ensuremath{D}} 
\newcommand{\jac}{\ensuremath{J}} 
\newcommand{\A}{\ensuremath{A}} 
\newcommand{\bu}[1]{\ensuremath{\bar{#1}}}
\newcommand{\notg}{\ensuremath{a}}
\newcommand{\notb}{\ensuremath{b}}
\newcommand{\euno}{\ensuremath{\scriptscriptstyle{(1)}}}
\newcommand{\edue}{\ensuremath{\scriptscriptstyle{(2)}}}
\newcommand{\ebi}{\ensuremath{\scriptscriptstyle{(\notb)}}}
\colorlet{myGray}{gray}
\colorlet{myBlue}{blue}
\colorlet{myBlack}{black}
\colorlet{myBackground}{gray!10}
\begin{document}

\title[Inequivalent complexity criteria for free boundary minimal surfaces]{Inequivalent complexity criteria\\ for free boundary minimal surfaces}

\author{Alessandro Carlotto and Giada Franz}
     \address{ \noindent Alessandro Carlotto: 
     	\newline ETH D-Math, R\"amistrasse 101, 8092 Z\"urich, Switzerland 
     	\newline IAS, 1 Einstein drive, 08540 Princeton, United States of America
     	\newline
     	 \textit{E-mail address: alessandro.carlotto@math.ethz.ch, alessandro.carlotto@ias.edu} 
     	 \newline \newline \indent Giada Franz: 
     	\newline ETH D-Math, R\"amistrasse 101, 8092 Z\"urich, Switzerland 
     	 	\newline
     	 \textit{E-mail address: giada.franz@math.ethz.ch} 
}

\begin{abstract}
We obtain a series of results in the global theory of free boundary minimal surfaces, which in particular provide a rather complete picture for the way different \emph{complexity criteria}, such as area, topology and Morse index compare, beyond the regime where effective estimates are at disposal. 
\end{abstract}

\maketitle

\thispagestyle{empty}


\begin{spacing}{1.09}

\section{Introduction}

Free boundary minimal surfaces naturally arise, in Riemannian Geometry, as critical points of the area functional in the category of relative cycles. More precisely, if $(M^n,g)$ is a Riemannian manifold with boundary, and one considers the class of deformations induced by proper diffeomorphisms (that is to say: compactly supported diffeomorphisms that map the boundary $\partial M$ onto itself) the first variation of the area functional at $\Sigma^k$ vanishes if and only if
the submanifold in question has zero mean curvature and meets the boundary of the ambient manifold orthogonally: if that is the case $\Sigma$ shall be called a \emph{free boundary minimal submanifold} (specified to \emph{surface} when $k=2$).

In recent years, the series of works by A. Fraser and R. Schoen \cite{FraSch11}, \cite{FraSch13}, \cite{FraSch16} on the relation between free boundary minimal surfaces and the Steklov eigenvalues has breathed new life into the study of these objects, whose investigation goes back almost one century. The theory turns out to be extremely rich already in the simplest case of surfaces in the three-dimensional Euclidean unit ball, where many different examples have been discovered. We refer the reader to the introduction of \cite{AmbBuzCarSha18} and to the survey \cite{Li19} for a gallery of recent existence theorems, but we shall mention here the significant undergoing project, by Q. Guang, M. Li, Z. Wang and X. Zhou (see in particular \cite{LiZho16} and \cite{GuaLiWanZho19}) to \emph{transfer} the min-max theory by Almgren-Pitts to this setting, with the perspective of transposing the impressive results that have been achieved in the closed case (through the efforts of  F. Marques, A. Neves,  Y. Liokumovich, D. Ketover, K. Irie and A. Song among others).

These striking developments pose a number of challenges. Among those, it is natural to ask how different pieces of information that one can associate to a free boundary minimal surface relate to each other. In this article, we will primarily focus on three sources of data: the \emph{Euler characteristic} (as a topological descriptor, cf. \crefnoname{sec:EulChar}), the \emph{area} (as a measure of geometric size) and the \emph{Morse index} (that plays the role of the most basic analytic invariant one can associate to the surface in question). See \crefnoname{subsec:FBMS} for a precise definition and \crefnoname{sec:Properness} for a discussion on the role of properness in that respect.
 
 The general scope of the present work is to investigate how these different \emph{complexity criteria} can be compared to each other, under mild \emph{positive curvature} assumptions, i.e. in a regime where effective/quantitative estimates are typically not available. More precisely, the context to have in mind is that of a compact 3-manifold satisfying either of the following two pairs of curvature conditions:
\begin{enumerate}[label={\normalfont(\roman*)}]
\item the scalar curvature of $\amb$ is positive and $\partial \amb$ is mean convex with no minimal components; \label{ch:PosScal}
\item the scalar curvature of $\amb$ is non-negative and $\partial\amb$ is strictly mean convex. \label{ch:PosMean}
\end{enumerate}
First of all, we consider questions of this sort: `Given a Riemannian 3-manifold $(M,g)$ as above, is it possible to construct a monotone function $f$ such that any free boundary minimal surface whose index is bounded by $C$ has area bounded by $f(C)$?'
We can combine two of the main results we present here with other recent advances in the field to fully determine whether each of the six natural implications that one can associate to the three pieces of data above hold true, thereby obtaining a rather complete description of the scenario in front of us.

\begin{figure}[htpb]
\centering
\begin{tikzpicture}[scale=1.6]
\tikzset{bharrownegated/.style={
        decoration={markings,
            mark= at position 0.5 with {
                \node[transform shape, scale=1.7] (tempnode) {$\times$};
            },
            mark=at position 1 with {\arrow[scale=2]{>}}
        },
        postaction={decorate},
    }
}
\tikzset{bharrow/.style={
    decoration={markings,mark=at position 1 with {\arrow[scale=2]{>}}},
    postaction={decorate},
    }
}
\begin{scope}[every node/.style = {
    shape = rectangle,
    minimum width= 3.8cm,
    minimum height=1.5cm,
    align=center}]
\node (top) at (0,1) [draw=gray] {topological bounds\\ \footnotesize $\chi(\ms)$};
\node (ind) at (-4,-2.5) [draw=gray] {index bounds\\ \footnotesize $\ind(\ms)$};
\node (are) at (4,-2.5) [draw=gray] {area bounds\\ \footnotesize $\area(\ms)$};
\end{scope}
\draw[bharrow] ([xshift=1mm,yshift=-1.5mm]ind.east) -- node[sloped,midway,below=2mm] {\cref{thm:AreaBound}} ([xshift=-1mm,yshift=-1.5mm]are.west);
\draw[bharrownegated] ([xshift=-1mm,yshift=1.5mm]are.west) -- node[sloped,midway,above=2mm] {\cref{rmk:AreaNoControl} (cf. \cite{FraSch16}, \cite{KapLi17}, \cite{KapWiy17})} ([xshift=1mm,yshift=1.5mm]ind.east);
\draw[bharrow] ([xshift=-2mm,yshift=1mm]ind.north) -- node[sloped,midway,above=2mm] {\cref{thm:AreaBound} $\oplus$ \cite[Corollary 4]{AmbBuzCarSha18}} ([xshift=-2mm,yshift=-1mm]top.south west);
\draw[bharrownegated] ([xshift=3mm,yshift=-1mm]top.south west) -- node[sloped,midway,below=2mm] {\cref{thm:Counterexample}} ([xshift=3mm,yshift=1mm]ind.north);
\draw[bharrownegated] ([xshift=-3mm,yshift=-1mm]top.south east) -- node[sloped,midway,below=2mm] {\cref{thm:Counterexample}} ([xshift=-3mm,yshift=1mm]are.north);
\draw[bharrownegated]([xshift=2mm,yshift=1mm]are.north) -- node [sloped,midway,above=2mm] {\cref{rmk:AreaNoControl} (cf. \cite{FraSch16}, \cite{KapLi17}, \cite{KapWiy17})} ([xshift=2mm,yshift=-1mm]top.south east);
\end{tikzpicture}
\caption{A diagram comparing complexity criteria for compact Riemannian 3-manifolds $(M,g)$ satisfying \emph{either} $\Scal_g>0$, $H^{\partial\amb}\ge 0$ \emph{or} $\Scal_g\ge0$, $H^{\partial\amb}> 0$.}\label{fig:Complexity}
\end{figure}
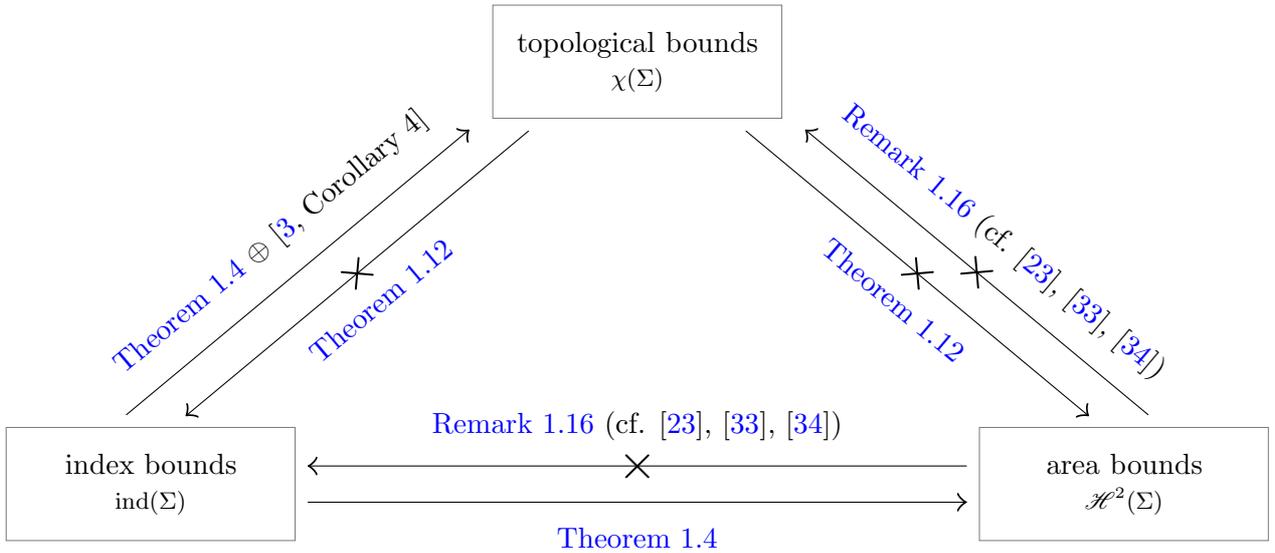

In particular, we develop a detailed analysis of the topological degenerations that may occur, in the limit, to sequences of free boundary minimal surfaces solely subject to a uniform Morse index bound to ultimately prove that \emph{a bound on the index implies a bound on the area, the topology and the total curvature} (see \cref{thm:AreaBound} for a precise statement). In turn, this theorem implies novel unconditional compactness (\cref{cor:CptBoundIndPosRic}) and generic finiteness (\cref{cor:GenericFin}) results. We note that, prior to this work, no (effective or ineffective) counterpart of \cref{thm:AreaBound} was known for free boundary minimal surfaces, not even under the stronger curvature assumptions that the Ricci curvature of the ambient manifold be positive, and its boundary strictly convex (see also \cref{rmk:FraserLicomment} and \cref{rmk:Hersch}).

To show that a bound on the topology cannot possibly imply a bound on the area, nor on the index, we build a large class of pathological counterexamples: in \cref{thm:Counterexample} we construct, for any smooth 3-manifold $M$ supporting Riemannian metrics of positive scalar curvature and mean convex boundary and any $a\geq 0, b>0$, one such Riemannian metric $g=g(a,b)$ in a way that $(M,g)$ contains a sequence of connected, embedded, free boundary minimal surfaces of genus $a$ and exactly $b$ boundary components, and whose area and Morse index attain arbitrarily large values. In fact, we have some freedom on the geometric boundary conditions we impose, so that a few variants of the construction are actually possible. 

Concerning the last two (possible) implications above, we note how a bound on the area cannot possibly imply a bound on the index, nor on the topology (no matter how strong curvature conditions are imposed). To that scope, we examine in \cref{rmk:AreaNoControl} three different classes of existing examples (due to Fraser-Schoen \cite{FraSch16}, Kapouleas-Li \cite{KapLi17} and Kapouleas-Wiygul \cite{KapWiy17}): in each case one has a sequence of free boundary minimal surfaces in the unit ball of $\R^3$ satisfying a uniform bound on the area, but arbitrarily large genus and hence, by \crefnoname{thm:TopFromArea}, arbitrarily large index as well. Roughly speaking, a bound on the area only implies weaker forms of convergence, typically of measure-theoretic character (e.g. in the sense of varifolds, flat chains, currents \ldots), but does not capture finer geometric properties.

This diagram then implicitly defines a hierarchy of conditions, based on the implications that hold or do not hold true. Of course, it is then natural to ask whether \emph{pairs} of `weak conditions' imply a `strong' one, e.g. prototypically whether a bound on the area and the topology implies a bound on the index. This interesting question has recently been answered, in the affirmative, by V. Lima \cite[Theorem B]{Lim17} who adapted to the free boundary setting some remarkable estimates by Ejiri-Micallef \cite{EjiMic08}: one can bound the Morse index from above by a linear function of the area and the Euler characteristic, with a multiplicative constant only depending on the ambient manifold. Thereby, the picture we obtain is quite exhaustive and final.

We will now present the contents of the article in more detail, point out the technical challenges we faced and relate them to the pre-existing results in the literature, some of which played a fundamental role with respect to this project.

\subsection{Topological degeneration analysis}

A good starting point for our discussion is the following result of Fraser-Li, which is the free boundary analogue of the classical compactness theorem \cite[Theorem 1]{ChoSch85} by Choi-Schoen.
\begin{theorem}[{\cite[Theorem 1.2]{FraLi14}}]
\label{thm:FraLiCpt}
Let $(\amb^3,g)$ be a compact Riemannian manifold with non-empty boundary. Suppose that $\amb$ has non-negative Ricci curvature and strictly convex boundary. Then the space of compact, properly embedded, free boundary minimal surfaces of fixed topological type in $\amb$ is compact in the $C^k$ topology for any $k\ge 2$.
\end{theorem}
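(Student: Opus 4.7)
My plan is to follow the strategy developed by Choi-Schoen in \cite{ChoSch85} for closed embedded minimal surfaces, carefully adapted to the free boundary setting. The two main novelties required are an \emph{a priori} area bound tailored to the presence of a convex boundary, and a boundary $\eps$-regularity and blow-up analysis capable of handling concentration points on $\partial\amb$.

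\textbf{Step 1: area and total curvature bounds.} Let $\{\ms_n\}$ be a sequence of properly embedded free boundary minimal surfaces with $\chi(\ms_n)$ constant. Combining Gauss-Bonnet with the Gauss equation and minimality gives
\[ \tfrac{1}{2}\int_{\ms_n}\abs{\A}^2 = \int_{\ms_n} K_\amb(T\ms_n) - 2\pi\chi(\ms_n) + \int_{\partial\ms_n}\kappa_g. \]
Orthogonality of $\ms_n$ and $\partial\amb$ identifies $\kappa_g$ (up to sign) with the second fundamental form of $\partial\amb$ evaluated on the unit tangent to $\partial\ms_n$, so strict convexity of $\partial\amb$ pins its sign with a uniform quantitative bound on $\abs{\kappa_g}$. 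A uniform upper bound on $\area(\ms_n)$ -- obtained by testing the divergence theorem on $\ms_n$ against a suitably chosen convex exhaustion function (or equivalently by comparison with a mean-convex foliation of a collar neighbourhood of $\partial\amb$, and invoking monotonicity together with embeddedness) -- then upgrades the display above to uniform bounds on both $\int_{\ms_n}\abs{\A}^2$ and $\length(\partial\ms_n)$.

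\textbf{Step 2: concentration set and blow-up analysis.} Let $\mathcal{Y}\subset\amb$ be the set of points $p$ such that $\abs{\A_{\ms_n}}$ fails to be locally uniformly bounded near $p$. Interior $\eps$-regularity of Choi-Schoen, together with its boundary analogue (which uses that the free boundary condition is a regular oblique derivative condition for the minimal surface equation, so that interior arguments extend via reflection), implies that $\mathcal{Y}$ is finite with cardinality controlled by $\sup_n\int\abs{\A}^2$, and that $\abs{\A_{\ms_n}}$ is locally uniformly bounded on $\amb\setminus\mathcal{Y}$. To rule out $\mathcal{Y}\neq\emptyset$ one picks $p\in\mathcal{Y}$ and rescales $\ms_n$ by $\lambda_n$ comparable to $\sup\abs{\A}$ near $p$; the rescaled surfaces converge smoothly on larger and larger balls to a complete, properly embedded minimal surface $\widetilde{\ms}$ in either $\R^3$ (interior case) or in the closed half-space meeting $\partial\R^3_+$ orthogonally (boundary case), with $\abs{\A}(0)=1$ and finite total curvature. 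Osserman's theorem forces $\widetilde{\ms}$ to be a flat plane in the first case; in the second, reflecting $\widetilde{\ms}$ across $\partial\R^3_+$ produces such a complete embedded minimal surface in $\R^3$ and reduces to the first. Either way $\abs{\A}(0)=1$ is contradicted, whence $\mathcal{Y}=\emptyset$.

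\textbf{Step 3 and main obstacle.} Global uniform bounds on $\abs{\A_{\ms_n}}$ combined with elliptic regularity for the minimal surface equation under the (oblique) free boundary condition bootstrap to uniform $C^k$ bounds for every $k\ge 2$, and a standard diagonal argument extracts a $C^k$-convergent subsequence whose limit is a free boundary minimal surface of the fixed topological type. I expect the hardest parts to be the area bound in Step~1 --- where fixed topology alone is insufficient and one must genuinely exploit the geometric role of strict convexity of $\partial\amb$ in preventing space-filling behaviour --- and the boundary $\eps$-regularity invoked in Step~2, where care is required to transfer the interior Choi-Schoen arguments to points lying on $\partial\amb$.
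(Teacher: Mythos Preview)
The paper does not prove this theorem; it is quoted from \cite{FraLi14} as background. The only information the paper provides about the Fraser--Li proof is \cref{rmk:FraserLicomment}: the area bound there ``is essentially analytic, relying on the \ldots connection between free boundary minimal surfaces and the first Steklov eigenvalue.'' Against that backdrop, your proposal has two genuine gaps.

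\textbf{Step 1 is the main obstacle and your sketch does not close it.} Gauss--Bonnet plus strict convexity indeed ties $\int_\ms\abs{\A}^2$ and $\length(\partial\ms)$ to $\area(\ms)$ and the topology, but it does not by itself bound any of these quantities; you still need an \emph{a priori} area bound. Your suggestion --- testing a ``convex exhaustion function'' with the divergence theorem, or comparing with a mean-convex collar foliation --- yields at best a relation of the form $\area(\ms)\leq C\cdot\length(\partial\ms)$, which closes no loop when fed back into Gauss--Bonnet. In Fraser--Li the mechanism is different: under $\Ric\geq 0$ and strictly convex boundary one proves a uniform \emph{lower} bound on the first Steklov eigenvalue $\sigma_1(\ms)$, while a Hersch/Yang--Yau-type argument gives an upper bound on $\sigma_1(\ms)\cdot\length(\partial\ms)$ depending only on the topology; together these bound the boundary length, and then the area. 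This is the ``analytic'' step the paper's remark alludes to, and it is not replaceable by the elementary divergence-theorem trick you describe.

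\textbf{Step 2 contains a false claim.} Your blow-up limit $\widetilde{\ms}$ is a complete, properly embedded minimal surface in $\R^3$ (or a half-space) with finite total curvature, but Osserman's theorem does \emph{not} force such a surface to be a plane --- the catenoid is a counterexample. The Choi--Schoen route does not rule out the concentration set by blow-up rigidity. Instead one converges smoothly away from $\mathcal{Y}$ to a punctured limit, removes the singularities to obtain a smooth free boundary minimal limit $\ms_\infty$, and then uses $\Ric\geq 0$ together with strict boundary convexity (via a Frankel-type intersection argument, ruling out a positive Jacobi field on $\ms_\infty$) to force multiplicity-one convergence; Allard regularity then upgrades the convergence to smooth across $\mathcal{Y}$. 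The curvature hypotheses enter precisely at the multiplicity-one step, not at a rigidity step for the blow-up.
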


Afterwards, a general investigation of the spaces of free boundary minimal hypersurfaces with bounded index and volume has been carried through in \cite{AmbCarSha18-Compactness} and \cite{AmbBuzCarSha18}. In absence of any curvature assumptions, it is not possible to obtain a strong compactness result as in \crefnoname{thm:FraLiCpt}, since curvature concentration can occur at certain (isolated) points. However, one can still prove a milder form of subsequential convergence (smooth, graphical convergence with multiplicity $m\geq 1$ away from finitely many points), and develop an accurate blow-up analysis near the points of bad convergence.
This analysis leads to several compactness and finiteness results, among which we want to recall the following theorem.
\begin{theorem}[{\cite[Corollary 4]{AmbBuzCarSha18}}] \label{thm:TopFromArea}
Let $(\amb^3,g)$ be a compact Riemannian manifold with boundary and consider $I\in \N$, $\Lambda\ge 0$. Then there exist constants $a_0=a_0(\amb,g,I,\Lambda)$ and $b_0 = b_0(\amb,g,I,\Lambda)$ such that every compact, properly embedded, free boundary minimal surface with index bounded by $I$ and area bounded by $\Lambda$ has genus bounded by $a_0$ and number of boundary components bounded by $b_0$. Furthermore, there exists a constant $\tau_0=\tau_0(\amb,g,I,\Lambda)$ so that the total curvature (i.e. the integral of the square length of the second fundamental form) of any such surface is bounded from above by $\tau_0$. 
\end{theorem}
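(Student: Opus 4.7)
The plan is to argue by contradiction via a sequential compactness argument coupled with a careful blow-up analysis at points of curvature concentration. Suppose, towards a contradiction, that one of the three quantities---genus, number of boundary components, or total curvature $\int_\ms \abs{\A_\ms}^2\de\area$---is unbounded along a sequence $\{\ms_k\}_{k\in\N}$ of compact, properly embedded free boundary minimal surfaces in $(\amb^3,g)$ with $\ind(\ms_k)\le I$ and $\area(\ms_k)\le \Lambda$.

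The first step is to invoke the general compactness theory for free boundary minimal surfaces with bounded index and area developed in \cite{AmbCarSha18-Compactness, AmbBuzCarSha18}, which requires no curvature assumption on $(\amb,g)$: up to a subsequence, $\ms_k$ converges as varifolds to a free boundary minimal surface $\ms_\infty$ with $\ind(\ms_\infty)\le I$, the convergence being locally smooth and graphical with some integer multiplicity $m\ge 1$ away from a finite set $\mathcal Y\subset\amb$ whose cardinality is itself controlled by $I$.

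Next, the crucial \emph{blow-up step} at each $p\in\mathcal Y$: one rescales $\ms_k$ about $p$ by the reciprocal of the maximum of $\abs{\A_{\ms_k}}$ in a shrinking neighborhood of $p$, and extracts a subsequential pointed limit $\widetilde{\ms}$, which is a non-trivial complete properly embedded minimal surface of finite index (bounded by $I$) in either $\R^3$ (for interior $p$) or in a closed half-space of $\R^3$ with free boundary on the bounding plane (for $p\in\partial\amb$, since the rescaled metrics converge to the Euclidean one with $\partial\amb$ flattening to $T_p\partial\amb$). The theorem of Fischer--Colbrie in the closed case, and its free boundary counterpart in the half-space case---obtainable via reflection across the bounding plane, which turns $\widetilde{\ms}$ into a complete minimal surface of finite index in $\R^3$---then guarantee that each such $\widetilde{\ms}$ has finite total curvature, bounded in terms of $I$ alone. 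A standard iterated bubbling procedure à la Choi--Schoen, combined with the smooth convergence on $\amb\setminus\mathcal Y$, accounts for all curvature lost near $\mathcal Y$ by summing the total curvatures of the bubbles and of $\ms_\infty$; this yields a uniform bound $\int_{\ms_k}\abs{\A_{\ms_k}}^2\de\area \le \tau_0(\amb,g,I,\Lambda)$, proving the total curvature conclusion.

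Granted this, the topological bounds follow from Gauss--Bonnet. Since $H_{\ms_k}\equiv 0$, on each $\ms_k$ one has $\abs{\A_{\ms_k}}^2 = -2K_{\ms_k}$. Along $\partial\ms_k$, the orthogonal contact with $\partial\amb$ identifies the geodesic curvature of $\partial\ms_k$ inside $\ms_k$ with the restriction of $\II^{\partial\amb}$ to the tangent directions of $\partial\ms_k$, which is pointwise controlled by a constant depending only on $(\amb,g)$; combined with a uniform bound on $\length(\partial\ms_k)$ derived from $\area(\ms_k)\le\Lambda$ via monotonicity at boundary points, Gauss--Bonnet then gives
\[
\abs{2\pi\chi(\ms_k)} \le C(\amb,g,I,\Lambda).
\]
Writing $\chi(\ms_k) = 2 - 2g_k - b_k$ with $g_k, b_k \ge 0$ the genus and number of boundary components of $\ms_k$, this simultaneously bounds both $g_k$ and $b_k$. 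The main obstacle I expect is the blow-up analysis at \emph{boundary} concentration points: one must choose rescalings that simultaneously restore the curvature to unit scale and flatten $\partial\amb$ to a plane, verify that the resulting pointed limit is a genuine free boundary minimal surface in a half-space (as opposed to, say, a minimal surface in $\R^3$ whose free boundary data escapes to infinity), and establish a rigorous ``no loss of curvature''/removable singularities statement in this free boundary setting to close the curvature accounting.
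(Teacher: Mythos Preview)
The paper does not prove this statement itself: it is quoted verbatim as \cite[Corollary~4]{AmbBuzCarSha18} and used as a black box (see also the remark immediately following it). Your outline is essentially the strategy of that reference---compactness under bounded index and area, bubble extraction at concentration points (boundary points handled by reflection so as to reduce to Fischer--Colbrie in $\R^3$), a no-loss-of-total-curvature accounting, and then Gauss--Bonnet---so you are reconstructing the original argument rather than something proved in the present article.

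Two small corrections are in order. First, the identity $\abs{\A}^2=-2K_\ms$ holds only in a flat ambient; in general the Gauss equation gives $\abs{\A}^2=2\bigl(K^{\amb}_{\mathrm{sec}}(T\ms)-K_\ms\bigr)$, and the extra term integrates to at most $C(\amb,g)\,\Lambda$, which is harmless. Second, the bound $\length(\partial\ms_k)\le C(\amb,g)\,\Lambda$ does not come from monotonicity but from first variation: choose any smooth vector field $X$ on $\amb$ with $X=\hat\eta$ along $\partial\amb$; then minimality and the free boundary condition give
\[
\length(\partial\ms_k)=\int_{\partial\ms_k}\scal{X}{\eta}\de\Haus^1=\int_{\ms_k}\div_{\ms_k}X\de\Haus^2\le C(\amb,g)\,\area(\ms_k).
\]
With these two fixes your Gauss--Bonnet step bounds $2-\chi(\ms_k)=2\genus(\ms_k)+\bdry(\ms_k)$ from above and the argument closes.
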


\begin{remark}
The statement in \cite{AmbBuzCarSha18} is more general as it only requires a bound on some eigenvalue of the Jacobi operator instead of an index bound, and it applies to free boundary minimal hypersurfaces in ambient manifolds of dimension $3\leq n+1\leq 7$.
\end{remark}

Here we shall be concerned with the space of free boundary minimal surfaces (inside a three-dimensional Riemannian manifold) with bounded index but without any a priori bound on the area. The analogous task has been carried out, for the case of \emph{closed} minimal surfaces, in the remarkable article \cite{ChoKetMax17} by Chodosh-Ketover-Maximo. Some of the methods developed there are essential for our analysis, and we often rely on the results presented in that article for interior points, although serious technical work (and specific tools) are needed to properly handle the possible degenerations occurring near the boundary of the ambient manifold. 

Let us now describe the key steps in this approach. Hence, let us consider a compact Riemannian manifold with non-empty boundary $(\amb^3,g)$. For the sake of simplicity, let us assume (in the context of this introduction) the following additional property:
\begin{description}
\item [$(\mathfrak{P})$]\label{HypP} \ \ If $\ms^2\subset \amb$ is a smooth, connected, complete (possibly non-compact), embedded surface with zero mean curvature which meets the boundary of the ambient manifold orthogonally along its own boundary, then $\partial\ms = \ms\cap\partial\amb$. 
\end{description}
Such a condition prevents the existence of minimal surfaces that touch $\partial\amb$ in their interior and it is implied by simple geometric assumptions (for example property \hypC{} in \crefnoname{sec:NotDef}, namely that the boundary of the ambient manifold be mean convex with no minimal component).
We postpone the discussion of this property, its relevance and the issues that arise when one drops it to \crefnoname{sec:NotDef} and \crefnoname{sec:Properness}.
In particular, in that appendix we point out how far from trivial the `properness issues' are: if one allows for an interior contact set, then there are at least four natural definitions of Morse index one can adopt, and the value one computes depends not only on the adopted definition but also, even for a given definition, on the size of the contact set.

Fixed $I\in\N$ and given a sequence of compact, properly embedded, free boundary minimal surfaces $\ms_j^2\subset\amb$ with $\ind(\ms_j)\le I$, we develop our analysis in two steps:
\begin{description}
\item [Macroscopic behavior]  First we prove that, up to subsequence, the surfaces $\ms_j$ converge locally smoothly away from a finite set of points $\set_\infty$ to a smooth free boundary minimal lamination $\lam\subset\amb$, that is a suitable disjoint union of free boundary minimal surfaces (see \cref{def:fbmlam}). 
Note that we cannot expect anything better than a lamination without imposing uniform bounds on the area.
Moreover, it holds that the curvature of $\ms_j$ is locally uniformly bounded away from $\set_\infty$.
This tells us that the surfaces are well-controlled away from a finite set of points.
Let us remark that the points in $\set_\infty$ can belong to the boundary.

The main ingredient here is an extension of the curvature estimate for stable minimal surfaces (cf. \cite{Sch83_estimates}, \cite{Whi87} and \cite{SchSim81} for higher dimensions) to free boundary minimal surfaces with bounded index.

\item [Microscopic behavior] The second step consists in carefully studying the local behavior of the surfaces $\ms_j$ near the `bad' points in $\set_\infty$. In particular, we prove that, for $\eps>0$ sufficiently small, $\ms_j\cap B_{\eps}(\set_\infty)$ contains only a finite number of components where the curvature is not bounded and these components have controlled topology and area.

The way to proceed in the proof is to blow-up the components of $\ms_j\cap B_{\eps}(\set_\infty)$ with unbounded curvature at the `scale of the curvature'. In this way we obtain a (free boundary) minimal surface in $\R^3$ or in a half-space of $\R^3$ (if initially we were near the boundary of $\amb$) with index less or equal than $I$. 
Thanks to \cite{ChoMax16} (cf. also \cite{ChoMax18}), possibly combined with the theory developed in Section 2 of \cite{AmbBuzCarSha18}, we are able to conclude the description of $\ms_j$ at small scale, paying attention to prevent data loss in the blow-up.
\end{description}

\begin{figure*}[htpb]
\centering
\begin{tikzpicture}[scale =0.55]
\coordinate (A1) at (0,0); 
\coordinate (B1) at (0.2,0.05); 
\coordinate (C1) at (0.4,0.12);
\coordinate (D1) at (0.6,0.21);
\coordinate (E1) at (0.8,0.35);

\coordinate (F1) at (4,1.97);
\coordinate (G1) at (4.1,2.05);
\coordinate (H1) at (4.3,2.17);
\coordinate (I1) at (4.5,2.24);
\coordinate (J1) at (4.7,2.28);
\coordinate (K1) at (4.85,2.27);

\coordinate (F2) at (5,-4.13);
\coordinate (G2) at (5.1,-4.13);
\coordinate (H2) at (5.3,-4.12);
\coordinate (I2) at (5.5,-4.11);
\coordinate (J2) at (5.7,-4.09);
\coordinate (K2) at (5.8,-4.07);

\coordinate (C2) at (0.7,-3);
\coordinate (D2) at (0.9,-3.05);
\coordinate (E2) at (1.1,-3.1);

\coordinate (X) at (-1.5,-0.2);
\coordinate (Y) at (-1.2, -3.2);
\coordinate (U) at (7.5, 1.8);
\coordinate (V) at (7.9, -2.6);

\draw plot [smooth cycle, tension=0.6] coordinates {(A1) (B1) (C1) (D1) (E1)
			      (1.5,0.9) (2,1.2) (2.5,1.3) (3,1.7) (3.7,1.71)
			      (F1) (G1) (H1) (I1) (J1) (K1)
			      (5.5,2.2) (6,1.8)
			      (U) (8.5,-0.5) (V)
			      (7,-2.9) (6.5, -3.3) (6.2, -3.93) 
			      (K2) (J2) (I2) (H2) (G2) (F2)
			      (4.5,-4) (3.5,-3.2) (2.7, -2.5) (2,-2.7) (1.5, -3.2) 
			      (E2) (D2) (C2)
			      (0.3, -3)
			      (Y) (-1.9,-2) (X) (-0.5,-0.05)};

\draw (F1) to [bend left=20] (F2);
\draw (G1) to [bend left=20] (G2);

\draw plot [smooth, tension = 0.5] coordinates {(H1) (5.2,0.2) (5.45,-1) (5.57, -1.2) (5.65, -1) (5.4,0.2) (I1)};
\draw plot [smooth, tension = 0.5] coordinates {(H2) (5.5,-2.5) (5.5,-1.5) (5.6, -1.3) (5.7, -1.5) (5.7,-2.5) (I2)};
\pgfmathsetmacro{\t}{atan(0.03/0.1)}
\begin{scope}[rotate around={\t:(5.57,-1.2)}]
\draw[gray] (5.57, -1.2) arc (90:270:0.03 and {sqrt(0.03*0.03+0.1*0.1)/2});
\draw (5.57, -1.2) arc (90:-90:0.03 and {sqrt(0.03*0.03+0.1*0.1)/2});
\end{scope}

\draw plot [smooth, tension = 0.5] coordinates {(J1) (5.25,1.2) (5.68,0) (5.9,-1.29) (5.97, -1.45) (6, -1.27) (5.8,0) (5.37,1.2) (K1)};
\draw plot [smooth, tension = 0.5] coordinates {(J2)  (5.9,-2.7) (5.93, -1.7) (5.97, -1.53) (6.03,-1.7) (6.02, -2.7) (K2)};
\draw[gray] (5.97, -1.45) arc (90:270:0.02 and {0.04});
\draw (5.97, -1.45) arc (90:-90:0.02 and {0.04});

\draw[gray] (0.45, -2.9) arc (90:182:0.02 and {0.08});
\draw (0.45, -2.9) arc (90:0:0.02 and {0.08});
\draw plot [smooth, tension = 0.55] coordinates {(A1) (0.35,-1.2) (0.35,-2.65) (0.45, -2.9) (0.55, -2.65) (0.55,-1.2) (B1)};

\pgfmathsetmacro{\t}{atan(0.01/0.1)}
\begin{scope}[rotate around={-\t:(0.87, -2.6)}]
\draw[gray] (0.87,-2.6) arc (90:270:0.03 and {sqrt(0.01*0.01+0.1*0.1)/2});
\draw (0.87,-2.6) arc (90:-90:0.03 and {sqrt(0.01*0.01+0.1*0.1)/2});
\end{scope}
\draw plot [smooth, tension = 0.55] coordinates {(C1) (0.75,-1.2) (0.77,-2.4) (0.87, -2.6) (0.97, -2.4) (0.95,-1.2) (D1)};
\draw plot [smooth, tension = 0.9] coordinates {(C2) (0.75, -2.8) (0.86, -2.7) (0.93, -2.8) (D2)};

\draw (E1) to [bend left = 15] (E2);

\pgfmathsetmacro\R{0.65}
\draw[gray] (C2) circle (\R);
\draw[gray] (5.685, -1.2) circle (\R);

\node(bp) at (-1,1) {\footnotesize `bad' points};
\draw[blue] (bp) to [bend right = 30] (0.2,-2.7);
\draw[blue] (bp) to [bend left=30] (5.3,-0.8);

\node at (1.7,-0.8) {$\ms_j$};

\draw[->] (10.5,-1) -- (13.5,-1);
\node at (12,-0.6) {\footnotesize $j\to\infty$};

\begin{scope}[xshift=500]
\coordinate (A1) at (0,0); 
\coordinate (B1) at (0.2,0.05); 
\coordinate (C1) at (0.4,0.12);
\coordinate (D1) at (0.6,0.21);
\coordinate (E1) at (0.8,0.35);

\coordinate (F1) at (4,1.97);
\coordinate (G1) at (4.1,2.05);
\coordinate (H1) at (4.3,2.17);
\coordinate (I1) at (4.5,2.24);
\coordinate (J1) at (4.7,2.28);
\coordinate (K1) at (4.85,2.27);

\coordinate (F2) at (5,-4.13);
\coordinate (G2) at (5.1,-4.13);
\coordinate (H2) at (5.3,-4.12);
\coordinate (I2) at (5.5,-4.11);
\coordinate (J2) at (5.7,-4.09);
\coordinate (K2) at (5.8,-4.07);

\coordinate (C2) at (0.7,-3);
\coordinate (D2) at (0.9,-3.05);
\coordinate (E2) at (1.1,-3.1);

\coordinate (X) at (-1.5,-0.2);
\coordinate (Y) at (-1.2, -3.2);
\coordinate (U) at (7.5, 1.8);
\coordinate (V) at (7.9, -2.6);

\draw plot [smooth cycle, tension=0.6] coordinates {(A1) (B1) (C1) (D1) (E1)
			      (1.5,0.9) (2,1.2) (2.5,1.3) (3,1.7) (3.7,1.71)
			      (F1) (G1) (H1) (I1) (J1) (K1)
			      (5.5,2.2) (6,1.8)
			      (U) (8.5,-0.5) (V)
			      (7,-2.9) (6.5, -3.3) (6.2, -3.93) 
			      (K2) (J2) (I2) (H2) (G2) (F2)
			      (4.5,-4) (3.5,-3.2) (2.7, -2.5) (2,-2.7) (1.5, -3.2) 
			      (E2) (D2) (C2)
			      (0.3, -3)
			      (Y) (-1.9,-2) (X) (-0.5,-0.05)};
			      
\draw[blue] (C1) to [bend left = 15] (C2);
\draw[blue] (F1) to [bend left = 20] (F2);
\draw[blue] (G1) to [bend left = 20] (G2);
\draw[blue] (I1) to [bend left = 20] (I2);

\fill[blue] (C2) circle [radius=0.2em] node[below] {$\set_\infty$};
\fill[blue] (5.685, -1.2) circle [radius=0.2em] node[right] {$\set_\infty$};
\node[blue] at (1.1,-0.8) {$\lam$};
\end{scope}

\draw[->] (2.5, -4) to [bend right = 30] (3.5,-6);
\node at (1.6,-5.2) {\footnotesize surgery};

\draw[->] (14.2, -4.9) to (15.8,-3.3);
\node[rotate=45] at (14.7,-3.8) {\footnotesize $j\to\infty$};

\begin{scope}[yshift=-170,xshift=160]

\coordinate (A1) at (0,0); 
\coordinate (B1) at (0.2,0.05); 
\coordinate (C1) at (0.4,0.12);
\coordinate (D1) at (0.6,0.21);
\coordinate (E1) at (0.8,0.35);

\coordinate (F1) at (4,1.97);
\coordinate (G1) at (4.1,2.05);
\coordinate (H1) at (4.3,2.17);
\coordinate (I1) at (4.5,2.24);
\coordinate (J1) at (4.7,2.28);
\coordinate (K1) at (4.85,2.27);

\coordinate (F2) at (5,-4.13);
\coordinate (G2) at (5.1,-4.13);
\coordinate (H2) at (5.3,-4.12);
\coordinate (I2) at (5.5,-4.11);
\coordinate (J2) at (5.7,-4.09);
\coordinate (K2) at (5.8,-4.07);

\coordinate (C2) at (0.7,-3);
\coordinate (D2) at (0.9,-3.05);
\coordinate (E2) at (1.1,-3.1);

\coordinate (X) at (-1.5,-0.2);
\coordinate (Y) at (-1.2, -3.2);
\coordinate (U) at (7.5, 1.8);
\coordinate (V) at (7.9, -2.6);

\draw plot [smooth cycle, tension=0.6] coordinates {(A1) (B1) (C1) (D1) (E1)
			      (1.5,0.9) (2,1.2) (2.5,1.3) (3,1.7) (3.7,1.71)
			      (F1) (G1) (H1) (I1) (J1) (K1)
			      (5.5,2.2) (6,1.8)
			      (U) (8.5,-0.5) (V)
			      (7,-2.9) (6.5, -3.3) (6.2, -3.93) 
			      (K2) (J2) (I2) (H2) (G2) (F2)
			      (4.5,-4) (3.5,-3.2) (2.7, -2.5) (2,-2.7) (1.5, -3.2) 
			      (E2) (D2) (C2)
			      (0.3, -3)
			      (Y) (-1.9,-2) (X) (-0.5,-0.05)};
			      
\draw (A1) to [bend left = 15] (0.3,-3);
\draw (B1) to [bend left = 15] (0.5,-2.99);
\draw (C1) to [bend left = 15] (C2);
\draw (D1) to [bend left = 15] (D2);
\draw (E1) to [bend left = 15] (E2);
\draw (F1) to [bend left = 20] (F2);
\draw (G1) to [bend left = 20] (G2);
\draw (H1) to [bend left = 20] (H2);
\draw (I1) to [bend left = 20] (I2);
\draw (J1) to [bend left = 20] (J2);
\draw (K1) to [bend left = 20] (K2);

\pgfmathsetmacro\R{0.65}
\draw[blue] (C2) circle (\R);
\draw[blue] (5.685, -1.2) circle (\R);

\node at (1.7,-0.8) {$\tilde\ms_j$};
\end{scope}
\end{tikzpicture}
\end{figure*}

At this point, due to this precise description of the degeneration, we are able to perform a `simplification surgery' on $\ms_j$.
Namely, fixing $\eps>0$ sufficiently small, we modify the surfaces $\ms_j$ inside $B_\eps(\set_\infty)$ to obtain new surfaces $\tilde\ms_j$ with the following properties:
\begin{itemize}
\item $\tilde\ms_j$ coincides with $\ms_j$ outside $B_\eps(\set_\infty)$ (the surgery is performed only near the `bad points');
\item the surfaces $\tilde\ms_j$ have uniformly bounded curvature;
\item the topology and the area of $\tilde\ms_j$ are comparable to those of $\ms_j$;
\item the surfaces $\tilde\ms_j$ converge locally smoothly to the lamination $\lam$ introduced above.
\end{itemize}

We refer the reader to \cref{cor:ExistenceBlowUpSetAndCurvatureEstimate}, \cref{thm:GlobalDeg} and \cref{cor:Surgery} for precise statements concerning the description of the topological degeneration and the surgery procedure, respectively. We note here that the \emph{refined} Morse-theoretic arguments we present in \crefnoname{sec:MorseTheory} (where we need to separately count the number of curves $\Sigma_j$ traces, locally, along the two parts of the boundary of small geodesic balls near $\partial M$) are essential to make the whole machinery work. 
That being said, by means of this analysis we obtain the following result, which shows that it is possible to remove the assumption on the area bound from \crefnoname{thm:TopFromArea} in the case of ambient manifolds satisfying the mild curvature assumptions mentioned above. 

\begin{theorem} \label{thm:AreaBound}
Let $(\amb^3,g)$ be a compact Riemannian manifold with boundary. 
Moreover assume that
\begin{enumerate}[label={\normalfont(\roman*)}]
\item \emph{either} the scalar curvature of $\amb$ is positive and $\partial \amb$ is mean convex with no minimal components;
\item \emph{or} the scalar curvature of $\amb$ is non-negative and $\partial\amb$ is strictly mean convex.
\end{enumerate}
Given $I\in\N$, there exist constants $\Lambda_0 = \Lambda_0(\amb, g, I)$, $\tau_0=\tau_0(\amb, g, I)$,  $a_0 = a_0(\amb, g, I)$ and $b_0 = b_0(\amb, g, I)$ such that for every compact, connected, embedded, free boundary minimal surface $\ms^2\subset\amb$ with non-empty boundary and with index at most $I$ we have that its area is bounded by $\Lambda_0$, its total curvature is bounded by $\tau_0$, its genus by $a_0$ and the number of its boundary components by $b_0$.
\end{theorem}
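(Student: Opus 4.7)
Granting \cref{thm:TopFromArea}, it suffices to establish the area bound $\Lambda_0 = \Lambda_0(\amb,g,I)$, from which the bounds on the genus, the number of boundary components, and the total curvature will follow by applying that result with $\Lambda_0$ in place of $\Lambda$. I would therefore argue by contradiction, assuming the existence of a sequence $\ms_j\subset\amb$ of compact, connected, embedded free boundary minimal surfaces with $\ind(\ms_j)\le I$ and $\area(\ms_j)\to\infty$.

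The first step is to run the macroscopic and microscopic degeneration analysis outlined in the introduction, to be formalized in \cref{cor:ExistenceBlowUpSetAndCurvatureEstimate} and \cref{thm:GlobalDeg}. Passing to a subsequence, this produces a finite set $\set_\infty\subset\amb$ and a free boundary minimal lamination $\lam\subset\amb$ such that $\ms_j\to\lam$ smoothly, and with uniformly bounded second fundamental form, on compact subsets of $\amb\setminus\set_\infty$, while inside small neighbourhoods of each bad point the topology and the area of the components on which the curvature concentrates are under a priori control. The surgery procedure of \cref{cor:Surgery} then replaces $\ms_j$ by $\tilde\ms_j$ with uniformly bounded $|\A|$ globally, still converging smoothly to $\lam$, and with area and topology comparable to those of $\ms_j$ up to additive constants depending only on $I$; in particular $\area(\tilde\ms_j)\to\infty$.

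The second step is to exploit the curvature hypotheses to constrain the lamination. Plugging the constant function into the stability inequality for a two-sided free boundary minimal surface, and combining the Gauss equation with the Gauss--Bonnet formula, one obtains for any connected, complete, stable two-sided free boundary minimal surface $L\subset\amb$ with $\partial L\ne\emptyset$ the universal inequality
\begin{equation*}
\int_L \left( \tfrac{1}{2}|\A|^2 + \tfrac{1}{2}\Scal_g \right) + \int_{\partial L} H^{\partial\amb} \le 2\pi\chi(L).
\end{equation*}
Under either assumption \emph{(i)} or \emph{(ii)} the left-hand side is strictly positive, which forces $L$ to be a topological disk and yields uniform bounds on both $\area(L)$ and on its total curvature in terms of $(\amb,g)$ alone; a parallel Gauss--Bonnet argument handles any closed interior leaves. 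Moreover, every leaf of $\lam$ is stable: a leaf along which convergence has multiplicity $m\ge 2$ admits a positive Jacobi function coming from the normalized difference of two adjacent sheets (expressed as graphs in Fermi coordinates), while on a multiplicity-one leaf stability is inherited from the $\ms_j$ via a transplantation argument — any unstable direction on $L$ would pull back, for $j$ large, to test functions with negative second variation on $\ms_j$ and, once enough such directions are collected, would violate $\ind(\ms_j)\le I$.

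The final step is to contradict $\area(\tilde\ms_j)\to\infty$. Since the smooth convergence $\tilde\ms_j\to\lam$ carries an integer multiplicity $m_j(L)$ on each leaf $L$, and since each leaf has uniformly bounded area, the only way $\area(\tilde\ms_j)$ can diverge is if $m_j(L)\to\infty$ on some leaf $L$. From consecutive pairs of sheets on $L$ one then extracts an arbitrarily large number of almost-Jacobi functions with pairwise disjoint supports; pulled back to $\ms_j$, these yield disjointly supported test functions with strictly negative second variation, contradicting $\ind(\ms_j)\le I$ as soon as their number exceeds $I$. The principal technical obstacle I anticipate lies in carrying out all three steps --- the degeneration, the surgery, and the stability classification together with the transplant of Jacobi functions --- \emph{uniformly up to} $\partial\amb$: bad points of $\set_\infty$ lying on $\partial\amb$, leaves of $\lam$ meeting $\partial\amb$ (either orthogonally or, pathologically, tangentially), and the interplay between the refined Morse-theoretic counting of boundary arcs on small half-balls (cf.\ \cref{sec:MorseTheory}) and the free boundary adaptation of the arguments of \cite{ChoKetMax17} are precisely the points demanding the most delicate technical work.
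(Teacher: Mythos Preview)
Your overall architecture (reduce to the area bound, run the degeneration/surgery machinery, then contradict via the structure of the limit lamination) matches the paper, but Steps 2 and 3 contain real gaps.

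In Step 2 you plug the constant function into the stability inequality to get an area bound on each stable leaf $L$. This is only legitimate if $L$ is already known to be \emph{compact}; leaves of a lamination are a priori merely complete, and a complete noncompact stable leaf could have infinite area. Establishing compactness of stable free boundary minimal surfaces under the hypotheses (i)/(ii) is precisely the content of \cref{prop:StableImpliesCptness}, whose proof occupies all of \crefnoname{sec:CptnessStable} and is considerably more delicate than the closed case: one must produce a positive Jacobi field, pass to the conformal metric $\tilde g = \omega^2 g$, show $(\Sigma,\tilde g)$ is complete with $\tilde K\ge 0$ and $\tilde k\ge 0$, and then run a second-variation argument along a minimizing $\tilde g$-geodesic to bound the intrinsic diameter. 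Note also that even for compact $L$, in case (ii) the inequality you wrote bounds $\length(\partial L)$ but not $\area(L)$ directly; the diameter bound is what closes this. Finally, your claim that \emph{every} leaf is stable is not correct for multiplicity-one leaves: transplanting test functions only shows that the sum of the indices of such leaves is at most $I$, not that each is stable.

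In Step 3 your contradiction mechanism does not work. The normalized differences of consecutive sheets over a \emph{stable} leaf $L$ converge to positive Jacobi fields, hence have second variation tending to \emph{zero}, not strictly negative; and they are all supported over the same region of $\Sigma_j$, not disjointly. So no index contradiction arises from large multiplicity alone. The paper argues differently: it first locates a point $x_0$ where the area concentrates, observes that the leaf $L$ through $x_0$ must fall in the ``stable universal cover'' alternative of \cref{thm:RemSingLimLam} (multiplicity one would bound the local area), applies \cref{prop:StableImpliesCptness} to conclude $L$ is a compact \emph{disc}, and then uses a monodromy argument: since $L$ is simply connected, a connected component of the surgered surface $\tilde\Sigma_j$ that has one sheet converging to $L$ near $x_0$ must globally converge to $L$ with multiplicity one, bounding its area and yielding the contradiction.
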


\begin{remark}
Note that the requirement that either of the curvature assumptions hold \emph{strictly} is actually necessary, for the manifold $S^1\times S^1\times I$, endowed with a flat metric, contains stable minimal annuli of arbitrarily large area.
\end{remark}

\begin{remark}\label{rmk:FraserLicomment}
An area bound for free boundary minimal surfaces is required also in the proof of \crefnoname{thm:FraLiCpt} (see \cite[Proposition 3.4]{FraLi14}).
However, the assumption that is made there is a bound on the topology and, also, the method employed in that case is essentially analytic, relying on the aforementioned connection between free boundary minimal surfaces and the first Steklov eigenvalue.
\end{remark}

\begin{remark}\label{rmk:Hersch}
So far \cref{thm:AreaBound} was known only for surfaces with index $I=0$ or $I=1$ (see \cite[Appendix A]{AmbBuzCarSha18}). Indeed, for stable free boundary minimal surfaces the area bound follows from the stability inequality and a similar argument can be applied to surfaces with index $1$ based on the well-known \emph{Hersch trick}. 
\end{remark}

In order to prove the previous theorem, it turns out that one needs to gain some control on the size of \emph{stable subdomains} of free boundary minimal surfaces. Prior to this work, this result was known only in the closed case (see \cite{Car15}, based on ideas going back to the work by Schoen-Yau \cite{SchYau83}).

\begin{proposition} \label{prop:StableImpliesCptness}
Let $(\amb^3,g)$ be a three-dimensional Riemannian manifold with boundary. Denote by $\varrho_0\eqdef \inf_M \Scal_g$ the infimum of the scalar curvature of $\amb$ and by $\sigma_0\eqdef \inf_{\partial\amb}H^{\partial\amb}$ the infimum of the mean curvature of $\partial\amb$. Assume that 
\begin{enumerate}[label={\normalfont(\roman*)}]
\item \label{sic:posscal} \emph{either} the scalar curvature of $\amb$ is uniformly positive ($\varrho_0>0$) and $\partial \amb$ is mean convex ($\sigma_0\ge 0$) with no minimal components;
\item \label{sic:strictmc} \emph{or} the scalar curvature of $\amb$ is non-negative ($\varrho_0\ge 0$) and $\partial\amb$ is uniformly strictly mean convex ($\sigma_0 > 0$).
\end{enumerate}
Then every complete, connected, embedded, stable free boundary minimal surface $\ms^2\subset\amb$ that is two-sided and has non-empty boundary is compact, and its intrinsic diameter satisfies the bound
\[
\operatorname{diam}(\ms)\eqdef \sup_{x,y\in\ms} d_\ms(x,y) \le \min\left\{ \frac{2\sqrt 2 \pi}{\sqrt{3\varrho_0}}, \frac{\pi + 8/3}{\sigma_0}\right\}\point
\]
Moreover, one has that
\[
0< \frac{\varrho_0}2 \area(\ms) + \sigma_0 \length(\partial\ms) \le 2\pi\chi(\ms) \semicolon
\]
in particular, $\ms$ is diffeomorphic to a disc.
\end{proposition}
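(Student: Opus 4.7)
The proof naturally splits into two stages: first derive the integral inequality, which forces $\ms$ to be a topological disc; then establish the diameter bounds, from which compactness follows automatically.

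\emph{Stage 1.} I test the free boundary second variation of area,
\[
Q(\phi) = \int_\ms \bigl(|\nabla\phi|^2 - (|A_\ms|^2 + \Ric^M(\nu,\nu))\phi^2\bigr) - \int_{\partial\ms} \II^{\partial M}(\nu,\nu)\phi^2,
\]
with $\phi \equiv 1$ (using an exhaustion by compact sub-domains should $\ms$ not yet be known to be compact). The outcome is to be combined with the Gauss rearrangement $|A_\ms|^2 + \Ric^M(\nu,\nu) = \tfrac{1}{2}|A_\ms|^2 + \tfrac{1}{2}R^M - K_\ms$ and Gauss-Bonnet $\int_\ms K_\ms + \int_{\partial\ms}\kappa_g^\ms = 2\pi\chi(\ms)$, together with the pointwise identity $\II^{\partial M}(\nu,\nu) + \kappa_g^\ms = H^{\partial M}$ along $\partial\ms$. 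The latter is immediate from the free boundary condition $\ms \perp \partial M$: along $\partial\ms$ the pair $\{e_\tau,\nu\}$ is an orthonormal basis of $T(\partial M)$, so $H^{\partial M} = \tr \II^{\partial M} = \II^{\partial M}(\nu,\nu) + \II^{\partial M}(e_\tau,e_\tau) = \II^{\partial M}(\nu,\nu) + \kappa_g^\ms$. Rearranging gives
\[
\int_\ms \tfrac{1}{2}|A_\ms|^2 + \tfrac{1}{2}\int_\ms R^M + \int_{\partial\ms} H^{\partial M} \le 2\pi\chi(\ms),
\]
which specialises to the stated inequality after dropping the non-negative total curvature term and using $R^M \ge \varrho_0$, $H^{\partial M} \ge \sigma_0$. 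Under either curvature hypothesis the left-hand side is strictly positive, so $\chi(\ms) \ge 1$; being two-sided (hence orientable), compact, and with $\partial\ms \ne \emptyset$, one must have $\chi(\ms) = 1$, i.e. $\ms$ is a disc.

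\emph{Stage 2.} For the diameter bounds I would test stability against radial profiles of intrinsic distance. In the scalar curvature case, I pick $p \in \ms$ and take $\phi = \cos(\alpha\, d_\ms(\cdot, p))$ on $B_{\pi/(2\alpha)}^\ms(p)$, which is a topological disc thanks to Stage 1; the only delicate term $\int K_\ms\, \phi^2$ is handled via a layer-cake decomposition combined with Gauss-Bonnet applied to the sub-level sets of $\phi$ (each a sub-disc, contributing a universal $2\pi$ on the topological side, while the resulting $\kappa_g$ terms have a favourable sign). Optimising in $\alpha$ then produces a Bonnet-Myers-type contradiction past the threshold $2\sqrt{2}\pi/\sqrt{3\varrho_0}$. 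The mean-convex case instead calls for a test function that retains the boundary contribution, so I would take $\phi$ to be a suitable profile of $d_\ms(\cdot,\partial\ms)$: the boundary integral $\int_{\partial\ms}\II^{\partial M}(\nu,\nu)\phi^2$ then absorbs the $\sigma_0$ information (through the identity of Stage 1) and contributes positively on the correct side of the stability inequality. The optimal profile is determined by a one-dimensional Euler-Lagrange ODE whose explicit solution, combining a trigonometric interior piece with a polynomial boundary layer, yields the constant $\pi + 8/3$.

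The main obstacle is Stage 2 in the mean-convex regime: one cannot simply transplant the Fischer-Colbrie-Schoen / Cai-Galloway machinery, nor the closed-surface analogue of \cite{Car15}, because the relevant test function must honestly interpolate between interior and boundary behaviour, and the slightly exotic constant $\pi + 8/3$ precisely records the optimal such interpolation rather than a clean trigonometric optimum. Compactness of $\ms$ is then immediate from the finiteness of the intrinsic diameter.
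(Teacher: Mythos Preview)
Your Stage 1 has a genuine circularity. Testing stability with $\phi\equiv 1$ is only legitimate once $\ms$ is known to be compact: on a non-compact $\ms$ the exhaustion by cutoffs $\phi_R$ produces a residual term $\int_\ms|\nabla\phi_R|^2$ which you cannot force to zero without a priori area-growth control, and no such control is available at this point. Since you then explicitly invoke the disc topology from Stage~1 inside Stage~2 (``which is a topological disc thanks to Stage~1''), the argument as written does not close. The paper resolves this via the Fischer--Colbrie conformal trick: stability yields a positive solution $\omega$ of the Jacobi equation (with the oblique boundary condition), and one works in the conformal metric $\tilde g=\omega^2 g$. A direct computation gives $\tilde K\ge \omega^{-2}\varrho_0/2\ge 0$ and $\tilde k\ge \omega^{-1}\sigma_0\ge 0$; one proves $(\ms,\tilde g)$ is complete, and Gauss--Bonnet on growing $\tilde g$-balls then produces the integral inequality \emph{without} assuming compactness (the paper in fact notes in a footnote that your $\phi\equiv 1$ argument is exactly what one does once compactness is already in hand).

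The mechanism in Stage~2 is also quite different. Rather than two-dimensional radial test functions in the stability inequality, the paper minimizes the weighted length $\tilde l(\gamma)=\int_\gamma\omega$ over curves from a fixed $x_0$ to $\partial\ms$ (respectively to another point $y_0$), and runs a one-dimensional second-variation computation along the minimizer, which stays interior by convexity of $\partial\ms$ in $\tilde g$. This reduces to an ODE inequality of Schoen--Yau type; the boundary conditions on $\omega$ and on the first eigenfunction of the linearized problem feed $H^{\partial M}$ into a term $\sigma_0\,\xi^2(l)$, and testing with $\xi(t)=t$ gives $d_\ms(x_0,\partial\ms)\le 4/(3\sigma_0)$. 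The constant $(\pi+8/3)/\sigma_0$ then comes from combining this with the bound $\length(\partial\ms)\le 2\pi/\sigma_0$ obtained in Stage~1, not from a single optimized profile as you suggest. Your proposed profile of $d_\ms(\cdot,\partial\ms)$ plugged into the two-dimensional stability inequality may be workable, but it is a different argument, and as you present it, it again leans on the disc topology from Stage~1 which has not been established in the non-compact case.
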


It is interesting to note that the variational argument that allows to prove the corresponding estimate in the closed case is not sufficient in the free boundary context and, indeed, this result turns out to be much more delicate (see \crefnoname{sec:CptnessStable}). Yet, the key idea remains similar: stable (free boundary) minimal hypersurfaces inherit the `positivity' curvature properties of the ambient manifold (cf. \cite{SchYau17} for a striking application of this principle to the proof of the \emph{positive mass theorem} in all dimensions).

We shall now present two significant consequences of \cref{thm:AreaBound}. The first one descends by combining the area bound with the geometric compactness result of \crefnoname{thm:FraLiCpt}.
\begin{corollary} \label{cor:CptBoundIndPosRic}
Let $(\amb^3,g)$ be a compact Riemannian manifold with boundary. Suppose that $\amb$ has non-negative Ricci curvature and that the boundary $\partial\amb$ is strictly convex.
Then any set of compact embedded free boundary minimal surfaces with uniformly bounded index is compact in the $C^k$ topology for every $k\ge 2$.
\end{corollary}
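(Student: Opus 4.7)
The plan is to combine \cref{thm:AreaBound} with the Fraser--Li compactness result (\cref{thm:FraLiCpt}) in a rather direct manner. The curvature hypothesis that $\mathrm{Ric}_g \geq 0$ and $\partial M$ is strictly convex is strictly stronger than the condition in item (ii) of \cref{thm:AreaBound}: indeed, non-negative Ricci curvature implies non-negative scalar curvature, and strict convexity of $\partial M$ (i.e. positivity of its second fundamental form with respect to the inward normal) implies strict mean convexity. Hence we are in a setting where both \cref{thm:AreaBound} and \cref{thm:FraLiCpt} apply.

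First, I would invoke \cref{thm:AreaBound}: given a uniform index bound $I$, every compact, connected, properly embedded free boundary minimal surface $\Sigma \subset M$ with $\operatorname{ind}(\Sigma) \leq I$ satisfies $\operatorname{genus}(\Sigma) \leq a_0$ and $\#(\partial \Sigma) \leq b_0$, for constants depending only on $(M,g,I)$. (Here the disconnected case reduces to the connected one after counting how many components can accumulate, which is also controlled by the area and index bounds.) In particular, the set of topological types realized is \emph{finite}.

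Second, for each of the finitely many admissible topological types $(a,b)$ with $a \leq a_0$ and $b \leq b_0$, \cref{thm:FraLiCpt} applies to yield $C^k$-compactness (for every $k \geq 2$) of the subfamily of surfaces of type $(a,b)$ inside the given class. Taking the union of these finitely many compact subfamilies produces a compact set in the $C^k$ topology. Since every sequence $(\Sigma_j)$ in our family admits a subsequence of constant topological type (by pigeonhole on the finite set of types), that subsequence converges smoothly to a free boundary minimal surface in $M$, yielding the desired compactness.

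The main (and in fact only) conceptual content has already been absorbed into \cref{thm:AreaBound}; there is no further obstacle here, aside from checking that the finitely many topological types statement is legitimately inherited from the connected case stated in \cref{thm:AreaBound}. This is immediate once one observes that any two components of a minimal surface with index bounded by $I$ contribute to the total index (they are separate minimal surfaces themselves), so the number of components is controlled, and each component's area and topology are controlled by \cref{thm:AreaBound} applied componentwise.
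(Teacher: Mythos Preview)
Your approach is correct and matches the paper's: combine the topology bounds from \cref{thm:AreaBound} with the Fraser--Li compactness \cref{thm:FraLiCpt}, using the pigeonhole on the finitely many admissible topological types. One small caveat: your side argument that distinct components ``contribute to the total index'' is not valid as stated, since stable components have index zero and contribute nothing; this does not by itself bound the number of components. This is a non-issue, however, because the corollary (like \cref{thm:FraLiCpt} and \cref{thm:AreaBound}) is naturally a statement about \emph{connected} surfaces, so the digression is unnecessary.
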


In addition, one can employ the Baire-type result given in \cite[Theorem 9]{AmbCarSha18-Compactness} to obtain the following \emph{generic finiteness} result.

\begin{corollary}\label{cor:GenericFin}
Let $\amb^3$ be a compact manifold with boundary. For a generic choice of $g$ in the class of Riemannian metrics such that $M$ has positive scalar curvature and $\partial M$ has strictly mean convex boundary, the space of compact, embedded, free boundary minimal surfaces with index bounded by $I$ is finite (for any $I\in\mathbb{N}$). Hence, the set of all such surfaces (regardless of their Morse index) is countable. Analogous conclusions hold true for $g$ chosen in a dense subclass of metrics satisfying the curvature conditions \ref{ch:PosScal} or \ref{ch:PosMean} given above.
\end{corollary}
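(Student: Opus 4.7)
The plan is to combine the newly obtained \cref{thm:AreaBound} with the Baire-type theorem \cite[Theorem 9]{AmbCarSha18-Compactness}, which asserts that — within a suitable class of Riemannian metrics on $\amb$ — for every $(I,\Lambda)\in\N\times\co{0}{\infty}$ there are only generically finitely many compact, embedded free boundary minimal surfaces of index at most $I$ and area at most $\Lambda$. The crucial new input is the area estimate of \cref{thm:AreaBound}: in either of the curvature regimes \ref{ch:PosScal} or \ref{ch:PosMean}, an index bound alone promotes to a bound on the area, so that the auxiliary area cap in the Baire statement can be absorbed by a diagonal argument.

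More concretely, let $\mathcal{G}$ denote the open class of $C^k$ Riemannian metrics on $\amb$ (with $k$ fixed and large enough) whose scalar curvature is positive and whose boundary is strictly mean convex; this is a Baire space. For each pair $(I,n)\in\N\times\N$, the cited result supplies a residual subset $\mathcal{G}_{I,n}\subset\mathcal{G}$ on which only finitely many compact embedded free boundary minimal surfaces have index at most $I$ and area at most $n$. Setting
\[
\mathcal{G}_I \eqdef \bigcap_{n\in\N}\mathcal{G}_{I,n}, \qquad \mathcal{G}_\infty \eqdef \bigcap_{I\in\N}\mathcal{G}_I,
\]
one still obtains residual (hence dense) subsets of $\mathcal{G}$ by the countable intersection property.

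Fixing now $g\in\mathcal{G}_\infty$ and $I\in\N$, the hypotheses of \cref{thm:AreaBound} are satisfied (since $g\in\mathcal{G}$), so every compact, connected, embedded free boundary minimal surface in $(\amb,g)$ with index at most $I$ has area at most some $\Lambda_0=\Lambda_0(\amb,g,I)$. Choosing any integer $n\ge\Lambda_0$ and using that $g\in\mathcal{G}_{I,n}$, one concludes that only finitely many such surfaces exist with area at most $n$ — and, by the area bound itself, that family exhausts \emph{all} surfaces of index at most $I$. Taking a union over $I\in\N$ then gives the countability statement for the set of free boundary minimal surfaces of finite Morse index. For the final assertion, it suffices to verify that $\mathcal{G}$ is \emph{dense} within each of the classes \ref{ch:PosScal} and \ref{ch:PosMean}: this follows from standard small perturbations (a boundary deformation to upgrade mean convex to strictly mean convex in \ref{ch:PosScal}, and a conformal change of Kazdan--Warner type to upgrade nonnegative scalar curvature to positive scalar curvature in \ref{ch:PosMean}) that preserve the remaining hypotheses. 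The residual set $\mathcal{G}_\infty\subset\mathcal{G}$ then sits densely inside both \ref{ch:PosScal} and \ref{ch:PosMean}, yielding the desired conclusion.

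The only genuine subtlety is that the area constant $\Lambda_0$ in \cref{thm:AreaBound} depends on the ambient metric $g$, which prevents a direct application of the Baire theorem with a single fixed $\Lambda$; this is cleanly circumvented by intersecting over the countable family $n\in\N$ as above. Aside from this diagonal step, the argument is a routine combination of Baire category, the perturbative density of the strict curvature class, and the effective new input of \cref{thm:AreaBound}.
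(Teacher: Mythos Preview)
Your proposal is correct and matches the paper's own (implicit) argument: the corollary is obtained precisely by feeding the area bound of \cref{thm:AreaBound} into the Baire-type result \cite[Theorem 9]{AmbCarSha18-Compactness}, with the countable intersection over area thresholds $n\in\N$ being the natural way to absorb the metric-dependence of $\Lambda_0$. The density step for the final clause is likewise the standard perturbation argument you describe.
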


\begin{remark}\label{rmk: GenericYau}
We note that the fact that for a generic choice of the background metric the set of closed minimal surfaces in a compact manifold $(M,g)$ without boundary is \emph{countable} played an essential role in the proof of the generic case of the Yau conjecture by Irie-Marques-Neves \cite{IriMarNev18}. The conclusion of \cref{cor:GenericFin} should be a key building block for the corresponding free boundary result.
\end{remark}

\subsection{Pathological families of free boundary minimal surfaces}

Explicit examples, based on equivariant constructions due to W.-Y. Hsiang \cite{Hsi83} and E. Calabi \cite{Cal67} show that the conclusion of the geometric compactness theorem by Choi-Schoen cannot possibly hold in higher dimension or codimension, respectively, no matter how restrictive curvature assumptions one considers on the ambient manifold. A related question, explicitly posed by B. White in 1984, see \cite{Bro86}, is whether strong compactness still holds under weaker curvature conditions but in ambient dimension three, specifically in the class of compact 3-manifolds of positive scalar curvature. For the closed case, this question was fully answered by T. Colding and C. De Lellis in \cite{ColDeL05} (after earlier, significant contributions by Hass-Norbury-Rubinstein \cite{HasNorRub03}): given any compact 3-manifold $M$ supporting metrics of positive scalar curvature, and a non-negative integer $a$, one can construct a metric of positive scalar curvature $g=g(a)$ so that the ambient manifold $(M,g)$ contains a sequence of pairwise distinct, closed minimal surfaces of genus $a$ which is not compact in the sense above. In fact, one can analyze the limit behavior of such a sequence in detail: one witnesses convergence to a singular minimal lamination (inside the given ambient manifold) with precisely two singular points, located at two antipodes on a stable minimal sphere. Furthermore, the value of the area and of the Morse index of these surfaces diverge. The reader is referred to the monograph \cite{ColMin11} for basic background and as a reference for the terminology we employ. Here we discuss, and solve, the corresponding problem in the setting of \emph{free boundary} minimal surfaces inside smooth compact 3-manifolds with boundary. 

\begin{theorem}\label{thm:Counterexample}
Let $M$ be a compact, orientable, 3-manifold supporting Riemannian metrics of positive scalar curvature and mean convex boundary and let $a\geq 0$ and $b>0$ be integers. Then there exists a Riemannian metric $g=g(a,b)$ of positive scalar curvature and totally geodesic boundary such that $(M,g)$ contains a sequence of connected, embedded, free boundary minimal surfaces of genus $a$ and exactly $b$ boundary components, and whose area and Morse index attain arbitrarily large values. Analogous conclusions hold true requiring the ambient manifold to have positive scalar curvature and strictly mean convex boundary.
\end{theorem}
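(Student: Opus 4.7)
The plan is to reduce the free boundary construction to the classical closed-manifold counterexample of Colding-De Lellis \cite{ColDeL05} by means of a doubling procedure. First, I would modify the initial Riemannian metric so that $\partial\amb$ becomes totally geodesic while scalar curvature remains strictly positive: starting from a PSC metric with mean convex boundary, a Gromov-Lawson style conformal bending in a collar neighborhood of $\partial\amb$ allows one to flatten the boundary at the cost of a controlled loss of scalar curvature, which can be absorbed thanks to the uniform positivity of $\Scal_g$. Then I would pass to the Riemannian double $\widetilde\amb = \amb \cup_{\partial\amb} \amb$, equipped with the smooth PSC metric induced by reflection, and denote by $\tau$ the corresponding isometric involution, whose fixed set is exactly $\partial\amb$.

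Next, I would import the Colding-De Lellis construction into $(\widetilde\amb, \tilde g)$ in a $\tau$-equivariant way. Their construction produces, inside a small geodesic ball, a stable minimal $2$-sphere $S_0$ together with a sequence of closed embedded minimal surfaces $\widetilde\ms_k$ of prescribed genus that accumulate onto $S_0$ as a singular lamination with exactly two conical singularities at antipodal points of $S_0$, and whose areas and Morse indices diverge. To make the construction compatible with $\tau$, I would select $b$ distinct points $p_1,\dots,p_b \in \partial\amb$ and plant a local rotationally symmetric catenoidal model at each $p_i$, arranged so that the associated stable minimal sphere meets $\partial\amb$ along an equator with the two conical singular points lying on $\partial\amb$. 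The global surfaces $\widetilde\ms_k$ are then assembled by joining these $b$ local models via a connecting piece of genus $2a$ and $2b$ boundary components, glued in $\tau$-equivariantly, so that $\widetilde\ms_k$ is a closed embedded minimal surface of total genus $2a+b-1$, invariant under $\tau$, and meeting $\partial\amb$ transversally in exactly $b$ simple closed curves.

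Having $\widetilde\ms_k$ in hand, I would set $\ms_k \eqdef \widetilde\ms_k \cap \amb$. Since $\partial\amb$ is totally geodesic and $\widetilde\ms_k$ is both $\tau$-invariant and minimal, $\widetilde\ms_k$ must meet $\partial\amb$ orthogonally, hence $\ms_k$ is a compact, properly embedded, free boundary minimal surface in $(\amb, g)$. The topological double of $\ms_k$ is $\widetilde\ms_k$, and the identity $\chi(\widetilde\ms_k) = 2\chi(\ms_k) = 2(2 - 2a - b)$ yields genus $a$ and exactly $b$ boundary components. Connectedness of $\ms_k$ is arranged through the choice of connecting piece. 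The areas $\area(\ms_k) = \tfrac{1}{2}\area(\widetilde\ms_k)$ diverge by construction, and then \cref{thm:AreaBound} automatically forces $\ind(\ms_k) \to \infty$ as well, since any uniform index bound would conflict with the divergence of the area. For the variant with strictly mean convex boundary, a small conformal perturbation of $g$ supported far from the $p_i$'s bends $\partial\amb$ to have strictly positive mean curvature while leaving $\ms_k$ free boundary minimal and preserving $\Scal > 0$.

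The main obstacle I foresee is the $\tau$-equivariant Colding-De Lellis construction of the second step: one must revisit the delicate genus-prescription and unstable-surface-production arguments of \cite{ColDeL05} and verify that every step, namely the choice of local catenoidal model, the metric perturbation that produces the pathological surfaces, and the genus-carrying connecting piece, can be executed symmetrically across $\partial\amb$, with the two singular points of the accumulating lamination anchored on the fixed locus. The joint control on the total genus $2a+b-1$ and on the cardinality $b$ of intersection curves with $\partial\amb$ requires treating the connecting piece essentially independently from the $b$ local models while maintaining $\tau$-invariance throughout; ensuring connectedness of $\ms_k$ together with the exact prescribed count of boundary components is another point needing care.
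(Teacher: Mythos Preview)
Your approach is genuinely different from the paper's. The paper works \emph{modularly}: it keeps the Colding--De Lellis spiraling-sphere block entirely in the interior, attaches $a$ torus blocks to supply the genus, and constructs a separate free boundary block $(M_2^{(b)},g_2^{(b)})$ on the $3$-ball carrying a one-parameter family of genus-$0$ free boundary minimal surfaces with exactly $b$ boundary components; these are then joined by the Colding--De Lellis wire-matching lemma (and a free boundary analogue thereof), and the whole thing is connected-summed to $M$ (after making $\partial M$ totally geodesic via Gromov--Lawson). The crucial point is that the accumulation and the index/area blow-up happen \emph{away} from $\partial M$, so the free boundary structure of the surfaces is completely tame: near $\partial M$ they sit in a fixed local foliation by parallel half-discs. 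Your doubling strategy instead forces the singular accumulation to occur \emph{on} $\partial M$, which is the source of the difficulties below.

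The main gap is the one you flag yourself: a $\tau$-equivariant version of the full Colding--De Lellis construction, with the two conical singularities anchored on the fixed locus and with prescribed genus and number of $\tau$-fixed curves, is not something one can simply invoke. The individual spiraling-sphere model does carry a reflection symmetry through any plane containing its axis, but once you plant $b$ such models and insist on joining them by a $\tau$-invariant connecting piece of genus $2a$ with $2b$ boundary circles, you must redo the wire-matching argument equivariantly \emph{and} synchronize the $b$ spiraling families so that the $k$-th surface in each model can be glued to a single connected $\widetilde\Sigma_k$. This is essentially rebuilding the paper's free boundary gluing lemma (its \cref{lem:join}) from scratch, and your topology bookkeeping (how exactly $b$ spheres plus a genus-$2a$, $2b$-holed piece yields a closed surface of genus $2a+b-1$) is not consistent as written.

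Two further points. First, your argument for the strictly mean convex variant does not work: a perturbation supported \emph{far} from the $p_i$'s leaves $H^{\partial M}=0$ near the $p_i$'s, so the boundary is not strictly mean convex; and any perturbation \emph{near} the $p_i$'s disturbs the infinitely many surfaces accumulating there. The paper avoids this precisely because its surfaces meet $\partial M$ only in a fixed, well-controlled region where a standard deformation (Lemma~C.1 of \cite{CarLi19}) applies. Second, your use of \cref{thm:AreaBound} to deduce $\ind(\Sigma_k)\to\infty$ from $\area(\Sigma_k)\to\infty$ is logically valid (the proofs are independent) and is a nice shortcut; the paper instead reads off the index divergence directly from the Colding--De Lellis block (see \cref{rmk:index}).
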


This shows that such curvature conditions, i.e. $R_g> 0$ in $M$ and $H^{\partial M}\geq 0$ on its boundary $\partial M$ are in general too weak to ensure any form of geometric compactness. In fact, in each of the above examples we witness not only the lack of smooth single-sheeted subsequential convergence, but even of a milder form of subsequential convergence (namely: smooth, possibly with integer multiplicity $m\geq 1$, away from finitely many points) to a smooth free boundary minimal surface. The limit object is, as above, more pathologic than one would hope for. On the other hand, the reader may want to compare these `negative' results with some of the geometric applications in \cite{AmbBuzCarSha18}, based on a bubbling analysis: it is shown that (in 3-manifolds satisfying the aforementioned curvature conditions) sequences of free boundary minimal surfaces of fixed topological type \emph{and correspondingly low index} shall indeed sub-converge in the strongest geometric sense.

\begin{remark}\label{rmk:ConvexBC}
In certain cases, for instance when $M$ is a ball and $b=1$, it is possible to deform the metric $g$ near the boundary $\partial M$ so that $(M,g)$ has positive scalar curvature, \emph{strictly convex} boundary and contains a sequence of free boundary minimal surfaces whose limit behavior is as above.
\end{remark}

\begin{remark}
A simpler variant of the very same construction we shall present in the proof of \cref{thm:Counterexample} allows to prove that, when dropping any curvature requirement, these non-compactness phenomena can be made to occur inside \emph{any} pre-assigned topological 3-manifold.
\end{remark}

\begin{remark}
A full topological characterization of those compact 3-manifolds that support metrics of positive scalar curvature and mean convex boundary has been obtained by the first author and C. Li in \cite[Theorem A]{CarLi19}. Roughly speaking, one can assert that those curvature conditions are only `mildly' restrictive from the topological perspective, as in particular the boundary $\partial M$ can be the disjoint union of closed surfaces whose genera correspond to any pre-assigned string of non-negative integers.  
\end{remark}

The examples we construct partly rely on earlier work by Colding-De Lellis \cite{ColDeL05}. Their construction is, in some sense, \emph{modular}: they build some simple blocks and develop tools to glue such blocks together by means of a suitable \emph{wire matching argument}. The novel ingredients one needs to prove our results is a new family of building blocks: for any $b>0$ we construct a (conveniently simple) Riemannian metric of positive scalar curvature on the 3-ball so that the resulting 3-manifold contains a sequence of free boundary surfaces of genus 0 and exactly $b$ boundary components. The construction we present is rather different depending on whether $b\geq 2$ or instead $b=1$, the latter relying on a smoothing lemma by P. Miao employed to the scope of desingularizing a preliminary edgy model. A detailed proof of \cref{thm:Counterexample} is provided in \crefnoname{sec:Counterexample}.

\vspace{5mm}

To conclude this introduction, we consider three classes of examples that have appeared in recent years in the literature and briefly discuss how they naturally provide counterexamples to the last two implications in \crefnoname{fig:Complexity} we still need to discuss.

\begin{remark}\label{rmk:AreaNoControl}
We remind the reader of the following existence results:
\begin{itemize}
\item{In \cite{FraSch16} A. Fraser and R. Schoen have constructed a sequence $\Sigma^{1}_{k}$ of free boundary minimal surfaces in the unit ball of $\mathbb{R}^3$ having genus 0 and any number $b\geq 2$ of boundary components; it follows from their construction that as one lets $k\to \infty$ the surfaces in question converge (e.g. in the sense of varifolds) with multiplicity two to a flat equatorial disc.} 
\item{In \cite{KapLi17} N. Kapouleas and M. Li have constructed a sequence $\Sigma^{2}_{k}$ of free boundary minimal surfaces in the unit ball of $\mathbb{R}^3$ having genus $a\geq a_0$ (sufficiently large) and $b=3$ boundary components; it follows from their construction that as one lets $k\to \infty$ the surfaces in question converge with multiplicity one to the union of an equatorial disc and a critical catenoid.}
\item{In \cite{KapWiy17} N. Kapouleas and D. Wiygul have constructed a sequence $\Sigma^{3}_{k}$ of free boundary minimal surfaces in the unit ball of $\mathbb{R}^3$ having genus $a\geq a_0$ (sufficiently large) and exactly one boundary component; it follows from their construction that as one lets $k\to \infty$ the surfaces in question converge with multiplicity three to an equatorial disc (more generally, their methods allow a stacking construction as mentioned in Remark 1.2 therein).}
\end{itemize}
Hence, in each of the three cases we have a sequence of minimal surfaces with uniformly bounded area, unbounded topology and (by \crefnoname{thm:TopFromArea}) unbounded index. Note that the second and third family actually arise via a desingularization procedure. The reader might want to compare these existence results with those obtained by D. Ketover \cite{Ket16fb} by means of equivariant min-max techniques \cite{Ket16equiv}.
\end{remark}

\subsection{Structure of the article}

We provide an outline of the contents of this article by means of the following diagram.

\vspace{4ex}
\begin{center}
\usetikzlibrary{trees}
\tikzstyle{every node}=[anchor=west]
\tikzstyle{tit}=[shape=rectangle, rounded corners,
    draw, minimum width = 3cm]
\tikzstyle{sec}=[shape=rectangle, rounded corners]
\tikzstyle{optional}=[dashed,fill=gray!50]
\begin{tikzpicture}[scale=0.9]

\tikzstyle{mybox} = [draw, rectangle, rounded corners, inner sep=10pt, inner ysep=20pt]
\tikzstyle{fancytitle} =[fill=white, text=black, draw]

\begin{scope}[align=center, font=\small]
 \node [tit] (pre) {Preliminaries};
 \node [sec] [below of = pre, yshift=-0.5cm] (not) {Notation and \\ definitions (\crefnoname{sec:NotDef})};
 \node [sec] [below of = not, yshift=-0.5cm] (pro) {Properness issues\\ (\crefnoname{sec:Properness})};
\end{scope}

\begin{scope}[align=center, xshift=7cm, font=\small]
 \node [tit] (too) {Tools};
 \node [sec] [below of = too, yshift=-0.5cm] (fbml) {Free boundary minimal\\ laminations (\crefnoname{sec:FBMLam})};
 \node [sec] [right of = fbml, xshift=4cm] (ref) {Reflecting FBMS in a \\half-space (\crefnoname{sec:ReflPrinc})};
 \node [sec] [below of = ref, yshift=-0.5cm] (mul) {Convergence with \\ multiplicity one (\crefnoname{sec:Mult1Conv})};
 \node [sec] [below of = fbml, yshift=-0.5cm] (mor) {Morse-theoretic arguments\\ (\crefnoname{sec:MorseTheory})};
\end{scope}

\draw[gray,dashed,rounded corners] ($(pre)+(-2.5,0.7)$)  rectangle ($(pre)+(2.5,-4.2)$);
\draw[gray,dashed, rounded corners] ($(too)+(-2.5,0.7)$)  rectangle ($(too)+(8.6,-4.2)$);

\begin{scope}[align=center, xshift=6cm,yshift=-5.5cm]
\node [tit] (res) {Main results};
\node [sec] [below of = res, yshift=-0.5cm] (are) {Area bound\\ (\crefnoname{sec:AreaBound})};
\node [sec] [below of = are, yshift=-0.5cm] (cou) {Counterexamples\\ (\crefnoname{sec:Counterexample})};
\begin{scope}[font=\small]
\node [sec] [right of = are, xshift=4.5cm] (sur) {Simplification surgery\\ (\crefnoname{sec:Surgery})};
\node [sec] [left of = are, xshift=-4.5cm] (cpt) {Diameter bounds \\ for stable FBMS \\ (\crefnoname{sec:CptnessStable})};
\node [sec] [below of = sur, xshift=-1.3cm, yshift=-1.2cm] (mic) {Microscopic\\ behavior \\ (\crefnoname{sec:LocalDegeneration})} ;
\node [sec] [below of = sur, xshift=1.3cm, yshift=-1.2cm] (mac) {Macroscopic\\ behavior \\ (\crefnoname{sec:MacroDescr})} ;

\draw[blue,rounded corners] ($(cou)+(2,-1.7)$)  rectangle ($(cpt)+(-2,1)$);
\node[blue] at ($(cou)+(-8,-1.3)$) {\footnotesize Core of the paper};

\end{scope}

\tikzset{bigarr/.style={
decoration={markings,mark=at position 1 with {\arrow[scale=1.5]{>}}},
postaction={decorate}}}
 
\begin{scope}[gray]
\draw[bigarr] ([xshift=0.4cm]mic.north) to ([xshift=0.4cm,yshift=1cm]mic.north);
\draw[bigarr] ([xshift=-0.4cm]mac.north) to ([xshift=-0.4cm,yshift=1cm]mac.north);
\draw[bigarr] (sur) to (are);
\draw[bigarr] (cpt) to (are);
\end{scope}
\end{scope}
\end{tikzpicture}
\end{center}

\noindent \textit{Acknowledgements:} The first author would like to thank Lucas Ambrozio and Pengzi Miao for helpful discussions.
A. C. is partly supported by the National Science Foundation (through grant DMS 1638352) and by the Giorgio and Elena Petronio Fellowship Fund; G. F. is partly supported by a fellowship of the Zurich Graduate School in Mathematics. This project was partly developed at the Institute for Advanced Study during the special year \emph{Variational Methods in Geometry}.


\section{Notation and definitions} \label{sec:NotDef}

\subsection{Basic notation}

Let $(\amb^3,g)$ be a Riemannian manifold with boundary and let $\ms^2\subset\amb$ be an embedded surface. Then we denote by:
\begin{itemize}
\item $\Diff$ the connection on $\amb$ and $\cov$ the induced connection on $\ms$.
\item $\nu$ a choice of a global unit normal vector field on $\ms$ when $\ms$ is two-sided.
\item $\eta$ the outward unit co-normal vector field to $\partial \ms$.
\item $\hat\eta$ the outward unit co-normal vector field to $\partial\amb$ (which coincides with $\eta$ along $\partial\ms$ when $\ms$ satisfies the free boundary property). 
\item $\A(X,Y) = g(D_X Y,\nu)$ the second fundamental form of $\ms\subset\amb$ and $\II^{\partial\amb}(X,Y) = g(D_X Y,\hat{\eta})$ the second fundamental form of $\partial\amb\subset\amb$. Observe that $\II^{\partial\amb} < 0$ if for example $\amb$ is the unit ball in $\R^3$ (and thus $\partial\amb$ is the unit sphere).
\item $H^{\partial\amb}$ the mean curvature of $\partial\amb$, that is $H^{\partial\amb} = - \II^{\partial\amb}(E_1,E_1) - \II^{\partial\amb}(E_2,E_2)$ for every choice of a local orthonormal frame $\{E_1,E_2\}$ of $\partial\amb$. 
We say that $\partial\amb$ is mean convex if $H^{\partial\amb} \ge 0$, and strictly mean convex if $H^{\partial\amb} > 0$ (that is, for example, the case of the unit ball in $\R^3$).
\item $\chi(\ms)$, $\genus(\ms)$, $\bdry(\ms)$ and $\area(\ms)$ respectively the Euler characteristic, the genus, the number of boundary components and the area of $\ms$.
\end{itemize}
All these notions are easily extended to the case when the surface in question is only immersed, rather than embedded.

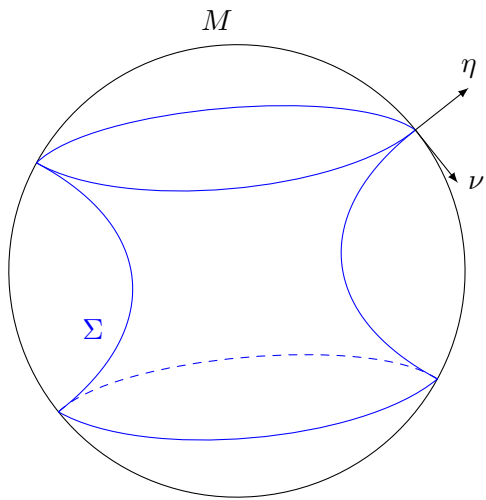
\begin{figure}[htpb]
\centering
\begin{tikzpicture}[line cap=round,line join=round,rotate=5]

\pgfmathsetmacro{\R}{3}
\pgfmathsetmacro{\T}{1.19968}
\pgfmathsetmacro{\a}{1/(\T*cosh(\T))}
\pgfmathsetmacro{\e}{0.403*\R}
\coordinate(Pa)at({\R*\a*cosh(\T)},{\R*\a*\T});
\coordinate(Pav)at({1.3*\R*\a*cosh(\T)},{1.3*\R*\a*\T});
\coordinate(Pao)at({\R*\a*cosh(\T)+0.3*\R*\a*\T},{\R*\a*\T-0.3*\R*\a*cosh(\T)});
\coordinate(Pb)at({\R*\a*cosh(-\T)},{-\R*\a*\T});
\coordinate(Qa)at({-\R*\a*cosh(\T)},{\R*\a*\T});
\coordinate(Qb)at({-\R*\a*cosh(-\T)},{-\R*\a*\T});

\pgfmathsetmacro{\wa}{215}
\pgfmathsetmacro{\wb}{140}
\pgfmathsetmacro{\ua}{0.4375*\R}
\pgfmathsetmacro{\ub}{0.35*\R}

\draw[myBlack] (0,0) circle (\R);
\node[above=2pt] at(0,\R) {$\amb$};

\begin{scope}[myBlue]
\node at(-0.65*\R,-0.2*\R) {$\ms$};
\draw[scale=1,domain=-\T:\T,smooth,variable=\s]  plot ({\R*\a*cosh(\s)},{\R*\a*\s});
\draw[scale=1,domain=-\T:\T,smooth,variable=\s]  plot ({-\R*\a*cosh(\s)},{\R*\a*\s});

\draw(Pa)..controls+(\wa:\ua)and +(180-\wa:\ua)..(Qa);
\draw(Pb)..controls+(\wa:\ua)and +(180-\wa:\ua)..(Qb);
\draw(Pa)..controls+(\wb:\ub)and +(180-\wb:\ub)..(Qa);
\draw[dashed](Pb)..controls+(\wb:\ub)and +(180-\wb:\ub)..(Qb);
\end{scope}

\draw[-latex] (Pa) -- (Pav) node[above]{$\eta$};
\draw[-latex] (Pa) -- (Pao) node[right]{$\nu$};
\end{tikzpicture}
\caption{An example of a free boundary minimal surface in the unit ball $B^3\subset \R^3$ with some notation included.} \label{fig:NotEx}
\end{figure}

In certain circumstances, for instance as a result of a blow-up procedure, we will have to work in a half-space of $\R^3$.
In that respect, for $0 \ge a \ge -\infty$ we shall set $\Xi(a) \eqdef\{ x^1\ge a \}$ and $\Pi(a)\eqdef\{ x^1= a \}$. When $a=-\infty$, we agree that $\Xi(a)$ coincides with $\R^3$ and $\Pi(a)$ is empty.
We omit the dependence on $a$ when we are interested in a generic half-space, without caring about the distance of the origin from the boundary.

\subsection{Free boundary minimal surfaces} \label{subsec:FBMS}

Given a Riemannian manifold $(\amb^3,g)$, we denote by $\vf_\partial(\amb)$ the linear space of smooth ambient vector fields $X$ such that 
\begin{enumerate}[label={\normalfont(\roman*)}]
\item $X(x) \in T_x\amb$ for all $x\in\amb$,
\item $X(x)\in T_x\partial\amb$ for all $x\in\partial\amb$.
\end{enumerate}
The first variation of the area functional at a smooth, embedded surface $\ms^2\subset\amb$ with respect to the flow $\psi_t$ generated by a compactly supported vector field $X\in \vf_\partial(\amb)$ is given by
\begin{equation} \label{eq:FirstVar}
\frac{\d}{\d t}\Big|_{t=0} \Haus^n(\psi_t(\ms))  = \int_\ms \div_\ms (X) \de \Haus^2 = -\int_\ms \scal HX \de \Haus^2 + \int_{\partial\ms} \scal{X}\eta \de \Haus^1 \point
\end{equation}
Once again, the same formula holds true, modulo straightforward notational changes, in the case of immersed surfaces.

When $M$ and $\ms$ are compact, it is customary to say that a surface $\ms^2\subset \amb$ is \emph{properly embedded} if $\partial\ms = \ms\cap \partial\amb$. If that is the case, the equation above implies that $\ms$ is a stationary point of the area functional if and only if it has zero mean curvature and meets the ambient boundary orthogonally. In this case $\ms$ is called \emph{free boundary minimal surface}.

\begin{remark}  \label{subsec:ImpRem}
It is possible to extend this definition also to non-properly embedded surfaces, convening that $\ms^2\subset\amb$ is a free boundary minimal surface if it has zero mean curvature and meets the boundary of the ambient manifold orthogonally along its own boundary. 
\end{remark}

In this article we focus our attention on compact Riemannian manifolds $(\amb^3,g)$ that satisfy property \hypP{}, that has been stated in the introduction. However, note that local limits of properly embedded free boundary minimal surfaces can be non-properly embedded even assuming property \hypP{}.
In particular this is the case of \crefnoname{thm:LamCptness}.

\begin{remark} \label{rem:ConfusionProper}
We stress that, when $\ms$ or $\amb$ are non-compact, there is a possible confusion between \emph{proper} in the sense stated above, namely $\partial \ms = \ms\cap \partial\amb$, and \emph{proper} in the sense that the inclusion map $\ms\hookrightarrow\amb$ is proper as in General Topology.
Unfortunately during the article we have to use both meanings, since we need to consider non-compact free boundary minimal surfaces in our ambient manifold and even free boundary minimal surfaces in non-compact ambient spaces.
Therefore, when we talk about \emph{properly embedded} surfaces with boundary we mean that \emph{both} properties are satisfied.
In all cases of possible ambiguity, we try to make the interpretation as explicit and clear as possible.
\end{remark}

Thanks to the maximum principle, property \hypP{} is implied by the following geometric condition on the boundary.
\begin{description}
\item [$(\mathfrak{C})$]\ \ \  The boundary $\partial\amb$ is mean convex and has no minimal component. \label{HypC}
\end{description}
\begin{remark}
Note that the assumptions of \crefnoname{thm:FraLiCpt}, \cref{thm:AreaBound}, \cref{prop:StableImpliesCptness} and \cref{cor:CptBoundIndPosRic} all imply \hypC{} and thus \hypP{}.
\end{remark}

\begin{remark}\label{rem:edged}
In certain circumstances, we shall deal with free boundary minimal surfaces $\ms$ whose boundary is not entirely contained in $\partial\amb$, that is when $\partial\ms\setminus\partial\amb \not=\emptyset$ (cf. also \cite[Remark 13]{AmbCarSha18-Compactness}).
For example, we look at the intersection of free boundary minimal surfaces with geodesic balls in the ambient manifold centered at boundary points: in that case we talk about \emph{edged} free boundary minimal surface and we mean that they satisfy the free boundary condition only with respect to the boundary of the ambient manifold that is contained in the ball, and not necessarily with respect to the relative boundary of the ball in question.
\end{remark}

Then, given a properly embedded free boundary minimal surface $\ms$, one can consider the second variation of the area functional, which can be written as
\begin{equation} \label{eq:SecondVar}
\begin{split}
\frac{\d^2}{\d t^2}\Big|_{t=0} \Haus^n(\psi_t(\ms)) &= \int_\ms (\abs{\cov^{\perp}X^{\perp}}^2 - (\Ric_{\amb}(X^\perp,X^\perp) + \abs A^2 \abs{X^\perp}^2)) \de \Haus^2 +{}\\
&\pheq + \int_{\partial\ms} \II^{\partial\amb}(X^\perp, X^\perp) \de \Haus^1\comma
\end{split}
\end{equation}
where $X^\perp$ is the normal component of $X$ and $\cov^\perp$ is the induced connection on the normal bundle $N\ms$ of $\ms\subset\amb$.
Thus, given a section $V\in\Gamma(N\ms)$ of the normal bundle, the second variation along the flow generated by $V$ is equal to the quadratic form
\[
 Q^\ms(V,V) \eqdef  \int_\ms (\abs{\cov^{\perp}V}^2 - (\Ric_{\amb}(V,V) + \abs A^2 \abs{V}^2)) \de \Haus^2 + \int_{\partial\ms} \II^{\partial\amb}(V, V) \de \Haus^1 \point
\]
The \emph{(Morse) index} of $\ms$ is defined as the maximal dimension of a linear subspace of $\Gamma(N\ms)$ where $Q^\ms$ is negative definite.
Under the above assumptions, this number equals the number of negative eigenvalues of the elliptic problem
\[
\begin{cases}
\lapl_\ms^\perp V + \Ric^\perp_\amb(V,\cdot) +\abs A^2 V + \lambda V = 0 & \text{on $\ms$}\comma\\
\cov^\perp_\eta V = - (\II^{\partial\amb}(V,\cdot))^\sharp & \text{on $\partial\ms$}\point
\end{cases}
\]

Observe that, if $\ms^2\subset\amb^3$ is two-sided, then every vector field $V\in\Gamma(N\ms)$ can be written as $V = f\nu$ and $Q^\ms(V,V)$ coincides with the quadratic form $Q_\ms(f,f)$, defined as
\[
\begin{split}
Q_\ms(f,f) &\eqdef \int_\ms (\abs{\cov f}^2 - (\Ric_{\amb}(\nu,\nu) + \abs A^2 )f^2) \de \Haus^2 + \int_{\partial\ms} \II^{\partial\amb}(\nu, \nu)f^2 \de \Haus^1\\
&= -\int_\ms f\jac_\ms(f) \de\Haus^2 + \int_{\partial\ms} \left( f\DerParz f \eta + \II^{\partial\amb}(\nu,\nu) f^2 \right) \de\Haus^1 \comma
\end{split}
\]
where $\jac_\ms \eqdef \lapl_\ms + (\Ric_\amb(\nu,\nu) + \abs A^2)$ is the (scalar) Jacobi operator of $\ms$.

\subsection{Euler characteristic} \label{sec:EulChar}
Recall that the Euler characteristic of a compact surface $\ms$ is equal to
\[
\chi(\ms) = \begin{cases}
            2 - 2\genus(\ms) - \bdry(\ms) & \text{if $\ms$ is orientable} \comma\\
            1- \genus(\ms) - \bdry(\ms) & \text{if $\ms$ is not orientable} \point
            \end{cases}
\]
Now let $\ms_1,\ms_2$ be two compact oriented surfaces with boundary and consider $c_1$, $c_2$ be two boundary components of $\ms_1$ and $\ms_2$ respectively. Note that $c_1$ and $c_2$ are both homeomorphic to $S^1$.
As shown in \crefnoname{fig:GlSurf}, we can glue $\ms_1$ and $\ms_2$ along $c_1$ and $c_2$ in two ways:
\begin{enumerate} [label={\normalfont(\roman*)}]
\item We can attach all $c_1$ to all $c_2$. \label{at:circle}
\item We can attach an arc of $c_1$ to an arc of $c_2$. \label{at:segment}
\end{enumerate}

\begin{figure}[htpb]
\centering
\begin{tikzpicture}[scale=0.8]
\pgfmathsetmacro{\radius}{4}
\pgfmathsetmacro{\a}{4/sqrt(17)}
\draw (-\radius,0) arc (180:360:{\radius} and {\radius/4});
\draw[dashed] (\radius,0) arc (0:180:{\radius} and {\radius/4});
\draw (\radius,0) arc (0:180:{\radius});
\draw[dashed,blue] (\a,\a) arc ({atan(1/4)}:90:{\radius/4} and {\radius});
\draw[blue] (0,\radius) arc (90:{180+atan(1/4)}:{\radius/4} and {\radius});

\node at ({-\radius+0.9}, {\radius-0.9}) {$\ms_1$};
\node at ({\radius-0.9}, {\radius-0.9}) {$\ms_2$};

\pgfmathsetmacro{\xs}{-5/2*\radius}
\pgfmathsetmacro{\ys}{\radius/2-0.5}
\pgfmathsetmacro{\radiusa}{(\radius/2)+0.5}
\pgfmathsetmacro{\radiusb}{\radius}

\draw (\xs,\ys) ellipse ({\radiusb} and {\radiusa});
\draw[blue] (\xs, {\ys+\radiusa}) arc (90:270:{\radius/4} and {\radiusa});
\draw[dashed,blue] (\xs, {\ys+\radiusa}) arc (90:-90:{\radius/4} and {\radiusa});
\node at ({-\radius+0.3+\xs}, {\radius-0.9}) {$\ms_1$};
\node at ({\radius-0.3+\xs}, {\radius-0.9}) {$\ms_2$};
\end{tikzpicture}
\caption{Gluing of two discs via \ref{at:circle} (on the left) and \ref{at:segment} (on the right).} \label{fig:GlSurf}
\end{figure}
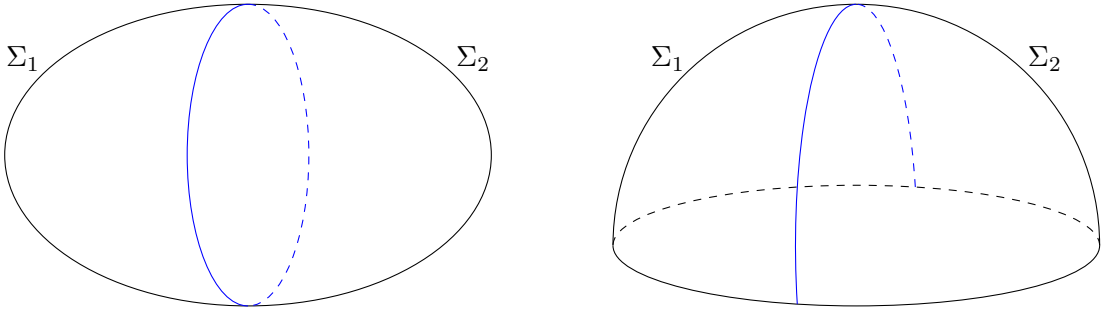

In general, we can construct an oriented surface $\ms$ by gluing $b$ boundary components of $\ms_1,\ms_2$ as in \ref{at:circle} and $b'$ boundary components as in \ref{at:segment}. Then the Euler characteristic of $\ms$ is given by
\[
\chi(\ms) = \chi(\ms_1)+\chi(\ms_2) - b' \point
\]
Therefore $\ms$ has genus equal to $\genus(\ms_1)+\genus(\ms_2) + b + b' - 1$ and number of boundary components equal to the sum of the boundary components of $\ms_1$ and $\ms_2$ minus $2b+b'$.


\section{Free boundary minimal laminations}\label{sec:FBMLam}

In this section we want to generalize the definitions and the results about minimal laminations (see for example \cite[Definition 2.1]{Car15} or \cite[Definition 2.2]{MeePerRos16}) to the case of manifolds with boundary (see also \cite[Section 5]{GuaLiZho17}). 
Indeed, laminations naturally arise as limits of (free boundary) minimal surfaces that are assumed to have uniformly bounded index, but not necessarily uniformly bounded area.

\subsection{Definitions and compactness}

\begin{definition} \label{def:fbmlam}
A \emph{free boundary minimal lamination} $\lam$ in a three-dimensional Riemannian manifold $(\amb^3,g)$ with boundary $\partial\amb$ is the union of a collection of pairwise disjoint, connected, embedded free boundary minimal surfaces of $\amb$. Moreover we require that $\cup_{L\in \lam} L$ is a closed subset of $\amb$ and that, for each $x\in\amb$ one of the following assertions holds:
\begin{enumerate}  [label={\normalfont(\roman*)}]
\item 
\label{fbml:interior}
$x\in \amb\setminus \partial\amb$ and there exists an open neighborhood $U$ of $x$ and a local coordinate chart $\varphi: B_1^2(0)\times \oo01\subset \R^3 \to U$ such that $\varphi^{-1}((\cup_{L\in \lam} L) \cap U) = B_1^2(0) \times C$ for a closed subset $C\subset \oo 01$; 
\item \label{fbml:goodbdry}
$x\in \partial\amb$ and there exists a relatively open neighborhood $U$ of $x$ and a local coordinate chart $\varphi:(B_1^2(0) \cap \{x^1\ge 0\})\times \oo01 \subset \Xi\to U$ such that $\varphi^{-1}((\cup_{L\in \lam} L) \cap U) = (B_1^2(0)\cap\{x^1\ge 0\})\times C$ for a closed subset $C\subset \oo 01$; 
\item \label{fbml:badbdry}
$x\in\partial\amb$ and there exists an extension $\check\amb$ without boundary and an open neighborhood $U$ of $x$ in $\check\amb$ such that property \ref{fbml:interior} is satisfied for the neighborhood $U\ni x$.
\end{enumerate}
\end{definition}
A schematic representation of the three different situations in \cref{def:fbmlam} can be found in \cref{fig:DefLam}.

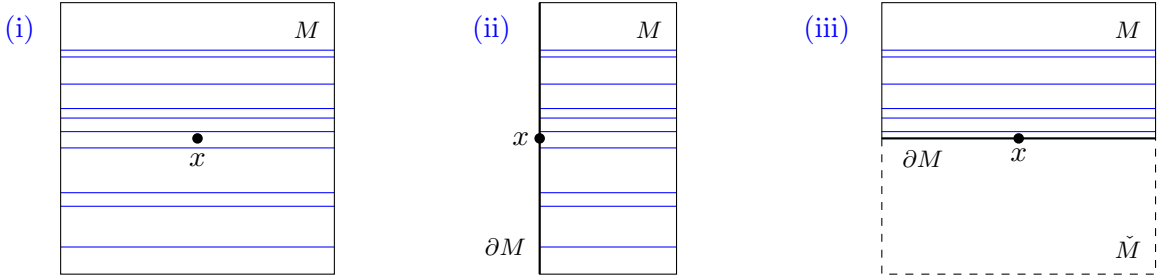
\begin{figure}[htpb]
\centering
\begin{tikzpicture}[scale=1.8]
\pgfmathsetmacro{\xa}{-3}
\pgfmathsetmacro{\xb}{-0.5}
\pgfmathsetmacro{\xc}{3}
\pgfmathsetmacro{\r}{1}
\pgfmathsetmacro{\s}{0.8}
\pgfmathsetmacro{\t}{0.3}

\pgfmathsetmacro{\hp}{5}
\def\heightspos{{0.05, 0.15, 0.22, 0.4, 0.6, 0.65}}
\pgfmathsetmacro{\hn}{3}
\def\heightsneg{{-0.07, -0.4, -0.5, -0.8}}

\foreach \i in {0,...,\hp}
  \draw[myBlue] ({\xa-\r},\heightspos[\i]) -- ({\xa+\r},\heightspos[\i]);
\foreach \i in {0,...,\hn}
  \draw[myBlue] ({\xa-\r},\heightsneg[\i]) -- ({\xa+\r},\heightsneg[\i]);
  
\fill (\xa, 0) circle [radius=0.1em] node[below=2pt] {$x$};
\draw  ({\xa+\r},\r) -- ({\xa-\r},\r) -- ({\xa-\r},-\r) -- ({\xa+\r},-\r) -- cycle;
\node at ({\xa+\s}, \s) {\footnotesize $\amb$};
\node at ({\xa-\r-\t}, \s) {\ref{fbml:interior}};

\foreach \i in {0,...,\hp}
  \draw[myBlue] (\xb,\heightspos[\i]) -- ({\xb+\r},\heightspos[\i]);
\foreach \i in {0,...,\hn}
  \draw[myBlue] (\xb,\heightsneg[\i]) -- ({\xb+\r},\heightsneg[\i]);
  
\fill (\xb, 0) circle [radius=0.1em] node[left] {$x$};
\draw  ({\xb+\r},\r) -- ({\xb},\r) -- ({\xb},-\r) -- ({\xb+\r},-\r) -- cycle;
\draw[thick] ({\xb},\r) -- ({\xb},-\r);
\node at ({\xb+\s}, \s) {\footnotesize $\amb$};
\node at ({\xb-0.25}, -\s) {\footnotesize $\partial \amb$};
\node at ({\xb-\t-0.05}, \s) {\ref{fbml:goodbdry}};

\foreach \i in {0,...,\hp}
  \draw[myBlue] ({\xc-\r},\heightspos[\i]) -- ({\xc+\r},\heightspos[\i]);

\fill (\xc, 0) circle [radius=0.1em] node[below] {$x$};
\draw ({\xc+\r},\r) -- ({\xc-\r},\r) -- ({\xc-\r},0) -- ({\xc+\r},0) -- cycle;
\draw[thick] ({\xc-\r},0) -- ({\xc+\r},0);
\node at ({\xc+\s}, \s) {\footnotesize $\amb$};
\node at ({\xc-0.7}, -0.15) {\footnotesize $\partial\amb$};
\draw[dashed]  ({\xc-\r},0) -- ({\xc-\r},-\r) -- ({\xc+\r},-\r) -- ({\xc+\r},0);
\node at ({\xc+\s}, -\s) {\footnotesize $\check{\amb}$};
\node at ({\xc-\r-\t-0.1}, \s) {\ref{fbml:badbdry}};
\end{tikzpicture}
\caption{Definition of lamination in chart.} \label{fig:DefLam}
\end{figure}
\begin{remark}
Note that, if we require property \hypP{} on $\amb$, then case \ref{fbml:badbdry} cannot occur. We have included it in the definition because we need to consider free boundary minimal laminations in half-spaces of $\R^3$ (which do not fulfill \hypP{}) and local limits of free boundary minimal surfaces (or laminations) which a priori can be non-properly embedded (see \crefnoname{subsec:ImpRem} and \crefnoname{thm:LamCptness} below).
\end{remark}

\begin{definition}
We say that a point $p$ of a minimal lamination $\lam$ is a \emph{limit point} if there exists a coordinate chart $(U,\varphi)$ with $p\in U$ as in the previous definition such that $\varphi^{-1}(p) = (t,x)$ and $t$ is an accumulation point for $C$.
\end{definition}

\begin{remark} \label{rem:LimLeaf}
Thanks to the Harnack inequality, if $p$ is a limit point of a lamination $\lam$, then the entire leaf through $p$ consists of limit points of $\amb$. In this case, we shall call it a \emph{limit leaf}.
\end{remark}

As anticipated, we introduce the concept of lamination to gain compactness. Indeed the following theorem holds.
\begin{theorem}[{\cite[Proposition B.1]{ColMin04} and \cite[Theorem 5.5]{GuaLiZho17}}] \label{thm:LamCptness}
Let $(\amb^3, g)$ be a complete three-dimensional Riemannian manifold with boundary. Given $x\in\amb$ and $r>0$, let $\lam_j$ be a sequence of free boundary minimal laminations with uniformly bounded curvatures in $\subset B_{2r}(x) \subset \amb$. Then there exists a subsequence which converges in $B_r(x)$ in the $C^{0,\alpha}$ topology for any $\alpha<1$ to a Lipschitz minimal lamination $\lam$ whose (possibly non-properly embedded) leaves have free boundary with respect to $\partial\amb$. Moreover the leaves of $\lam$ are smooth free boundary minimal surfaces and the leafwise convergence is $C^\infty$. 
\end{theorem}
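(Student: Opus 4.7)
The plan is to reduce to the classical interior lamination compactness theorem of Colding-Minicozzi \cite{ColMin04} by means of a reflection procedure across $\partial\amb$. Via a finite covering of $\overline{B_r(x)}$ by small geodesic balls and a standard diagonal extraction, it is enough to prove subsequential local convergence in a neighborhood of each individual point $p \in \overline{B_r(x)}$.

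For an interior point $p \in \amb\setminus\partial\amb$, the restriction of $\lam_j$ to a small Euclidean coordinate ball around $p$ is a sequence of minimal laminations with the uniform curvature bound inherited from the hypothesis. Then \cite[Proposition B.1]{ColMin04} applies directly to extract a subsequence converging in $C^{0,\alpha}$ to a Lipschitz minimal lamination, with the graphical structure \ref{fbml:interior} of \cref{def:fbmlam} and $C^\infty$ leafwise convergence.

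At a boundary point $p \in \partial\amb$, the key observation is that the free boundary condition -- orthogonal intersection with $\partial\amb$ -- forces each leaf reaching $\partial\amb$ to extend smoothly by reflection across it. Concretely, I would work in Fermi coordinates $(t,y)$ with $t$ the signed geodesic distance from $\partial\amb$, extend the metric $g$ to a metric $\check g$ on $\oo{-\delta}{\delta} \times U$ by even reflection in $t$, and extend each leaf $L$ of $\lam_j$ reaching $\partial\amb$ to its symmetric double $L \cup L'$. Orthogonality at the contact locus together with the minimal surface equation ensure that the gluing produces a $C^{1,1}$ minimal surface, which is in fact smooth by elliptic regularity. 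The resulting doubled family $\check\lam_j$ is a sequence of minimal laminations in the doubled coordinate neighborhood, still satisfying uniform curvature bounds, so the interior step applies and yields a subsequential $C^{0,\alpha}$ limit $\check\lam$ that inherits the reflection symmetry. Restricting back to $\{t\ge 0\}$ gives the desired limit lamination $\lam$: leaves of $\check\lam$ transverse to $\{t=0\}$ produce leaves of $\lam$ terminating orthogonally at $\partial\amb$ (case \ref{fbml:goodbdry}), while leaves of $\check\lam$ which cross $\{t=0\}$ and whose two halves are to be regarded as a single, a priori non-proper leaf of $\lam$ correspond to the non-properly embedded limit leaves allowed in case \ref{fbml:badbdry}.

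The main obstacle will be to make sure the reflected metric $\check g$ is regular enough to feed all the curvature and H\"older estimates needed to run the Colding-Minicozzi machinery, and that the free boundary property of the limit leaves passes to the limit. The former is handled by noting that even reflection of a smooth Riemannian metric across a hypersurface in Fermi coordinates yields a $C^{1,1}$ metric, which lies well within the regularity threshold of the relevant linear elliptic theory and of the standard curvature estimates. The latter follows from Schauder-type boundary estimates for the free boundary minimal surface equation, which simultaneously upgrade the $C^{0,\alpha}$ leafwise convergence to $C^\infty$ and confirm smoothness and orthogonal contact of the limit leaves at $\partial\amb$.
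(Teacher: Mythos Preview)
The paper does not supply its own proof of this statement: it is quoted from the literature, specifically \cite[Proposition B.1]{ColMin04} for the interior case and \cite[Theorem 5.5]{GuaLiZho17} for the free boundary case. Your reflection-across-the-boundary strategy is precisely the approach taken in \cite{GuaLiZho17}, and your sketch captures the essential steps correctly: localize, treat interior points via Colding--Minicozzi, and at boundary points pass to the doubled picture in Fermi coordinates so that the interior result applies again.

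One small caution worth flagging: the evenly reflected metric $\check g$ is only $C^{1,1}$ across $\{t=0\}$ unless $\partial\amb$ is totally geodesic, so the reflected leaves are minimal with respect to a $C^{1,1}$ metric rather than a smooth one. You acknowledge this, but it means one should either verify that the relevant parts of \cite[Appendix B]{ColMin04} go through under $C^{1,1}$ ambient regularity, or---as is effectively done in \cite{GuaLiZho17}---work directly with the free boundary Schauder estimates on the original half-ball rather than relying entirely on the doubled picture for the higher-order regularity and convergence. Either route closes the argument, but the distinction is worth making explicit.
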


\begin{remark} \label{rem:ConvOfLam} In the scenario described in the previous statement, we will say that $\lam_j$ locally converges to $\lam$ \emph{in the sense of laminations}.
In that respect, we recall that the convergence of a sequence of laminations $\lam_j$ to a lamination $\lam$ is leafwise $C^\infty$ if, for every sequence of points $x_j\in \lam_j$ that converges to a point $x\in \amb$, there exists a leaf $L\subset\lam$ such that $x\in L$ and a neighborhood $U$ of $x$ such that the connected component $L_j'$ of $L_j\cap U$ that contains $x_j$ converges to the connected component of $L\cap U$ that contains $x$ smoothly with multiplicity one (in the sense of graphs).
\end{remark}

It is well-known that a two-sided minimal surface having a positive Jacobi field is stable (cf. e.g. \cite[Lemma 1.36]{ColMin11}). In turn, the existence of a positive Jacobi field can be deduced whenever multi-sheeted convergence occurs. We shall state here a helpful variation on this theme, whose proof is, by now, rather standard (cf. e.g. Theorem 22 in \cite{Whi16}, and Proposition 2.1 in \cite{CarChoEic16}).
\begin{lemma} \label{lem:StableUnivOrMultOne}
Let $(\amb^3,g)$ be a complete three-dimensional Riemannian manifold with boundary and let $\ms_j^2\subset\amb$ be a sequence of (connected) properly embedded free boundary minimal surfaces that locally converge to a free boundary minimal lamination $\lam\subset \amb\setminus\set_\infty$ away from a finite set of points $\set_\infty$. Consider a leaf $L\in\lam$, then one of the following assertions holds:
\begin{enumerate} [label={\normalfont(\arabic*)}]
\item $L$ has stable universal cover; \label{suomo:stable}
\item the convergence of $\ms_j$ to $L$ is locally smooth with multiplicity one (in the sense of graphs); namely, for every $x\in L$ there exists a neighborhood $U$ of $x$ in $\amb$ such that $U\cap (\bigcup_{L'\in\lam} L') = U\cap L$ and $U\cap \ms_j$ converges smoothly to $U\cap L$ with multiplicity one (as graphs). \label{suomo:multone}
\end{enumerate}
\end{lemma}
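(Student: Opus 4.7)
The plan is to establish the dichotomy by showing that the failure of alternative \ref{suomo:multone} at any point of $L$ forces the existence of a positive Jacobi function on the orientation (and, if need be, universal) cover of $L$, which in turn implies alternative \ref{suomo:stable}. To this end, first reduce to the two-sided case by passing to the orientation double cover $\pi \colon \hat L\to L$ endowed with the pulled-back metric and unit normal $\nu$; since the stability of the universal cover of $\hat L$ is equivalent to that of the universal cover of $L$, and since the multiplicity of convergence lifts faithfully, it suffices to produce a positive Jacobi function (satisfying the appropriate Robin boundary condition along $\partial \hat L$) on the universal cover $\tilde L$.

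Assume then that \ref{suomo:multone} fails at some $x_\ast \in L$. By the smooth leafwise convergence away from $\set_\infty$ and embeddedness of the $\ms_j$'s, there exist a neighborhood $U\ni x_\ast$ and an integer $m\geq 2$ such that, for $j$ large, $\ms_j\cap U$ decomposes into $m$ disjoint normal graphs $u_j^{(1)} < \cdots < u_j^{(m)}$ over $L\cap U$ in Fermi coordinates based on $(L,\nu)$; when $x_\ast\in\partial L\subset\partial M$, work in Fermi coordinates based simultaneously on $L$ and on $\partial M$, so that the free boundary condition for $\ms_j$ translates into the Robin boundary condition $\partial_\eta u_j^{(k)} = -\II^{\partial M}(\nu,\nu)\,u_j^{(k)} + \text{l.o.t.}$ along $\partial L\cap U$. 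Now fix a basepoint $p_0 \in \tilde L$ over a point in $U\setminus \partial L$, set $v_j \eqdef u_j^{(2)}-u_j^{(1)} > 0$ (pulled back to $\tilde L$), and normalize $w_j \eqdef v_j/v_j(p_0)$. Writing the minimal surface equation satisfied by each $u_j^{(k)}$ as a quasilinear elliptic equation and subtracting, one finds that $w_j$ satisfies a linear elliptic equation of the form $\jac_{L,j} w_j = 0$ with coefficients converging in $C^\infty_{\mathrm{loc}}$ to those of the Jacobi operator $\jac_L$, together with a Robin-type boundary condition on $\partial\tilde L$ whose coefficient converges to $\II^{\partial M}(\nu,\nu)$.

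Next, extend this local construction globally by a continuation argument. On any precompact domain $\Omega\Subset \tilde L$ intersecting $U$, smooth convergence away from $\set_\infty$ and embeddedness propagate the two-sheet structure: for $j$ large, the two sheets that pinch together at the preimage of $U$ persist as disjoint graphs over all of $\Omega$, since otherwise two graphs would have to cross or meet $\set_\infty$, and $\set_\infty$ is a finite set that can be avoided in $\tilde L$. Hence $w_j$ is well-defined on $\Omega$ for $j$ large. By the interior and boundary Harnack inequality (the latter being available for the Robin problem with smooth coefficients), the sequence $w_j$ is locally uniformly bounded above and below away from zero; standard elliptic Schauder estimates then give $C^{2,\alpha}_{\mathrm{loc}}$ subsequential convergence to a non-negative function $w$ satisfying $\jac_L w = 0$ on $\tilde L$ with the correct Robin condition on $\partial\tilde L$, and $w(p_0) = 1$. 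The strong maximum principle combined with the Hopf boundary lemma yields $w>0$ everywhere on $\tilde L$ (including up to the boundary).

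The existence of such a positive Jacobi function on the universal cover is equivalent, by the standard variational characterization (testing $Q^L$ against $\varphi/w^{1/2}$-type cut-offs, or equivalently an integration by parts producing the stability inequality from the identity $\jac_L w = 0$), to the stability of $\tilde L$ as a two-sided free boundary minimal surface, so alternative \ref{suomo:stable} holds. The main technical point is the handling of the Robin boundary condition throughout: ensuring that the boundary Harnack/Hopf lemma applies uniformly in $j$ so as to guarantee that the limit $w$ is strictly positive up to $\partial\tilde L$, and that the two persistent sheets do not lose their ordering as $\Omega$ exhausts $\tilde L$ — both of which rely crucially on the embeddedness of $\ms_j$ and on the finiteness of $\set_\infty$.
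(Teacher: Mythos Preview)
Your argument is essentially the standard one the paper points to (via \cite{Whi16} and \cite{CarChoEic16}), and the same construction is spelled out in Step~2 of the proof of \cref{thm:RemSingLimLam}: take two distinct sheets, normalize their difference, and pass to a limit positive Jacobi function. One phrasing issue worth cleaning up: in the lamination regime there is no a~priori bound on the number of sheets, so you cannot assert that $\ms_j\cap U$ decomposes into a \emph{fixed} integer $m\geq 2$ graphs; the paper sidesteps this by taking $u_j^+\eqdef\sup$ and $u_j^-\eqdef\inf$ of the height function (which are automatically well-defined smooth graphs on any $\Omega\Subset L$), and this choice also makes your continuation argument on $\tilde L$ unnecessary --- the extremal sheets are globally defined on the two-sided cover $\hat L$, and the resulting positive Jacobi function pulls back to $\tilde L$.
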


\subsection{Removable singularities}

Thanks to \crefnoname{thm:LamCptness}, in \crefnoname{sec:MacroDescr} we prove that free boundary minimal surfaces with uniformly bounded index converge, possibly after extracting a subsequence, to a free boundary minimal lamination which is smooth away from a finite number of points (so that, in particular, we are in the scenario described in the statement of \cref{lem:StableUnivOrMultOne}). The aim of the following propositions is to provide tools to show that these singularities are actually removable.

\begin{corollary} \label{cor:StableLam}
Let $\varphi:\ms\to \Xi(0)\setminus\{0\}\subset\R^3$ be a stable, two-sided minimal immersion that has free boundary with respect to $\Pi(0)$ and is complete away from $\{0\}$. Then the closure of $\varphi(\ms)$ is a plane or a half-plane.
\end{corollary}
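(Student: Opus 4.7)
The strategy is to reduce to the classical Fischer-Colbrie--Schoen theorem (also obtained independently by do Carmo--Peng and by Pogorelov), which asserts that any complete, two-sided, stable minimal immersion into $\R^3$ coincides with a flat plane. Two obstacles stand in the way of a direct application: the possibly non-empty free boundary of $\ms$ along $\Pi(0)$, and the potential incompleteness of $\ms$ at the origin. I would address them in order.

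For the first, assuming $\partial\ms \neq \emptyset$, I would invoke the reflection principle for free boundary minimal surfaces in a half-space (developed in \cref{sec:ReflPrinc}) to produce a doubled immersion $\tilde\varphi \colon \tilde\ms \to \R^3 \setminus \{0\}$, obtained by gluing $\varphi(\ms)$ to its image under the reflection $R$ across $\Pi(0)$. Since $\varphi(\ms)$ meets $\Pi(0)$ orthogonally along $\partial\ms$, this doubling is a smooth, two-sided minimal immersion that remains complete away from the origin. Stability transfers to $\tilde\ms$ by decomposing an arbitrary compactly supported normal variation on $\tilde\ms$ into its $R$-even and $R$-odd parts, both of which correspond to admissible variations of $\ms$ (the odd ones preserve the free boundary condition, while the even ones vanish on $\partial\ms$), so the non-negativity of $Q^{\tilde\ms}$ follows from that of $Q^{\ms}$ up to a harmless factor of two. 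If instead $\partial\ms = \emptyset$, this step is vacuous: one just regards $\ms$ as a stable two-sided minimal immersion into $\R^3\setminus\{0\}$, complete away from the origin, and sets $\tilde\ms \eqdef \ms$.

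The crux of the proof is to show that the isolated singularity of $\tilde\ms$ at the origin is removable. The Schoen curvature estimate for two-sided stable minimal surfaces in $\R^3$, applied on annular regions centered at $0$, gives $\abs{\A}(x) \le C/\abs{x}$ in a punctured neighborhood of the origin, and hence quadratic area growth for $\tilde\ms$ near $0$. Testing the stability inequality against a standard logarithmic cutoff function supported in a small ball around the origin then produces a uniform bound $\int_{\tilde\ms \cap B_r(0)} \abs{\A}^2 \de \Haus^2 < \infty$ for some $r>0$. A classical removable-singularity result for minimal surfaces with finite total curvature at an isolated point (cf.\ the work of Choi--Schoen) then guarantees that $\tilde\varphi$ extends to a smooth, complete, two-sided stable minimal immersion $\bar\varphi \colon \bar\ms \to \R^3$.

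At this stage Fischer-Colbrie--Schoen forces $\bar\varphi(\bar\ms)$ to coincide with an affine plane $P \subset \R^3$. In the reflected case $P$ must be invariant under $R$, so either $P = \Pi(0)$ or $P$ is orthogonal to $\Pi(0)$; in the non-reflected case the inclusion $P \subset \Xi(0)$ forces $P$ to be parallel to, or to coincide with, $\Pi(0)$. Restricting back to $\varphi(\ms) \subset \Xi(0) \setminus \{0\}$, its closure is therefore either the full plane $P$ (when $P$ is parallel to, or equal to, $\Pi(0)$) or the half-plane $P \cap \Xi(0)$ (when $P$ is orthogonal to $\Pi(0)$), as claimed. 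The main expected obstacle is the removable-singularity step, namely establishing the $L^2$ bound on $\abs{\A}$ up to the puncture and then upgrading subsequential lamination-type convergence near $0$ to a genuine smooth extension across the origin.
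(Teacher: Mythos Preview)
Your argument is correct and follows the same route as the paper: reflect across $\Pi(0)$ to eliminate the boundary, transfer stability to the double, and then invoke a Bernstein-type theorem for stable minimal surfaces complete away from a point. The paper packages the removable-singularity step and the Bernstein conclusion into a single citation of Gulliver--Lawson \cite{GulLaw86}, whereas you unpack it via Schoen's curvature estimate, a logarithmic cutoff, and a Choi--Schoen-type extension; one small correction is that your parenthetical has the even/odd roles reversed (odd variations vanish on $\partial\ms$, while even ones satisfy the Neumann condition, which coincides with the free boundary condition since $\II^{\Pi(0)}=0$).
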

\begin{proof} If $\partial\ms=\emptyset$ then the result is a direct consequence of the Bernstein-type theorem by Gulliver-Lawson \cite{GulLaw86}. Otherwise,
consider the double $\check{\ms}$ of $\ms$ (that is to say: the boundaryless surface that is obtained by reflecting $\ms$ across $\partial\ms$) and let $\tau:\check{\ms}\to \check{\ms}$ be the associated involution. 
Denoting with $\varrho:\R^3\to\R^3$ the reflection with respect to $\Pi(0)$, we define the map $\check\varphi:\check{\ms} \to \R^3\setminus\{0\}$ as follows:
\[
\check\varphi(x) \eqdef \begin{cases}
                   \varphi(x) & \text{if $x\in \ms\subset \check{\ms}$}\comma\\
                   \varrho(\varphi(\tau(x))) & \text{if $x\in \check{\ms}\setminus\ms$} \point
                  \end{cases}
\]
Observe that $\check\varphi$ is a two-sided minimal immersion that is complete away from $\{0\}$. Moreover, it follows from the discussion presented in \cite[Section 2]{AmbBuzCarSha18} that $\check\varphi$ is stable, so we can conclude as above.
\end{proof}

The following result mirrors the one obtained for interior points in Proposition D.3 of \cite{ChoKetMax17}.

\begin{proposition} \label{prop:RemovableSingStab}
Let $(\amb^3,g)$ be a complete Riemannian manifold with boundary. Fix $p\in\partial\amb$ and $\eps_0>0$ and consider an embedded minimal surface $\hat\ms\subset B_{\eps_0}(p)\setminus\{p\}$ having free boundary with respect to $\partial\amb$, and stable universal cover.
Then $\hat\ms$ smoothly extends across $p$, i.e. there exists a free boundary minimal surface $\ms\subset B_{\eps_0}(p)$ such that $\hat\ms = \ms\setminus\{p\}$.
\end{proposition}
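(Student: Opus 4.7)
The plan is to mirror the strategy used in the interior case (Proposition D.3 of \cite{ChoKetMax17}), with the essential new input being the boundary classification provided by \cref{cor:StableLam}. Concretely, I would start by establishing the curvature estimate $\abs{A_{\hat\ms}}(x)\cdot d(x,p)\le C$ for $x\in\hat\ms$ in a punctured neighborhood of $p$. This follows by applying the free boundary version of the stable minimal surface curvature estimate (the analogue of Schoen's bound in the free boundary regime, used throughout this paper) in geodesic balls of radius $\sim d(x,p)/2$ around $x$: these balls are contained in $B_{\eps_0}(p)\setminus\{p\}$ and $\hat\ms$ restricted to each such ball has stable universal cover by hypothesis.

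Next I would perform a blow-up at $p$. Working in Fermi coordinates around $p$ so that $\partial\amb$ is identified, in the limit, with $\Pi(0)\subset\R^3$, consider the rescalings $\hat\ms_{\lambda_j}=\lambda_j(\hat\ms-p)$ for $\lambda_j\to\infty$. The curvature estimate from the first step rescales to a uniform bound $\abs{A_{\hat\ms_{\lambda_j}}}(y)\le C/\abs{y}$, stability of the universal cover is scale-invariant, and the rescaled ambients converge to the flat half-space $\Xi(0)$. Applying the lamination compactness result \cref{thm:LamCptness} on annular regions exhausting $\Xi(0)\setminus\{0\}$, I would extract a subsequential limit $\lam_\infty$ consisting of stable free boundary minimal leaves in $\Xi(0)\setminus\{0\}$, each complete away from the origin. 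By \cref{cor:StableLam}, the closure of every such leaf is either a plane or a half-plane; since the leaves lie in $\Xi(0)$ and have free boundary on $\Pi(0)$, only the half-plane case survives, and cone invariance forces each such half-plane to pass through the origin.

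Finally, I would promote this tangent analysis to a smooth extension. Embeddedness of $\hat\ms$ prevents two distinct half-planes of $\lam_\infty$ from intersecting transversally along their common edge on $\Pi(0)$, so $\lam_\infty$ reduces to a single half-plane $P$ (parallel sheets are ruled out by the cone property at the origin, and multi-sheeted convergence onto $P$ would force, via \cref{lem:StableUnivOrMultOne} applied on the rescalings, the appearance of a positive Jacobi field on $P$, compatible only with multiplicity one given the embeddedness and the free boundary condition). Hence the tangent cone at $p$ is a unique multiplicity-one half-plane, so in particular $\hat\ms$ has finite area and bounded density near $p$, and the uniform curvature bound allows one to write $\hat\ms$, close to $p$, as a $C^{1,\alpha}$ graph over $P$ with free boundary on $\Pi(0)$; elliptic regularity for the free boundary minimal surface equation then yields a smooth extension $\ms\subset B_{\eps_0}(p)$. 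The main obstacle I anticipate is precisely the multiplicity-one and tangent-uniqueness step: unlike in the interior setting, the presence of $\partial\amb$ complicates the sheeting analysis, and one must carefully combine the embeddedness of $\hat\ms$ with \cref{lem:StableUnivOrMultOne} and the cone structure of $\lam_\infty$ to exclude higher multiplicity limits before Allard-type regularity can be invoked.
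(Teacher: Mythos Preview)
Your proposal follows the right broad strategy (curvature estimate, blow-up, classification of the limit via \cref{cor:StableLam}, extension), but there are two genuine gaps that the paper's proof has to work around.

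First, you incorrectly dismiss the possibility that the blow-up lamination $\lam_\infty$ is (or contains) $\Pi(0)$ itself. A leaf of the limit lamination in $\Xi(0)$ need not have nonempty boundary on $\Pi(0)$; the free boundary condition is vacuous for leaves parallel to $\Pi(0)$, and case \ref{fbml:badbdry} of \cref{def:fbmlam} explicitly allows leaves touching $\partial M$ in their interior. The paper shows that $\lam_\infty$ is \emph{either} a single half-plane through the origin orthogonal to $\Pi(0)$ \emph{or} $\Pi(0)$ itself, and the latter case genuinely occurs. In that situation $\hat\ms\cap\partial B_{2\delta}(p)$ can spiral, and you never address this; the paper handles it by reducing to the interior argument of \cite[Proposition D.3]{ChoKetMax17}.

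Second, your mechanism for reducing $\lam_\infty$ to a single multiplicity-one half-plane does not work. The blow-up limit is not a priori a cone (uniqueness of tangent cones is precisely what is at stake), so ``cone invariance'' cannot be invoked to force leaves through the origin or to rule out parallel sheets. Likewise, the Jacobi-field argument you sketch proves nothing: a half-plane carries plenty of positive Jacobi fields, so higher multiplicity is not excluded that way. The paper instead first upgrades the curvature bound to $\abs{A_{\hat\ms}}(x)d_g(x,p)\le 1/4$ (using that \emph{every} blow-up consists of planes or half-planes), and then uses the Morse-theoretic \cref{cor:14PuncturedBall} to show that any leaf of $\lam_\infty$ not through the origin would force $\hat\ms$ to contain a disc or half-disc disjoint from $p$, contradicting connectedness. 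Once $\lam_\infty$ is pinned down, the paper does not attempt a direct $C^{1,\alpha}$-graph/Allard argument; it shows $\hat\ms$ near $p$ is a punctured (half-)disc via \cref{rem:14PunctBallVariant} and then invokes the known removable singularity results (\cite[Proposition D.1]{ChoKetMax17} or \cite[Theorem 4.1]{FraLi14}).
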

\begin{remark}
Note that we are not requiring that $\hat\ms$ is properly embedded in $B_{\eps_0}(p)\setminus \{p\}$ (in particular it could be non-properly embedded in the sense of maps).
\end{remark}

\begin{proof}
First observe that we can assume that $p$ belongs to the topological closure of $\hat \ms$, otherwise the result would be obvious.
Taking $\eps_0>0$ possibly smaller and using \cite[Theorem 1.2]{GuaLiZho17}, we can assume that in $B_{\eps_0}(p)$ it holds $\abs{\A_{\hat\ms}}(x)d_g(p,x) \le C$ for all $x\in{\hat\ms}$, for some $C>0$. Therefore, for any $r_j\to 0$, the surfaces $r_j^{-1}({\hat\ms} - p)$ locally converge (in the sense of laminations), up to subsequence, to a free boundary minimal lamination ${\lam}_\infty$ of $\Xi(0)\setminus\left\{0\right\}$ thanks to \crefnoname{thm:LamCptness}.

Observe that each leaf of $\lam_\infty$ is complete away from $\{0\}$. We now argue that each leaf also has stable universal cover. Consider a leaf $L\in \lam_\infty$; then, by \cref{lem:StableUnivOrMultOne}, $L$ has stable universal cover or the convergence to $L$ is locally smooth with multiplicity one. However, if the second case occurs, the stability of the universal cover of the surfaces $r_j^{-1}({\hat\ms} - p)$ is inherited by the universal cover of $L$.

Hence, we can apply \cref{cor:StableLam} to obtain that ${\lam}_\infty$ consists of parallel planes or half-planes and thus, possibly further restricting the ball we are considering, we can improve the curvature estimate to (say)
\begin{equation}\label{eq:14estLam}
\abs{\A_{\hat\ms}}(x) d_g(x,p) \le \frac 14
\end{equation}
for all $x\in {\hat\ms} \subset B_{\eps_0}(p)\setminus\{p\}$.

We now want to prove that $\lam_\infty$ is either a single half-plane $\Delta$ passing through the origin and orthogonal to $\Pi(0)$ or $\Pi(0)$ itself.
If not the case, then there would exist another plane or half-plane not passing through the origin which appears in the (lamination) limit of the rescalings $r_j^{-1}(\hat\ms-p)$; hence one could define $\delta\in \oo0{\eps_0}$ sufficiently small such that $\hat\ms\cap (B_\delta(p)\setminus \{p\})$ contains a properly embedded component diffeomorphic to a disc or a half-disc, which does not contain $p$. As a result, we can choose $\delta$ (and possibly taking $\eps_0$ smaller) in such a way that $\hat\ms$ satisfies the assumptions of \cref{cor:14PuncturedBall} between radii $\delta$ and $\eps_0$ in a suitable Fermi chart.
Applying the corollary, we conclude that $\hat\ms$ itself contains a disc or half-disc (obtained, roughly speaking, by gluing the previous disc or half-disc with its corresponding `trivial' component of $\hat\ms\setminus B_\delta(p)$), but this is a contradiction since $\hat\ms$ is connected and its closure contains $p$.
Therefore $\lam_\infty$ consists of only one leaf\footnote{Note that $\lam_\infty$ cannot contain two intersecting (half-)planes, because this would contradict the embeddedness of ${\hat\ms}$.}, which is either $\Delta$ or $\Pi(0)$.

 Let us consider $\delta\in \oo0{\eps_0/3}$ such that $\hat\ms\cap (B_{3\delta}(p)\setminus B_\delta (p))$ intersects $\partial B_{2\delta}(p)$ transversely, is sufficiently (smoothly) close to the limit half-plane or plane and is a multigraph over it.
In particular $\hat\ms\cap \partial B_{2\delta}(p)$ is the union of injectively immersed curves.

If $\hat\ms\cap \partial B_{2\delta}(p)$ contains a compact curve, which can be either an $S^1$ or an arc, then $\hat\ms\cap B_{3\delta}(p)$ is a properly embedded topological punctured disc or half-disc in $B_{3\delta}(p)\setminus \{p\}$ thanks to estimate \eqref{eq:14estLam}, since we can apply \cref{cor:14PuncturedBall} (in Fermi chart) in the variant described in \cref{rem:14PunctBallVariant}.
Thus, using Proposition D.1 in \cite{ChoKetMax17} or its free boundary analogue (based on \cite[Theorem 4.1]{FraLi14}), respectively, we obtain that $\hat\ms$ extends smoothly across $\{p\}$.

The only other situation that can happen is that $\hat\ms\cap \partial B_{2\delta}(p)$ consists of one or more spiraling curves. Observe that this case can only occur if $\lam_\infty$ has $\Pi(0)$ as its only leaf since, otherwise,  $\Delta\cap (B_{3\delta}(p)\setminus B_\delta (p))$ is simply connected and thus it is not possible to see a spiraling behavior.
However, the method to deal with the spiraling situation in the case when $\lam_\infty=\left\{\Pi(0)\right\}$ is completely analogous to the one in the spiraling case in the proof for interior points in \cite[Proposition D.3]{ChoKetMax17}, therefore we can employ the same argument to conclude.
\end{proof}

We are now able to discuss the general case, when one needs to deal with isolated singularities arising when taking limits of free boundary minimal surfaces with bounded index.

\begin{theorem} \label{thm:RemSingLimLam}
Let $(\amb^3,g)$ be a compact three-dimensional Riemannian manifold with boundary.
Let $\ms_j^2\subset\amb$ be a sequence of properly embedded free boundary minimal surfaces with uniformly bounded index, which locally converge to a free boundary minimal lamination ${\hat\lam}$ in $\amb\setminus\set_\infty$ away from a finite set of points $\set_\infty$.
Then ${\hat\lam}$ extends smoothly through $\set_\infty$ to a smooth lamination $\lam$ of $\amb$.
Moreover, given any leaf $L\in\lam$, one of the following assertions holds:
\begin{enumerate} [label={\normalfont(\arabic*)}]
\item $L$ has stable universal cover; \label{rsll:stable}
\item the convergence of $\ms_j$ to $L$ is locally smooth with multiplicity one (in the sense of graphs); namely, for every $x\in L$ there exists a neighborhood $U$ of $x$ in $\amb$ such that $U\cap (\bigcup_{L'\in\lam} L') = U\cap L$ and $U\cap \ms_j$ converges smoothly to $U\cap L$ with multiplicity one (as graphs). \label{rsll:multone}
\end{enumerate}
\end{theorem}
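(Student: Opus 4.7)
My plan is to localize around each $p \in \set_\infty$ and extend $\hat\lam$ across the puncture one leaf at a time. Fix $p\in\set_\infty$, choose $\eps>0$ small enough that $B_\eps(p)\cap\set_\infty = \{p\}$, and consider a leaf $L$ of $\hat\lam$ whose topological closure in $\amb$ contains $p$ (other leaves require no extension). Applying \cref{lem:StableUnivOrMultOne} to the convergence $\ms_j\to\hat\lam$ produces a dichotomy for $L$ that dictates the two cases of the argument.

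In the first case, $L$ has stable universal cover; here I invoke \cref{prop:RemovableSingStab} (or its interior analogue \cite[Proposition D.3]{ChoKetMax17} if $p$ happens to lie in the interior of $\amb$) to conclude that $L$ extends smoothly to a free boundary minimal surface on the entire ball $B_\eps(p)$.

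In the second case, convergence to $L$ is locally smooth with multiplicity one. The plan is then to transfer quantitative information from $\ms_j$ to $L$: by the curvature estimate for free boundary minimal surfaces of uniformly bounded index (one of the key tools developed earlier in the paper for the macroscopic description), the curvatures $\abs{\A_{\ms_j}}$ are uniformly bounded on each annular region $B_\eps(p)\setminus B_\delta(p)$ with $\delta > 0$, since by definition $\set_\infty$ captures exactly the curvature concentration points. Combined with multiplicity-one smooth convergence, this yields local curvature bounds on $L$ in the punctured ball, allowing me to rerun the blow-up argument from the proof of \cref{prop:RemovableSingStab}: rescalings of $L$ at $p$ subconverge via \cref{thm:LamCptness} to a stable free boundary minimal lamination of $\Xi(0)\setminus\{0\}$, which by \cref{cor:StableLam} consists of parallel (half-)planes. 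The same embeddedness and disc-capture argument as in the proof of \cref{prop:RemovableSingStab} then shows the tangent (half-)plane is unique, and one concludes the extension via the classical punctured-disc removal result from \cite{ChoKetMax17} (for interior points) or its free boundary counterpart based on \cite[Theorem 4.1]{FraLi14}, with the spiraling subcase handled verbatim as in the proof of \cref{prop:RemovableSingStab}.

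Once every leaf has been extended, the resulting family is pairwise disjoint (using the strong maximum principle to preserve disjointness across the added point $p$) and fits into local foliated charts at $p$ built from the smoothness of the extensions, yielding the required lamination $\lam$ of $\amb$. The final dichotomy of the statement then follows by reapplying \cref{lem:StableUnivOrMultOne} to the convergence $\ms_j\to\lam$ on all of $\amb$. The main technical obstacle, I expect, lies in Case~(2): making fully rigorous the passage from multiplicity-one convergence plus annular curvature estimates for $\ms_j$ to genuine local curvature control on $L$ near $p$, and transplanting the spiraling argument into a Fermi chart when $p \in \partial\amb$.
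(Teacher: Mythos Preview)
Your overall architecture (localize at $p\in\set_\infty$, split via \cref{lem:StableUnivOrMultOne}, then invoke \cref{prop:RemovableSingStab}) matches the paper, but there are two genuine gaps.

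\textbf{Gap in your Case (2) for the extension.} You assert that rescalings of $L$ at $p$ subconverge to a \emph{stable} lamination, then invoke \cref{cor:StableLam}. But nothing you have said forces stability of the blow-up: multiplicity-one convergence transfers to $L$ a bound on the \emph{index}, not stability, and a finite-index blow-up need not consist of (half-)planes. The paper avoids this entirely with a cleaner reduction: since $L$ inherits index $\le I$ from the $\ms_j$, the Fischer--Colbrie argument (\cite[Proposition~1]{Fis85}) gives an $\eps_0>0$ such that $L\cap(B_{\eps_0}(p)\setminus\{p\})$ is stable and admits a positive Jacobi function, hence has stable universal cover. This folds your Case~(2) back into Case~(1), after which \cref{prop:RemovableSingStab} applies directly with no need to rerun its proof. (The paper also notes there can be only finitely many leaves of the multiplicity-one type near $p$, else the index bound on $\ms_j$ is violated.)

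\textbf{Gap in the final dichotomy.} Reapplying \cref{lem:StableUnivOrMultOne} to $\ms_j\to\lam$ still only gives multiplicity-one convergence \emph{away from} $\set_\infty$; the theorem claims it at every $x\in L$, including $x\in\set_\infty$. The paper closes this with \cref{lem:MultOneConvExtends} (an Allard/monotonicity argument extending multiplicity-one convergence across isolated points). Moreover, the paper does not simply re-invoke \cref{lem:StableUnivOrMultOne} for the dichotomy: when $L$ is neither a limit leaf nor a multiplicity-one limit, it constructs a positive Jacobi function on $L$ directly via the height-difference method (top and bottom sheets $u_j^\pm$ of $\ms_j$ over compact $\Omega\Subset L\setminus\set_\infty$, normalized difference $h_j$, pass to the limit, Harnack, extend across $\set_\infty$), following \cite[Section~6]{AmbCarSha18-Compactness}. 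Your one-line appeal to \cref{lem:StableUnivOrMultOne} does not cover this.
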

\begin{proof}
We split the proof in two steps: in the first step we show that $\hat\lam$ extends through $\set_\infty$, while in the second step we prove properties \ref{rsll:stable} and \ref{rsll:multone} of the leaves of $\lam$.

\vspace{2mm}\textbf{Step 1.} Since the result is local, we can assume to work around a single point $p\in \set_\infty$.
First, let us prove that there exists $\eps_0>0$ sufficiently small such that each leaf of ${\hat\lam}$ in $B_{\eps_0}(p)$ has stable universal cover.

Thanks to \cref{lem:StableUnivOrMultOne}, either $\hat L\in \hat \lam$ has stable universal cover or the convergence of $\ms_j$ to $\hat L$ is locally smooth and graphical with multiplicity one. Observe that there can be only a finite number of unstable leaves of the second type, otherwise the uniform bound on the index of $\ms_j$ would be violated.
If we focus on \emph{one} such leaf, we can argue as follows.
Taking $\eps_0$ possibly smaller, we can assume that $B_{\eps_0}(p)$ is simply connected and thus $\Sigma_j\cap B_{\eps_0}(p)$ is two-sided\footnote{This is a general topological fact, whose proof can be found for example in \cite[Lemma C.1]{ChoKetMax17} (where the result is stated in a particular case but the proof is completely analogous).}. Hence, it is easily argued that (by virtue of the multiplicity one convergence) ${\hat L}\cap B_{\eps_0}(p)$ has to be two-sided as well. Now, a straightforward variation of the same argument as in \cite[Proposition 1]{Fis85} proves that we can pick $\eps_0$ even smaller, in such a way that $\hat L\cap (B_{\eps_0}(p)\setminus \{p\})$ is stable and, in addition, the Jacobi operator has a positive solution in the same domain. Hence, using such a function, we can derive that $\hat L\cap (B_{\eps_0}(p)\setminus \{p\})$ has, in fact, stable universal cover.

We have thus proved that there exists $\eps_0>0$ such that every leaf of $\hat\lam$ in $B_{\eps_0}(p)\setminus \{p\}$ has stable universal cover. Then we can apply \cref{prop:RemovableSingStab} to obtain that each leaf extends smoothly across $p$. Note that the extended leaves cannot meet at $p$ (since $\hat\lam$ is assumed to be a lamination in $M\setminus \mathcal{S}_{\infty}$).

It is only left to prove that $\lam$ obtained as union of the extended leaves has the structure of lamination around the point $p$. This fact follows from the proof of \cref{prop:RemovableSingStab}, where it was shown that that for any sequence of positive numbers $r_j\to 0$ we have that $r_j^{-1}(\lam-p)$ converge to a lamination consisting of parallel planes or half-planes. 

\vspace{2mm}\textbf{Step 2.} If $L$ is a limit leaf of $\lam$ (see \cref{rem:LimLeaf}), then $L$ has stable universal cover by standard arguments (cf. also \cref{lem:StableUnivOrMultOne}).
On the other hand, if the convergence of $\ms_j$ to $L$ is locally smooth with multiplicity one (in the sense of graphs) away from $\set_\infty$, then the convergence extends across $\set_\infty$ thanks to \cref{lem:MultOneConvExtends} and we end up in case \ref{rsll:multone}.

Therefore, let us assume that $L$ is not a limit leaf and that $\ms_j$ does not converge to $L$ with multiplicity one.
Possibly passing to the double cover, we can assume that $L$ is two-sided (cf. \cite[Section 6]{AmbCarSha18-Compactness}). We then show that $L$ admits a positive Jacobi function, which proves that $L$ is stable as well as its universal cover.

Let us consider a regular domain $\Omega\Subset L\setminus \set_\infty$. Consider a vector field $X\in \vf_\partial(M)$ that has unit length and is normal to $L$ along $L$. Denote by $\Phi(x,t)$ the flow associated to $X$ and, for every $\eps>0$, define
\[
\Omega_\eps\eqdef \{ \Phi(x,t) \st x\in\Omega\comma \abs t< \eps \} \point
\]
Observe that we can fix $\eps>0$ such that the convergence of $\ms_j$ to $\lam$ is smooth in the sense of laminations in $\Omega_\eps$ and such that the only component of $\lam$ in $\Omega_\eps$ is $L\cap \Omega_\eps$.

By definition of convergence in the sense of laminations, $\Omega_\eps\setminus \Omega_{\eps/2}$ does not intersect $\ms_j$ for $j$ sufficiently large, since it does not intersect any leaf of $\lam$.
Then, for every $j$, define
\[
u_j^+(x) \eqdef \sup  \{t\in \oo{-\eps}\eps \st \Phi(x,t)\in \ms_j\} \quad \text{and} \quad u_j^-(x) \eqdef \inf \{t\in \oo{-\eps}\eps \st \Phi(x,t)\in \ms_j\} \point
\]
Note that $-\eps/2< u_j^-(x) < u_j^+(x) < \eps/2$, since $\ms_j\cap(\Omega_\eps\setminus\Omega_{\eps/2}) = \emptyset$ and $\ms_j$ does not converge to $L$ with multiplicity one.
Furthermore observe that, by \cref{rem:ConvOfLam} together with the compactness of $\Omega$, for every $j$ sufficiently large (depending on $\Omega$) the surfaces $\ms_j^\pm \eqdef \{ \Phi(x, u_j^\pm(x)) \st x\in\Omega \}\subset \ms_j$ are well-defined smooth surfaces (with boundary) that converge uniformly smoothly to $\Omega$.

At this point, we can go through the very same argument as in the proof of Theorem 5 in \cite[Section 6]{AmbCarSha18-Compactness}, so we just sketch it briefly.

Fixing $x_0\in\Omega\setminus\partial\amb$, define $\tilde h_j\eqdef u_j^+ - u_j^->0$ and $h_j(x)\eqdef \tilde h_j(x_0)^{-1} \tilde h_j(x)$. Then, following exactly the same proof of Claim 1 in \cite[Section 6]{AmbCarSha18-Compactness}, we have that $h_j$ is bounded in $C^l(\Omega)$ for all $l\in\N$ and converges smoothly to $h\in C^\infty(\Omega)$, solution of
\begin{equation} \label{eq:JacEqSub}
\begin{cases}
\jac_L (h) = \lapl_L h +(\frac 12 \Scal_g+\frac 12 \abs A^2 - K)h = 0 & \text{in $\Omega$}\comma\\
\frac{\partial h}{\partial \eta} = - \II(\nu,\nu)h & \text{on $\partial\amb\cap\Omega$}\point
\end{cases}
\end{equation}
Then, by taking an exhaustion of $L\setminus\set_\infty$ by domains $\Omega\Subset L\setminus\set_\infty$ containing $x_0$, we obtain a function $h\in C^\infty(L\setminus\set_\infty)$ solving \eqref{eq:JacEqSub} in $L\setminus\set_\infty$. Moreover $h(x_0)=1$ and $h\ge 0$.
Finally, thanks to the same proof of Claim 2 in \cite[Section 6]{AmbCarSha18-Compactness}, it holds that $h$ is uniformly bounded and thus extends to a smooth Jacobi function on all $L$, which is positive everywhere thanks to the maximum principle and the Hopf boundary point lemma. 
\end{proof}

In the setting of Euclidean half-spaces, \cref{thm:RemSingLimLam} reads as follows.
\begin{proposition}\label{prop:LimLamInR3}
Let $g_j$ be a sequence of Riemannian metrics in $\R^3$ that converges, locally smoothly, to the Euclidean metric.
Let $\ms_j^2\subset \Xi(a_j) \subset \R^3$ for some $0\ge a_j\ge -\infty$ be a sequence of properly embedded, \emph{edged}, free boundary minimal surfaces in $(\Xi(a_j),g_j)$, such that for every $j\in\mathbb{N}$ we have $\Sigma_j\subset\Delta_j$ for a sequence of compact domains $\Delta_j$ exhausting $\Xi(a)$ for $a=\lim_{j\to\infty} a_j$, each being the intersection of a smooth domain of $\R^3$ with $\Xi(a_j)$.
Assume that the surfaces $\ms_j$ have index bounded by $I\in\N$ and locally converge (in the sense of laminations) to a free boundary minimal lamination $\hat\lam\subset \Xi(a)\setminus\set_\infty$ away from a finite set of points $\set_\infty$.
Then $\hat\lam$ extends smoothly through $\set_\infty$ to a free boundary minimal lamination $\lam\subset\Xi(a)$ and the following dichotomy holds:
\begin{enumerate} [label={\normalfont(\arabic*)}]
\item $\lam$ consists of parallel planes or half-planes; \label{llir:stable}
\item $\lam$ is a complete, non-flat, connected, properly embedded, free boundary minimal surface in $\Xi(a)$ of (positive) index at most $I$ and (a subsequence of) $\ms_j$ converges to $\lam$ locally smoothly (in the sense of graphs) with multiplicity one. \label{llir:multone}
\end{enumerate}
\end{proposition}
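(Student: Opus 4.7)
The plan is to adapt the proof of \cref{thm:RemSingLimLam} to the present non-compact setting, and then leverage the rigidity of complete stable free boundary minimal surfaces in half-spaces to upgrade the per-leaf dichotomy to the global dichotomy claimed in the statement.

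First, I would extend $\hat\lam$ smoothly across $\set_\infty$, producing a lamination $\lam\subset \Xi(a)$. Since the extension is purely local around each $p\in\set_\infty$, I can apply Proposition~D.3 of~\cite{ChoKetMax17} if $p$ lies in the interior of $\Xi(a)$, and \cref{prop:RemovableSingStab} if $p\in\Pi(a)$; both results carry over to the ambient metrics $g_j$ since $g_j\to g_{\rm Eucl}$ locally smoothly, and the exhausting domains $\Delta_j$ eventually contain any prescribed ball. The preparatory reduction is exactly as in Step~1 of the proof of \cref{thm:RemSingLimLam}: one picks $\eps_0>0$ so small that every leaf of $\hat\lam\cap(B_{\eps_0}(p)\setminus\{p\})$ has stable universal cover, using the index bound together with \cref{lem:StableUnivOrMultOne} to see that only finitely many leaves can arise as multiplicity-one smooth limits, each of those admitting a positive Jacobi field via a Fischer-Colbrie-type argument on the punctured ball.

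Second, I would apply \cref{lem:StableUnivOrMultOne} globally to $\lam\subset\Xi(a)$: each leaf $L$ either has stable universal cover, or $\ms_j\to L$ locally smoothly with multiplicity one in the sense of graphs. If \emph{every} leaf of $\lam$ has stable universal cover, then each $L$ is a complete, two-sided (after passing to the orientation double cover if necessary) stable minimal surface in $\Xi(a)$; invoking the classical Bernstein-type theorem in $\R^3$ (Gulliver-Lawson, cf.~also do Carmo-Peng, Fischer-Colbrie-Schoen, Pogorelov) for interior leaves and the doubling argument from \cref{cor:StableLam} for leaves meeting $\Pi(a)$, each leaf is a plane or a half-plane orthogonal to $\Pi(a)$. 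The disjointness of leaves forces them to be parallel, delivering conclusion~\ref{llir:stable}.

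Third, I would treat the complementary case where some leaf $L\in\lam$ is non-flat. By the previous step $L$ cannot have stable universal cover, so the convergence $\ms_j\to L$ is locally smooth with multiplicity one. Consequently $L$ is a smooth, properly embedded, free boundary minimal surface in $\Xi(a)$, and by upper semicontinuity of the index under smooth convergence $\ind(L)\le I$; moreover $\ind(L)>0$ because any complete stable leaf in $\Xi(a)$ would be flat by the Bernstein-type argument. It remains to show $\lam=\{L\}$, i.e.\ that no other leaf coexists with $L$. Any putative extra leaf $L'$ would have stable universal cover, hence be a plane or half-plane parallel to or disjoint from $L$; however the connectedness of each $\ms_j$ combined with the multiplicity-one smooth convergence to $L$ on an open subset, together with the uniform curvature estimates available away from $\set_\infty$ (and thus on all of $\Xi(a)$ after the extension), implies that $\ms_j$ is eventually contained in an arbitrarily small tubular neighborhood of $L$ on every compact subset of $\Xi(a)\setminus\{L'\cap\text{neighborhoods}\}$, so any `bridging' of sheets between $L$ and $L'$ would force the topology of $\ms_j$ to blow up while their index stays bounded, contradicting \cref{thm:TopFromArea} applied in small ambient balls. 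This yields the single-leaf conclusion~\ref{llir:multone}.

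The main obstacle I anticipate is precisely this last point, namely the exclusion of coexisting flat leaves when $L$ is non-flat: the argument requires a careful interplay between the multiplicity-one convergence, the connectedness of the approximating surfaces, and the finite-index structure theory for complete minimal surfaces in half-spaces, in order to upgrade the purely local per-leaf dichotomy of \cref{thm:RemSingLimLam} to a global statement about $\lam$.
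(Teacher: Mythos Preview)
Your first two steps are essentially the paper's argument, though the paper is more economical: it simply invokes \cref{thm:RemSingLimLam} directly (both Step~1 and Step~2 of that proof) rather than redoing the extension and the per-leaf dichotomy by hand. Your re-derivation is fine but unnecessary.

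The genuine gap is in your third step, the exclusion of a flat leaf $L'$ coexisting with a non-flat leaf $L$. Your argument relies on the connectedness of each $\ms_j$, which is not assumed in the statement; and it invokes \cref{thm:TopFromArea}, which requires an \emph{area} bound that is nowhere available here. Even granting connectedness, the ``bridging forces topology blow-up'' heuristic is not a proof: a priori the sheets of $\ms_j$ near $L'$ could be discs joined to the sheets near $L$ by long thin necks without creating any genus or extra boundary components.

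The paper resolves this differently and more cleanly. Once one knows that every leaf of $\lam$ has index at most $I$ (stable leaves trivially, the multiplicity-one leaf by upper semicontinuity), one reflects across $\Pi(a)$ via \cref{lem:ReflectionPrinciple} and uses \cref{cor:MinSurfInR3WithFiniteIndex} to see that the reflected lamination in $\R^3$ still has leaves of finite index, hence of finite total curvature and properly embedded. At that point the global dichotomy is exactly Corollary~B.2 of \cite{ChoKetMax17}, whose engine is the strong half-space theorem: a properly embedded, non-flat, finite-total-curvature minimal surface in $\R^3$ cannot lie in a half-space, so it cannot be disjoint from a plane. This is the missing structural ingredient in your argument; the exclusion of $L'$ is a statement about complete minimal surfaces in Euclidean space, not about the approximating sequence $\ms_j$.
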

\begin{proof}
The first part of the statement follows directly from \cref{thm:RemSingLimLam}, so let us prove properties \ref{llir:stable} and \ref{llir:multone}.
Let us consider a leaf $L\in\lam$. By \cref{thm:RemSingLimLam}, $L$ has stable universal cover or the convergence of $\ms_j$ to $L$ is locally smooth with multiplicity one (in the sense of graphs).
In the first case, $L$ must be a plane by Corollary 22 in \cite{AmbBuzCarSha18}. In the second case, $L$ has index bounded by $I$, otherwise the bound on the index of $\ms_j$ would be violated. In particular, all the leaves of $\lam$ have bounded index, thus the result is just the free boundary analogue of Corollary B.2 in \cite{ChoKetMax17}, from which our result follows using \cref{lem:ReflectionPrinciple} and \cref{cor:MinSurfInR3WithFiniteIndex}, which is needed (in particular) to ensure that the reflected minimal lamination still has finite index.
\end{proof}


\section{Macroscopic behavior} \label{sec:MacroDescr}

In this section we study the behavior at macroscopic scale of a sequence of free boundary minimal surfaces with bounded index. 

\subsection{Smooth blow-up sets}

Similarly to \cite[Section 2.1.1]{ChoKetMax17}, we first need to define a finite set of `bad points', away from which a sequence of free boundary minimal surfaces with bounded index is well-controlled.

\begin{definition} \label{def:BlowUpSets}
Let $(\amb^3,g)$ be a compact three-dimensional Riemannian 
manifold and suppose that $\ms_j^2\subset\amb$ is a sequence 
of properly embedded \emph{edged} free boundary minimal surfaces.
A sequence of finite sets of points $\set_j \subset \ms_j$ is said to be a 
\emph{sequence of smooth blow-up sets} if
\begin{enumerate} [label={\normalfont(\roman*)}]
\item The second fundamental form blows up at points in $\set_j$, i.e. \[\liminf_{j\to\infty}\, \min_{p\in \set_j}\ \abs{\A_{\ms_j}}(p) = \infty\point\]
\item Chosen a sequence of points $p_j \in \set_j$, the 
rescaled surfaces $\bu \ms_j \eqdef \abs{\A_{\ms_j}}(p_j) (\ms_j - p_j)$ 
converge (up to subsequence) locally smoothly with multiplicity one (in the sense of graphs) to a complete, non-flat, properly embedded, free 
boundary minimal surface $\bu \ms_\infty \subset \Xi(a)$, for some $0\ge a \ge -\infty$, satisfying 
\[\abs{\A_{\bu \ms_\infty}}(x) \le \abs{\A_{\bu \ms_\infty}} (0)\] for all $x\in \bu 
\ms_\infty$. \label{bus:BlowUpLimit}
\item Points in $\set_j$ do not appear in the blow-up limit of other points in $\set_j$, i.e. \[\liminf_{j\to\infty}\, \min_{p\neq q\in \set_j}\ \abs{\A_{\ms_j}}(p) d_{g_j}(p,q) = 
\infty\point\] \label{bus:PointsAreFar}
\end{enumerate}
\end{definition}

\subsection{Curvature estimates}

The starting point for our study is a curvature estimate for stable minimal surfaces, here stated in the free boundary setting.
\begin{theorem}[{\cite[Theorem 1.2]{GuaLiZho17}}]\label{thm:CurvEstStable}
Let $(\amb^3, g)$ be a compact Riemannian manifold with boundary. Then there 
exists a constant $C =C(\amb,g)$ such that, if $\ms^2\subset \amb$ is a 
compact, properly embedded, stable, \emph{edged}, free boundary minimal surface, then
\[
\sup_{x\in \ms}\ \abs{\A_\ms}(x) \min\{ 1, d_g(x, \partial \ms \setminus \partial 
\amb) \} \le C \point
\]
\end{theorem}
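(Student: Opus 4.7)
The plan is to argue by contradiction via a blow-up in the spirit of Schoen's classical curvature estimate, distinguishing an \emph{interior} from a \emph{boundary} blow-up scenario. Assuming the estimate fails, one extracts a sequence of compact, properly embedded, stable, edged, free boundary minimal surfaces $\ms_j\subset\amb$ and points $x_j\in \ms_j$ with $\abs{\A_{\ms_j}}(x_j)\min\{1, d_g(x_j,\partial\ms_j\setminus\partial\amb)\}\to\infty$. A standard point-picking lemma (in the spirit of Ecker-Huisken) then produces replacement points $y_j\in \ms_j$ at which $\lambda_j\eqdef \abs{\A_{\ms_j}}(y_j)\to\infty$, which remain far (in units of $\lambda_j^{-1}$) from the ``inner edge'' $\partial\ms_j\setminus\partial\amb$, and around which $\abs{\A_{\ms_j}}\le 2\lambda_j$ on intrinsic balls of radius $r_j\lambda_j^{-1}$ with $r_j\to\infty$.

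I would then split the analysis according to the asymptotic value of $\lambda_j d_g(y_j,\partial\amb)$. If this quantity diverges, working in $g$-normal coordinates at $y_j$ and rescaling by $\lambda_j$ yields, subsequentially, a complete, two-sided (after passing to the orientation double cover if necessary), stable minimal surface $\ms_\infty\subset\R^3$ with $\abs{\A_{\ms_\infty}}(0)=1$ and bounded second fundamental form, which must be a plane by the classical Bernstein-type theorem of Fischer-Colbrie--Schoen, a contradiction. If instead $\lambda_j d_g(y_j,\partial\amb)\to c\in\ClCl{0}{\infty}$, I would work in Fermi coordinates based at a nearest boundary point $\hat y_j\in\partial\amb$ and rescale by $\lambda_j$: since $\II^{\partial\amb}$ is uniformly bounded on $\partial\amb$, its contribution to the rescaled second variation is of order $\lambda_j^{-1}$ and disappears in the limit, so the process produces a complete, properly embedded, free boundary minimal surface $\ms_\infty\subset\Xi(-c)$ with $\abs{\A_{\ms_\infty}}(0)=1$ and bounded curvature, stable with respect to the flat half-space geometry.

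To rule out this second scenario I would invoke the reflection principle available in \crefnoname{sec:ReflPrinc}: doubling $\ms_\infty$ across $\Pi(-c)$ produces a complete, two-sided, smooth minimal surface $\check\ms_\infty\subset\R^3$ with bounded second fundamental form, and stability transfers to $\check\ms_\infty$ because any compactly supported test function on $\check\ms_\infty$ decomposes into its even and odd parts across $\Pi(-c)$, each of which is controlled by the limit stability inequality on $\ms_\infty$. Applying once more the Bernstein-type classification --- in this setting also available via Gulliver-Lawson \cite{GulLaw86} --- forces $\check\ms_\infty$ to be a plane, which again contradicts $\abs{\A_{\check\ms_\infty}}(0)=1$.

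The chief obstacle is the boundary case, and more precisely securing three points: (i) that the rescaled surfaces converge smoothly up to $\Pi(-c)$ rather than merely in the interior, the crucial input being that the ``inner edges'' $\partial\ms_j\setminus\partial\amb$ are at infinite rescaled distance from $y_j$ and hence cannot pollute the limit; (ii) that $\check\ms_\infty$ is smooth across $\Pi(-c)$, which is the content of the reflection principle and rests on the free boundary condition combined with standard boundary regularity for stable free boundary minimal surfaces; and (iii) that stability genuinely survives doubling, the precise mechanism being the vanishing of $\II^{\partial\amb}$ at the blow-up scale together with the even/odd decomposition above. Once these boundary-specific issues are addressed, the remainder of the argument runs in parallel with the closed case.
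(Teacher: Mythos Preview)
The paper does not supply its own proof of this statement: it is quoted verbatim as \cite[Theorem 1.2]{GuaLiZho17}, with the observation (in the remark immediately following) that the edged case is covered by Remark 1.3 of that reference. Your sketch is the standard blow-up argument, and it is the approach taken in \cite{GuaLiZho17}; the boundary ingredients you isolate --- Fermi coordinates, the $O(\lambda_j^{-1})$ scaling of the $\II^{\partial\amb}$ term, and the reflection-plus-stability-doubling (which in this paper is invoked via \cite[Section 2]{AmbBuzCarSha18} and \cref{cor:StableLam}) --- are exactly what is needed, so there is nothing to correct.
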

\begin{remark}
Actually, in \cite{GuaLiZho17} the theorem is stated only for \emph{non-edged} free 
boundary minimal surfaces, but it is observed in Remark 1.3 therein
that the conclusion still holds in such more general setting.
\end{remark}

We will need the following extension of the previous result.

\begin{lemma} \label{lem:CurvEstBoundedInd}
Let $(\amb^3,g)$ be a compact Riemannian manifold with boundary and fix $I\in 
\N$. Suppose that $\ms_j^2\subset \amb$ is a sequence of compact, properly 
embedded, \emph{edged}, free boundary minimal surfaces with $\ind(\ms_j) \le I$. Then, up 
to subsequence, there exist a constant $C=C(\amb,g)$ and a sequence of smooth 
blow-up sets $\set_j\subset \ms_j$ with $\abs {\set_j} \le I$ and
\[
\sup_{x\in \ms_j} \ \abs{\A_{\ms_j}}(x) \min\{ 1, d_g(x, \set_j \cup (\partial \ms_j 
\setminus \partial 
\amb)) \} \le C \point
\]
\end{lemma}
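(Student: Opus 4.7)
The plan is to mirror the iterative point-picking strategy used in the closed case by Chodosh--Ketover--Maximo \cite[Lemma 2.4]{ChoKetMax17}, taking \cref{thm:CurvEstStable} as the base case. Let $C_0 = C_0(\amb, g)$ denote the constant produced by that theorem. I would argue inductively: starting from $\set_j^{(0)} \eqdef \emptyset$, at each stage I either verify that the asserted estimate holds with constant $C_0$ (and take $\set_j \eqdef \set_j^{(k)}$), or else enlarge $\set_j^{(k)}$ to $\set_j^{(k+1)}$ by adding one blow-up point. The process must terminate in at most $I$ iterations because each new blow-up produces an independent non-flat bubble contributing at least one negative eigenvalue to the Jacobi form of $\ms_j$, so $I+1$ bubbles would violate $\ind(\ms_j)\le I$.

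For the inductive step, assume a set $\set_j^{(k)}$ meeting the requirements of \cref{def:BlowUpSets} has already been constructed and that, along a subsequence,
\[
\sup_{x\in\ms_j}\ \abs{\A_{\ms_j}}(x)\min\{1, d_g(x, \set_j^{(k)}\cup(\partial\ms_j\setminus\partial\amb))\} \to \infty.
\]
A standard Ekeland-type point-picking argument produces $p_j^{(k+1)}\in\ms_j$ at which $\abs{\A_{\ms_j}}(p_j^{(k+1)})\to\infty$ and such that, setting $r_j \eqdef \tfrac14\, d_g(p_j^{(k+1)}, \set_j^{(k)}\cup(\partial\ms_j\setminus\partial\amb))$, the product $r_j\,\abs{\A_{\ms_j}}(p_j^{(k+1)})$ diverges while $\abs{\A_{\ms_j}}$ remains comparable to $\abs{\A_{\ms_j}}(p_j^{(k+1)})$ on the intrinsic geodesic ball of radius $r_j$ around the new point. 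In particular, property \ref{bus:PointsAreFar} of \cref{def:BlowUpSets} is preserved for $\set_j^{(k+1)} \eqdef \set_j^{(k)}\cup\{p_j^{(k+1)}\}$. Rescaling by $\abs{\A_{\ms_j}}(p_j^{(k+1)})$ and re-centering at $p_j^{(k+1)}$ makes the ambient metric converge locally smoothly to the Euclidean one, and the rescaled surfaces $\bu\ms_j$ meet the hypotheses of \cref{prop:LimLamInR3} inside a half-space $\Xi(a_j)\subset\R^3$, where $a_j\to a\in[-\infty,0]$ measures the rescaled distance from $p_j^{(k+1)}$ to $\partial\amb$. Alternative \ref{llir:stable} of that proposition would force the limit to be flat, but the choice of $p_j^{(k+1)}$ yields $\abs{\A}(0)=1$ on the limit; hence alternative \ref{llir:multone} must hold, producing a complete, non-flat, properly embedded free boundary minimal surface $\bu\ms_\infty$ of positive index, which verifies property \ref{bus:BlowUpLimit}.

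To close the induction, each new bubble carries at least one negative eigenvalue of its own Jacobi operator. Localizing the corresponding eigensections after rescaling and pulling them back to $\ms_j$, one obtains, for $j$ large, $k+1$ sections of $N\ms_j$ supported in pairwise disjoint neighborhoods of the points $p_j^{(1)},\ldots,p_j^{(k+1)}$ on each of which the second variation quadratic form $Q^{\ms_j}$ is strictly negative; the disjointness of the supports is guaranteed by property \ref{bus:PointsAreFar}, which ensures that the bubble scales $\abs{\A_{\ms_j}}(p_j^{(l)})^{-1}$ are negligible compared to the mutual distances. Hence $\ind(\ms_j)\ge k+1$, and the iteration necessarily stops before $k+1=I+1$, yielding the desired $\set_j$ with $\abs{\set_j}\le I$ and the estimate with constant $C = C_0$. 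The main technical obstacle I anticipate is the careful handling of the boundary in the point-picking and blow-up: the new point $p_j^{(k+1)}$ may approach $\partial\amb$ at the bubble scale (giving a genuine Euclidean half-space limit $\Xi(a)$ with $a$ finite) or escape it (giving $a=-\infty$), while the \emph{edged} boundary $\partial\ms_j\setminus\partial\amb$ must be shown to recede to infinity at the bubble scale in order to justify invoking \cref{prop:LimLamInR3}, a conclusion that is secured precisely by including $d_g(\cdot, \partial\ms_j\setminus\partial\amb)$ inside the point-picking functional.
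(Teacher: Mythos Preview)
Your approach is correct and is the standard alternative to the paper's route. The paper argues by induction on $I$: it first locates a \emph{single} blow-up point $p_j$, removes the tiny ball $B_{R_0/\lambda_j}(p_j)$ (thereby creating new edge boundary), and applies the inductive hypothesis to the complement $\ms_j'=\ms_j\setminus B_{R_0/\lambda_j}(p_j)$, which has index at most $I-1$; the final set is $\set_j=\set_j'\cup\{p_j\}$. This is precisely why the lemma is stated for \emph{edged} surfaces (cf.\ the remark following the statement): the surgery produces new edge, and the inductive step must tolerate it. Your iterative scheme, by contrast, never modifies $\ms_j$ and terminates by exhibiting $k+1$ disjointly supported negative directions for $Q^{\ms_j}$; this is closer in spirit to Sharp's compactness argument, and it makes the role of the edged hypothesis purely cosmetic (it is only there to absorb $\partial\ms_j\setminus\partial\amb$ into the distance functional).

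One point you should not leave implicit: property~\ref{bus:PointsAreFar} of \cref{def:BlowUpSets} is symmetric in the pair $(p,q)$, and your point-picking only gives $\abs{\A_{\ms_j}}(p_j^{(k+1)})\,d_g(p_j^{(k+1)},q)\to\infty$ for old $q\in\set_j^{(k)}$. The reverse inequality $\abs{\A_{\ms_j}}(q)\,d_g(q,p_j^{(k+1)})\to\infty$ requires a short extra argument: if it failed along a subsequence, $p_j^{(k+1)}$ would remain in a bounded region of the old bubble at $q$, where the rescaled curvature is uniformly bounded, forcing $\abs{\A_{\ms_j}}(p_j^{(k+1)})\lesssim\abs{\A_{\ms_j}}(q)$ and hence bounding the product you already showed diverges. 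The paper handles exactly this asymmetry via the artificial edge of $\ms_j'$ (the old points' bubbles are complete, so the new edge near $p_j$ recedes at their scale), and then compares curvatures using the decay estimate~\eqref{eq:SmallSecFundSigmaInfty} on the new bubble.
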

\begin{remark}
Note that the reason to state the lemma with \emph{edged} free boundary minimal surfaces is to perform the inductive procedure in the proof.
\end{remark}
\begin{proof}
We proceed by induction on $I\in \N$. If $I=0$ the statement is exactly 
\crefnoname{thm:CurvEstStable}, thus assume $I>0$.
Note that we can suppose that \[\rho_j \eqdef \max_{x\in\ms_j}\ \abs{\A_{\ms_j}}(x) \min\{ 1, d_g(x, \partial 
\ms_j \setminus \partial \amb) \} \to \infty\comma \] for otherwise one can take $\set_j=\emptyset$ and there is nothing left to do.
Take $q_j \in \ms_j$, a point where $\rho_j$ is attained, and define $R_j 
\eqdef d_g(q_j, \partial 
\ms_j \setminus \partial \amb)/2$.

\begin{figure}[htpb]
\centering
\begin{tikzpicture}
\pgfmathsetmacro{\R}{3}
\pgfmathsetmacro{\r}{\R*(1-sqrt(37/100))}
\pgfmathsetmacro{\k}{1.2}
\coordinate(Q)at(0,0);
\coordinate(P)at(-0.6*\R,0.1*\R);

\coordinate(C1)at(-1.3*\R*\k,0*\k);
\coordinate(C2)at(-1.1*\R,0.1*\R);
\coordinate(C3)at(-0.7*\R,-0.2*\R);
\coordinate(C4)at(-0.1*\R,-0.05*\R);
\coordinate(C5)at(0.3*\R,-0.25*\R);
\coordinate(C6)at(0.8*\R,0.2*\R);
\coordinate(C7)at(1.2*\R*\k, 0.1*\R*\k);

\coordinate(D1)at(1.25*\R*\k,0.6*\R*\k);
\coordinate(D2)at(1.1*\R*\k,1.05*\R*\k);
\coordinate(D3)at(0.8*\R*\k,1.3*\R*\k);
\coordinate(D4)at(0.1*\R*\k,1.2*\R*\k);
\coordinate(D5)at(-0.3*\R*\k,1.4*\R*\k);
\coordinate(D6)at(-0.7*\R*\k,1.1*\R*\k);
\coordinate(D7)at(-1.2*\R*\k, 0.8*\R*\k);

\pgfmathsetmacro{\ta}{12}
\pgfmathsetmacro{\tb}{179}
\draw[myGray] ({\R*cos(\ta)}, {\R*sin(\ta)}) arc (\ta:\tb:\R);
\draw[dashed, myGray] ({\R*cos(\tb)}, {\R*sin(\tb)}) arc (\tb:360+\ta:\R);

\pgfmathsetmacro{\sa}{8}
\pgfmathsetmacro{\sb}{160}
\draw[myGray] ({-0.6*\R+\r*cos(\sa)}, {0.1*\R+\r*sin(\sa)}) arc (\sa:\sb:\r);
\draw[dashed, myGray] ({-0.6*\R+\r*cos(\sb)}, {0.1*\R+\r*sin(\sb)}) arc (\sb:360+\sa:\r);

\fill (Q) circle[radius=0.1em];
\node[right=3pt] at (Q) {$q_j$};
\fill (P) circle[radius=0.1em];
\node[left] at (P) {$p_j$};

\draw[black!80] (Q)--({cos(60)*\R},{sin(60)*\R}) node [midway,right] {$R_j$};
\draw[black!80] (P)--({-0.6*\R+cos(80)*\r},{0.1*\R+sin(80)*\r}) node [midway,right] {$r_j$};

\draw[thick] plot [smooth, tension=0.7] coordinates {(C1) (C2) (C3) (C4) (C5) (C6) (C7)};
\draw[black!80] plot [smooth, tension=0.7] coordinates {(C7) (D1) (D2) (D3) (D4) (D5) (D6) (D7) (C1)};

\node[below=2pt] at (C5) {$\partial\ms_j\cap\partial \amb$};
\node[above left=-1pt] at (D6) {$\partial\ms_j\setminus\partial \amb$};

\node[below right=20pt] at (D4) {$\ms_j$};
\end{tikzpicture}
\caption{Point picking argument.} \label{fig:PickPt}
\end{figure}
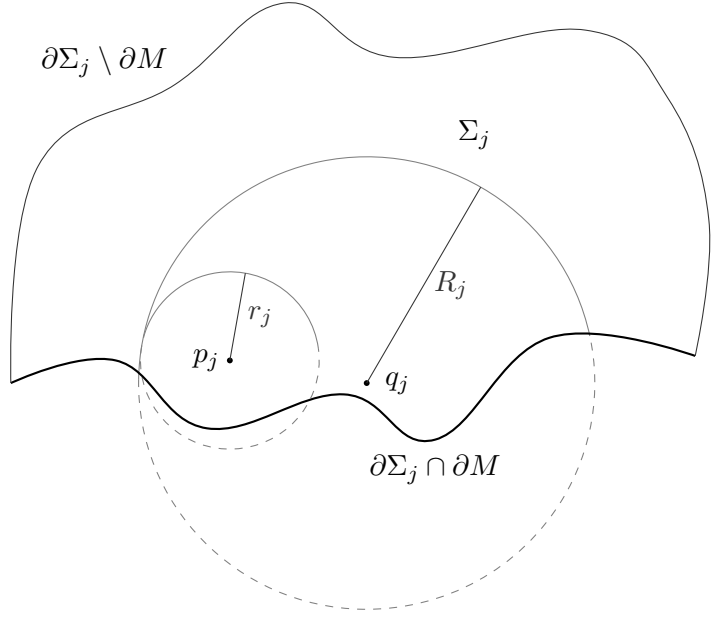

Now consider $p_j\in \ms_j \cap B_{R_j}(q_j)$ which 
realizes
\[
\max_{x\in \ms_j \cap B_{R_j}(q_j)}\, \abs{\A_{\ms_j}}(x) d_{g}(x, \partial B_{R_j}(q_j)\setminus \partial \amb)
\]
and define $r_j \eqdef d_{g}(p_j,  \partial B_{R_j}(q_j) \setminus\partial \amb)$ and 
$\lambda_j \eqdef \abs{\A_{\ms_j}}(p_j)$.
Note that 
\[
\begin{split}
\lambda_jr_j &= \abs{\A_{\ms_j}}(p_j) d_{g}(p_j,  \partial B_{R_j}(q_j)\setminus\partial\amb) \ge  \abs{\A_{\ms_j}}(q_j) d_{g}(q_j,  \partial B_{R_j}(q_j)\setminus \partial\amb) \\ &= \abs{\A_{\ms_j}}(q_j) R_j = \frac 12 \abs{\A_{\ms_j}}(q_j)d_g(q_j, 
\partial 
\ms_j \setminus \partial \amb) \ge \frac{\rho_j}2 \to\infty \point
\end{split}
\]
Thus in particular $\abs{\A_{\ms_j}}(p_j) \min\{ 1, d_g(p_j, \partial 
\ms_j \setminus \partial \amb) \}\to \infty$, where we are using that $d_g(p_j, 
\partial 
\ms_j \setminus \partial \amb)\ge r_j$, since there are no points of $\partial 
\ms_j\setminus \partial \amb$ in $B_{R_j}(q_j)$.

Now, we have that $d_{g}(x, \partial B_{r_j}(p_j) \setminus\partial\amb) \le 
d_{g}(x, \partial B_{R_j}(q_j)\setminus\partial\amb)$ for every $x\in B_{r_j}(p_j)$ with equality in $p_j$. Therefore $p_j$ also realizes 
\[
\max_{x\in \ms_j \cap B_{r_j}(p_j)}\, \abs{\A_{\ms_j}}(x) d_{
g}(x, \partial B_{r_j}(p_j)\setminus\partial\amb) \point
\]

Hence, we can now perform a blow-up argument around the points 
$p_j$. In particular, let us define 
$\bu \ms_j \eqdef \lambda_j(\ms_j - p_j)$, a surface in the manifold $\amb_j \eqdef 
\lambda_j (\amb - p_j)$ endowed with the rescaled\footnote{One can think of $M$ as isometrically embedded in some $\R^K$ and consider the blow-ups in this Euclidean space. Thus, in particular, we have that $g_j(x)=\lambda_j g(x+p_j)$.} metric $g_j$. Then we have that
\[
\abs{\A_{\bu \ms_j}}(x)\, d_{
g_j}(x, \partial B_{\lambda_jr_j}(0)\setminus \partial M_j) \le \lambda_jr_j \to\infty
\] 
for all $x \in \bu \ms_j \cap B_{\lambda_jr_j}(0)$. 
Note that here $B_{\lambda_jr_j}(0)$ is the ball in $\amb_j$ with respect to the metric $g_j$. We do not write explicitly the dependence on $j$ since it is always clear from the context.

Hence, fixing $R>0$, for all $x\in \bu \ms_j \cap B_{R}(0)$ it 
holds that
\begin{equation} \label{eq:BehaviourSecFundBlowUp}
\abs{\A_{\bu \ms_j}}(x) \le \frac{\lambda_jr_j}{\lambda_jr_j - R} \nearrow 1
\end{equation}
as $j \to \infty$. Moreover, observe that $\abs{\A_{\bu \ms_j}}(0) = 1$ and the domains $B_{\lambda_jr_j}(0) \subset \amb_j$ do not contain points of $\partial\bu\ms_j\setminus \partial\amb_j$, since $\ms_j$ has no points of 
$\partial \ms_j\setminus \partial \amb$ in $B_{r_j}(p_j)$.
By \crefnoname{thm:LamCptness} together with \cref{prop:LimLamInR3} (we are in case \ref{llir:multone} since the limit lamination cannot be flat), this implies that the surfaces $\bu\ms_j$ converge locally smoothly (in the sense of graphs) with multiplicity one to a properly embedded free boundary minimal surface $\bu\ms_\infty\subset \Xi(a)$ (for some $-\infty\leq a\leq 0$). Furthermore, the index of $\bu\ms_\infty$ is strictly positive and less or equal than $I$.

Hence, with a standard argument as in \cite[Proposition 1]{Fis85}, there exists $R_0>0$ such that $\bu \ms_\infty \cap B_{R_0}(0)$ has 
index greater than $0$ and $\bu \ms_\infty\setminus B_{R_0}(0)$ is stable. 
Moreover we can assume, without loss of generality, that $\bu \ms_\infty$ intersects $\partial B_{R_0}(0)$ 
transversely and, thanks to Proposition 28 in \cite{AmbBuzCarSha18}, also that 
\begin{equation}\label{eq:SmallSecFundSigmaInfty}
\abs{\A_{\bu \ms_\infty}}(x) \le 1/4
\end{equation}
on $\bu \ms_\infty\setminus B_{R_0}(0)$.

Now consider $\ms_j' \eqdef \ms_j\setminus B_{R_0/\lambda_j}(p_j)$. Choosing 
$j$ sufficiently large, we can assume that $\partial B_{R_0/\lambda_j}(p_j)$ intersects $\ms_j$ transversely and that the ball $B_{R_0/\lambda_j}(p_j)$ does not intersect the portion of the boundary of $\ms_j$ that is not contained in 
$\partial \amb$. Indeed we know that $\lambda_j d_{g}(p_j, \partial 
\ms_j\setminus \partial\amb)\to\infty$.
Therefore $\ms_j'$ is a sequence of manifolds which still fulfills the 
assumptions of the lemma, but with $\ind(\ms_j')\le I-1$ for $j$ 
sufficiently large.
Hence, by the inductive hypothesis, up to subsequence there exist a constant 
$C'>0$ and a sequence of smooth blow-up sets $\set_j'\subset \ms_j'$ 
with $\abs{\set_j'} \le I-1$ and
\begin{equation} \label{eq:EstimateInductive}
 \abs{\A_{\ms_j'}} (x) \min\{ 1, d_{ g}(x,  \set_j'\cup (\partial 
\ms_j'\setminus\partial\amb)) \} \le C' \point
\end{equation}

We now want to show that $\set_j\eqdef  \set_j'\cup \{ p_j \}$ is the desired 
sequence of blow-up sets. The only non-obvious point to check is 
point \ref{bus:PointsAreFar} of \cref{def:BlowUpSets}, for which it suffices to verify that
\[
\lim_{j\to\infty}\, \min_{q\in \set_j'}\ \abs{\A_{\ms_j}}(p_j) d_{g}(p_j,q) = 
\lim_{j\to\infty}\, \min_{q\in \set_j'}\ \abs{\A_{\ms_j}}(q) d_{g}(p_j,q) = 
\infty \point
\]
We first observe that indeed
\[
\liminf_{j\to\infty}\, \min_{q\in \set_j'}\ \abs{\A_{\ms_j}}(q) d_{g}(p_j,q)\geq  \liminf_{j\to\infty}\, \min_{q\in \set_j'}\ \abs{\A_{\ms_j}}(q) d_{g}(\partial \ms'_j\setminus\partial M,q)=
\infty 
\]
based on \ref{bus:BlowUpLimit} for $q\in\set'_j$ (the associated limit surface is \emph{not edged}). However, thanks to \eqref{eq:BehaviourSecFundBlowUp} and \eqref{eq:SmallSecFundSigmaInfty}, we derive
 $\abs{\A_{\ms_j}}(q) \le 
\abs{\A_{\ms_j}}(p_j)/2$ for all $q\in \set_j'$ provided one takes $j$ sufficiently large; hence $\min_{q\in \set_j'}\ \abs{\A_{\ms_j}}(p_j) d_{g}(p_j,q)$ could not be uniformly bounded either.

Therefore, it remains to check that there exists $C>0$ such that
\[
\abs{\A_{\ms_j}} (x) \min\{ 1, d_{g}(x, \set_j\cup (\partial \ms_j\setminus \partial\amb)) \} \le 
C
\]
for all $x\in \ms_j$.
This inequality easily holds on $B_{R_0/\lambda_j}(p_j)$, thus it is sufficient 
to check it for points $x\in \ms_j'$.
Assume by contradiction that there exists a sequence of points $z_j\in \ms_j'$ such that
\begin{equation} \label{eq:ContradictionAssumption}
\limsup_{j\to\infty}\ \abs{\A_{\ms_j}}(z_j) \min\{ 1, d_{g}(z_j, \set_j\cup 
(\partial \ms_j\setminus \partial \amb)) \} = \infty\point
\end{equation}
First observe that $\liminf_{j\to\infty}\lambda_jd_g(z_j,p_j) = \infty$, because otherwise $\lambda_j(z_j-p_j)$ would converge to some point $\bar z\in \bu\ms_\infty$ and we would obtain
\[
\begin{split}
\limsup_{j\to\infty}\ &\abs{\A_{\ms_j}}(z_j) \min\{ 1, d_{g}(z_j, \set_j\cup 
(\partial \ms_j\setminus \partial \amb)) \} \le \limsup_{j\to\infty}\ \abs{\A_{\ms_j}}(z_j) d_{g}(z_j,p_j) \\
& = \limsup_{j\to\infty}\ \abs{\A_{\bu\ms_j}}(\lambda_j(z_j-p_j))  d_{g_j}(\lambda_j(z_j-p_j),0) = \abs{\A_{\bu\ms_\infty}}(\bar z) d_{{\R^3}}(\bar z,0) < \infty \point
\end{split}
\]
Moreover, since both \eqref{eq:EstimateInductive} and \eqref{eq:ContradictionAssumption} hold and $[\set_j'\cup(\partial\ms_j'\setminus\partial\amb)]\setminus [\set_j\cup(\partial\ms_j\setminus\partial\amb)] = \partial\ms_j'\setminus\partial\ms_j$, we have that
\[
d_g(z_j,\set_j'\cup(\partial\ms_j'\setminus\partial\amb)) = d_g(z_j,\partial\ms_j'\setminus\partial\ms_j) = d_g(z_j,p_j) - \frac{R_0}{\lambda_j} \point
\]
Thus, we can conclude that
\[
\begin{split}
\limsup_{j\to\infty}\ &\abs{\A_{\ms_j}}(z_j) \min\{ 1, d_{g}(z_j, \set_j\cup 
(\partial \ms_j\setminus \partial \amb)) \} \le\limsup_{j\to\infty}\ \abs{\A_{\ms_j}}(z_j) d_{g}(z_j,p_j) \\
&\le\limsup_{j\to\infty}\ \frac{C'}{d_g(z_j,\set_j'\cup(\partial\ms_j'\setminus\partial\amb))}d_{g}(z_j,p_j) = \limsup_{j\to\infty}\ \frac{C'}{d_{g}(z_j,p_j) - R_0/\lambda_j}d_{g}(z_j,p_j) \\
& = \limsup_{j\to\infty}\ \frac{C'}{1 - R_0/(\lambda_jd_{g}(z_j,p_j) )} = C'\comma
\end{split}
\]
which is a contradiction and completes the proof.
\end{proof}

Given the previous lemma and the tools to handle free boundary minimal laminations presented in \crefnoname{sec:FBMLam}, we can conclude the description of the limit picture at macroscopic scale.
\begin{corollary} \label{cor:ExistenceBlowUpSetAndCurvatureEstimate}
Let $(\amb^3,g)$ be a compact Riemannian manifold with boundary and fix $I\in 
\N$. Suppose that $\ms_j^2\subset \amb$ is a sequence of compact, properly 
embedded, free boundary minimal surfaces with $\ind(\ms_j) \le I$. Then, up 
to subsequence, there exist a constant $C=C(\amb,g)$ and a sequence of smooth 
blow-up sets $\set_j\subset \ms_j$ with $\abs {\set_j} \le I$ and
\begin{equation}\label{eq:CurvEstSj}
\sup_{x\in \ms_j} \ \abs{\A_{\ms_j}}(x) \min\{ 1, d_g(x, \set_j ) \} \le C \point
\end{equation}
Moreover, the sets $\set_j$ converge to a set of points $\set_\infty$ and the surfaces $\ms_j$ converge locally smoothly away from $\set_\infty$ to some smooth free boundary minimal lamination $ \lam$ of $\amb$. Furthermore, if the ambient manifold satisfies \hypP{} then $\partial L = L \cap \partial\amb$ for all $L\in\lam$. 
\end{corollary}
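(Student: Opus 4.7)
The plan is to deduce this corollary as an essentially immediate consequence of the preceding machinery: \cref{lem:CurvEstBoundedInd} for the curvature estimate, \cref{thm:LamCptness} for the lamination convergence, and \cref{thm:RemSingLimLam} to deal with the assertion about $(\mathfrak{P})$. The main content is really in verifying that the pieces fit together once properly assembled.

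First, I would observe that compact, properly embedded, free boundary minimal surfaces are a special (non-edged) case of edged free boundary minimal surfaces, namely the case $\partial\ms_j\setminus\partial\amb=\emptyset$. Therefore \cref{lem:CurvEstBoundedInd} applies verbatim and produces, up to subsequence, a constant $C=C(\amb,g)$ and a sequence of smooth blow-up sets $\set_j\subset\ms_j$ with $\abs{\set_j}\le I$ such that
\[
\sup_{x\in\ms_j}\abs{\A_{\ms_j}}(x)\min\{1,d_g(x,\set_j)\}\le C\comma
\]
since the term $\partial\ms_j\setminus\partial\amb$ is empty and thus drops out. This gives \eqref{eq:CurvEstSj}.

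Next I would extract the limit set $\set_\infty$. Since $\abs{\set_j}\le I$ uniformly and $\amb$ is compact, a simple Bolzano--Weierstrass/diagonal argument produces a subsequence along which the finite sets $\set_j$ converge, in the Hausdorff sense, to a finite set $\set_\infty\subset\amb$ with $\abs{\set_\infty}\le I$. For every compact $K\Subset\amb\setminus\set_\infty$ the distance $d_g(\,\cdot\,,\set_j)$ is bounded below by a positive constant on $K$ for $j$ large, so \eqref{eq:CurvEstSj} yields uniform curvature bounds for $\ms_j\cap K$. Covering $\amb\setminus\set_\infty$ by such $K$'s and applying the lamination compactness result \cref{thm:LamCptness} (with a standard diagonal extraction), we obtain a further subsequence along which $\ms_j$ converges locally smoothly, away from $\set_\infty$, to a free boundary minimal lamination $\hat\lam\subset\amb\setminus\set_\infty$.

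To upgrade this to a lamination $\lam$ of all of $\amb$, I would apply \cref{thm:RemSingLimLam}: the bounded index hypothesis is precisely what is needed to conclude that the singular set $\set_\infty$ is removable, so $\hat\lam$ extends to a smooth free boundary minimal lamination $\lam\subset\amb$. Finally, under hypothesis $(\mathfrak{P})$, each leaf $L\in\lam$ is a smooth, connected, complete, embedded surface of zero mean curvature that meets $\partial\amb$ orthogonally along $\partial L$ (this is exactly what being a leaf of a free boundary minimal lamination means in the sense of \cref{def:fbmlam}); property $(\mathfrak{P})$ then directly yields $\partial L=L\cap\partial\amb$, completing the proof. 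The only subtlety to keep in mind is that case \ref{fbml:badbdry} of \cref{def:fbmlam} (leaves touching $\partial\amb$ in their interior) is ruled out precisely by $(\mathfrak{P})$, so no leaf can fail the required properness.
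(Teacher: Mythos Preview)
Your proof is correct and follows essentially the same approach as the paper: apply \cref{lem:CurvEstBoundedInd} (noting that the non-edged case is the special case $\partial\ms_j\setminus\partial\amb=\emptyset$), extract $\set_\infty$ by compactness, invoke \cref{thm:LamCptness} for the lamination limit away from $\set_\infty$, and then \cref{thm:RemSingLimLam} to remove the singularities, with the final claim following directly from property $(\mathfrak{P})$. You have simply fleshed out the details that the paper leaves implicit.
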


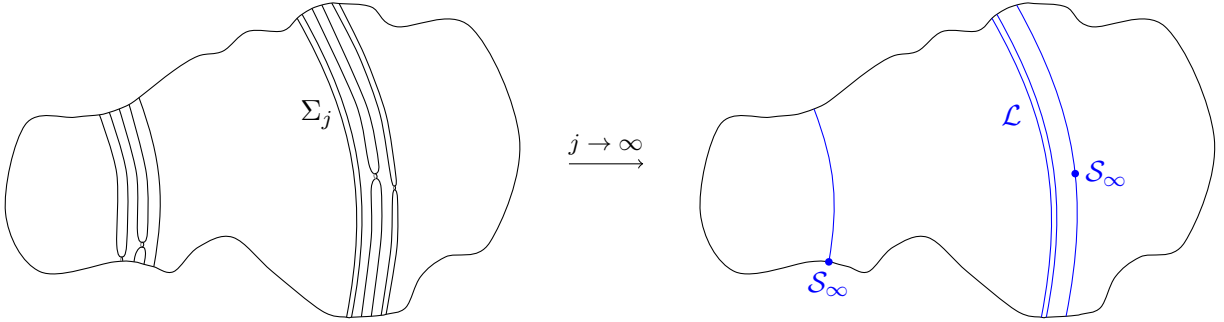
\begin{figure}[htpb]
\centering
\begin{tikzpicture}[scale=0.65]
\coordinate (A1) at (0,0); 
\coordinate (B1) at (0.2,0.05); 
\coordinate (C1) at (0.4,0.12);
\coordinate (D1) at (0.6,0.21);
\coordinate (E1) at (0.8,0.35);

\coordinate (F1) at (4,1.97);
\coordinate (G1) at (4.1,2.05);
\coordinate (H1) at (4.3,2.17);
\coordinate (I1) at (4.5,2.24);
\coordinate (J1) at (4.7,2.28);
\coordinate (K1) at (4.85,2.27);

\coordinate (F2) at (5,-4.13);
\coordinate (G2) at (5.1,-4.13);
\coordinate (H2) at (5.3,-4.12);
\coordinate (I2) at (5.5,-4.11);
\coordinate (J2) at (5.7,-4.09);
\coordinate (K2) at (5.8,-4.07);

\coordinate (C2) at (0.7,-3);
\coordinate (D2) at (0.9,-3.05);
\coordinate (E2) at (1.1,-3.1);

\coordinate (X) at (-1.5,-0.2);
\coordinate (Y) at (-1.2, -3.2);
\coordinate (U) at (7.5, 1.8);
\coordinate (V) at (7.9, -2.6);

\draw plot [smooth cycle, tension=0.6] coordinates {(A1) (B1) (C1) (D1) (E1)
			      (1.5,0.9) (2,1.2) (2.5,1.3) (3,1.7) (3.7,1.71)
			      (F1) (G1) (H1) (I1) (J1) (K1)
			      (5.5,2.2) (6,1.8)
			      (U) (8.5,-0.5) (V)
			      (7,-2.9) (6.5, -3.3) (6.2, -3.93) 
			      (K2) (J2) (I2) (H2) (G2) (F2)
			      (4.5,-4) (3.5,-3.2) (2.7, -2.5) (2,-2.7) (1.5, -3.2) 
			      (E2) (D2) (C2)
			      (0.3, -3)
			      (Y) (-1.9,-2) (X) (-0.5,-0.05)};
			      
\draw (F1) to [bend left=20] (F2);
\draw (G1) to [bend left=20] (G2);

\draw plot [smooth, tension = 0.5] coordinates {(H1) (5.2,0.2) (5.45,-1) (5.57, -1.2) (5.65, -1) (5.4,0.2) (I1)};
\draw plot [smooth, tension = 0.5] coordinates {(H2) (5.5,-2.5) (5.5,-1.5) (5.6, -1.3) (5.7, -1.5) (5.7,-2.5) (I2)};
\pgfmathsetmacro{\t}{atan(0.03/0.1)}
\begin{scope}[rotate around={\t:(5.57,-1.2)}]
\draw[gray] (5.57, -1.2) arc (90:270:0.03 and {sqrt(0.03*0.03+0.1*0.1)/2});
\draw (5.57, -1.2) arc (90:-90:0.03 and {sqrt(0.03*0.03+0.1*0.1)/2});
\end{scope}

\draw plot [smooth, tension = 0.5] coordinates {(J1) (5.25,1.2) (5.68,0) (5.9,-1.29) (5.97, -1.45) (6, -1.27) (5.8,0) (5.37,1.2) (K1)};
\draw plot [smooth, tension = 0.5] coordinates {(J2)  (5.9,-2.7) (5.93, -1.7) (5.97, -1.53) (6.03,-1.7) (6.02, -2.7) (K2)};
\draw[gray] (5.97, -1.45) arc (90:270:0.02 and {0.04});
\draw (5.97, -1.45) arc (90:-90:0.02 and {0.04});

\draw[gray] (0.45, -2.9) arc (90:182:0.02 and {0.08});
\draw (0.45, -2.9) arc (90:0:0.02 and {0.08});
\draw plot [smooth, tension = 0.55] coordinates {(A1) (0.35,-1.2) (0.35,-2.65) (0.45, -2.9) (0.55, -2.65) (0.55,-1.2) (B1)};

\pgfmathsetmacro{\t}{atan(0.01/0.1)}
\begin{scope}[rotate around={-\t:(0.87, -2.6)}]
\draw[gray] (0.87,-2.6) arc (90:270:0.03 and {sqrt(0.01*0.01+0.1*0.1)/2});
\draw (0.87,-2.6) arc (90:-90:0.03 and {sqrt(0.01*0.01+0.1*0.1)/2});
\end{scope}
\draw plot [smooth, tension = 0.55] coordinates {(C1) (0.75,-1.2) (0.77,-2.4) (0.87, -2.6) (0.97, -2.4) (0.95,-1.2) (D1)};
\draw plot [smooth, tension = 0.9] coordinates {(C2) (0.75, -2.8) (0.86, -2.7) (0.93, -2.8) (D2)};

\draw (E1) to [bend left = 15] (E2);

\node at (4.4,0) {$\ms_j$};

\draw[->] (9.5,-1) -- (11,-1);
\node at (10.25,-0.6) {\footnotesize $j\to\infty$};

\begin{scope}[xshift=400]
\coordinate (A1) at (0,0); 
\coordinate (B1) at (0.2,0.05); 
\coordinate (C1) at (0.4,0.12);
\coordinate (D1) at (0.6,0.21);
\coordinate (E1) at (0.8,0.35);

\coordinate (F1) at (4,1.97);
\coordinate (G1) at (4.1,2.05);
\coordinate (H1) at (4.3,2.17);
\coordinate (I1) at (4.5,2.24);
\coordinate (J1) at (4.7,2.28);
\coordinate (K1) at (4.85,2.27);

\coordinate (F2) at (5,-4.13);
\coordinate (G2) at (5.1,-4.13);
\coordinate (H2) at (5.3,-4.12);
\coordinate (I2) at (5.5,-4.11);
\coordinate (J2) at (5.7,-4.09);
\coordinate (K2) at (5.8,-4.07);

\coordinate (C2) at (0.7,-3);
\coordinate (D2) at (0.9,-3.05);
\coordinate (E2) at (1.1,-3.1);

\coordinate (X) at (-1.5,-0.2);
\coordinate (Y) at (-1.2, -3.2);
\coordinate (U) at (7.5, 1.8);
\coordinate (V) at (7.9, -2.6);

\draw plot [smooth cycle, tension=0.6] coordinates {(A1) (B1) (C1) (D1) (E1)
			      (1.5,0.9) (2,1.2) (2.5,1.3) (3,1.7) (3.7,1.71)
			      (F1) (G1) (H1) (I1) (J1) (K1)
			      (5.5,2.2) (6,1.8)
			      (U) (8.5,-0.5) (V)
			      (7,-2.9) (6.5, -3.3) (6.2, -3.93) 
			      (K2) (J2) (I2) (H2) (G2) (F2)
			      (4.5,-4) (3.5,-3.2) (2.7, -2.5) (2,-2.7) (1.5, -3.2) 
			      (E2) (D2) (C2)
			      (0.3, -3)
			      (Y) (-1.9,-2) (X) (-0.5,-0.05)};
			      
\draw[blue] (C1) to [bend left = 15] (C2);
\draw[blue] (F1) to [bend left = 20] (F2);
\draw[blue] (G1) to [bend left = 20] (G2);
\draw[blue] (I1) to [bend left = 20] (I2);

\fill[blue] (C2) circle [radius=0.2em] node[below] {$\set_\infty$};
\fill[blue] (5.685, -1.2) circle [radius=0.2em] node[right] {$\set_\infty$};
\node[blue] at (4.4,0) {$\lam$};
\end{scope}
\end{tikzpicture}
\caption{Macroscopic description of degeneration.} \label{fig:MacroDescr}
\end{figure}

\begin{proof}
The first part of the statement is a special case of \cref{lem:CurvEstBoundedInd}. Then, possibly extracting a further subsequence, we can assume that the sets $\set_j$ converge to a set $\set_\infty$ (of cardinality at most $I$) and, thanks to \crefnoname{thm:LamCptness}, the surfaces $\ms_j$ converge to a free boundary minimal lamination $\hat\lam$ in $\amb\setminus \set_\infty$ smoothly away from $\set_\infty$. 
However, \cref{thm:RemSingLimLam} ensures that the lamination $\hat\lam$ extends smoothly through $\set_\infty$; namely, there exists a smooth free boundary minimal lamination $\lam$ in $\amb$ extending $\hat\lam$. The last claim is straightforward.
\end{proof}


\section{Microscopic behavior} \label{sec:LocalDegeneration}

In this section we study the behavior of our minimal surfaces at small scales around the points where concentration of curvature occurs, that is to say around the points in $\set_\infty$ in \cref{cor:ExistenceBlowUpSetAndCurvatureEstimate}.

\subsection{Setting description} \label{HypN}

We denote by $(\mathfrak{N})$ the following set of assumptions:
\begin{enumerate} [label={\normalfont(\roman*)}]
\item $g_j$ is a sequence of metrics on $\amb_j^3 \eqdef \Xi(a_j)\cap \{\abs x< R_j\}\subset \R^3$, with $0\ge a_j\ge -\infty$ and $R_j\to\infty$, locally smoothly converging to the Euclidean metric as 
$j\to\infty$.
\item $\ms_j^2\subset \amb_j$ is a sequence of properly embedded \emph{edged} minimal surfaces (with $\partial\ms_j\subset\partial\amb_j$) that have free boundary with respect to $\Pi(a_j)$. 
\item \label{N:BlowUpSet} $\ind(\ms_j)\le I$ for some natural constant $I>0$ independent of $j$, and
$\set_j\subset \ms_j\cap B_{{\mu_0}}(0)$, {for $\mu_0$ given by \cref{cor:14TopInfo}}, is a sequence of non-empty smooth blow-up sets\footnote{Hereafter we denote by $B_r(p)$ the ball of center $p$ and radius $r$ in the metric $g_j$, without specifying $j$ when this is clear from the context. Also, note that the setting in \cref{def:BlowUpSets} is slightly different from the setting here, but the definition can be easily adapted to this context.} 
with $\abs {\set_j}\le I$ and \[\abs{\A_{\ms_j}}(x) d_{g_j}(x, \set_j\cup 
(\partial\ms_j\setminus \Pi(a_j) )) \le C\] for all $x\in \ms_j$, for some constant 
$C>0$ independent of $j$.
\end{enumerate}

\begin{figure}[htpb]
\centering
\begin{tikzpicture}[scale=1.1]
\pgfmathsetmacro{\R}{2}
\pgfmathsetmacro{\r}{1}


\coordinate (OA) at (0,0);

\draw[gray] (OA) circle (\R);
\node[gray] at (-0.8*\R,0.8*\R) {$\amb_j$};
\draw[gray] (OA) circle (\r);
\node[gray] at (1.1*\r,-1*\r) {\footnotesize $B_{\mu_0}(0)$};

\node at (-0.3*\R, 0.65*\R) {$\ms_j$};

\def\h{{-0.15, 0.37, 0.5, 0.55}}

\foreach \i in {0,...,3}
  \draw ({-sqrt(\R*\R*(1-\h[\i]*\h[\i]))},{\R*\h[\i]}) -- ({sqrt(\R*\R*(1-\h[\i]*\h[\i]))},{\R*\h[\i]});

\pgfmathsetmacro{\ha}{-0.3*\R}
\pgfmathsetmacro{\xa}{sqrt(\R*\R-\ha*\ha)}
\pgfmathsetmacro{\hb}{-0.20*\R}
\pgfmathsetmacro{\xb}{sqrt(\R*\R-\hb*\hb)}

\draw[gray] (0.15,{(\ha+\hb)/2}) arc (180:0:0.05 and 0.02);
\draw (0.15,{(\ha+\hb)/2}) arc (180:360:0.05 and 0.02);
\draw plot [smooth, tension=0.35] coordinates {(-\xa,\ha) (-0.05,\ha) (0.15,{(\ha+\hb)/2}) (-0.05,\hb) (-\xb,\hb)};
\draw plot [smooth, tension=0.4] coordinates {(\xb,\hb) (0.45,\hb) (0.25,{(\hb+\ha)/2}) (0.45,\ha) (\xa,\ha)};

\pgfmathsetmacro{\hd}{-0.08*\R}
\pgfmathsetmacro{\xd}{sqrt(\R*\R-\hd*\hd)}
\pgfmathsetmacro{\he}{0.10*\R}
\pgfmathsetmacro{\xe}{sqrt(\R*\R-\he*\he)}

\draw[gray] (-0.15,{(\he+\hd)/2}) arc (180:0:0.1 and 0.04);
\draw (-0.15,{(\he+\hd)/2}) arc (180:360:0.1 and 0.04);
\draw plot [smooth, tension=0.4] coordinates {(-\xe,\he) (-0.4,\he) (-0.15,{(\he+\hd)/2}) (-0.4,\hd) (-\xd,\hd)};
\draw plot [smooth, tension=0.4] coordinates {(\xe,\he) (0.3,\he) (0.05, {(\he+\hd)/2}) (0.3,\hd) (\xd,\hd)};

\pgfmathsetmacro{\hf}{0.15*\R}
\pgfmathsetmacro{\xf}{sqrt(\R*\R-\hf*\hf)}
\pgfmathsetmacro{\hg}{0.20*\R}
\pgfmathsetmacro{\xg}{sqrt(\R*\R-\hg*\hg)}

\draw[gray] (0.25,{(\hf+\hg)/2}) arc (180:0:0.05 and 0.02);
\draw (0.25,{(\hf+\hg)/2}) arc (180:360:0.05 and 0.02);
\draw plot [smooth, tension=0.25] coordinates {(-\xg,\hg) (0.1,\hg) (0.25,{(\hg+\hf)/2}) (0.1,\hf) (-\xf,\hf)};
\draw plot [smooth, tension=0.32] coordinates {(\xg,\hg) (0.5,\hg) (0.35,{(\hg+\hf)/2}) (0.5,\hf) (\xf,\hf)};

\begin{scope}[xshift=70*\R]

\pgfmathsetmacro{\hl}{-0.20*\R}
\pgfmathsetmacro{\xl}{sqrt(\R*\R-\hl*\hl)}
\coordinate (OA) at (0,0);

\pgfmathsetmacro{\t}{atan(-\hl/(\xl))}
\draw[gray] (\xl,\hl) arc(-\t:180+\t:\R);
\draw[gray, dashed] (-\xl,\hl) arc(180+\t:360-\t:\R);
\node[gray] at (0.8*\R,0.8*\R) {$\amb_j$};

\pgfmathsetmacro{\s}{atan(-\hl/(sqrt(\r*\r-\hl*\hl)))}
\draw[gray] ({(sqrt(\r*\r-\hl*\hl))},\hl) arc(-\s:180+\s:\r);
\draw[gray, dashed] (-{(sqrt(\r*\r-\hl*\hl))},\hl) arc(180+\s:360-\s:\r);
\node[gray] at (-1.1*\r,-1*\r) {\footnotesize $B_{\mu_0}(0)$};

\draw[thick] (-\xl,\hl) -- (\xl,\hl);
\node[gray] at (0.78*\R,\hl+0.12*\R) {$\Pi(a_j)$};

\node at (-0.43*\R, 0.65*\R) {$\ms_j$};

\def\c{{-0.3,-0.25, 0.3, -0.6,0.5}}
 
\foreach \i in {0,...,4}
 \draw ({\R*\c[\i]}, {sqrt(\R*\R*(1-\c[\i]*\c[\i]))}) -- ({\R*\c[\i]}, \hl);
 
\pgfmathsetmacro{\ca}{-0.2*\R}
\pgfmathsetmacro{\ya}{sqrt(\R*\R-\ca*\ca)}
\pgfmathsetmacro{\cb}{-0.10*\R}
\pgfmathsetmacro{\yb}{sqrt(\R*\R-\cb*\cb)}

\draw[gray] ({(\ca+\cb)/2}, {\hl+0.05*\R}) arc (90:180:0.03 and {0.05*\R});
\draw ({(\ca+\cb)/2}, {\hl+0.05*\R}) arc (90:0:0.03 and {0.05*\R});
\draw plot [smooth, tension=0.4] coordinates {(\ca,\ya) (\ca, {\hl+0.17*\R}) ({(\ca+\cb)/2}, {\hl+0.05*\R}) (\cb,{\hl+0.17*\R}) (\cb,\yb)};

\pgfmathsetmacro{\cc}{0*\R}
\pgfmathsetmacro{\yc}{sqrt(\R*\R-\cc*\cc)}
\pgfmathsetmacro{\cd}{0.16*\R}
\pgfmathsetmacro{\yd}{sqrt(\R*\R-\cd*\cd)}

\draw[gray] ({(\cc+\cd)/2}, {\hl+0.25*\R}) arc (90:270:0.05 and {0.05*\R});
\draw ({(\cc+\cd)/2}, {\hl+0.25*\R}) arc (90:-90:0.05 and {0.05*\R});
\draw plot [smooth, tension=0.5] coordinates {(\cc,\yc) (\cc, {\hl+0.4*\R}) ({(\cc+\cd)/2}, {\hl+0.25*\R}) (\cd,{\hl+0.4*\R}) (\cd,\yd)};
\draw plot [smooth, tension=1.8] coordinates {(\cc,\hl) ({(\cc+\cd)/2}, {\hl+0.15*\R}) (\cd,\hl)};
\end{scope}

\begin{scope}[xshift=140*\R]

\pgfmathsetmacro{\hl}{0.01*\R}
\pgfmathsetmacro{\xl}{sqrt(\R*\R-\hl*\hl)}

\coordinate (OA) at (0,0);

\pgfmathsetmacro{\t}{atan(-\hl/(\xl))}
\draw[gray] (\xl,\hl) arc(-\t:180+\t:\R);
\draw[gray, dashed] (-\xl,\hl) arc(180+\t:360-\t:\R);
\node[gray] at (0.8*\R,0.8*\R) {$\amb_j$};

\pgfmathsetmacro{\s}{atan(-\hl/(sqrt(\r*\r-\hl*\hl)))}
\draw[gray] ({(sqrt(\r*\r-\hl*\hl))},\hl) arc(-\s:180+\s:\r);
\draw[gray, dashed] (-{(sqrt(\r*\r-\hl*\hl))},\hl) arc(180+\s:360-\s:\r);
\node[gray] at (-1.1*\r,-1*\r) {\footnotesize $B_{\mu_0}(0)$};

\draw[thick] (-\xl,\hl) -- (\xl,\hl);
\node[gray] at (0.76*\R,\hl-0.15*\R) {$\Pi(a_j)$};

\node at (-0.3*\R, 0.65*\R) {$\ms_j$};

\def\h{{0.37, 0.5, 0.55}}

\foreach \i in {0,...,2}
  \draw ({-sqrt(\R*\R*(1-\h[\i]*\h[\i]))},{\R*\h[\i]}) -- ({sqrt(\R*\R*(1-\h[\i]*\h[\i]))},{\R*\h[\i]});

\pgfmathsetmacro{\hd}{-0.08*\R}
\pgfmathsetmacro{\xd}{sqrt(\R*\R-\hd*\hd)}
\pgfmathsetmacro{\he}{0.10*\R}
\pgfmathsetmacro{\xe}{sqrt(\R*\R-\he*\he)}

\draw[gray] (-0.15,{(\he+\hd)/2}) arc (180:0:0.1 and 0.04);
\draw plot [smooth, tension=0.4] coordinates {(-\xe,\he) (-0.4,\he) (-0.15,{(\he+\hd)/2})};
\draw plot [smooth, tension=0.4] coordinates {(\xe,\he) (0.3,\he) (0.05, {(\he+\hd)/2})};

\pgfmathsetmacro{\hf}{0.15*\R}
\pgfmathsetmacro{\xf}{sqrt(\R*\R-\hf*\hf)}
\pgfmathsetmacro{\hg}{0.20*\R}
\pgfmathsetmacro{\xg}{sqrt(\R*\R-\hg*\hg)}

\draw[gray] (0.25,{(\hf+\hg)/2}) arc (180:0:0.05 and 0.02);
\draw (0.25,{(\hf+\hg)/2}) arc (180:360:0.05 and 0.02);
\draw plot [smooth, tension=0.25] coordinates {(-\xg,\hg) (0.1,\hg) (0.25,{(\hg+\hf)/2}) (0.1,\hf) (-\xf,\hf)};
\draw plot [smooth, tension=0.32] coordinates {(\xg,\hg) (0.5,\hg) (0.35,{(\hg+\hf)/2}) (0.5,\hf) (\xf,\hf)};
\end{scope}
\end{tikzpicture}
\caption{Different possible situations in setting \hypN{}.} \label{fig:MicroSett}
\end{figure}
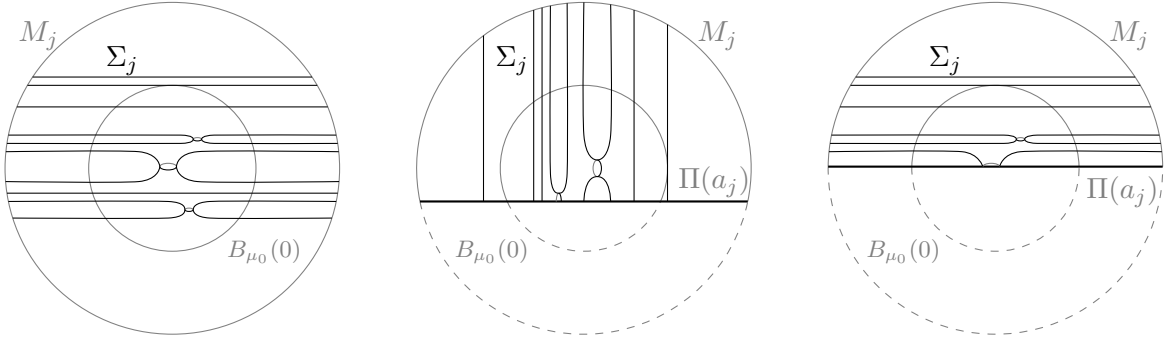

\begin{proposition} \label{prop:StructureLimLam}
Let us consider the set of assumptions \hypN{}. Then, up to subsequence, the smooth blow-up sets $\set_j$ converge to a finite set of points $\set_\infty$, of cardinality at most $I$, and there 
is a free boundary minimal lamination $\lam$ in $\Pi(a)$ (where we assume the existence of $a = \lim_{j\to\infty}a_j\in \cc{-\infty} 0$) consisting of parallel planes or half-planes and such that $\ms_j$ locally converges (in the sense of laminations) to $\lam$ away from $\set_\infty$.
\end{proposition}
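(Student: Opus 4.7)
The plan is to combine the curvature bound of assumption~\ref{N:BlowUpSet} with the lamination compactness machinery developed in \cref{sec:FBMLam}, and then to rule out one branch of the dichotomy provided by \cref{prop:LimLamInR3} by exploiting that $\set_j$ is non-empty by hypothesis.

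First, I would pass to a subsequence so that $a_j \to a \in \cc{-\infty}{0}$ and, since $\abs{\set_j} \le I$ and $\set_j \subset \overline{B_{\mu_0}(0)}$, so that the points of $\set_j$ converge in the Hausdorff sense (allowing distinct indices to collide in the limit) to a non-empty finite set $\set_\infty$ of cardinality at most $I$. Thanks to assumption~\ref{N:BlowUpSet}, the second fundamental form of $\ms_j$ is then uniformly bounded on every compact subset of $\Xi(a) \setminus \set_\infty$ once $j$ is sufficiently large that the `edge' portion $\partial\ms_j \setminus \Pi(a_j)$, which lies on $\partial B_{R_j}(0)$ with $R_j \to \infty$, is far from the compact set in question. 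An application of \cref{thm:LamCptness} then allows me to extract a further subsequence converging in the sense of laminations, away from $\set_\infty$, to a free boundary minimal lamination $\hat\lam \subset \Xi(a)\setminus\set_\infty$.

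At this stage the hypotheses of \cref{prop:LimLamInR3} are in force (in particular, the domains $\Xi(a_j) \cap B_{R_j}(0)$ indeed exhaust $\Xi(a)$ in the required sense), so I obtain a smooth extension $\lam$ of $\hat\lam$ through $\set_\infty$, together with the stated dichotomy: either $\lam$ consists of parallel planes or half-planes, or $\lam$ is a single non-flat, connected, properly embedded free boundary minimal surface of $\Xi(a)$ with positive index at most $I$, towards which $\ms_j$ converges locally smoothly and with multiplicity one in the sense of graphs.

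The only remaining (and conceptually straightforward) step is to exclude the second alternative. If it held, multiplicity-one smooth graphical convergence on a neighborhood $U$ of any $p_\infty \in \set_\infty$ would force $\abs{\A_{\ms_j}}$ to stay uniformly bounded on $U$ for all $j$ large. Yet, because $\set_j$ is a sequence of smooth blow-up sets and $\set_\infty\neq\emptyset$, we may pick $p_j \in \set_j$ with $p_j \to p_\infty$ and $\abs{\A_{\ms_j}}(p_j) \to \infty$, an immediate contradiction. Hence the first alternative in \cref{prop:LimLamInR3} prevails, which is exactly the assertion of the proposition. The bulk of the work is therefore carried out upstream in \cref{prop:LimLamInR3}; the main thing to watch here is just to match the hypotheses of that proposition and to perform this final curvature-based exclusion, which follows directly from the defining properties of smooth blow-up sets.
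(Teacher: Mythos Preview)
Your proposal is correct and follows essentially the same line as the paper's proof: extract a subsequence so that $\set_j\to\set_\infty$, use the curvature estimate in \hypN{} together with \cref{thm:LamCptness} and \cref{prop:LimLamInR3} to obtain the limit lamination and the dichotomy, and then exclude the non-flat alternative because multiplicity-one graphical convergence would yield uniform curvature bounds contradicting the blow-up at points of $\set_j$. The paper is slightly terser (it folds the invocation of \cref{thm:LamCptness} into the appeal to \cref{prop:LimLamInR3}), but the argument is the same.
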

\begin{proof}
First observe that, up to subsequence, we can assume that $\set_j$ converge to a finite set of points $\set_\infty$ of cardinality at most $I$ and contained in the open unit ball centered at the origin.
Then, because of the curvature assumptions we are making, thanks to \cref{prop:LimLamInR3} we gain smooth (subsequential) convergence to a free boundary minimal lamination $\hat\lam$ in $\Pi(a)\setminus\set_\infty$;
in fact $\hat\lam$ extends to a smooth lamination $\lam$ of $\Pi(a)$. 
We now need to rule out alternative \ref{llir:multone} of the proposition, namely the possibility that $\lam$ consists of a (single) non-flat, two-sided, properly embedded, free boundary minimal surface.
However, in this case the convergence must be locally smooth and graphical with multiplicity one at (all points of) such unique leaf: hence this would imply locally uniform curvature estimates for the sequence $\Sigma_j$, which is in contradiction with the presence of a smooth blow-up set in \hypN{}, though.
Thus the only possibility is that $\lam$ is a lamination in $\Xi(a)$ of parallel planes or half-planes, which concludes the proof.
\end{proof}

Later on, we will separately study the components where `bad things' happen (but which are in finite number) and the others. Therefore it will be useful to introduce the following definition. 
\begin{definition}
In the setting \hypN{}, let us denote by $\ms_j^{\euno}$ the union of the connected components of 
$\ms_j\cap B_1(0)$ that contain at least one point in $\set_j$ (informally: the ones with the necks in \crefnoname{fig:MicroSett}) and with
$\ms_j^{\edue}$ the union of the connected components of $\ms_j\cap B_1(0)$ that do not.
\end{definition}
\begin{remark}
It is sufficient to work in $B_1(0)$, since the information about $\ms_j$ in $B_1(0)^c$ are then obtained in the applications thanks to suitable Morse-theoretic arguments (see \crefnoname{sec:MorseTheory}).
\end{remark}

\begin{remark} \label{rem:ConvSigma1}
Observe that the surfaces $\ms_j^{\euno}$ locally converge (in the sense of laminations) to the union of the leaves of the lamination $\lam$ (given by \cref{prop:StructureLimLam}) passing through points of $\set_\infty$, the convergence happening away from $\set_\infty$. Indeed, the convergence to any other component of $\lam$ in $B_1(0)$ is uniformly smooth (in the sense of laminations), but each component of $\ms_j^{\euno}$ contains a point where the curvature diverges.

In particular, if the number of components of $\ms_j^{\euno}\cap(\partial B_1(0)\setminus\Pi(a_j))$ is uniformly bounded, then $\ms_j^{\euno}\cap(\partial B_1(0)\setminus\Pi(a_j))$ is $\mu_0$-strongly equatorial (as per \cref{def:StrongEq}) for any $j$ sufficiently large.
\end{remark}

\subsection{Neck components}

In this section, we deal with the behavior of $\ms_j^{\euno}$, which is the part of $\ms_j\cap B_1(0)$ that `carries the topology' of the surface $\ms_j$. 
The components $\ms_j^{\edue}$ are instead well-controlled, in the sense that they have uniformly bounded curvature and they are topological discs. We postpone the investigation of these properties of $\ms_j^{\edue}$ to the proof \cref{thm:GlobalDeg}.

The following proposition is essentially the base case of the induction to prove \cref{prop:LocDeg}, which is the full description of what happens around the origin along the sequence $\ms_j^{\euno}$. 

\begin{lemma} \label{lem:LocDegOnePt}
Let us assume to be in the setting \hypN{} with $\abs {\set_j} = 1$ for all $j$.
Then there exists $\kappa(I)\ge0$ (depending on $I$) such that, for $j$ sufficiently large, the following assertions hold true:
\begin{enumerate} [label={\normalfont(\arabic*)}]
\item The surfaces $\ms_j^{\euno}$ have genus at most $\kappa(I)$.
\item {The surfaces $\ms_j^{\euno}$ intersect both $\partial B_{1}(0)\setminus \Pi(a_j)$ and $\Pi(a_j)$ transversely in at most $\kappa(I)$ components.}
\end{enumerate}

\end{lemma}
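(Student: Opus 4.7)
The plan is to exploit the assumption $\abs{\set_j}=1$ to perform a single blow-up at the unique bad point, invoke known finite-topology results for free boundary minimal surfaces of bounded index in half-spaces, and then glue the resulting microscopic picture to the smooth macroscopic picture provided by \cref{prop:StructureLimLam} in the outer region.

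First I would let $p_j$ be the unique point of $\set_j$, set $\lambda_j\eqdef\abs{\A_{\ms_j}}(p_j)\to\infty$, and consider the rescalings $\bu\ms_j\eqdef\lambda_j(\ms_j-p_j)$. By \cref{def:BlowUpSets} together with \cref{prop:LimLamInR3}, a subsequence converges locally smoothly with multiplicity one (in the sense of graphs) to a complete, non-flat, properly embedded free boundary minimal surface $\bu\ms_\infty\subset\Xi(a_\infty)$ of index at most $I$. Applying \cite[Corollary 22]{AmbBuzCarSha18} --- which combines the closed-case bound of \cite{ChoMax16} with the reflection procedure of \cref{lem:ReflectionPrinciple} --- yields integers $g_0=g_0(I)$ and $e_0=e_0(I)$ such that $\genus(\bu\ms_\infty)\le g_0$ and $\bu\ms_\infty$ has at most $e_0$ ends. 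Standard finite-total-curvature arguments then produce a radius $R=R(I)>0$ so that $\bu\ms_\infty\setminus B_R(0)$ consists of exactly $e_0$ pairwise disjoint topological (half-)annuli, each smoothly close to a plane or a half-plane.

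By the multiplicity-one smooth convergence on the compact shell $\overline{B_{2R}(0)\setminus B_{R/2}(0)}$, for $j$ large enough $\bu\ms_j$ intersects $\partial B_R(0)$ transversely in at most $e_0$ circles or arcs, and each component of $\bu\ms_j\cap(B_{2R}(0)\setminus B_{R/2}(0))$ is a topological (half-)annulus. Pulling this back to the original chart, the analogous statement holds for $\ms_j^\euno$ on the corresponding shell around $p_j$. Since $\set_j=\{p_j\}$, the curvature bound in assumption \hypN{}\ref{N:BlowUpSet} applies uniformly to $\ms_j^\euno$ on the intermediate annular region $B_1(0)\setminus B_{R/\lambda_j}(p_j)$; combined with \cref{prop:StructureLimLam}, $\ms_j^\euno$ converges there smoothly to a lamination of parallel planes or half-planes. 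It follows that each of the $e_0$ tubes exiting $\partial B_{R/\lambda_j}(p_j)$ continues unbranched up to $\partial B_1(0)$ as a topological (half-)annulus, giving conclusion (2) with $\kappa$ bounding both the number of components of $\ms_j^\euno\cap(\partial B_1(0)\setminus\Pi(a_j))$ and of $\ms_j^\euno\cap\Pi(a_j)$ by $e_0$; the transversality of these intersections is automatic for $j$ large by smooth convergence. Conclusion (1) then follows from the gluing formulas of \crefnoname{sec:EulChar}: $\ms_j^\euno\cap B_1(0)$ is obtained by attaching $e_0$ (half-)annular pieces (of genus $0$) along $e_0$ circles or arcs to the blow-up region (which has genus at most $g_0$), so its genus is bounded by $g_0+e_0$.

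The main obstacle I anticipate is rigorously matching the $e_0$ ends of $\bu\ms_\infty$ with exactly $e_0$ tubes of $\ms_j^\euno$ extending smoothly out to $\partial B_1(0)$, with no branching or additional topology arising in the intermediate annulus $B_1(0)\setminus B_{R/\lambda_j}(p_j)$. This requires pairing the asymptotic (half-)planes of $\bu\ms_\infty$ with the leaves of the limit lamination of \cref{prop:StructureLimLam}, exploiting the uniform curvature estimate of \hypN{}\ref{N:BlowUpSet} and unique continuation to exclude sheets appearing or disappearing across this annular region.
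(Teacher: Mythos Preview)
Your outline correctly sets up the blow-up limit $\bu\ms_\infty$ and invokes the right finite-topology bounds for it. The genuine gap is precisely the step you flag as an obstacle but do not resolve: controlling the intermediate annulus $B_1(0)\setminus B_{R/\lambda_j}(p_j)$.

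The curvature bound in \hypN{}\ref{N:BlowUpSet} only gives $\abs{\A_{\ms_j}}(x)\,d_{g_j}(x,p_j)\le C$ for some constant $C$ that has no reason to be small, and \cref{prop:StructureLimLam} only controls $\ms_j$ on sets bounded away from $\set_\infty$; the inner boundary of your annulus collapses to $p_\infty$, so that proposition says nothing there. With a large scale-invariant constant $C$ you cannot rule out topology accumulating in the neck region, so your assertion that ``each of the $e_0$ tubes continues unbranched up to $\partial B_1(0)$ as a topological (half-)annulus'' is exactly the content of the lemma and is unproven. Unique continuation does not help: it concerns a single minimal surface, not a sequence whose geometry is degenerating at all intermediate scales.

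What the paper actually does is upgrade the curvature estimate on the whole intermediate annulus to $\abs{\A_{\ms_j}}(x)\,d_{g_j}(x,p_j)<\mu_0$, where $\mu_0$ is the small universal constant of \cref{cor:14TopInfo}. This is obtained by a contradiction/intermediate-scale blow-up: if the estimate failed at points $z_j$ with $\delta_j\eqdef d_{g_j}(z_j,p_j)\to 0$, one rescales by $\delta_j^{-1}$; the resulting sequence still satisfies \hypN{}, so by \cref{prop:StructureLimLam} it converges to parallel planes away from $0$, forcing $\abs{\A}(z_j)\,\delta_j\to 0$, a contradiction. With the $\mu_0$-estimate in hand, one checks that $\ms_j^{\euno}\cap(\partial B_{R_0/\lambda_j}(p_j)\setminus\Pi(a_j))$ is $\mu_0$-strongly equatorial (via \cite[Proposition 1]{Sch83_uniqueness} applied to the ends of $\bu\ms_\infty$) and then applies \cref{cor:14TopInfo} to transfer the genus and boundary-component bounds from radius $R_0/\lambda_j$ out to radius $1$. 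This Morse-theoretic step is what replaces your heuristic ``tubes continue unbranched'' and is where the smallness of $\mu_0$ (rather than merely finiteness of $C$) is essential.
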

\begin{proof}
Let $\set_j = \{p_j\}$, $\set_\infty = \{p_\infty\}$ and $\lambda_j \eqdef \abs{\A_{\ms_j}}(p_j)$. By definition of smooth blow-up set, the surfaces $\bu\ms_j\eqdef \lambda_j(\ms_j-p_j)$ converge up to subsequence to a complete, non-flat, properly embedded, free boundary minimal surface $\bu\ms_\infty$ in $\Xi(a)$ for some $0\ge a\ge -\infty$. Furthermore $\bu\ms_\infty$ has index at most $I$.
Therefore, by \cref{prop:GeometryOfHalfBubble}, the genus, number of ends and number of boundary components of $\bu\ms_\infty$ are all bounded by $\kappa(I)$.

Consider $\mu_0$ given by \cref{cor:14TopInfo} and take $R_0>0$ such that 
\begin{equation} \label{eq:CurvEstSigmaBarInfty}
\abs{\A_{\bu\ms_\infty}} (x) d_{\R^3}(0,x) < {\mu_0}
\end{equation}
for $x\in\bu\ms_\infty \setminus B_{R_0}(0)$.
{Assume $R_0$ large enough that the genus and the number of connected components of both $\bu\ms_\infty\cap { (\partial B_{R_0}(0)\setminus \Pi(a))}$ and $\bu\ms_\infty\cap \Pi(a)$ are bounded by $\kappa(I)$.
{Moreover, thanks to \cite[Proposition 1]{Sch83_uniqueness}, we can also suppose that $\bu\ms_\infty\cap (\partial B_{R_0}(0)\setminus\Pi(a))$ is $(\mu_0/2)$-strongly equatorial (as per \cref{def:StrongEq}).}
Hence observe that, for $j$ sufficiently large, $\ms_j\cap B_{R_0/\lambda_j}(p_j)$ has genus and number of boundary components on $\partial B_{R_0/\lambda_j}(p_j)\setminus \Pi(a_j)$ both bounded by $\kappa(I)$, and $\ms_j\cap(\partial B_{R_0/\lambda_j}(p_j)\setminus\Pi(a_j))$ is $\mu_0$-strongly equatorial.}
In order to transfer this information to all $B_1(0)$ we want to prove that the estimate
\begin{equation}\label{eq:14EstMsJ}
\abs{\A_{\ms_j}}(x) d_{g_j} (p_j,x) < {\mu_0}
\end{equation}
holds for every $x\in \ms_j \cap ( B_{1}(0)\setminus B_{R_0/\lambda_j}(p_j) )$, for $j$ sufficiently large.

To this purpose, it is enough to prove that there exists $\delta>0$ such that the estimate holds in $\ms_j \cap ( B_\delta(p_j)\setminus B_{R_0/\lambda_j}(p_j) )$. Then we deduce the desired estimate using that $\ms_j$ converges (in the sense of laminations) in $B_1(0)\setminus B_\delta (p_j)$ to a lamination consisting of planes (indeed $p_\infty\in B_\delta(p_j)$ for $j$ sufficiently big).

Assume by contradiction that such $\delta>0$ does not exist. Then one would find a sequence $z_j\in \ms_j\setminus B_{R_0/\lambda_j}(p_j)$ such that $\delta_j\eqdef d_{g_j}(p_j,z_j) \to 0$ and $\abs{\A_{\ms_j}}(z_j)\delta_j \ge {\mu_0}$.
Then consider $\check\ms_j \eqdef \delta_j^{-1}(\ms_j-p_j)$, for which we have \begin{equation}\label{eq:BoundBelowCurvature}
\abs{\A_{\check\ms_j}}(\delta_j^{-1}(z_j-p_j)) \ge {\mu_0}\point
\end{equation}
Note that there cannot possibly be a uniform curvature bound for the sequence $\check\ms_j$ around $0$, otherwise the scales $\lambda_j$ and $\delta^{-1}_j$ would be comparable, hence the surfaces $\check\ms_j$ would converge to a homothety of $\bu\ms_\infty$, but this is not possible for the choice of $z_j$ together with \eqref{eq:CurvEstSigmaBarInfty}.
As a result, possibly extracting a subsequence (which we do not rename) $\check\ms_j$ still fulfills the assumptions of the setting \hypN{} and therefore it converges to a lamination consisting of planes away from $0$. However, observe that this implies $\abs{\A_{\check\ms_j}}(\delta_j^{-1}(z_j-p_j))\to 0$, which contradicts \eqref{eq:BoundBelowCurvature} and thus proves \eqref{eq:14EstMsJ}.

Thus, all the assumptions of \cref{cor:14TopInfo} are satisfied and therefore the genus and the number of connected components of both $\ms_j^{\euno}\cap(\partial B_1(0)\setminus \Pi(a_j))$ and $\ms_j^{\euno}\cap\Pi(a_j)$ are bounded by $\kappa(I)$ (possibly renaming $\kappa(I)$ as the double of the constant introduced above).
\end{proof}

We can now proceed and prove the corresponding result for any set $\set_j$ of uniformly bounded cardinality.

\begin{proposition} \label{prop:LocDeg}
Assume to be in the setting \hypN{}. 
Then there exists $\kappa(I)\ge0$ such that, for $j$ sufficiently large, the following assertions hold true:
\begin{enumerate} [label={\normalfont(\arabic*)}]
\item The surfaces $\ms_j^{\euno}$ have genus at most $\kappa(I)$.
\item {The surfaces $\ms_j^{\euno}$ intersect both $\partial B_{1}(0)\setminus \Pi(a_j)$ and $\Pi(a_j)$ transversely in at most $\kappa(I)$ components.}
\end{enumerate}
\end{proposition}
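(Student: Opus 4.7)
The plan is to argue by induction on $I$, the a priori bound on $|\set_j|$, the base case $I=1$ being exactly \cref{lem:LocDegOnePt}. For the inductive step, I would pass to a subsequence so that $\set_j$ converges as a set to a finite collection $\set_\infty = \{q_1, \ldots, q_m\} \subset \overline{B_{\mu_0}(0)}$ with $m \le I$, and split into two cases according to whether $m \ge 2$ or $m = 1$.

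In Case A ($m \ge 2$), I would choose $\rho>0$ small enough that the balls $B_{2\rho}(q_l)$ are pairwise disjoint and contained in $B_1(0)$. Each ball $B_\rho(q_l)$ would then contain at most $I-1$ points of $\set_j$ for large $j$, so after translating $q_l$ to the origin and rescaling $B_\rho(q_l)$ to the unit ball, the restricted sequence satisfies \hypN{} with strictly fewer blow-up points; the inductive hypothesis then bounds the genus of the corresponding neck components together with the number of their boundary components on both $\partial B_\rho(q_l) \setminus \Pi(a_j)$ and $\Pi(a_j) \cap B_\rho(q_l)$. On the complement $B_1(0) \setminus \bigcup_l B_\rho(q_l)$, the curvature estimate in \hypN{} provides uniform bounds on $|\A_{\ms_j}|$, so \cref{thm:LamCptness} together with \cref{prop:LimLamInR3} yields subsequential smooth convergence to a lamination of parallel planes or half-planes. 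The inductive control on the components at $\partial B_\rho(q_l)$ forces $\ms_j^{\euno}$ in this exterior region to consist of uniformly finitely many topological discs or half-discs, each a near-graph over a limit leaf. Gluing the pieces via the Euler characteristic formula from \cref{sec:EulChar} then produces the claimed bounds.

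In Case B ($m=1$, say $\set_\infty = \{q\}$), either $|\set_j|=1$ eventually (and I directly invoke \cref{lem:LocDegOnePt}), or $r_j \eqdef \operatorname{diam}(\set_j) \to 0$ and I rescale around a chosen $q_j \in \set_j$: the new surfaces $\tilde\ms_j \eqdef r_j^{-1}(\ms_j - q_j)$ with rescaled metrics $\tilde g_j$ and blow-up set $\tilde\set_j \eqdef r_j^{-1}(\set_j - q_j)$ satisfy \hypN{} with $\operatorname{diam}(\tilde\set_j)=1$ by construction, and hence admit a subsequential limit $\tilde\set_\infty$ with at least two distinct points. The rescaled problem therefore falls in Case A and yields bounds on $\tilde\ms_j^{\euno}$ inside the rescaled unit ball, equivalently on $\ms_j^{\euno}$ inside $B_{r_j}(q_j)$. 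On the annular region $B_1(0) \setminus B_{2r_j}(q_j)$, the estimate in \hypN{} again yields uniform curvature bounds (for $j$ large, $\set_j \subset B_{r_j}(q_j)$), producing, as before, smooth convergence to a planar lamination together with a controlled outer contribution; a final application of the topological gluing formula completes the bound.

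I expect the main obstacle to be the careful bookkeeping of the topology across the transition regions $\partial B_\rho(q_l)$ and $\partial B_{2r_j}(q_j)$. The crucial mechanism is the second part of the inductive conclusion: the uniform bound on the number of components on \emph{each} of the two parts of the boundary ensures that only finitely many neck pieces from the sub-balls extend into the outside region, and smooth convergence to a planar lamination then forces each such piece to carry bounded genus individually. The separate count for $\partial B_1(0) \setminus \Pi(a_j)$ and for $\Pi(a_j)$, matching the Morse-theoretic analysis of \cref{sec:MorseTheory}, is precisely what allows the induction to close in this edged, free boundary setting, by contrast with the closed case treated in \cite{ChoKetMax17}.
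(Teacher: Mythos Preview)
Your overall strategy matches the paper's: induct on $I$, split according to whether the limit concentration set $\set_\infty$ has one or several points, and in the single-point case rescale by the diameter of $\set_j$ to reduce to the multi-point case. Your Case~A is handled somewhat differently from the paper---you work at a \emph{fixed} scale $\rho$ rather than the paper's shrinking $r_j\to 0$---but this variant is viable, since on the fixed complement $B_1(0)\setminus\bigcup_l B_\rho(q_l)$ the curvature really is uniformly bounded and the convergence to parallel (half-)planes lets you read off the topology of the exterior pieces and glue. One small omission: to apply the inductive hypothesis with parameter $I-1$ you must also check that $\ind(\ms_j\cap B_\rho(q_l))\le I-1$, not merely that $B_\rho(q_l)$ contains at most $I-1$ blow-up points; this follows because each of the \emph{other} balls contributes index at least~$1$.

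The genuine gap is in Case~B. After rescaling and invoking Case~A you obtain bounds inside $B_{r_j}(q_j)$ with $r_j\to 0$, and you then assert that \hypN{} ``yields uniform curvature bounds'' on the annulus $B_1(0)\setminus B_{2r_j}(q_j)$. This is false: the estimate in \hypN{} gives only $\abs{\A_{\ms_j}}(x)\,d_{g_j}(x,\set_j)\le C$, hence on that annulus $\abs{\A_{\ms_j}}(x)\lesssim C/d_{g_j}(x,q_j)$, which blows up near the shrinking inner boundary. Smooth convergence to the planar lamination therefore holds only on a \emph{fixed} exterior $B_1(0)\setminus B_\delta(q_\infty)$, and nothing you have written controls the topology of $\ms_j^{\euno}$ in the intermediate region $B_\delta(q_\infty)\setminus B_{r_j}(q_j)$. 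The paper fills this by proving---via a contradiction argument that rescales at a putative bad point and invokes \cref{prop:StructureLimLam}, exactly as in the proof of \cref{lem:LocDegOnePt}---that in fact $\abs{\A_{\ms_j}}(x)\,d_{g_j}(x,p_\infty)<\mu_0$ on $B_\delta\setminus B_{r_j}$ for some fixed small $\delta$; this sharp $\mu_0$-estimate is precisely the hypothesis of \cref{cor:14TopInfo}, which then transfers the genus and boundary-component bounds outward from $B_{r_j}$ to $B_\delta$, after which the fixed-scale gluing takes over. (A minor related point: with your choice $r_j=\operatorname{diam}(\set_j)$ the rescaled set $\tilde\set_j$ has diameter~$1$, not $\le\mu_0$ as \hypN{} requires; rescaling instead by $2\operatorname{diam}(\set_j)/\mu_0$, as the paper does, fixes this.)
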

\begin{proof}
Let us proceed by induction on $I> 0$. 
The case $I=1$ has been treated in \cref{lem:LocDegOnePt}, thus let us assume $I>1$.
We distinguish two cases, and we first consider the case when $\abs{\set_\infty} \ge2$. 
Choose $\delta>0$ small enough that $\min_{p,q\in \set_\infty}d_g(p,q)\ge 4\delta$, moreover fix one point $p_\infty\in \set_\infty$. Since $\ind(\ms_j\cap B)\ge 1$ for every connected component $B$ of $B_\delta(\set_\infty)$, then $\ind(\ms_j\cap B_\delta(p_\infty)) \le I-1$.

Now choose positive numbers $r_j\to 0$ such that $\set_j\subset B_{\mu_0r_j}(\set_\infty)$ and 
\[\liminf_{j\to\infty} \left(r_j \min_{p\in \set_j} \abs{\A_{\ms_j}} (p)\right) = \infty\point\]
Note that the surfaces $r_j^{-1}(\ms_j-p_\infty)$ still fulfill the assumptions \hypN{} (with blow-up sets that are rescalings of the blow-up sets $\set_j$) thanks to the choice of $r_j$ and thus we can apply the inductive hypothesis to these surfaces. In particular we obtain that the components $\ms_j^{\euno}\cap B_{r_j}(p_\infty)$ have genus at most $\kappa(I-1)$ and {intersect both $\partial B_{r_j}(p_\infty){\setminus \Pi(a_j)}$ and $\Pi(a_j)$ transversely in at most $\kappa(I-1)$ components.}

We now prove that, choosing $\delta>0$ possibly smaller, we have that
\[
\abs{A_{\ms_j}}(x) d_{g_j}(x,p_\infty) < {\mu_0}
\]
for all $x \in B_\delta(p_\infty)\setminus B_{r_j}(p_\infty)$.
If this is not the case, then there exists a sequence of points $z_j\in \ms_j$ with $\delta_j\eqdef d_{g_j}(z_j,p_\infty) >r_j$, $\delta_j\to0$ and $\abs{A_{\ms_j}}(z_j)\delta_j\ge {\mu_0}$. The rescaled surfaces $\bu \ms_j \eqdef \delta_j^{-1}(\ms_j-p_\infty)$ still satisfy \hypN{} with blow-up set $\delta_j^{-1}(\set_j-p_\infty)$ and therefore they converge away from $0$ to a lamination consisting of parallel planes by \cref{prop:StructureLimLam}. In particular $\abs{A_{\bu\ms_j}}(\delta_j^{-1}(z_j-p_\infty)) \to 0$, which contradicts the choice of $z_j$.

{Note that, choosing $j$ sufficiently large, we can assume that $\ms_j^{\euno}\cap(\partial B_{r_j}(p_\infty)\setminus \Pi(a_j))$ is $\mu_0$-strongly equatorial by \cref{rem:ConvSigma1}.
As a result, we can invoke \cref{cor:14TopInfo} and conclude that the components $\ms_j^{\euno}\cap B_{\delta}(p_\infty)$ also have genus at most $\kappa(I-1)$ and intersect both $\partial B_{\delta}(p_\infty)\setminus \Pi(a_j)$ and $\Pi(a_j)$ transversely in at most $2\kappa(I-1)$ components.}

Now we follow the very same argument on each ball of radius $\delta$ (as above) and centered at a point of $\set_{\infty}$. We obtain analogous bounds, hence (keeping in mind that we have uniform curvature estimates away from such balls) we can exploit the topological bounds we have gained
to get that $\ms_j^{\euno}$ converges graphically smoothly with finite multiplicity in $B_1(0)\setminus B_\delta(\set_\infty)$ to the leaves passing through $\set_\infty$ of the limit lamination $\lam$ in \cref{prop:StructureLimLam} and conclude with the basic topological tools presented in \crefnoname{sec:EulChar}.

Therefore, now we have to deal only with the case $\abs{\set_\infty} = 1$.
We can assume $\abs{\set_j}\ge 2$, since otherwise we could just apply \cref{lem:LocDegOnePt}. Take $p_j,q_j\in\set_j$ that realize the maximum distance between points in $\set_j$ and define $r_j\eqdef  2 d_{g_j}(p_j,q_j) \mu_0^{-1}\to 0$.
The sequence of surfaces $\check\ms_j\eqdef r_j^{-1}(\ms_j-p_j)$ still satisfy the assumptions \hypN{} with index $I$. Moreover $\abs{\check \set_\infty}\ge 2$, therefore we can apply the first part of the proposition to $\check\ms_j$, obtaining all the desired information in $B_{r_j}(p_j)$. However, now we can argue as above to obtain the ${\mu_0}$-curvature estimate in $\ms_j\cap (B_{1}(0)\setminus B_{r_j}(p_j))$ and transfer the information to $B_1(0)$.
\end{proof}

\subsection{Fine description} \label{sec:GlobalScheme}

We are now ready to put together all the information obtained above and present a fine description of degeneration for a sequence of surfaces with bounded index.

\begin{theorem} \label{thm:GlobalDeg}
Given $I\in \N$ there exists $\kappa(I)\ge 0$ such that the following assertions hold true.

Let $(\amb^3,g)$ be a three-dimensional Riemannian manifold with boundary that satisfies \hypP{}. Let $\ms_j^2\subset \amb$ be a sequence of compact, properly embedded, free boundary minimal surfaces with $\ind(\ms_j)\le I$, for a fixed constant $I\in\N$.
 In the setting of \cref{cor:ExistenceBlowUpSetAndCurvatureEstimate}, one can find a constant $\eps_0>0$ (depending on the sequence in question) so that $\min_{p,q\in\set_\infty}d_g(p,q)\ge 4\eps_0$ and such that, taken $\eps\leq \eps_0$ and defined $\ms_j^{\euno}$ the union of the components of $\ms_j\cap B_{\eps}(\set_\infty)$ which contain at least one point of $\set_j$ and $\ms_j^{\edue}$ the union of the other components of $\ms_j\cap B_{\eps}(\set_\infty)$, for $j$ sufficiently large: 
\begin{enumerate} 
\item 
\begin{enumerate} [ref=(\arabic{enumi}\alph*)]
\item No component of $\ms_j^{\euno}$ is a disc or a half-disc.
\item The genus of $\ms_j^{\euno}$ is bounded by $\kappa(I)$.
\item \label{gd:NumCompSigmaI}
{$\ms_j^{\euno}$ intersects both $\partial B_\eps(\set_\infty)\setminus\partial M$ and $\partial\amb\cap B_\eps(\set_\infty)$ transversely in at most $\kappa(I)$ components.}
\item \label{gd:AreaBoundSigmaI}
$\ms_j^{\euno}$ has uniformly bounded area, namely
\[ 
\limsup_{j\to\infty}\ \area(\ms_j^{\euno}) \le 4\pi \kappa(I) \eps^2\point
\]
\end{enumerate}
\item 
\begin{enumerate} [ref=(\arabic{enumi}\alph*)]
\item Each component of $\ms_j^{\edue}$ is a disc or a half-disc.
\item $\ms_j^{\edue}$ has uniformly bounded curvature, that is 
\[
\limsup_{j\to\infty}\,\sup_{x\in\ms_j^{\edue}}\ \abs{\A_{\ms_j}}(x) < \infty \point
\]
\item Each component of $\ms_j^{\edue}$ has area uniformly bounded, namely
\[ 
\limsup_{j\to\infty}\, \sup_{\substack{C\subset \ms_j^{\edue}\\ \text{connected}}} \area(C) \le 2\pi \eps^2\point
\]
 \label{gd:AreaBoundSigmaII}
\end{enumerate}
\end{enumerate}
\end{theorem}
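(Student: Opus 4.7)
The plan is to globalize the microscopic description of \cref{prop:LocDeg} by working in Fermi (resp.\ normal) charts centered at each $p\in\set_\infty$, applying \cref{prop:LocDeg} after a blow-up at a suitable intermediate scale $r_j\to 0$, and then propagating the resulting topological bounds from $\partial B_{r_j}(p)$ out to $\partial B_\eps(p)$ via the annular topological control of \cref{cor:14TopInfo}. As a preliminary step, I would invoke \cref{cor:ExistenceBlowUpSetAndCurvatureEstimate} to extract a subsequence equipped with smooth blow-up sets $\set_j\subset\ms_j$, with $|\set_j|\le I$, converging to $\set_\infty$, and along which $\ms_j$ converges to a smooth free boundary minimal lamination $\lam$ locally smoothly away from $\set_\infty$ (the free boundary property of the leaves being granted by \hypP{}). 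I would then fix $\eps_0$ with $4\eps_0\le\min_{p\ne q\in\set_\infty}d_g(p,q)$, small enough that the Fermi/normal charts about each $p\in\set_\infty$ are defined on $B_{2\eps_0}(p)$ with metric as $C^2$-close to Euclidean as needed to apply \cref{cor:14TopInfo}, and so that the transverse intersection numbers of $\ms_j$ with $\partial B_\eps(p)\setminus\partial\amb$ and with $\partial\amb\cap B_\eps(p)$ are eventually independent of $\eps\in\oc{0}{\eps_0}$.

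\textbf{Local analysis at $p\in\set_\infty$.} Fix $p\in\set_\infty$ and define $r_j\eqdef 2\mu_0^{-1}\max_{q\in\set_j\cap B_{\eps_0}(p)}d_g(p,q)$; then $r_j\to 0$ and $\set_j\cap B_{\eps_0}(p)\subset B_{\mu_0 r_j/2}(p)$. Blowing up the chart by the factor $1/r_j$ yields a sequence satisfying the hypotheses \hypN{} with $a=0$ if $p\in\partial\amb$ and $a=-\infty$ otherwise; \cref{prop:LocDeg} then supplies the bounds $\genus\le\kappa(I)$ and at most $\kappa(I)$ transverse intersection components on each of $\partial B_{r_j}(p)\setminus\partial\amb$ and $\partial\amb\cap B_{r_j}(p)$, for the components of $\ms_j\cap B_{r_j}(p)$ containing a blow-up point. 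To lift these to $B_\eps(p)$ I would establish the $\mu_0$-estimate $\abs{\A_{\ms_j}}(x)\,d_g(x,p)\le \mu_0$ on $\ms_j\cap(B_\eps(p)\setminus B_{r_j}(p))$ via the contradiction argument of \cref{lem:LocDegOnePt}: a violation along a sequence $z_j$ at scale $\delta_j\in\cc{r_j}{\eps}$ would, after a further blow-up, produce either a lamination of parallel (half-)planes via \cref{prop:StructureLimLam} (if $\delta_j\to 0$) or a direct violation of the macroscopic estimate in \cref{cor:ExistenceBlowUpSetAndCurvatureEstimate} (if $\delta_j$ stays bounded below). With the $\mu_0$-estimate in place, \cref{cor:14TopInfo} transfers the topological bounds across the annulus, yielding (1b) and (1c) upon summation over $p\in\set_\infty$.

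\textbf{Topology and area of $\ms_j^{\euno}$; analysis of $\ms_j^{\edue}$.} For (1a), if a component $C$ of $\ms_j^{\euno}$ were a disc or half-disc, the multiplicity-one blow-up at any point of $\set_j\cap C$ would converge to a complete, non-flat, properly embedded, free boundary minimal surface $\bu\ms_\infty\subset\Xi(a)$ of finite index $\le I$, whose non-trivial topology (positive genus or several ends/boundary components, by \cref{prop:GeometryOfHalfBubble}, possibly after passing to the reflected double via \cref{lem:ReflectionPrinciple}) would propagate through the graphical convergence back to the rescaled $\lambda_j C$, forcing $C$ to have either multiple boundary circles on $\partial B_\eps(p)$ or non-zero genus, contradicting the disc/half-disc assumption in either case. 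For (1d), the divergence theorem applied in Fermi coordinates to the radial vector field $X=x$, using $\div_{\ms_j}(X)\approx 2$ in the near-Euclidean regime, gives
\[
2\,\area(\ms_j^{\euno}\cap B_\eps(p))\approx \int_{\partial\ms_j^{\euno}\cap(\partial B_\eps(p)\setminus\partial\amb)}\scal{x}{\nu}\de\length,
\]
the free-boundary portion on $\partial\amb$ contributing negligibly as $\nu\perp x$ there in the Euclidean limit; combining $\abs{\scal{x}{\nu}}\le \eps$, the bound $\kappa(I)$ on the number of boundary circles, and $2\pi\eps(1+o(1))$ on each circle's length, and then summing over $p\in\set_\infty$, yields (1d) with the factor $4\pi$ absorbing metric distortion. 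For Part~(2), the components of $\ms_j^{\edue}$ stay at positive distance from $\set_j$ for $\eps$ small and $j$ large (by \cref{rem:ConvSigma1} they converge to leaves of $\lam$ avoiding $\set_\infty$), so (2b) follows directly from \cref{cor:ExistenceBlowUpSetAndCurvatureEstimate}; smooth graphical convergence to smooth leaves of $\lam$ intersecting $B_\eps(\set_\infty)$ in topological discs or half-discs then gives (2a), and (2c) follows from the same divergence theorem argument applied per component.

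\textbf{Main obstacle.} The most delicate step is the extension in the second paragraph: one must simultaneously exclude curvature concentration at all intermediate scales $r_j\ll\delta_j\ll\eps$ and then apply \cref{cor:14TopInfo} in a way that tracks separately the number of boundary components on $\partial B_r(p)\setminus\partial\amb$ and on $\partial\amb\cap B_r(p)$, an additional bookkeeping specific to the free boundary setting without counterpart in the closed analogue of \cite{ChoKetMax17}.
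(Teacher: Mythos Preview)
Your overall strategy---rescale at an intermediate scale around each $p\in\set_\infty$, apply \cref{prop:LocDeg}, then propagate via the $\mu_0$-estimate and \cref{cor:14TopInfo}---matches the paper's. Your divergence-theorem argument for (1d) is a legitimate alternative to the paper's use of the monotonicity formula on the annulus $B_\eps\setminus B_{\eps/2}$.

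There are, however, two gaps, one minor and one substantive.

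\textbf{Minor: the choice of $r_j$.} Your formula $r_j=2\mu_0^{-1}\max_{q\in\set_j\cap B_{\eps_0}(p)}d_g(p,q)$ does not guarantee that the rescaled sets $r_j^{-1}(\set_j-p)$ remain smooth blow-up sets in the sense of \cref{def:BlowUpSets}: one needs $r_j\min_{q\in\set_j}|\A_{\ms_j}|(q)\to\infty$, and there is no reason this holds for your $r_j$ (indeed if $|\set_j|=1$ and $p_j\to p$ very fast relative to $|\A|(p_j)^{-1}$, your $r_j$ can even vanish). The paper simply \emph{chooses} $r_j\to 0$ slowly enough that both $\set_j\subset B_{\mu_0 r_j}(\set_\infty)$ and $r_j\min_{q}|\A_{\ms_j}|(q)\to\infty$; this is always possible and is what you should do.

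\textbf{Substantive: the argument for (2b).} Your claim that the components of $\ms_j^{\edue}$ stay at positive distance from $\set_j$ is unjustified and, in fact, the opposite of what happens. \cref{rem:ConvSigma1} concerns $\ms_j^{\euno}$, not $\ms_j^{\edue}$; nothing prevents disc components of $\ms_j^{\edue}$ from lying directly above or below a neck, arbitrarily close to points of $\set_j$ (this is exactly the picture exploited later in the surgery of \cref{cor:Surgery}). At such points the estimate of \cref{cor:ExistenceBlowUpSetAndCurvatureEstimate} gives no uniform curvature bound. The paper's argument is genuinely different: assume $\lambda_j^{\edue}\eqdef\sup_{\ms_j^{\edue}}|\A_{\ms_j}|\to\infty$, realized at $z_j$; the curvature estimate then forces $z_j\to p_\infty\in\set_\infty$. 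Rescale the \emph{entire} $\ms_j$ by $\lambda_j^{\edue}$ about $z_j$; by \cref{prop:LimLamInR3} the limit is a single non-flat surface with multiplicity-one convergence. But since $\lambda_j^{\edue}d_g(z_j,\set_j)$ stays bounded (again by the curvature estimate), the rescaled $\ms_j^{\euno}$ also has non-empty limit, so the limit receives contributions from two distinct connected components of the rescaled $\ms_j$---contradicting multiplicity one. You need this argument (or an equivalent one) to close (2b).
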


\begin{proof}

Let $\eps_0>0$ be such that $\min_{p,q\in\set_\infty}d_g(p,q)\ge 4\eps_0$ and each component of $B_{\eps_0}(\set_\infty)$ admits a chart to the unit ball in $\R^3$ (around the points of $\set_\infty$ in $\amb\setminus\partial\amb$) or in $\Xi(0)$ (around the points of $\set_\infty$ in $\partial\amb$) where the limit lamination (cf. statement of \cref{cor:ExistenceBlowUpSetAndCurvatureEstimate}) $\lam$ is sufficiently close (in the sense of laminations) to a union of parallel planes or half-planes. 
Moreover, choose $r_j\to 0$ such that $\set_j\subset B_{ \mu_0r_j}(\set_\infty)$ and
\[
\liminf_{j\to\infty}\ \left( r_j \min_{p\in\set_j}\ \abs{\A_{\ms_j}}(p) \right) = \infty \point
\]
Observe that, fixing a point $p_\infty\in \set_\infty$, the rescaled surfaces $r_j^{-1}(\ms_j - p_\infty)$ in the ambient manifolds $r_j^{-1}(B_{\eps_0}(p_\infty)-p_\infty)$ (seeing everything in chart) fit in the setting \hypN{}.
Therefore, by \cref{prop:LocDeg}, we obtain that the surfaces $\ms_j^{\euno} \cap B_{r_j}(p_\infty)$ have genus at most $\kappa(I)$ and intersect $\partial\amb$ and $ \partial B_{r_j}(p_\infty)\setminus\partial M$ transversely in at most $\kappa(I)$ components.

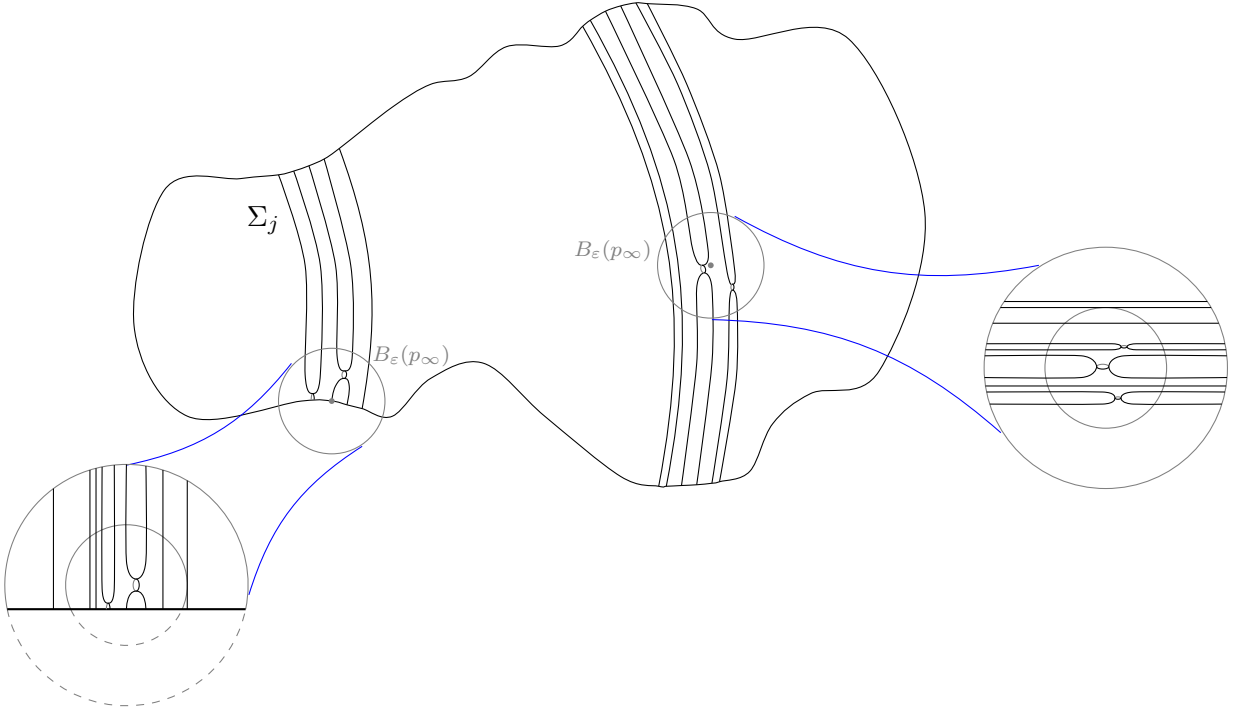
\begin{figure}[htpb]
\centering

\begin{tikzpicture}
\coordinate (A1) at (0,0); 
\coordinate (B1) at (0.2,0.05); 
\coordinate (C1) at (0.4,0.12);
\coordinate (D1) at (0.6,0.21);
\coordinate (E1) at (0.8,0.35);

\coordinate (F1) at (4,1.97);
\coordinate (G1) at (4.1,2.05);
\coordinate (H1) at (4.3,2.17);
\coordinate (I1) at (4.5,2.24);
\coordinate (J1) at (4.7,2.28);
\coordinate (K1) at (4.85,2.27);

\coordinate (F2) at (5,-4.13);
\coordinate (G2) at (5.1,-4.13);
\coordinate (H2) at (5.3,-4.12);
\coordinate (I2) at (5.5,-4.11);
\coordinate (J2) at (5.7,-4.09);
\coordinate (K2) at (5.8,-4.07);

\coordinate (C2) at (0.7,-3);
\coordinate (D2) at (0.9,-3.05);
\coordinate (E2) at (1.1,-3.1);

\coordinate (X) at (-1.5,-0.2);
\coordinate (Y) at (-1.2, -3.2);
\coordinate (U) at (7.5, 1.8);
\coordinate (V) at (7.9, -2.6);

\draw plot [smooth cycle, tension=0.6] coordinates {(A1) (B1) (C1) (D1) (E1)
			      (1.5,0.9) (2,1.2) (2.5,1.3) (3,1.7) (3.7,1.71)
			      (F1) (G1) (H1) (I1) (J1) (K1)
			      (5.5,2.2) (6,1.8)
			      (U) (8.5,-0.5) (V)
			      (7,-2.9) (6.5, -3.3) (6.2, -3.93) 
			      (K2) (J2) (I2) (H2) (G2) (F2)
			      (4.5,-4) (3.5,-3.2) (2.7, -2.5) (2,-2.7) (1.5, -3.2) 
			      (E2) (D2) (C2)
			      (0.3, -3)
			      (Y) (-1.9,-2) (X) (-0.5,-0.05)};
			      
\draw (F1) to [bend left=20] (F2);
\draw (G1) to [bend left=20] (G2);

\draw plot [smooth, tension = 0.5] coordinates {(H1) (5.2,0.2) (5.45,-1) (5.57, -1.2) (5.65, -1) (5.4,0.2) (I1)};
\draw plot [smooth, tension = 0.5] coordinates {(H2) (5.5,-2.5) (5.5,-1.5) (5.6, -1.3) (5.7, -1.5) (5.7,-2.5) (I2)};
\pgfmathsetmacro{\t}{atan(0.03/0.1)}
\begin{scope}[rotate around={\t:(5.57,-1.2)}]
\draw[gray] (5.57, -1.2) arc (90:270:0.03 and {sqrt(0.03*0.03+0.1*0.1)/2});
\draw (5.57, -1.2) arc (90:-90:0.03 and {sqrt(0.03*0.03+0.1*0.1)/2});
\end{scope}

\draw plot [smooth, tension = 0.5] coordinates {(J1) (5.25,1.2) (5.68,0) (5.9,-1.29) (5.97, -1.45) (6, -1.27) (5.8,0) (5.37,1.2) (K1)};
\draw plot [smooth, tension = 0.5] coordinates {(J2)  (5.9,-2.7) (5.93, -1.7) (5.97, -1.53) (6.03,-1.7) (6.02, -2.7) (K2)};
\draw[gray] (5.97, -1.45) arc (90:270:0.02 and {0.04});
\draw (5.97, -1.45) arc (90:-90:0.02 and {0.04});

\draw[gray] (0.45, -2.9) arc (90:182:0.02 and {0.08});
\draw (0.45, -2.9) arc (90:0:0.02 and {0.08});
\draw plot [smooth, tension = 0.55] coordinates {(A1) (0.35,-1.2) (0.35,-2.65) (0.45, -2.9) (0.55, -2.65) (0.55,-1.2) (B1)};

\pgfmathsetmacro{\t}{atan(0.01/0.1)}
\begin{scope}[rotate around={-\t:(0.87, -2.6)}]
\draw[gray] (0.87,-2.6) arc (90:270:0.03 and {sqrt(0.01*0.01+0.1*0.1)/2});
\draw (0.87,-2.6) arc (90:-90:0.03 and {sqrt(0.01*0.01+0.1*0.1)/2});
\end{scope}
\draw plot [smooth, tension = 0.55] coordinates {(C1) (0.75,-1.2) (0.77,-2.4) (0.87, -2.6) (0.97, -2.4) (0.95,-1.2) (D1)};
\draw plot [smooth, tension = 0.9] coordinates {(C2) (0.75, -2.8) (0.86, -2.7) (0.93, -2.8) (D2)};
\draw (E1) to [bend left = 15] (E2);
\node at (-0.2,-0.6) {$\ms_j$};

\pgfmathsetmacro\R{0.7}
\pgfmathsetmacro\r{0.5}
\fill[gray] (C2) circle [radius=0.1em];
\draw[gray] (C2) circle (\R);

\node[gray] at (1.75, -2.4) {\scriptsize $B_\varepsilon(p_\infty)$};

\fill[gray] (5.685, -1.2) circle [radius=0.1em];
\draw[gray] (5.685, -1.2) circle (\R);
\node[gray] at (4.4, -1) {\scriptsize $B_\varepsilon(p_\infty)$};

\draw[blue] (0.17,-2.5) to [bend left=20] (-2,-3.85);
\draw[blue] (1.1,-3.6) to [bend right=20] (-0.39,-5.57);

\draw[blue] (6,-0.55) to [bend right=20] (10,-1.2);
\draw[blue] (5.7,-1.92) to [bend left=20] (9.5,-3.42);

\begin{scope}[scale=0.8,xshift=-2.5cm,yshift=-6.8cm]

\pgfmathsetmacro{\R}{2}
\pgfmathsetmacro{\r}{1}

\pgfmathsetmacro{\hl}{-0.20*\R}
\pgfmathsetmacro{\xl}{sqrt(\R*\R-\hl*\hl)}
\coordinate (OA) at (0,0);

\pgfmathsetmacro{\t}{atan(-\hl/(\xl))}
\draw[gray] (\xl,\hl) arc(-\t:180+\t:\R);
\draw[gray, dashed] (-\xl,\hl) arc(180+\t:360-\t:\R);

\pgfmathsetmacro{\s}{atan(-\hl/(sqrt(\r*\r-\hl*\hl)))}
\draw[gray] ({(sqrt(\r*\r-\hl*\hl))},\hl) arc(-\s:180+\s:\r);
\draw[gray, dashed] (-{(sqrt(\r*\r-\hl*\hl))},\hl) arc(180+\s:360-\s:\r);

\draw[thick] (-\xl,\hl) -- (\xl,\hl);

\def\c{{-0.3,-0.25, 0.3, -0.6,0.5}}

\foreach \i in {0,...,4}
  \draw ({\R*\c[\i]}, {sqrt(\R*\R*(1-\c[\i]*\c[\i]))}) -- ({\R*\c[\i]}, \hl);

\pgfmathsetmacro{\ca}{-0.2*\R}
\pgfmathsetmacro{\ya}{sqrt(\R*\R-\ca*\ca)}
\pgfmathsetmacro{\cb}{-0.10*\R}
\pgfmathsetmacro{\yb}{sqrt(\R*\R-\cb*\cb)}

\draw[gray] ({(\ca+\cb)/2}, {\hl+0.05*\R}) arc (90:180:0.03 and {0.05*\R});
\draw ({(\ca+\cb)/2}, {\hl+0.05*\R}) arc (90:0:0.03 and {0.05*\R});
\draw plot [smooth, tension=0.4] coordinates {(\ca,\ya) (\ca, {\hl+0.17*\R}) ({(\ca+\cb)/2}, {\hl+0.05*\R}) (\cb,{\hl+0.17*\R}) (\cb,\yb)};

\pgfmathsetmacro{\cc}{0*\R}
\pgfmathsetmacro{\yc}{sqrt(\R*\R-\cc*\cc)}
\pgfmathsetmacro{\cd}{0.16*\R}
\pgfmathsetmacro{\yd}{sqrt(\R*\R-\cd*\cd)}

\draw[gray] ({(\cc+\cd)/2}, {\hl+0.25*\R}) arc (90:270:0.05 and {0.05*\R});
\draw ({(\cc+\cd)/2}, {\hl+0.25*\R}) arc (90:-90:0.05 and {0.05*\R});
\draw plot [smooth, tension=0.5] coordinates {(\cc,\yc) (\cc, {\hl+0.4*\R}) ({(\cc+\cd)/2}, {\hl+0.25*\R}) (\cd,{\hl+0.4*\R}) (\cd,\yd)};
\draw plot [smooth, tension=1.8] coordinates {(\cc,\hl) ({(\cc+\cd)/2}, {\hl+0.15*\R}) (\cd,\hl)};
\end{scope}

\begin{scope}[scale=0.8,yshift=-3.2cm, xshift=13.6cm]
\pgfmathsetmacro{\R}{2}
\pgfmathsetmacro{\r}{1}

\coordinate (OA) at (0,0);
\draw[gray] (OA) circle (\R);
\draw[gray] (OA) circle (\r);

\def\h{{-0.15, 0.37, 0.5, 0.55}}

\foreach \i in {0,...,3}
  \draw ({-sqrt(\R*\R*(1-\h[\i]*\h[\i]))},{\R*\h[\i]}) -- ({sqrt(\R*\R*(1-\h[\i]*\h[\i]))},{\R*\h[\i]});  

\pgfmathsetmacro{\ha}{-0.3*\R}
\pgfmathsetmacro{\xa}{sqrt(\R*\R-\ha*\ha)}
\pgfmathsetmacro{\hb}{-0.20*\R}
\pgfmathsetmacro{\xb}{sqrt(\R*\R-\hb*\hb)}

\draw[gray] (0.15,{(\ha+\hb)/2}) arc (180:0:0.05 and 0.02);
\draw (0.15,{(\ha+\hb)/2}) arc (180:360:0.05 and 0.02);
\draw plot [smooth, tension=0.35] coordinates {(-\xa,\ha) (-0.05,\ha) (0.15,{(\ha+\hb)/2}) (-0.05,\hb) (-\xb,\hb)};
\draw plot [smooth, tension=0.4] coordinates {(\xb,\hb) (0.45,\hb) (0.25,{(\hb+\ha)/2}) (0.45,\ha) (\xa,\ha)};

\pgfmathsetmacro{\hd}{-0.08*\R}
\pgfmathsetmacro{\xd}{sqrt(\R*\R-\hd*\hd)}
\pgfmathsetmacro{\he}{0.10*\R}
\pgfmathsetmacro{\xe}{sqrt(\R*\R-\he*\he)}

\draw[gray] (-0.15,{(\he+\hd)/2}) arc (180:0:0.1 and 0.04);
\draw (-0.15,{(\he+\hd)/2}) arc (180:360:0.1 and 0.04);
\draw plot [smooth, tension=0.4] coordinates {(-\xe,\he) (-0.4,\he) (-0.15,{(\he+\hd)/2}) (-0.4,\hd) (-\xd,\hd)};
\draw plot [smooth, tension=0.4] coordinates {(\xe,\he) (0.3,\he) (0.05, {(\he+\hd)/2}) (0.3,\hd) (\xd,\hd)};

\pgfmathsetmacro{\hf}{0.15*\R}
\pgfmathsetmacro{\xf}{sqrt(\R*\R-\hf*\hf)}
\pgfmathsetmacro{\hg}{0.20*\R}
\pgfmathsetmacro{\xg}{sqrt(\R*\R-\hg*\hg)}

\draw[gray] (0.25,{(\hf+\hg)/2}) arc (180:0:0.05 and 0.02);
\draw (0.25,{(\hf+\hg)/2}) arc (180:360:0.05 and 0.02);
\draw plot [smooth, tension=0.25] coordinates {(-\xg,\hg) (0.1,\hg) (0.25,{(\hg+\hf)/2}) (0.1,\hf) (-\xf,\hf)};
\draw plot [smooth, tension=0.32] coordinates {(\xg,\hg) (0.5,\hg) (0.35,{(\hg+\hf)/2}) (0.5,\hf) (\xf,\hf)};
\end{scope}
\end{tikzpicture}
\caption{Blow-up around the points of degeneration.} \label{fig:GlobalDescr}
\end{figure}
Then note that, exactly as in the proof of \cref{prop:LocDeg}, one can choose $\eps_0>0$, possibly smaller than before, in such a way that 
\[
\abs{\A_{\ms_j}}(x) d_g(x,\set_\infty) < {\mu_0}
\]
for $x\in \ms_j \cap (B_{\eps_0}(\set_\infty) \setminus B_{r_j}(\set_\infty))$, for $j$ sufficiently large.
{Therefore, thanks to \cref{rem:ConvSigma1}, we can apply \cref{cor:14TopInfo} and thus transfer the information on the genus and the boundary components of $\ms_j^{\euno}\cap B_{r_j}(p_\infty)$ to $\ms_j^{\euno}\cap B_{\eps_0}(p_\infty)$ (possibly taking $\kappa(I)$ as double the constant given by \cref{prop:LocDeg}).}

Let us now prove that the components of $\ms_j^{\edue}$ have uniformly bounded curvature, i.e.
\[
\limsup_{j\to\infty}\sup_{x\in \ms_j^{\edue}} \abs{A_{\ms_j}}(x) < \infty \point
\]
Assume by contradiction that (possibly after passing to a subsequence) there exists a sequence $z_j\in \ms_j^{\edue}$ satisfying
\[
\lambda_j^{\edue}\eqdef \abs{\A_{\ms_j}}(z_j) = \sup_{x\in \ms_j^{\edue}} \abs{\A_{\ms_j}}(x) \to \infty \point
\]
Observe that, by \eqref{eq:CurvEstSj}, this implies that (up to subsequence) the sequence $z_j$ converges to some point $p_\infty\in \set_\infty$. In particular the distance between $z_j$ and $\partial\ms_j^{\edue}\setminus\partial\amb$ is bounded from below by a positive constant, hence $\lambda_j^{\edue}(B_{\eps_0}(z_j)-z_j)$ is an exhausting sequence of domains of $\Pi(a)$ for some $0\ge a \ge -\infty$.

Now consider the rescaled surfaces $\bu\ms_j\eqdef \lambda_j^{\edue}(\ms_j -z_j)$: they have bounded curvature away from a finite set of points and that $\abs{A_{\bu\ms_j}}(0) = 1$. Therefore, by \crefnoname{thm:LamCptness} and \cref{prop:LimLamInR3}, $\bu\ms_j$ converges locally smoothly with multiplicity one (in the sense of graphs) to a complete, non-flat, connected, properly embedded, two-sided, free boundary minimal surface $\bu \ms_\infty$ in $\Xi(a)$, for some $0\ge a \ge-\infty$.

The limit of the surfaces $\lambda_j^{\edue}(\ms_j^{\euno}-z_j)$ must be non-empty since \[\limsup_{j\to\infty} \min_{p\in \set_j} \ \lambda_j^{\edue} d_{g}(z_j,p) < \infty \] by \eqref{eq:CurvEstSj}. 
However, this contradicts the fact that $\lambda_j^{\edue}(\ms_j -z_j)$ converges to $\bu\ms_\infty$ with multiplicity one since $\lambda_j^{\edue}(\ms_j^{\euno} -z_j)$ and $\lambda_j^{\edue}(\ms_j^{\edue} -z_j)$ are two different connected components of $\lambda_j^{\edue}(\ms_j^{\euno}-z_j)$ and the limit of both of them is non-empty.

Given the bound on the curvature of the components of $\ms_j^{\edue}$, the information on their topology follows easily (possibly taking $\eps_0$ smaller), using that the convergence of $\ms_j^{\edue}$ must be smooth (in the sense of lamination) everywhere in $B_{\eps_0}(\set_\infty)$.
Therefore each component of $\ms_j^{\edue}$ converges smoothly with multiplicity one to a leaf of $\lam$ (by simply connectedness of the components of $\lam$ in $B_\eps(p_\infty)$).

There only remains to prove the bounds \ref{gd:AreaBoundSigmaI} and \ref{gd:AreaBoundSigmaII} on the area.
Fixing $\eps\le\eps_0$, the surfaces $\ms_j^{\euno}$ intersects $\partial B_{\eps}(p_\infty)\setminus \partial\amb$ in at most $\kappa(I)$ components that are simple closed curves or arcs. This implies that $\ms_j^{\euno} \cap (B_\eps(p_\infty)\setminus B_{\eps/2}(p_\infty))$ converges graphically smoothly with finite multiplicity bounded by $\kappa(I)$ to the leaf of $\lam$ in $B_\eps(p_\infty)$ passing through $p_\infty$ (similarly to \cref{rem:ConvSigma1}). As a result, choosing $\eps_0$ sufficiently small such that all the leaves of $\lam$ in $B_{\eps}(p_\infty)$ are sufficiently close to discs or half-discs, we have that 
\[
\limsup_{j\to\infty}\ \area(\ms_j^{\euno} \cap (B_{\eps}(\set_\infty)\setminus B_{\eps/2}(\set_\infty))) \le 2\kappa(I)\pi\eps^2\point
\]
Finally the estimate on $\area(\ms_j^{\euno}\cap B_\eps(\set_\infty))$ follows from the monotonicity formula since, for $\eps>0$ sufficiently small, it holds
\[
\area(\ms_j^{\euno}\cap B_\eps(\set_\infty)) \le 2 \area(\ms_j^{\euno}\cap (B_{\eps}(\set_\infty)\setminus B_{\eps/2}(\set_\infty))) \point
\]
The area estimate for the components of $\ms_j^{\edue}$ is even easier since $\ms_j^{\edue}$ converges (in the sense of laminations) to $\lam$ everywhere in $B_\eps(\set_\infty)$ (in particular, each component of $\ms_j^{\edue}$ converges smoothly with multiplicity one to a leaf of $\lam$ in $B_\eps(\set_\infty)$, as observed above), thus we omit the details.
\end{proof}

\section{Surgery procedure}\label{sec:Surgery}

As a corollary of the degeneration description in \cref{thm:GlobalDeg}, we are able to perform surgeries on the surfaces $\ms_j$ to obtain new surfaces `similar' to $\ms_j$ but with bounded curvature.

\begin{corollary} \label{cor:Surgery}

Given $I\in \N$ there exists $\tilde\kappa(I)\ge 0$ such that the following assertions hold true.

In the setting of \cref{thm:GlobalDeg}, and for the same value of $\eps_0$,
for all $0< \eps\le \eps_0$ and $j$ sufficiently large, there exist properly embedded surfaces $\tilde \ms_j\subset \amb$ satisfying the following properties:
\begin{enumerate} [label={\normalfont(\arabic*)}]
\item $\tilde\ms_j$ coincides with $\ms_j$ outside $B_\eps(\set_\infty)$.
\item The curvature of $\tilde\ms_j$ is uniformly bounded, i.e.
\[
\limsup_{j\to\infty}\sup_{x\in \tilde\ms_j} \abs{\A_{\tilde\ms_j}}(x) < \infty \point
\]
\item The genus, the number of boundary components, the area and the number of connected components of $\tilde\ms_j$ are controlled by the ones of $\ms_j$, namely \label{s:SimChar}
\begin{align*}
&\genus(\ms_j) - \tilde\kappa(I) \le \genus(\tilde\ms_j) \le \genus(\ms_j)\comma \\
&\bdry( \ms_j) - \tilde\kappa(I) \le \bdry(\tilde\ms_j) \le \bdry( \ms_j) \comma \\
&\area( \ms_j) - \tilde\kappa(I) \le \area(\tilde\ms_j) \le \area(\ms_j) + \tilde\kappa(I) \comma \\
&\abs{\pi_0(\ms_j)} \le \abs{\pi_0(\tilde\ms_j)} \le \abs{\pi_0(\ms_j)} + \tilde\kappa(I) \point
\end{align*}
\item The surfaces $\tilde\ms_j$ locally converge (in the sense of laminations) to the lamination $\lam$ in \cref{cor:ExistenceBlowUpSetAndCurvatureEstimate}.

\end{enumerate}

\end{corollary}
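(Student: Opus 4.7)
The plan is to explicitly construct $\tilde\ms_j$ by removing $\ms_j^{\euno}$ from $B_\eps(\set_\infty)$ and capping off the resulting boundary curves with smooth approximations of (a portion of) the leaf of $\lam$ through the relevant point of $\set_\infty$. Concretely, given $p_\infty\in\set_\infty$ and $0<\eps\le\eps_0$, by \cref{thm:GlobalDeg} there is a uniform bound $\kappa(I)$ on the number of components of $\ms_j^{\euno}\cap(\partial B_\eps(p_\infty)\setminus\partial\amb)$, each of which is a simple closed curve (respectively, a simple arc if $p_\infty\in\partial\amb$ and the curve has endpoints on $\partial\amb$). Combining the multiplicity-one convergence of each sheet to the relevant leaf in the annular region (cf. \cref{rem:ConvSigma1} together with the curvature estimate in the proof of \cref{thm:GlobalDeg}) with the fact that $\lam$ is, in a suitable chart near $p_\infty$, smoothly close to a union of parallel (half-)planes, each such boundary curve $\gamma_{j,k}$ can be uniformly graphically parametrized over a circle or half-circle of radius $\eps$ inside a leaf of $\lam$ (a half-leaf meeting $\partial\amb$ orthogonally in the case $p_\infty\in\partial\amb$).

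Then I would define $\tilde\ms_j$ by replacing $\ms_j^{\euno}\cap B_\eps(p_\infty)$ with the surface obtained as follows: on the annular collar $B_\eps(p_\infty)\setminus B_{(1-\delta)\eps}(p_\infty)$ (for some small $\delta>0$ chosen uniformly in $j$) interpolate, via a smooth cut-off, between the graph defining $\gamma_{j,k}$ and the exact (half-)disc in the corresponding leaf; on $B_{(1-\delta)\eps}(p_\infty)$ use the flat (half-)disc itself. For large $j$ the $C^2$-distance of $\gamma_{j,k}$ from the limit circle is $o(1)$, so the cut-off interpolation has curvature bounded independently of $j$, and the resulting surface is smooth, properly embedded, and (when $p_\infty\in\partial\amb$) meets $\partial\amb$ orthogonally along the capped portion (after a small perturbation in Fermi coordinates to guarantee the free boundary condition at the cap). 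Performing the same procedure around each point of $\set_\infty$ and leaving $\ms_j$ untouched elsewhere produces the surface $\tilde\ms_j$, and property (1) is immediate.

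Property (2) is a combination of three uniform bounds: outside $B_\eps(\set_\infty)$, $\tilde\ms_j=\ms_j$ and the bound follows from \cref{cor:ExistenceBlowUpSetAndCurvatureEstimate}; on $\ms_j^{\edue}$ the curvature is uniformly bounded by \cref{thm:GlobalDeg}; on the new caps, the bound is by construction. For property (3), note that the surgery discards each component of $\ms_j^{\euno}$ (whose genus is at most $\kappa(I)$ by \cref{thm:GlobalDeg}) and replaces it with at most $\kappa(I)$ topological discs/half-discs, so the change in Euler characteristic, in number of boundary components and in number of connected components is controlled by a constant $\tilde\kappa(I)$ depending only on $I$ and on $|\set_\infty|\le I$; in particular the genus of $\tilde\ms_j$ is bounded above by that of $\ms_j$ (capping with discs can only decrease it) and below via the elementary identities in \cref{sec:EulChar}. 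The area estimate follows from the bound $\area(\ms_j^{\euno})\le 4\pi\kappa(I)\eps^2$ in \cref{thm:GlobalDeg}\ref{gd:AreaBoundSigmaI} and the fact that each cap has area at most $2\pi\eps^2$. Property (4) is a consequence of the construction: outside $B_\eps(\set_\infty)$ convergence of $\tilde\ms_j=\ms_j$ to $\lam$ is the one given by \cref{cor:ExistenceBlowUpSetAndCurvatureEstimate}, while inside $B_\eps(\set_\infty)$ each cap is, by definition, a smooth graph converging to the portion of the leaf of $\lam$ through the corresponding point of $\set_\infty$.

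The main obstacle I expect is making the cut-off/interpolation work uniformly in $j$ while simultaneously preserving (i) smoothness across $\partial B_\eps(\set_\infty)$ after gluing with the unchanged portion of $\ms_j$, and (ii) the free boundary condition along $\partial\amb\cap B_\eps(\set_\infty)$ in the case $p_\infty\in\partial\amb$. Both issues are handled by working in Fermi coordinates around $p_\infty$, in which $\partial\amb$ is flat, and by exploiting the multiplicity-one graphical convergence of the boundary curves $\gamma_{j,k}$ on the annular region, so that any two graphs being interpolated differ by $o(1)$ in $C^2$ and the interpolation cost in curvature is controlled by $\delta^{-2} o(1)=o(1)$ for fixed $\delta$; no data essential for properties (1)--(4) is lost in the process.
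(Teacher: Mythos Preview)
Your approach is essentially the same as the paper's: remove the neck components $\ms_j^{\euno}$ and replace them by graphical caps obtained via cutoff interpolation toward the leaf through $p_\infty$. The verification of properties (1)--(4) follows the same logic.

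There is, however, a genuine gap in your construction concerning \emph{embeddedness}. You propose to interpolate each boundary component $\gamma_{j,k}$ toward ``the exact (half-)disc in the corresponding leaf'' and then ``use the flat (half-)disc itself'' on the inner ball. But by \cref{rem:ConvSigma1} all the components of $\ms_j^{\euno}$ near $p_\infty$ converge to the \emph{same} leaf of $\lam$ (the one through $p_\infty$), so the ``corresponding leaf'' is identical for every $k$. Capping all of the (up to $\kappa(I)$) boundary curves with the very same disc, or with graphs that all converge to it, will make the caps intersect each other for large $j$. Moreover, nothing in your description prevents the caps from intersecting nearby components of $\ms_j^{\edue}$, which are themselves discs accumulating on the same leaf.

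The paper addresses exactly this point: after passing to a chart where the leaf is flat, it writes the sheets of $\ms_j^{\euno}$ over the annulus as ordered graphs $u_{j,1}<\dots<u_{j,n(j)}$, includes in the construction the disc components of $\ms_j^{\edue}$ lying directly above and below, picks a function $w_j$ whose graph lies \emph{strictly between} those two discs, and then interpolates each $u_{j,k}$ not toward the leaf itself but toward the stacked target $w_j+\tfrac{k}{n(j)}\delta_j$. The parameters $\delta_j\to 0$ guarantee convergence to $\lam$, while the strict ordering and the choice of $w_j$ guarantee that the resulting $\tilde\ms_j$ is embedded and disjoint from all other components. Your argument needs this stacking device (or an equivalent one) to go through. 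As a minor aside, note that the statement only asks for $\tilde\ms_j$ to be properly embedded, not free boundary, so your additional perturbation ``to guarantee the free boundary condition at the cap'' is unnecessary.
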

\begin{proof}
Consider $p_\infty\in \set_\infty$; then, if $p_\infty\in\amb\setminus\partial\amb$, we can perform the surgery as in \cite[Corollary 1.19]{ChoKetMax17} (possibly restricting $\eps_0$). Therefore, let us assume that $p_\infty\in \partial\amb$. Pick the leaf $L$ of $ \lam \cap B_\eps(p_\infty)$ passing through $p_\infty$, which satisfies $\partial L = L\cap\partial\amb$ thanks to property \hypP{}.
Then fix a diffeomorphism $\psi:B_\eps(p_\infty)\to B_3(0)\cap \Xi(0)\subset \R^3$ such that $\psi$ maps $B_{\eps/3}(p_\infty)$ diffeomorphically onto $B_1(0)\cap \Xi(0)$ and $L$ onto the flat half-disc $\{ x^3 = 0\} \cap (B_3(0)\cap \Xi(0))$.

Consider all the connected components of $\ms_j\cap B_\eps(p_\infty)$ which are converging smoothly to $L$ in $B_\eps(p_\infty)\setminus \bar B_{\eps/3}(p_\infty)$. These include all the neck components, called $\ms_j^{\euno}$ in \cref{thm:GlobalDeg}, as observed in the proof of the theorem.
Note that the convergence to the leaf $L$ in $B_\eps(p_\infty)\setminus \bar B_{\eps/3}(p_\infty)$ might possibly occur with infinite multiplicity; however, the convergence of the components relative to $\ms_j^{\euno}$ occurs with uniformly bounded multiplicity (for example thanks to the area bound \ref{gd:AreaBoundSigmaI} in \cref{thm:GlobalDeg}, or from the proof of \ref{gd:AreaBoundSigmaI} itself).

Let $\Gamma_j$ be the image through $\psi$ of the union of the necks components of $\ms_j\cap B_\eps(p_\infty)$ together with the disc components directly above or below a neck component.
We are going to perform our surgery on $\Gamma_j$ leaving invariant its disc components, in this way we are sure that also all the other disc components remain untouched.
For the sake of convenience, let us identify $L$ with its image in $B_3(0)\cap \Xi(0)$ and let us denote by $D(2)$ the disc with radius $2$ and center $0$ in $L$, that is $D(2)\eqdef L\cap \{\abs x < 2\}$, and $A(2,1)$ the annulus between radii $1$ and $2$ in $L$, namely $A(2,1)\eqdef L\cap \{1<\abs x < 2\}$. 
Then choose $\chi:\R\to \cc 01$ a smooth, non-decreasing cutoff function with $\chi(t) = 0$ for $t\le 5/4$ and $\chi(t) = 1$ for $t\ge 7/4$.

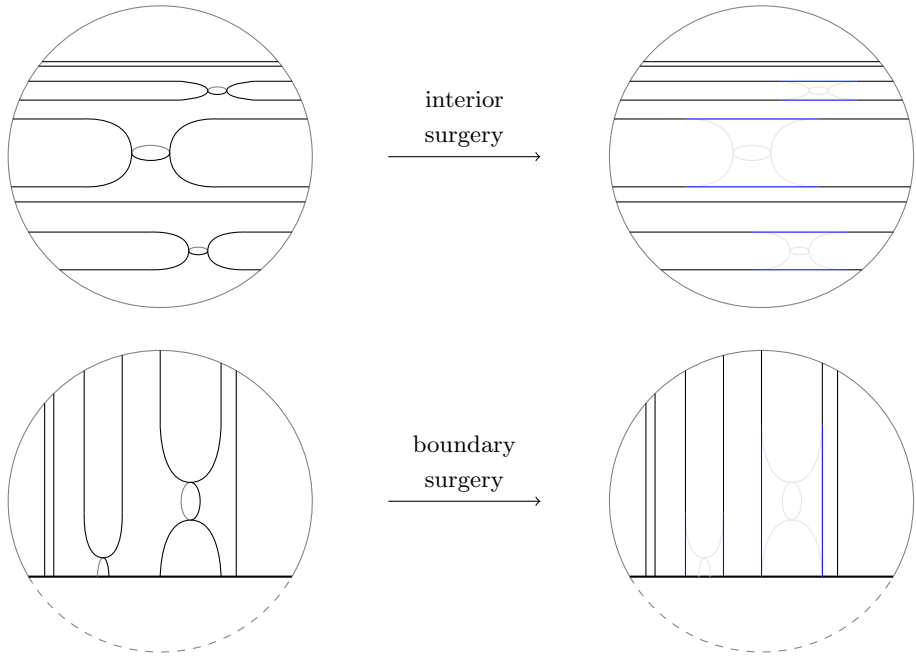
\begin{figure}[htpb]
\centering
\begin{tikzpicture}[scale=2.5]
\pgfmathsetmacro{\R}{2}
\pgfmathsetmacro{\r}{0.8}


\coordinate (OA) at (0,0);

\draw[gray] (OA) circle (\r);
\def\h{{-0.3, 0.6, 0.63}}

\foreach \i in {0,...,2}
  \draw ({-sqrt(\r*\r*(1-\h[\i]*\h[\i]))},{\r*\h[\i]}) -- ({sqrt(\r*\r*(1-\h[\i]*\h[\i]))},{\r*\h[\i]});

\pgfmathsetmacro{\ha}{-0.3*\R}
\pgfmathsetmacro{\xa}{sqrt(\r*\r-\ha*\ha)}
\pgfmathsetmacro{\hb}{-0.20*\R}
\pgfmathsetmacro{\xb}{sqrt(\r*\r-\hb*\hb)}

\begin{scope}
\draw[gray]  (0.15,{(\ha+\hb)/2}) arc (180:0:0.05 and 0.02);
\draw  (0.15,{(\ha+\hb)/2}) arc (180:360:0.05 and 0.02);
\draw plot [smooth, tension=2] coordinates {(-0.05,\ha) (0.15,{(\ha+\hb)/2}) (-0.05,\hb)};
\draw plot [smooth, tension=2] coordinates {(0.45,\hb) (0.25,{(\hb+\ha)/2}) (0.45,\ha)};

\draw (-\xa,\ha) -- (-0.05,\ha);
\draw (-0.05,\hb) -- (-\xb,\hb);
\draw (\xa,\ha) -- (0.45,\ha);
\draw (0.45,\hb) -- (\xb,\hb);

\pgfmathsetmacro{\hd}{-0.08*\R}
\pgfmathsetmacro{\xd}{sqrt(\r*\r-\hd*\hd)}
\pgfmathsetmacro{\he}{0.10*\R}
\pgfmathsetmacro{\xe}{sqrt(\r*\r-\he*\he)}
\draw[gray] (-0.15,{(\he+\hd)/2}) arc (180:0:0.1 and 0.04);
\draw  (-0.15,{(\he+\hd)/2}) arc (180:360:0.1 and 0.04);
\draw  plot [smooth, tension=2] coordinates {(-0.4,\he) (-0.15,{(\he+\hd)/2}) (-0.4,\hd)};
\draw  plot [smooth, tension=2] coordinates {(0.3,\he) (0.05, {(\he+\hd)/2}) (0.3,\hd)};
\draw (-\xe,\he) -- (-0.4,\he);
\draw (-0.4,\hd) -- (-\xd,\hd);
\draw (\xe,\he) -- (0.3,\he);
\draw (0.3,\hd) -- (\xd,\hd);

\pgfmathsetmacro{\hf}{0.15*\R}
\pgfmathsetmacro{\xf}{sqrt(\r*\r-\hf*\hf)}
\pgfmathsetmacro{\hg}{0.20*\R}
\pgfmathsetmacro{\xg}{sqrt(\r*\r-\hg*\hg)}
\draw[gray]  (0.25,{(\hf+\hg)/2}) arc (180:0:0.05 and 0.02);
\draw  (0.25,{(\hf+\hg)/2}) arc (180:360:0.05 and 0.02);
\draw plot [smooth, tension=1.5] coordinates {(0.1,\hg) (0.25,{(\hg+\hf)/2}) (0.1,\hf)};
\draw plot [smooth, tension=1.5] coordinates {(0.5,\hg) (0.35,{(\hg+\hf)/2}) (0.5,\hf)};
\draw (-\xg,\hg) -- (0.1,\hg);
\draw (\xg,\hg) -- (0.5,\hg);
\draw (0.1,\hf) -- (-\xf,\hf);
\draw (0.5,\hf) -- (\xf,\hf);
\end{scope}

\draw[->] (1.2,0) -- (2,0);
\node at (1.6,0.2) [text width=1.5cm,align=center]{\footnotesize interior surgery};

\begin{scope}[xshift=45*\R]

\coordinate (OA) at (0,0);
\draw[gray] (OA) circle (\r);
\def\h{{-0.3, 0.6, 0.63}}

\foreach \i in {0,...,2}
  \draw ({-sqrt(\r*\r*(1-\h[\i]*\h[\i]))},{\r*\h[\i]}) -- ({sqrt(\r*\r*(1-\h[\i]*\h[\i]))},{\r*\h[\i]});

\pgfmathsetmacro{\ha}{-0.3*\R}
\pgfmathsetmacro{\xa}{sqrt(\r*\r-\ha*\ha)}
\pgfmathsetmacro{\hb}{-0.20*\R}
\pgfmathsetmacro{\xb}{sqrt(\r*\r-\hb*\hb)}

\begin{scope}
\draw[gray!20]  (0.15,{(\ha+\hb)/2}) arc (180:0:0.05 and 0.02);
\draw[gray!20]  (0.15,{(\ha+\hb)/2}) arc (180:360:0.05 and 0.02);
\draw[gray!20] plot [smooth, tension=2] coordinates {(-0.05,\ha) (0.15,{(\ha+\hb)/2}) (-0.05,\hb)};
\draw[gray!20] plot [smooth, tension=2] coordinates {(0.45,\hb) (0.25,{(\hb+\ha)/2}) (0.45,\ha)};

\draw (-\xa,\ha) -- (-0.05,\ha);
\draw (-0.05,\hb) -- (-\xb,\hb);
\draw (\xa,\ha) -- (0.45,\ha);
\draw (0.45,\hb) -- (\xb,\hb);

\pgfmathsetmacro{\hd}{-0.08*\R}
\pgfmathsetmacro{\xd}{sqrt(\r*\r-\hd*\hd)}
\pgfmathsetmacro{\he}{0.10*\R}
\pgfmathsetmacro{\xe}{sqrt(\r*\r-\he*\he)}

\draw[gray!20] (-0.15,{(\he+\hd)/2}) arc (180:0:0.1 and 0.04);
\draw[gray!20]  (-0.15,{(\he+\hd)/2}) arc (180:360:0.1 and 0.04);
\draw[gray!20]  plot [smooth, tension=2] coordinates {(-0.4,\he) (-0.15,{(\he+\hd)/2}) (-0.4,\hd)};
\draw[gray!20]  plot [smooth, tension=2] coordinates {(0.3,\he) (0.05, {(\he+\hd)/2}) (0.3,\hd)};

\draw (-\xe,\he) -- (-0.4,\he);
\draw (-0.4,\hd) -- (-\xd,\hd);
\draw (\xe,\he) -- (0.3,\he);
\draw (0.3,\hd) -- (\xd,\hd);

\pgfmathsetmacro{\hf}{0.15*\R}
\pgfmathsetmacro{\xf}{sqrt(\r*\r-\hf*\hf)}
\pgfmathsetmacro{\hg}{0.20*\R}
\pgfmathsetmacro{\xg}{sqrt(\r*\r-\hg*\hg)}

\draw[gray!20]  (0.25,{(\hf+\hg)/2}) arc (180:0:0.05 and 0.02);
\draw[gray!20]  (0.25,{(\hf+\hg)/2}) arc (180:360:0.05 and 0.02);
\draw[gray!20] plot [smooth, tension=1.5] coordinates {(0.1,\hg) (0.25,{(\hg+\hf)/2}) (0.1,\hf)};
\draw[gray!20] plot [smooth, tension=1.5] coordinates {(0.5,\hg) (0.35,{(\hg+\hf)/2}) (0.5,\hf)};

\draw (-\xg,\hg) -- (0.1,\hg);
\draw (\xg,\hg) -- (0.5,\hg);
\draw (0.1,\hf) -- (-\xf,\hf);
\draw (0.5,\hf) -- (\xf,\hf);

\begin{scope}[solid,blue]
\draw[blue] (-0.05,\ha) -- (0.45,\ha);
\draw[blue] (-0.05,\hb) -- (0.45,\hb);
\draw[blue] (-0.4,\he) -- (0.3,\he);
\draw[blue] (-0.4,\hd) -- (0.3,\hd);
\draw[blue] (0.1,\hg) -- (0.5,\hg);
\draw[blue] (0.1,\hf) -- (0.5,\hf);
\end{scope}
\end{scope}
\end{scope}

\begin{scope}[yshift=-26*\R]
\pgfmathsetmacro{\hl}{-0.20*\R}
\pgfmathsetmacro{\xl}{sqrt(\r*\r-\hl*\hl)}
\coordinate (OA) at (0,0);
\pgfmathsetmacro{\s}{atan(-\hl/(sqrt(\r*\r-\hl*\hl)))}
\draw[gray] ({(sqrt(\r*\r-\hl*\hl))},\hl) arc(-\s:180+\s:\r);
\draw[gray, dashed] (-{(sqrt(\r*\r-\hl*\hl))},\hl) arc(180+\s:360-\s:\r);
\draw[thick] (-\xl,\hl) -- (\xl,\hl);

\def\c{{-0.76,-0.7, 0.5}}
\foreach \i in {0,...,2}
\draw ({\r*\c[\i]}, {sqrt(\r*\r*(1-\c[\i]*\c[\i]))}) -- ({\r*\c[\i]}, \hl);

\pgfmathsetmacro{\ca}{-0.2*\R}
\pgfmathsetmacro{\ya}{sqrt(\r*\r-\ca*\ca)}
\pgfmathsetmacro{\cb}{-0.10*\R}
\pgfmathsetmacro{\yb}{sqrt(\r*\r-\cb*\cb)}

\draw[gray] ({(\ca+\cb)/2}, {\hl+0.05*\R}) arc (90:180:0.03 and {0.05*\R});
\draw ({(\ca+\cb)/2}, {\hl+0.05*\R}) arc (90:0:0.03 and {0.05*\R});
\draw plot [smooth, tension=2] coordinates {(\ca, {\hl+0.17*\R}) ({(\ca+\cb)/2}, {\hl+0.05*\R}) (\cb,{\hl+0.17*\R})};

\draw (\ca,\ya) -- (\ca, {\hl+0.17*\R});
\draw (\cb,{\hl+0.17*\R}) -- (\cb,\yb);

\pgfmathsetmacro{\cc}{0*\R}
\pgfmathsetmacro{\yc}{sqrt(\r*\r-\cc*\cc)}
\pgfmathsetmacro{\cd}{0.16*\R}
\pgfmathsetmacro{\yd}{sqrt(\r*\r-\cd*\cd)}

\draw[gray] ({(\cc+\cd)/2}, {\hl+0.25*\R}) arc (90:270:0.05 and {0.05*\R});
\draw ({(\cc+\cd)/2}, {\hl+0.25*\R}) arc (90:-90:0.05 and {0.05*\R});
\draw plot [smooth, tension=1.8] coordinates {(\cc, {\hl+0.4*\R}) ({(\cc+\cd)/2}, {\hl+0.25*\R}) (\cd,{\hl+0.4*\R})};
\draw plot [smooth, tension=1.8] coordinates {(\cc,\hl) ({(\cc+\cd)/2}, {\hl+0.15*\R}) (\cd,\hl)};

\draw (\cc,\yc) -- (\cc, {\hl+0.4*\R});
\draw (\cd,{\hl+0.4*\R}) -- (\cd,\yd);

\draw[->] (1.2,0) -- (2,0);
\node at (1.6,0.2) [text width=1.5cm,align=center]{\footnotesize boundary surgery};

\begin{scope} [xshift=45*\R]

\pgfmathsetmacro{\hl}{-0.20*\R}
\pgfmathsetmacro{\xl}{sqrt(\r*\r-\hl*\hl)}
\coordinate (OA) at (0,0);

\pgfmathsetmacro{\s}{atan(-\hl/(sqrt(\r*\r-\hl*\hl)))}
\draw[gray] ({(sqrt(\r*\r-\hl*\hl))},\hl) arc(-\s:180+\s:\r);
\draw[gray, dashed] (-{(sqrt(\r*\r-\hl*\hl))},\hl) arc(180+\s:360-\s:\r);
\draw[thick] (-\xl,\hl) -- (\xl,\hl);

\def\c{{-0.76,-0.7, 0.5}}

\foreach \i in {0,...,2}
\draw ({\r*\c[\i]}, {sqrt(\r*\r*(1-\c[\i]*\c[\i]))}) -- ({\r*\c[\i]}, \hl);

\pgfmathsetmacro{\ca}{-0.2*\R}
\pgfmathsetmacro{\ya}{sqrt(\r*\r-\ca*\ca)}
\pgfmathsetmacro{\cb}{-0.10*\R}
\pgfmathsetmacro{\yb}{sqrt(\r*\r-\cb*\cb)}

\draw[gray!20] ({(\ca+\cb)/2}, {\hl+0.05*\R}) arc (90:180:0.03 and {0.05*\R});
\draw[gray!20] ({(\ca+\cb)/2}, {\hl+0.05*\R}) arc (90:0:0.03 and {0.05*\R});
\draw[gray!20] plot [smooth, tension=2] coordinates {(\ca, {\hl+0.17*\R}) ({(\ca+\cb)/2}, {\hl+0.05*\R}) (\cb,{\hl+0.17*\R})};

\draw[blue] (\ca, {\hl+0.17*\R}) -- (\ca,\hl);
\draw[blue] (\cb, {\hl+0.17*\R}) -- (\cb,\hl);
\draw (\ca,\ya) -- (\ca, {\hl+0.17*\R});
\draw (\cb,{\hl+0.17*\R}) -- (\cb,\yb);

\pgfmathsetmacro{\cc}{0*\R}
\pgfmathsetmacro{\yc}{sqrt(\r*\r-\cc*\cc)}
\pgfmathsetmacro{\cd}{0.16*\R}
\pgfmathsetmacro{\yd}{sqrt(\r*\r-\cd*\cd)}

\draw[gray!20] ({(\cc+\cd)/2}, {\hl+0.25*\R}) arc (90:270:0.05 and {0.05*\R});
\draw[gray!20] ({(\cc+\cd)/2}, {\hl+0.25*\R}) arc (90:-90:0.05 and {0.05*\R});
\draw[gray!20] plot [smooth, tension=1.8] coordinates {(\cc, {\hl+0.4*\R}) ({(\cc+\cd)/2}, {\hl+0.25*\R}) (\cd,{\hl+0.4*\R})};
\draw[gray!20] plot [smooth, tension=1.8] coordinates {(\cc,\hl) ({(\cc+\cd)/2}, {\hl+0.15*\R}) (\cd,\hl)};

\draw[blue] (\cc, {\hl+0.4*\R}) -- (\cc,{\hl});
\draw[blue] (\cd, {\hl+0.4*\R}) -- (\cd,{\hl});

\draw (\cc,\yc) -- (\cc, {\hl+0.4*\R});
\draw (\cd,{\hl+0.4*\R}) -- (\cd,\yd);
\end{scope}
\end{scope}
\end{tikzpicture}
\caption{Surgery at small scales.} \label{fig:SurgeryMicro}
\end{figure}

We can assume to have only two disc components in $\Gamma_j$, one at the top and one at the bottom. If this is not the case, we work separately on subsets of the components of $\Gamma_j$ in this form, eventually adding the extremal disc components if missing.
Pick a smooth function $w_j:D(2)\to \R$ such that 
\begin{itemize}
 \item the graph of $w_j$ is contained in $B_3(0)\cap \Xi(0)$ and it lies strictly between the two disc components of $\Gamma_j$;
 \item the graph of $w_j$ intersects transversely $\Pi(0)$;
 \item $w_j$ smoothly converges to $0$ as $j\to\infty$.
\end{itemize}
Moreover choose real numbers $\delta_j\to 0$ such that $w_j+\delta$ has the same properties for every $0\le \delta \le\delta_j$.

Now, since the convergence of $\Gamma_j$ to $L$ is smooth graphical with finite multiplicity on $A(2,1)$, we can find functions $u_{j,1},\ldots,u_{j,n(j)}:A(2,1)\to \R$ so that the non-disc components of $\Gamma_j$ on the cylinder over $A(2,1)$ are exactly the graphs of $u_{j,k}$ for $k=1,\ldots,n(j)$.
Moreover observe that
\begin{itemize}
\item $n(j)$ is uniformly bounded for $j\to\infty$ for what we said before;
\item for all $h\in\N$ we have $\sup_{k=1,\ldots,n(j)} \norm{u_{j,k}}_{C^h(A(2,1))}\to 0$ as $j\to\infty$;
\item by embeddedness of $\Gamma_j$ we can assume $u_{j,1}(x) < u_{j,2}(x) < \ldots < u_{j,n(j)}(x)$ for all $x\in A(2,1)$.
\end{itemize}
We then interpolate, i.e. we define
\[
\tilde u_{j,k}(x) \eqdef \chi(\abs{x}) u_{j,k}(x) + (1-\chi(\abs x)) \left(w_j(x) + \frac{k}{n(j)} \delta_j\right)\comma 
\]
for all $x\in A(2,1)$, and the surface $\tilde \Gamma_j$ as the union of the graphs of $\tilde u_{j,k}$ for $k=1,\ldots,n(j)$ inside the cylinder over $D(2)$ and coinciding with $\Gamma_j$ outside that cylinder.
Now just define $\tilde\ms_j$ as $\ms_j$ outside $B_\eps(\set_\infty)$ and as the preimages of $\tilde\Gamma_j$ through $\psi$ inside the balls $B_\eps(\set_\infty)$.
All the properties required to $\tilde\ms_j$ are easily fulfilled by construction thanks to the bounds on the genus and the number of boundary components of $\ms^{\euno}_j$ inside $B_\eps(\set_\infty)$, proven in \cref{thm:GlobalDeg}.
\end{proof}


\section{Diameter bounds for stable free boundary minimal surfaces} \label{sec:CptnessStable}

In this section we prove \cref{prop:StableImpliesCptness}, which is key to derive the area bounds claimed in the statement of \cref{thm:AreaBound}. The main idea of the proof is that, under the assumptions of the proposition, the stable minimal surface $\ms$  admits a (complete) conformal metric with non-negative curvature and convex boundary. Moreover, at least one of the two inequalities is strict: either the curvature is strictly positive or the boundary is strictly convex, hence we expect the size of $\Sigma$ to be bounded as a consequence of this property.
In order to implement this heuristic idea, we employ several different techniques, mainly from \cite{Fis85} and \cite{Whi86} and, of course, from the same result in the closed case \cite{SchYau83}, in the form proposed in \cite[Proposition 2.12]{Car15}.

\begin{proof} [Proof of \cref{prop:StableImpliesCptness}]

Suppose by contradiction that $\ms$ is non-compact\footnote{It is straightforward to note that the same proof goes through, to provide the bound for $\text{diam}(\ms)$ in the case when $\Sigma$ is a compact, properly embedded free boundary minimal surface: indeed in that case we know that $\ms$ is a (two-sided) disc, and it suffices to take $\omega$ the first eigenfunction of the Jacobi operator $\jac_{\ms}$ (subject to the usual oblique boundary condition).}. Then by standard arguments as in \cite{FisSch80} (cf. also \cite[Section 2.2.2]{Vol15}), there exists a positive function $\omega$ on $\ms$ such that
\begin{equation} \label{eq:EllProb}
\begin{cases} 
\jac_\ms (\omega) = \lapl \omega + (\frac 12 \Scal_g + \frac 12 \abs \A^2 - K)\omega = 0 & \text{on $\ms$}\\
\frac{\partial \omega}{\partial \eta} = -\II^{\partial\amb}(\nu,\nu) \omega &\text{on $\partial\ms$} \point
\end{cases}
\end{equation}
Recall that we can choose $\omega$ strictly positive in all $\ms$ (including on $\partial\ms$) by the strong maximum principle and the Hopf boundary point lemma.

Consider on $\ms$ the conformal change of metric $\tilde g = \omega^2g$ where $g$ (which we also denote by $\scal\cdot\cdot$) is the metric on $\ms$ that is induced by the ambient metric on $\amb$.
Then we know (see e.g. \cite[pp. 126-127]{FisSch80}) that the curvature of $(\ms,\tilde g)$ is given by
\begin{equation}\label{eq:ChangeSectional}
\begin{split}
\tilde K &= \omega^{-2}(K-\lapl \log \omega) = \omega^{-2}\left( K - \frac{\omega\lapl\omega - \abs{\grad \omega}^2 }{\omega^2} \right) \\
&=\omega^{-2}\left(\frac 12 \Scal_g + \frac 12 \abs A^2\right) + \frac{\abs{\grad \omega}^2}{\omega^4} \ge \omega^{-2}\frac{\Scal_g}2 \ge\omega^{-2}\frac{\varrho_0}2\ge 0\point
\end{split}
\end{equation}
Now let $\tau$ be a local choice of unit vector tangent to $\partial\ms$ in $(\ms,g)$ and, as usual, let $\eta$ be the outward unit normal to $\partial\ms$.
After the conformal change of coordinates, $\tau/\omega$ and $\eta/\omega$ are an orthonormal basis. Hence we can compute the geodesic curvature of $\partial\ms$ in $(\ms,\tilde g)$ as follows
\begin{equation}\label{eq:ChangeGeodCurv}
\begin{split}
\tilde k &= -\tilde g({\tilde\cov_{\tau/\omega}\tau/\omega},{\eta/\omega}) = -\omega^2 \scal{\tilde\cov_{\tau/\omega}\tau/\omega}{\eta/\omega}
= -\omega^{-1}\scal{\tilde \cov_\tau \tau}\eta \\
&=-\omega^{-1}\left(\scal{\cov_\tau\tau}{\eta} - \omega^{-1} \DerParz{\omega}\eta\right) = -\omega^{-1}(\II^{\partial\amb}(\tau,\tau) + \II^{\partial\amb}(\nu,\nu)) \\ &= \omega^{-1}H^{\partial\amb}\ge \omega^{-1}\sigma_0\ge 0\point
\end{split}
\end{equation}
Therefore $(\ms,\tilde g)$ is a surface with non-negative Gaussian curvature and convex boundary and, as claimed above, either of the two functions $\tilde{K}$ or $\tilde k$ is strictly positive.

\begin{lemma}
The surface $(\ms,\tilde g)$ with boundary is complete (as a metric space).
\end{lemma}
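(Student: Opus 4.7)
The plan is to argue by contradiction: suppose $(\ms, \tilde g)$ fails to be complete. Then there would exist a $\tilde g$-Cauchy sequence $\{p_n\}\subset \ms$ without $\tilde g$-limit in $\ms$, from which I would extract a rectifiable $\tilde g$-curve $\gamma:\co 0 L \to \ms$ of finite $\tilde g$-length $L$ that eventually exits every $g$-compact subset of $\ms$. This extraction is possible because, on any $g$-compact $K\subset \ms$, strict positivity and continuity of $\omega$ give $\tilde g\ge (\inf_K\omega)^2 g$, so a $\tilde g$-Cauchy sequence contained in $K$ would also be $g$-Cauchy and hence, by $g$-completeness of $\ms$, $g$-convergent to some $p\in\ms$; smoothness of $\tilde g$ near $p$ would then yield $\tilde g$-convergence as well, a contradiction.

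Parameterizing $\gamma$ by $\tilde g$-arclength one has $\abs{\gamma'}_g = 1/(\omega\circ\gamma)$, so the $g$-length of $\gamma$ equals $\int_0^L \omega^{-1}(\gamma(t))\, dt$; this integral has to be infinite (otherwise $\gamma$ would $g$-converge and a contradiction would follow as above), and therefore $\omega\circ\gamma$ attains arbitrarily small values. The curvature hypotheses of the proposition then enter decisively: by the conformal identities \eqref{eq:ChangeSectional}--\eqref{eq:ChangeGeodCurv}, blow-up of $\omega^{-1}$ forces blow-up of $\tilde K$ in the interior (when $\varrho_0>0$) or of $\tilde k$ on $\partial\ms$ (when $\sigma_0>0$). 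Integrally, for every subdomain $D\subset \ms$ one has $\int_D \tilde K \de\tilde A\ge (\varrho_0/2)\area(D,g)$ and $\int_{\partial D\cap\partial\ms}\tilde k\de\tilde s\ge \sigma_0\, \length(\partial D\cap\partial\ms,g)$.

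I would then apply the Gauss--Bonnet theorem to a carefully chosen sequence of $\tilde g$-compact, piecewise smooth subdomains $D_j\subset\ms$ containing longer and longer $\tilde g$-sub-arcs of $\gamma$:
\begin{equation*}
\int_{D_j}\tilde K\de\tilde A + \int_{\partial D_j\cap\partial\ms}\tilde k\de\tilde s + \int_{\partial D_j\setminus\partial\ms}\tilde\kappa\de\tilde s + \sum_i(\pi-\tilde\alpha_i) = 2\pi\chi(D_j)\le 4\pi \point
\end{equation*}
Since $\gamma$ has infinite $g$-length, a suitable $\tilde g$-tubular neighbourhood $D_j$ of a sub-arc of $\gamma$ has large $g$-area (and, along its free-boundary portion, large $g$-length), so the first two terms on the left-hand side diverge as $j\to\infty$. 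Provided the artificial-boundary curvature integral over $\partial D_j\setminus\partial\ms$ and the corner contributions remain uniformly bounded, this contradicts the bounded right-hand side and closes the argument.

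The main technical obstacle is precisely the construction of the exhausting domains $D_j$: they must simultaneously capture the blow-up of $\tilde K$ or $\tilde k$ along $\gamma$ while keeping the geodesic-curvature integral over $\partial D_j\setminus\partial\ms$ together with the corner contributions under uniform control, despite the possibility that $\gamma$ may repeatedly approach or cross $\partial\ms$. Unlike in the closed case of \cite[Proposition 2.12]{Car15}, where one can work simply with intrinsic $\tilde g$-geodesic balls, here the presence of the free boundary requires a more delicate choice, for instance of $\tilde g$-tubular neighbourhoods of sub-arcs of $\gamma$ adapted to $\partial\ms$, so as to keep the non-free-boundary portion of $\partial D_j$ geodesic (or nearly so) and the angles between $\partial D_j\setminus\partial\ms$ and $\partial\ms$ bounded away from zero.
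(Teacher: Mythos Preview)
Your proposal is explicitly incomplete: you yourself identify the construction of the domains $D_j$ as ``the main technical obstacle'' and do not carry it out. Beyond this acknowledged gap, the Gauss--Bonnet strategy has a structural problem in case \ref{sic:strictmc} of the proposition (where $\varrho_0=0$ and $\sigma_0>0$). There your lower bound $\int_{D_j}\tilde K\,\de\tilde\Haus^2\ge(\varrho_0/2)\,\area(D_j,g)$ vanishes identically, so the contradiction must come from the boundary term $\int_{\partial D_j\cap\partial\ms}\tilde k\,\de\tilde\Haus^1\ge\sigma_0\,\length(\partial D_j\cap\partial\ms,g)$. But nothing forces your escaping curve $\gamma$ to stay near $\partial\ms$: it may diverge entirely in the interior, in which case your tubular neighborhoods $D_j$ need not meet $\partial\ms$ at all and the boundary term is zero. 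You have not explained how to force large $g$-length along $\partial\ms$ in this regime.

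The paper's proof avoids all of this by a different reduction. Rather than taking an arbitrary escaping curve, it fixes an interior point $x_0$, constructs (via a completion trick) $\tilde g$-length-minimizing geodesics $\gamma_R$ from $x_0$ to $\partial B_R(x_0)\setminus\partial\ms$, and passes to a limiting $\tilde g$-minimizing ray $\gamma$. The key observation is that, because $\tilde k\ge0$, the boundary $\partial\ms$ is $\tilde g$-convex, so a $\tilde g$-minimizing geodesic starting at an interior point can never touch $\partial\ms$. Thus $\gamma$ lies entirely in $\ms\setminus\partial\ms$, and one is reduced to showing $\int_0^\infty\omega(\gamma(t))\,\de t=\infty$ for a minimizing geodesic in the interior --- precisely the situation handled by Fischer--Colbrie \cite[Theorem 1]{Fis85}, whose argument (based on the second variation of $\tilde g$-length along $\gamma$ and the Jacobi equation for $\omega$, not Gauss--Bonnet) applies verbatim. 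The convexity of $\partial\ms$ is what makes the free boundary case tractable; your approach does not exploit it.
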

\begin{proof}
The proof is similar to that of Theorem 1 in \cite{Fis85} with some precautions to be taken when dealing with the boundary.
Consider a point $x_0\in \ms\setminus \partial\ms$ and let $B_R(x_0)\subset \ms$ be the intrinsic ball of center $x_0$ and radius $R>0$ with respect to the complete metric $g$. Then $\{B_R(x_0)\}_{R>0}$ is an exhaustion of $\ms$.

Now consider the shortest geodesic $\gamma_R$ in the metric $\tilde g$ connecting $x_0$ to the closure of $\partial B_R(x_0)\setminus\partial\ms$, which exists by the following argument. Let us consider $\omega_R = \omega + \epsilon_R$ where $0\le\epsilon_R\le 1$ is a smooth function with $\epsilon_R = 0$ in $B_R(x_0)$ and $\epsilon_R = 1$ in $B_{R+1}(x_0)^c$. Then $\omega_R$ is bounded from below and thus the metric $\tilde g_R \eqdef \omega_R^2 g$ is complete on $\ms$. 
Based on the fact, and on the convexity of the boundary (that we just saw above), there exists a length-minimizing geodesic connecting $x_0$ to the closure of $\partial B_R(x_0)\setminus \partial\ms$ with respect to the metric $\tilde g_R$. Since this geodesic is obviously contained in $B_R(x_0)$ it is also length-minimizing with respect to $\tilde g$, since $\tilde g$ coincides with $\tilde g_R$ in $B_R(x_0)$.

Let us assume $\gamma_R$ to be parametrized by arclength with respect to $g$. By standard compactness arguments, we can find a sequence $R_i\to\infty$ such that $\gamma_{R_i}$ locally smoothly converge to a curve $\gamma: \co 0{\infty} \to \ms$ such that $\gamma(0)=x_0$, minimizing length with respect to $\tilde g$ between any two of its points, and that is also parametrized by arc length with respect to $g$.
Observe that $\gamma$ cannot touch the boundary $\partial\ms$ since it starts at an interior point and $\partial\ms$ is convex.

To prove the completeness of $(\ms,\tilde g)$ it is now sufficient to prove that $\gamma$ has infinite length with respect to $\tilde g$, that is to say
\[
\int_0^{\infty}\omega(\gamma(t))\de t = \infty \semicolon
\]
indeed, by construction, any divergent ray starting from $x_0$ has length equal or bigger than $\gamma$.
However, to prove this, we can apply exactly the same argument as in the first part of the proof of \cite[Theorem 1]{Fis85}, since $\gamma$ is actually contained in $\ms\setminus\partial\ms$. \end{proof}

Given the completeness of $(\ms,\tilde g)$, we can apply the Gauss-Bonnet theorem on its metric balls as follows.
Fix $x_0\in \ms$ and let consider $\Omega_r \eqdef \tilde B_r(x_0)$, the metric ball of radius $r$ and center $x_0$ in metric $\tilde g$.

\begin{remark}
We are now going to apply some results on geodesic balls of a Riemannian manifold taken from \cite[Section 4.4]{ShiShiTan03}, that are stated for complete manifolds without boundary.
The following observation clarifies why the same results hold in our case.

Given $r_0>0$, we can regard our manifold as a smooth subdomain of a complete manifold $(\check\ms,\check g)$ without boundary (the extension depending on $r_0>0$), such that, thanks to the convexity of $\partial \Sigma$, for $r$ close to $r_0$ the set
$\tilde B_r(x_0)\subset\ms$ is the intersection of the metric ball $\check B_r(x_0)\subset \check \ms$ with respect to the metric $\check g$ with $\ms$.
\end{remark}

Thanks to this remark, we can invoke a classical theorem by Hartman \cite{Har64} (cf. also Theorem 4.4.1 in \cite{ShiShiTan03}) and obtain that the boundary of $\Omega_r$ is a piecewise smooth embedded closed curve for almost every $r>0$. 
Moreover the length $l(r)$ of $\partial\Omega_r$ is differentiable almost everywhere with derivative given by
\[
l'(r) = \int_{\partial\Omega_r\setminus\partial\ms} (\text{geodesic curvature of $\partial\Omega_r\setminus\partial\ms$}) + \sum (\text{exterior angles of $\Omega_r$});
\]
note that \[\limsup_{r\to\infty}\ l'(r)\ge 0\point\] since $l$ only attains positive values.

Let us now pick a radius $r$ for which $\partial\Omega_r$ is a piecewise smooth embedded closed curve. Then, by the Gauss-Bonnet theorem on $\Omega_r$, it holds that
\begin{multline*}
l'(r) = \int_{\partial\Omega_r\setminus \partial\ms} (\text{geodesic curvature of $\partial\Omega_r\setminus \partial\ms$})+ \sum (\text{exterior angles of $\Omega_r$}) =\\ = 2\pi \chi (\Omega_r) - \int_{\Omega_r} \tilde K \de\tilde \Haus^2 - \int_{\partial\Omega_r\cap \partial\ms} \tilde k \de\tilde\Haus^1\comma
\end{multline*}
where $\tilde\Haus^1$ and $\tilde \Haus^2$ are the Hausdorff measures with respect to $\tilde g$ in $\ms$.
Hence, taking the upper limit on both sides and using that $\tilde K, \tilde k\ge 0$, we can conclude that
\[
0\le 2\pi\limsup_{r\to\infty}\chi(\Omega_r) - \int_\ms \tilde K\de\tilde\Haus^2 - \int_{\partial\ms} \tilde k\de\tilde\Haus^1 \point
\]
Recalling that $\chi(\Omega_r)\le 1$, this implies that both the integrals $\int_\ms \tilde K\de\tilde\Haus^2$ and $\int_{\partial\ms}\tilde k\de\tilde\Haus^1$ are finite.

Now observe that $\d\tilde\Haus^1 = \omega\d\Haus^1$ and $\d\tilde\Haus^2 = \omega^2\d\Haus^2$ by definition of $\tilde g$, where $\d\Haus^1$ and $\d\Haus^2$ are the one-dimensional and two-dimensional Hausdorff measures on $(\ms,g)$.
Hence, applying \eqref{eq:ChangeSectional} and \eqref{eq:ChangeGeodCurv}, we obtain that
\[
\int_\ms\tilde K \de\tilde\Haus^2 \ge \int_\ms \frac{\varrho_0}{2\omega^2}\de\tilde\Haus^2 = \int_\ms \frac{\varrho_0}{2\omega^2}\omega^2\de\Haus^2 = \frac{\varrho_0}2 \Haus^2(\ms)
\]
and that
\[
\int_{\partial\ms} \tilde k\de\tilde\Haus^1  = \int_{\partial\ms} \frac{\sigma_0}\omega\de\tilde\Haus^1 = \int_{\partial\ms} \frac{\sigma_0}\omega \omega\de\Haus^1 = \sigma_0 \Haus^1(\partial\ms) \point
\]
In particular we deduce\footnotetext{Observe that this inequality in the compact case follows directly from the stability inequality applied to a constant function.} that
\begin{equation} \label{eq:GBFinal}
0< \frac{\varrho_0}2 \Haus^2(\ms) + \sigma_0 \Haus^1(\partial\ms) \le 2\pi\limsup_{r\to\infty}\chi(\Omega_r) \le 2\pi\point
\end{equation}
Note that this also proves that $\ms$ has well-defined Euler characteristic $\chi(\ms) = \limsup_{r\to\infty}\chi(\Omega_r)=1$.

In particular we have obtained that $\partial\ms$ is compact if $\sigma_0>0$. 
However, since we might only have $\sigma_0=0$ and, in any case, a priori we are not in the position to invoke the isoperimetric inequality in \cite{Whi09}, we actually need the following lemma.

\begin{lemma} \label{lem:DistBdryEst}
Let $\ms^2 \subset \amb^3$ be as in \cref{prop:StableImpliesCptness} and let $x_0\in \ms$. Then $x_0$ has distance from the boundary of $\ms$ bounded by a constant depending only on $\varrho_0 = \inf_\amb R_g$ and $\sigma_0 = \inf_{\partial\amb} H^{\partial\amb}$. Namely it holds
\[
d_\ms(x_0,\partial\ms) \le \min\left\{ \frac{2\sqrt 2 \pi}{\sqrt{3\varrho_0}}, \frac{4}{3\sigma_0} \right\} \point
\]
\end{lemma}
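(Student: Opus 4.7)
The plan is to combine the stability of $\ms$ with the second variation of a minimizing geodesic from $x_0$ to $\partial\ms$. Let $\gamma\colon[0,L]\to\ms$ be a unit-speed length-minimizing geodesic realizing $L=d_\ms(x_0,\partial\ms)$, with $\gamma(0)\in\partial\ms$ and $\gamma(L)=x_0$. Its existence follows from the completeness of $(\ms,\tilde g)$ established above, combined with the convexity of $\partial\ms$ in the conformal metric $\tilde g$; by minimality $\gamma$ meets $\partial\ms$ orthogonally at $\gamma(0)$.

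The main step is to produce a one-dimensional differential inequality along $\gamma$ that forces $L$ to be small. On the one hand, the second variation of arclength along $\gamma$ gives, for every $f\in C^1([0,L])$ with $f(L)=0$,
\[
\int_0^L \bigl((f')^2 - K(\gamma)\,f^2\bigr)\de s \;\ge\; \kappa_g(\gamma(0))\,f(0)^2,
\]
where $\kappa_g$ is the geodesic curvature of $\partial\ms$ in $(\ms,g)$. A direct computation using the free boundary condition (so that the outward conormal $\eta$ coincides with $\hat\eta$) and minimality gives the key identity $\kappa_g(\gamma(0)) = H^{\partial\amb}(\gamma(0)) + \II^{\partial\amb}(\nu,\nu)(\gamma(0)) \ge \sigma_0 + \II^{\partial\amb}(\nu,\nu)(\gamma(0))$. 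On the other hand, the stability of $\ms$ together with the minimal Gauss identity $\Ric(\nu,\nu)+\abs{\A}^2=\tfrac12\Scal_g+\tfrac12\abs{\A}^2-K$ yields
\[
\int_\ms\bigl(\abs{\nabla\varphi}^2+K\varphi^2\bigr)\;\ge\;\tfrac{\varrho_0}{2}\int_\ms\varphi^2 \;-\;\int_{\partial\ms}\II^{\partial\amb}(\nu,\nu)\varphi^2,
\]
in which the troublesome term $\II^{\partial\amb}(\nu,\nu)$ appears with the opposite sign relative to the geodesic inequality, hinting at cancellation.

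To couple the two I would apply the stability inequality to a Fermi-type lift of $f$ from $\gamma$ to $\ms$: $\varphi(\exp_{\gamma(s)}(r\nu_\gamma))=f(s)\chi_\eps(r)$, with $\chi_\eps$ a transverse cutoff of width $\eps$. Letting $\eps\downarrow 0$, the Fermi Jacobian produces exactly a $\int_0^L K\,f^2\de s$ contribution that cancels against the $\int K$ term in the geodesic inequality, while the boundary contributions $\II^{\partial\amb}(\nu,\nu)\,f(0)^2$ annihilate each other. What remains is a one-dimensional inequality whose dominant form is
\[
\int_0^L (f')^2\de s \;\ge\; c\,\varrho_0\int_0^L f^2\de s \;+\; \sigma_0\,f(0)^2, \qquad f(L)=0,
\]
for an explicit $c>0$ (the discarded $\tfrac12\abs{\A}^2$ enters through a weighted choice of test function $f=\omega^{1/2}\tilde f$ in the style of Fischer-Colbrie--Schoen, accounting for the factor $3/4$ visible in the claimed constants). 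Specializing $f(s)=\cos(\pi s/(2L))$ isolates the $\varrho_0$-bound by a Rayleigh argument, while $f(s)=L-s$, or a mild refinement thereof, isolates the $\sigma_0$-bound.

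The main obstacle is controlling the Fermi-coordinate construction rigorously: the transverse exponential map may develop focal points before reaching $s=L$, degenerating the Jacobian. The cleanest workaround is to bypass Fermi coordinates altogether and use the positive Jacobi function $\omega$ from the proof of \cref{prop:StableImpliesCptness} as the transverse profile, building a test function of the form $\varphi=\omega^{1/2}\,\tilde f(s)$ along a thin tubular neighborhood of $\gamma$: the Jacobi equation satisfied by $\omega$ automatically delivers the Gauss-equation cancellations and the correct Robin-type boundary behaviour, making the $\II^{\partial\amb}(\nu,\nu)$ terms disappear by design and avoiding any focal-radius restriction.
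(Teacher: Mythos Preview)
Your proposal has a genuine gap in the key step: the Fermi-lift localization of the two-dimensional stability inequality to the geodesic $\gamma$ does not survive the limit $\eps\to 0$. With $\varphi(s,r)=f(s)\chi_\eps(r)$ in Fermi coordinates, the transverse part of the Dirichlet energy contributes $\int f(s)^2\,\eps^{-2}(\chi')^2\,J(s,r)\,\d r\,\d s$, which is of order $\eps^{-1}$, not bounded. The Fermi Jacobian $J(s,r)=1-\kappa(s)r+O(r^2)$ only produces $O(\eps)$ corrections and cannot cancel this blowup; your assertion that it ``produces exactly a $\int K f^2$ contribution'' conflates the $O(\eps)$ curvature correction with the $O(\eps^{-1})$ leading term. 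Your proposed workaround, replacing the transverse cutoff by $\omega^{1/2}$, does not resolve this: you still need to localize to a tube (otherwise the test function is not of the form $\tilde f(s)\cdot(\text{transverse})$), and the tube width still governs a divergent transverse energy.

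The paper avoids this entirely by never attempting a 2D-to-1D localization. Instead it minimizes the \emph{conformal} arclength $\tilde l(\gamma)=\int_\gamma\omega$ (i.e.\ the $\tilde g$-distance from $x_0$ to $\partial\ms$) rather than the $g$-arclength, and computes the second variation of $\tilde l$ directly. This is a one-dimensional functional from the outset; the only two-dimensional input is the Jacobi equation $\jac_\ms\omega=0$ and its Robin boundary condition, which enter pointwise along $\gamma$ through derivatives of $\omega$. After introducing the first eigenfunction $h$ of an auxiliary Sturm--Liouville problem and integrating by parts, the boundary terms combine as $-\II^{\partial\amb}(\tau,\tau)-\II^{\partial\amb}(\nu,\nu)=H^{\partial\amb}\ge\sigma_0$ exactly as you anticipated, and one arrives at
\[
\tfrac12\varrho_0\int_0^l\xi^2+\sigma_0\,\xi^2(l)\;\le\;\tfrac43\int_0^l(\xi')^2
\]
for all $\xi$ with $\xi(0)=0$. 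The conformal trick is precisely what substitutes for your tubular-neighborhood idea: the Jacobi function $\omega$ is built into the metric rather than into a test function, so no transverse cutoff is ever needed.
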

\begin{proof}
Let us consider the functional
\[
\tilde l(\gamma) \eqdef \int_\gamma\omega
\]
on the class of $W^{1,2}$-curves $\gamma$ lying in $\ms$ and connecting $x_0$ to a point in $\partial\ms$. Note that here with $\int_\gamma \omega$ we mean the integral with respect to the arc length (in metric $g$) of $\gamma$ and thus $\tilde l(\gamma)$ is instead the length of the curve $\gamma$ with respect to $\tilde g$.
In particular let $\gamma$ be a curve minimizing this functional. Observe that this curve is smooth and has finite length since $\omega$ is strictly positive on the closure of $\ms$.

We now compute the first and second variation of the functional $\tilde l$ along $\gamma$. Without loss of generality, we can assume that $\gamma$ is parametrized by arc-length, that is $\abs{\gamma'} = 1$, and it has length $l$.
Thus let us choose a variation $\alpha: \oo {-\eps}{\eps}\times \cc 0l  \to \ms$ with $\alpha(0,\cdot) = \gamma$, $\alpha(s,0) =x_0$ and $\alpha(s,1)\in\partial\ms$.
Computing the first variation we obtain\footnote{We always omit the dependence from $s$ and $t$ if clear from the context.}
\begin{align*}
\frac{\d}{\d s} \tilde l(\alpha(s, \cdot)) &= \frac{\d}{\d s} \int_0^l \omega(\alpha(s,t))\left|{\frac{\partial \alpha}{\partial t}(s,t)}\right| \de t
= \int_0^l \d\omega(\alpha)\left[ \DerParz{\alpha}{s} \right]\left|{\DerParz{\alpha}{t}}\right| + \omega(\alpha) \frac{\scal{\cov_{\DerParz \alpha s} \DerParz\alpha t}{\DerParz\alpha t}}{\left|{\DerParz{\alpha}{t}}\right|} \de t\\
&= \int_0^l \Bigg( \d\omega(\alpha)\left[ \DerParz{\alpha}{s} \right]\left|{\DerParz{\alpha}{t}}\right| 
-\d\omega(\alpha)\left[ \DerParz\alpha t \right] \frac{\scal{ \DerParz\alpha s}{\DerParz\alpha t}}{\left|{\DerParz{\alpha}{t}}\right|}  -\omega(\alpha) \frac{\scal{\DerParz\alpha s}{\cov_{\DerParz\alpha t}\DerParz\alpha t}}{\left|{\DerParz{\alpha}{t}}\right|} 
+{}\\
&\pheq+\omega(\alpha) \frac{\scal{\DerParz\alpha s}{\DerParz\alpha t}  \scal{\cov_{\DerParz\alpha t}\DerParz\alpha t}{\DerParz\alpha t}}{\left|{\DerParz{\alpha}{t}}\right|^3} \Bigg) \de t  +  \omega(\alpha(s,l)) \frac{\scal{ \DerParz\alpha s(s,l)}{\DerParz\alpha t(s,l)}}{\left|{\DerParz{\alpha}{t}}(s,l)\right|} \point
\end{align*}
In particular, evaluating at $s=0$ and setting $X(t)=\DerParz\alpha s(t,0)$, we have that
\[
\begin{split}
0 = \frac{\d}{\d s}\Big|_{s=0} \tilde l(\alpha(s,\cdot)) &= \int_0^l \Big(\d\omega(\gamma)[X] - \d\omega(\gamma)[\gamma'] \scal{\gamma'}{X} -\omega(\gamma)\Big\langle{\cov_{\gamma'}\gamma'},{X}\Big\rangle +{}\\
&\pheq +\omega(\gamma) \Big\langle{\cov_{\gamma'}\gamma'},{\gamma'}\Big\rangle\scal{\gamma'}{X} \Big)\de t + \omega(\gamma(l)) \scal{\gamma'(l)}{X(l)}\\
&= \int_0^l \Big(\d\omega(\gamma)[X] - \d\omega(\gamma)[\gamma'] \scal{\gamma'}{X} -\omega(\gamma)\Big\langle{\cov_{\gamma'}\gamma'},{X}\Big\rangle\Big)\de t  + \omega(\gamma(l)) \scal{\gamma'(l)}{X(l)}\comma
\end{split}
\]
which holds for all variations $\alpha$ as above. Note that we have used that $0 = \frac{\d}{\d t}\abs{\gamma'}^2 = 2\scal{\cov_{\gamma'}\gamma'}{\gamma'}$.

As a result, since $X(l)$ is tangent to $\partial\ms$, we have that
\[
\begin{cases}
\grad\omega(\gamma) - \d\omega(\gamma)[\gamma'] \gamma' -\omega(\gamma)\cov_{\gamma'}\gamma' = 0 \\
\gamma'(l)\perp \partial\ms \point
\end{cases}
\]

We can then compute the second variation for $s=0$, obtaining
\[
\begin{split}
\frac{\d^2}{\d s^2}\Big|_{s=0} \tilde l(\alpha(s,\cdot)) 
&=\int_0^l \Big\langle \cov_{\DerParz\alpha s}\Big|_{s=0} \bigg( 
\grad\omega(\alpha) \left|{\DerParz{\alpha}{t}}\right|^4 - \d\omega(\alpha)\left[{\DerParz{\alpha}{t}}\right] \left|{\DerParz{\alpha}{t}}\right|^2 {\DerParz{\alpha}{t}} +{}\\
&\pheq - \omega(\alpha) \left|{\DerParz{\alpha}{t}}\right|^2 \cov_{\DerParz\alpha t}\DerParz\alpha t + \omega(\alpha) \Big\langle{\cov_{\DerParz\alpha t}\DerParz\alpha t },{\DerParz\alpha t}\Big\rangle \DerParz\alpha t
\bigg),
X\Big\rangle \de t+{}\\
&\pheq + \omega(\gamma(l)) \left( \Big\langle{\cov_{\DerParz\alpha s}\Big|_{s=0}\DerParz\alpha s (s,l)},{\gamma'(l)}\Big\rangle + \Big\langle{X(l)},{\cov_{\DerParz\alpha s}\Big|_{s=0} \DerParz\alpha t(s,l)} \Big\rangle\right) \point
\end{split}
\]
Now assume that $X(t)$ is of the form $\psi(t)\tau$ where $\tau$ is a unit vector field orthogonal to $\gamma'$. Then we have that
\[
\begin{split}
&\frac{\d^2}{\d s^2}\Big|_{s=0} \tilde l(\alpha(s,\cdot)) = \int_0^l (\psi^2\lapl \omega - \psi^2\omega'' - \psi\psi'\omega' - \psi\psi''\omega - \psi^2K\omega)\de t + \omega(l)(\psi\psi' + \psi^2\II^{\partial\amb}(\tau,\tau))\\
&= \int_0^l \left(\psi^2\jac_\ms\omega - \frac 12\psi^2(R_g + \abs \A^2) \omega - \psi^2\omega'' - \psi\psi'\omega' - \psi\psi''\omega\right)\de t  + \omega(l)(\psi\psi'(l) + \psi^2(l)\II^{\partial\amb}(\tau,\tau))
\point
\end{split}
\]

Let $h:\cc 0l\to \R$ be the first (positive) eigenfunction for the eigenvalue problem
\[
\begin{cases}
\psi'' + \omega^{-1}\omega'\psi' + (\frac 12 R_g + \frac 12 \abs \A^2- \omega^{-1}\jac_\ms\omega + \omega^{-1}\omega'') \psi + \lambda\psi = 0\\
\psi(0) = 0\\
\psi'(l) = -\II^{\partial\amb}(\tau,\tau)\psi(l) \point
\end{cases}
\]
Then, since $\frac{\d^2}{\d s^2}|_{s=0} \tilde{l}(\alpha(s,\cdot))\ge 0$ for all variations $\alpha$, we have in particular that
\begin{multline*}
 h^{-1}h'' + \omega^{-1}\omega'h^{-1}h' + \frac 12 \varrho_0 +\omega^{-1}\omega''\le \\
 \le h^{-1}h'' + \omega^{-1}\omega'h^{-1}h' + \frac 12 R_g + \frac 12 \abs \A^2- \omega^{-1}\jac_\ms\omega + \omega^{-1}\omega''=-\lambda \le 0.
\end{multline*}
Therefore, multiplying this inequality by a test function $\xi\in C^\infty(\cc 0l)$ with $\xi(0)=0$ and integrating by parts, we obtain
\begin{align*}
0&\ge \int_0^l \left(h^{-1}h'' + \omega^{-1}\omega'h^{-1}h' + \frac 12 \varrho_0 + \omega^{-1}\omega'' \right)\xi^2 \de t\\
& = \int_0^l  \left( h^{-2}(h')^2+ \omega^{-2}(\omega')^2+ \omega^{-1}\omega'h^{-1}h'  \right)\xi^2 + \frac 12\varrho_0\xi^2 - 2 (h^{-1}h' + \omega^{-1}\omega')\xi\xi' \de t+ {}\\
&\pheq + (h^{-1}(l)h'(l)+\omega^{-1}(l)\omega'(l)) \xi^2(l) \\ 
& = \int_0^l  \frac 12\left( h^{-2}(h')^2+ \omega^{-2}(\omega')^2 \right)\xi^2+ \frac 12\varrho_0 \xi^2 + \frac 12\left(\frac{\d}{\d t}(\log(\omega h))\right)^2\xi^2 \de t+{}\\
&\pheq- 2\int_0^l \frac{\d}{\d t}(\log(\omega h))\xi\xi' \de t- (\II^{\partial\amb}(\tau,\tau)+\II^{\partial\amb}(\nu,\nu)) \xi^2(l) \\
& \ge \int_0^l  \frac 12\left( h^{-2}(h')^2+ \omega^{-2}(\omega')^2 \right)\xi^2+ \frac 12\varrho_0 \xi^2 + \frac 12\left(\frac{\d}{\d t}(\log(\omega h))\right)^2\xi^2 \de t+{} \\
&\pheq- 2\int_0^l \frac{\d}{\d t}(\log(\omega h))\xi\xi' \de t+ \sigma_0 \xi^2(l)\comma
\end{align*}
where we have used the boundary assumptions on $h$ and $\omega$, noting that $\omega'(l) = \scal{\grad\omega}{\gamma'(l)} = \DerParz\omega\eta$, since $\gamma'(l) = \eta(\gamma(l))$ ($\gamma'(l)$ is orthogonal to $\partial\ms$ and outward-pointing).
Furthermore, recall that $-\II^{\partial\amb}(\tau,\tau)-\II^{\partial\amb}(\nu,\nu) = H^{\partial\amb} \ge\sigma_0$.

We can then rely on the inequality
\[
2\left| \frac{\d}{\d t}(\log(\omega h))\xi\xi' \right| \le \frac 12\left( h^{-2}(h')^2+ \omega^{-2}(\omega')^2 \right)\xi^2+ \frac 12\left(\frac{\d}{\d t}(\log(\omega h))\right)^2\xi^2 + \frac 43(\xi')^2
\]
to conclude that
\[
\frac 12 \varrho_0\int_0^l \xi^2 \de t +  \sigma_0 \xi^2(l) \le \frac 43 \int_0^l (\xi')^2 \de t
\]
for all $\xi$ as above.
This proves that 
\[
l\le \delta_0 \eqdef \min \left\{\frac{2\sqrt 2\pi}{\sqrt{3\varrho_0}}, \frac 4{3\sigma_0}\right\} < \infty \comma
\]
as in \cite[Proposition 2.12]{Car15} for the first term and for example by taking $\xi(t) = t$ for the second term.
As a result we have proven that $\ms$ is contained in the $\delta_0$-neighborhood of $\partial\ms$ with respect to the intrinsic distance.
\end{proof}

 If $\varrho_0>0$ pick two points $x_0,y_0\in\ms\setminus\partial\ms$ and consider the curve $\gamma$ minimizing $\tilde l(\gamma)$ and connecting $x_0$ to $y_0$. Since $\gamma$ is a minimizing curve in the metric $\tilde g$ of $\ms$, then it cannot touch the convex boundary $\partial\ms$. Therefore we can follow the very same argument as in \cite[Proposition 2.12]{Car15} to prove that the length of $\gamma$ in the metric $g$ is bounded by $\frac{2\sqrt 2 \pi}{\sqrt{3\varrho_0}}$.

If $\sigma_0>0$ we conclude by observing that \cref{lem:DistBdryEst}, together with the compactness of $\partial\ms$, proves the compactness of $\ms$. The diameter estimate follows by simply combining equation \eqref{eq:GBFinal}, with \cref{lem:DistBdryEst}.
\end{proof}


\section{Ambient manifolds with `positive geometry'} \label{sec:AreaBound}
We can now capitalize our efforts and present the proof of \cref{thm:AreaBound}, which crucially exploits \cref{prop:StableImpliesCptness}.

\begin{proof}[Proof of \cref{thm:AreaBound}]
	We only need to prove the area bound, since all other conclusions then follow from \crefnoname{thm:TopFromArea}.
	Assume by contradiction that there exists a sequence of connected, compact, properly embedded, free boundary minimal surfaces $\Sigma_j^2\subset\amb$ with non-empty boundary and with $\ind(\ms_j)\le I$ and $\area(\ms_j)\to \infty$. Then, there exists a point $x_0\in\amb$ such that $\Haus^2(\ms_j\cap B_r(x_0))\to\infty$ for all $r>0$.
	
	Denote by $\lam$ the limit lamination given by \cref{cor:ExistenceBlowUpSetAndCurvatureEstimate} and consider the leaf $L\in\lam$ passing through $x_0$. Thanks to \cref{thm:RemSingLimLam}, $L$ must have stable universal cover (the other case is excluded since the area is diverging around $x_0$), which we can represent by a stable free boundary minimal immersion $\varphi: \ms\to M$.
	By means of a variation of the pull-back construction presented e.g. in \cite[Section 6]{AmbCarSha18-Compactness}, we can then reduce to applying \cref{prop:StableImpliesCptness} so to conclude that $\ms$ must be a disc, hence the map $\varphi$ must be an embedding (therefore $L$ is a stable, free boundary minimal surface in $(M,g)$).
	
	Let then $\tilde\ms_j$ be the sequence obtained from $\ms_j$ by means of the surgery procedure, as per
	\cref{cor:Surgery}. Observe that the new sequence satisfies uniform curvature bounds, and still has diverging area. Also, since \cref{cor:Surgery} provides a uniform bound $\tilde{\kappa}(I)+1$ on the number of connected components of $\tilde\ms_j$, we can select and rename $\tilde\ms_j$ so that it is connected for every $j\in\mathbb{N}$, and the area concentrates near the point $x_0$.
	
	Now, fix $r>0$ sufficiently small, and assume to consider a connected component of the intersection $\tilde\ms_j\cap B_r(x_0)$ which smoothly converges with multiplicity one to $L\cap B_r(x_0)$. Since $L$ is a disc, a standard monodromy argument allows to conclude that, a posteriori, (the whole component) $\tilde\ms_j$ converges to $L$ smoothly with multiplicity one. From this fact, we derive a uniform bound for the areas of $\tilde\ms_j$, which is a contradiction.
\end{proof}


\section{Non-compact families of free boundary minimal surfaces of fixed topology}\label{sec:Counterexample}

\cref{thm:Counterexample} is proven via a suitable, rather explicit, gluing construction aimed at attaching some elementary blocks. In all cases we shall now list, the word \emph{block} refers to an ambient manifold together with a sequence of minimal surfaces (closed or having free boundary) satisfying additional requirements.

\subsection{The building blocks: Spiraling spheres}\label{subs:BlocksSphere}

\begin{lemma}[cf. {\cite[Proposition 8]{ColDeL05}}] \label{lem:intblock}
On $S^3$ there exists a Riemannian metric $g_0$ of positive scalar curvature such that $(S^3, g_0)$ contains a sequence of minimal spheres $\Sigma_j$ with arbitrarily large area and index, and converging to a singular lamination $\lam$ whose singular set consists of exactly two points lying on a leaf which is a strictly stable (two-dimensional) sphere. Furthermore, the metric in question coincides with the unit round metric in a neighborhood of two distinct points $x_0$ and $y_0$, and on both those neighborhoods such minimal spheres can be completed to a local foliation by great spheres parallel at $x_0$ and $y_0$ respectively.
\end{lemma}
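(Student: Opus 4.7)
The plan is to follow the warped-product construction of Colding-De Lellis \cite{ColDeL05}, adapted to the closed ambient $S^3$. Realize $S^3$ topologically as the suspension $(S^2\times[-1,1])/{\sim}$, with the fibers over $\pm 1$ collapsed to two distinct points $x_0$ and $y_0$, and consider a metric of the form $g_0=dr^2+\phi(r)^2 g_{S^2}$ for a smooth, even, positive function $\phi$ on $(-1,1)$ vanishing to first order at $r=\pm 1$. First I would design $\phi$ to meet three constraints: (i) near $r=\pm 1$, $\phi$ coincides with the round warping function, so that $g_0$ equals the unit round metric in neighborhoods of $x_0$ and $y_0$; (ii) $\phi$ has a unique interior critical point, a strict local minimum, at $r=0$ with $\phi(0)>0$, so that the central slice $S_0\eqdef\{r=0\}$ is a minimal $2$-sphere whose strict stability follows from $\phi''(0)>0$ via the explicit form of its Jacobi operator; and (iii) the scalar curvature
\[
\Scal_{g_0}=\frac{2\,(1-(\phi')^{2})}{\phi^{2}}-\frac{4\phi''}{\phi}
\]
is strictly positive on all of $[-1,1]$.

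Once $\phi$ is fixed, I would produce the sequence $\ms_j$ by an $S^1$-equivariant reduction. Restrict attention to surfaces in $(S^3,g_0)$ invariant under the $S^1$-action rotating each fiber $S^2$ about the axis joining $x_0$ and $y_0$. In the two-dimensional orbit space, which is a topological disc, invariant minimal surfaces correspond to geodesics of a metric weighted by the length of the circular orbits, and the stable leaf $S_0$ corresponds to a closed geodesic of this reduced problem. A phase-portrait/shooting analysis of the reduced geodesic equation, modeled on the one carried out in \cite{ColDeL05}, would produce, for each sufficiently large integer $j$, a closed geodesic that winds $j$ times around the central closed geodesic before exiting the neck. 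Lifting back to $S^3$ yields a smooth, embedded, $S^1$-invariant minimal surface $\ms_j$, topologically a $2$-sphere thanks to the equivariance and the connectedness of the generating curve.

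Each additional winding in the neck contributes an annular region of area bounded below by a positive multiple of $\area(S_0)$, giving $\area(\ms_j)\to\infty$; moreover, the same windings support a growing family of compactly supported test functions modeled on vertically stacked half-catenoids, showing that the index is bounded below by a linear function of $j$ and hence $\ind(\ms_j)\to\infty$. Away from the two axial points $p^{\pm}\in S_0$ (the fixed points of the restricted $S^1$-action on $S_0$), the $\ms_j$ converge smoothly, as minimal laminations with multiplicity tending to infinity, to $S_0$; at $p^{\pm}$ the concentration of windings produces genuine singular convergence and yields the singular lamination $\lam$ with $S_0$ as a smooth, strictly stable leaf and exactly the two singular points $p^{\pm}$. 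Finally, condition (i) guarantees that inside the round neighborhoods of $x_0$ and $y_0$ each $\ms_j$ is a piece of a great $2$-sphere through the respective pole, and thus extends there to a local foliation of $(S^3,g_0)$ by great spheres parallel at $x_0$, respectively $y_0$, as required.

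The hard part will be reconciling the strict positivity of $\Scal_{g_0}$ with a neck narrow enough to sustain the spiraling behavior: the term $-4\phi''/\phi$ in the formula above can easily destroy positivity if $\phi$ is small at the neck while $\phi''>0$ is large. The design of $\phi$ must therefore combine a strict positive minimum at $r=0$ (with $\phi''(0)>0$ but small relative to $\phi(0)$) with a controlled flattening on either side, in such a way that the reduced ODE admits closed orbits of arbitrarily high winding number about the central leaf while $\Scal_{g_0}>0$ pointwise. Once this balancing is in place, the remainder essentially transcribes the qualitative framework of \cite{ColDeL05} to the present closed setting; the additional requirement that $g_0$ be round near two prescribed points is baked into condition (i) and automatically delivers the local foliations claimed in the statement.
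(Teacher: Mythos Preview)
The paper does not give its own proof of this lemma; it is imported verbatim from \cite[Proposition~8]{ColDeL05}, so there is nothing to compare your argument against on the paper's side. Your sketch is a reasonable reconstruction of the Colding--De Lellis strategy (rotationally symmetric metric, reduction to an ODE, phase-portrait analysis producing arbitrarily many windings near a strictly stable leaf, then local rounding at two points), and you correctly flag the genuine analytic difficulty: keeping $\Scal_{g_0}>0$ while making the neck thin enough to sustain the spiraling.

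Two points in your write-up are internally inconsistent and should be fixed before this could stand as a proof. First, with the $S^1$ action you describe (rotating each $S^2$ fibre about its axis), the orbit space is a disc whose boundary is the fixed circle, and the stable slice $S_0=\{r=0\}$ projects to a \emph{boundary-to-boundary arc}, not a closed geodesic; likewise the generating curves of the $\Sigma_j$ must be arcs (closed curves in the interior of the disc lift to tori, not spheres). The ``winding'' you want therefore occurs in the phase plane of the reduced second-order ODE, not as curves encircling a closed geodesic in the orbit space. Second, with $x_0,y_0$ placed at the suspension poles, an $S^1$-invariant great $2$-sphere in the round region corresponds to a diameter of the orbit disc and generically does \emph{not} pass through $x_0$ (which lies on the boundary circle of the orbit space); what is true is that such great $2$-spheres locally foliate a neighborhood of $x_0$ with the foliation parallel at $x_0$, and the $\Sigma_j$ can be completed to that foliation. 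Neither issue is fatal to your plan, but as written the orbit-space picture does not match the claimed topology of the $\Sigma_j$.
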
 

\begin{remark}
An important aspect is to clarify what we mean when writing that a local foliation is \emph{parallel at a point} (cf. \cite[Definition 7]{ColDeL05}). Given $z\in\Omega$, an open subset of the round 3-sphere, and $\mathscr{F}$, a local foliation of $\Omega$ by great spheres, we say that the foliation is parallel at $z\in F$ (for some $ F\in\mathscr{F}$, to be called the \emph{central leaf}) if 
\[
\sup_{w\in F} d(w, F')=d(z, F')
\]
for any $ F'\in\mathscr{F}$. 

The geometric picture this definition captures is easily described. Take $S^3\subset \R^4$ isometrically embedded as unit sphere and consider the foliation of $S^3\setminus \left\{ (0,0,0,\pm 1)\right\}$ consisting of the $2$-spheres obtained by slicing via vertical hyperplanes. Given any point $x=(x^1,x^2,x^3,0)$ and any open set $\Omega\ni x$ then the restriction of the above foliation to $\Omega$ is parallel at $x$, and the unique great sphere passing through $x$ is the central leaf.
\end{remark}

\begin{remark}\label{rmk:half}
For later use (cf. \cref{rmk:bmodel} and \crefnoname{subs:BlocksAnnulus}), it is helpful to introduce some related terminology. We consider the set
$\Omega^+\eqdef \left\{x \in \Omega \st  x^4\geq 0 \right\}
$
and the corresponding foliation $\mathscr{F}^+$ obtained by considering the intersection of each leaf of $\mathscr{F}$ with the domain $\Omega^+$.
We will say that $\mathscr{F}^+$ is a foliation of $\Omega^+$ by half great spheres, parallel at $x$.
\end{remark}

\begin{remark}\label{rmk:index}
It follows from the construction (see, specifically, the second paragraph of \cite[pp. 30]{ColDeL05}) that for any open set $\Omega$ containing the limit sphere, one has that $\ind(\Sigma_j\cap\Omega)\to\infty$ as $j\to\infty$, where it is understood that one only considers variations that are compactly supported in $\Omega$. Of course, it is also true that $\area({\Sigma_j\cap\Omega})\to\infty$ as $j\to\infty$.
\end{remark}

\subsection{The building blocks: Minimal tori}\label{subs:BlocksTorus}

We first provide the relevant statement and then mention the key points in the construction, to the extent this is needed in \crefnoname{subs:BlocksAnnulus} below to produce, for any given $\notb>1$, a Riemannian metric of positive scalar curvature on the 3-ball so that the resulting $3$-manifold contains a family of free boundary surfaces of genus $0$ and exactly $\notb$ boundary components.

\begin{lemma} [cf. {\cite[Lemma 12]{ColDeL05}}]
\label{lem:tori} 
On $S^3$ there exists a Riemannian metric $g_{1}$ of positive scalar curvature such that:
\begin{enumerate} [label={\normalfont(\arabic*)}]
\item {$(S^3, g_{1})$ contains a family of minimal tori $\Pi_\theta$ parametrized by $\theta\in (-\theta_0,\theta_0)$, for some $\theta_0>0$;}
\item {the metric in question coincides with the unit round metric in a neighborhood of given points $x_1$ and $y_1$ and on both those neighborhoods such minimal tori provide a local foliation by great spheres parallel at $x_1$ and $y_1$, respectively.}	
\end{enumerate}	
\end{lemma}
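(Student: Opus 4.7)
The plan is to construct $g_1$ by a local-to-global assembly, closely following the strategy of \cite[Lemma 12]{ColDeL05}. First, I fix two small disjoint geodesic balls $U_{x_1}\ni x_1$ and $U_{y_1}\ni y_1$ in the round $S^3$ and consider, on each of them, the natural local foliation by parallel great $2$-spheres centered at $x_1$ (respectively $y_1$). This furnishes, for each $\theta\in(-\theta_0,\theta_0)$, two totally geodesic disc pieces $D_\theta^{x_1}\subset U_{x_1}$ and $D_\theta^{y_1}\subset U_{y_1}$, each with identically vanishing mean curvature and meeting the respective boundary in a round circle; the central leaves $D_0^{x_1}, D_0^{y_1}$ pass through $x_1$ and $y_1$. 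These will serve as the prescribed local portions of the tori $\Pi_\theta$.

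Second, I would extend each pair $(D_\theta^{x_1}, D_\theta^{y_1})$ to a smoothly embedded torus $\Pi_\theta\subset S^3$ by joining them via two tubular annuli in the complementary region, arranged so that the resulting closed surface is topologically $T^2$ and the assignment $\theta\mapsto \Pi_\theta$ is a smooth $1$-parameter family providing an embedded foliation of a tubular neighborhood of $\Pi_0$. On $S^3\setminus(U_{x_1}\cup U_{y_1})$ I would then design the metric $g_1$ (matching the round one on $\partial U_{x_1}\cup\partial U_{y_1}$) so that each extended torus $\Pi_\theta$ is minimal. In suitable adapted coordinates $(\theta,\alpha,\beta)$, where $\theta$ labels the family and $(\alpha,\beta)$ parametrise the individual leaves, the minimality condition reduces to a prescribed-mean-curvature equation on the leaf's induced area element, which is solvable via a warped-product-type ansatz with enough freedom to interpolate smoothly to the round boundary data.

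The main obstacle is enforcing strict positivity of the scalar curvature simultaneously with minimality of the full family. Indeed, a straightforward symmetric warped-product ansatz with a $1$-parameter family of minimal leaves of constant induced area leads, by an elementary calculation, to a normal-direction scalar curvature contribution of the form $-2(\rho'/\rho)^2\le 0$. To overcome this one must break the symmetry of the ansatz by introducing a tangential dependence that contributes positively to the scalar curvature without disrupting the vanishing of the mean curvature of each $\Pi_\theta$. By choosing $\theta_0>0$ sufficiently small the resulting metric can be made $C^2$-close to the round metric, and a final localised conformal perturbation of Kazdan--Warner type (supported well away from $U_{x_1}\cup U_{y_1}$, so as not to affect the prescribed local round-metric structure) can be applied to restore $\Scal_{g_1}>0$ globally while preserving, after reparametrisation in $\theta$, the family of minimal tori. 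This delicate reconciliation between positive scalar curvature and the simultaneous minimality of a whole $1$-parameter family of leaves is where the freedom of choosing $\theta_0$ small plays a decisive role.
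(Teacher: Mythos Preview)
Your proposal has a genuine gap, and the overall architecture is essentially inverted relative to what actually works.

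The paper (following \cite{ColDeL05}) does not start from the round metric on two small balls and then try to manufacture a metric on the complement making a family of tori minimal. Instead it starts from an \emph{explicit} model where both the minimality of the tori and the positivity of the scalar curvature are automatic: one takes $\cc{-\pi/2}{\pi/2}\times S^1\times S^1$ with endpoints collapsed so as to obtain $S^3$, endowed with a metric which near $\{0\}\times S^1\times S^1$ is the Riemannian product $S^2\times S^1$. In this product region the surfaces $\{\text{great circle in }S^2\}\times S^1$ are totally geodesic tori (so certainly minimal), they all share two common circles, and the scalar curvature equals that of $S^2$, which is strictly positive. Only \emph{after} this is in place does one perform a purely local modification of the metric near two chosen points to make it round there, using an explicit interpolation between the round $S^3$ metric and the $S^2\times S^1$ product metric; the tori, being built from great circles, become locally isometric to great spheres in those neighborhoods. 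No global PDE or perturbation argument is needed.

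Your route, by contrast, attempts to solve a family of prescribed-mean-curvature equations on the complement and then to repair the scalar curvature by a conformal perturbation. The fatal step is the last one: a conformal change $\tilde g=e^{2f}g$ alters the mean curvature of a hypersurface by a term proportional to the normal derivative $\partial_\nu f$, so minimality of a surface is preserved only if $\partial_\nu f\equiv 0$ along it. Asking this to hold simultaneously for an entire one-parameter \emph{foliation} $\Pi_\theta$ forces $f$ to be constant on the foliated region, which kills any hope of using the conformal factor to adjust the scalar curvature there. ``Reparametrisation in $\theta$'' cannot help, since minimality is a property of each individual leaf, not of the labeling. Likewise, the assertion that taking $\theta_0$ small makes the constructed metric $C^2$-close to the round one on the complement is unjustified: $\theta_0$ only controls how many leaves you keep, not the geometry of the ambient metric you are building. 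In short, the difficulty you identify (reconciling positive scalar curvature with minimality of a whole family) is real, and the paper resolves it not by perturbation but by choosing from the outset a model ($S^2\times S^1$) in which both conditions hold for free.
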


The construction can be schematically described as follows:
\begin{itemize}
\item{On the (topological) product manifold $\cc{-\pi/2}{\pi/2}\times S^1\times S^1$, one can consider the equivalence relation $\sim$ given by
\[
(-\pi/2,p,q) \sim (-\pi/2,p,q') \quad \forall \ p \in S^1, \ \forall \ q,q'\in S^1\comma
\]
and
\[
(\pi/2,p,q)\sim (\pi/2, p', q) \quad \forall \ p, p' \in S^1,\  \forall \ q\in S^1\comma
\]
that corresponds to `collapsing vertical (resp. horizontal) fibers on $\left\{-\pi/2\right\}\times S^1\times S^1$ (resp. on $\left\{\pi/2\right\}\times S^1\times S^1$)'. Set $\tilde{M}=M/\sim $, this manifold can be endowed with a smooth Riemannian metric $\tilde{g}$ so that $(\tilde{M},\tilde{g})$ is a $3$-sphere of positive scalar curvature, and in a neighborhood of $\left\{0\right\}\times S^1\times S^1$ the metric is isometric to the Riemannian product of $S^2\times S^1$. Hence $(\tilde{M},\tilde{g})$ contains a one-parameter family of \emph{totally geodesic} tori all having two circles in common, say $\left\{0\right\}\times \left\{0\right\}\times S^1$ and $\left\{0\right\}\times \left\{\pi\right\}\times S^1$ (where we are conveniently identifying the round unit $S^1$ with the interval $\cc{0}{2\pi}$ with endpoints attached).
}
\item{At this stage one can perform a \emph{local} modification of the metric $\tilde{g}$ near the points $(0,\pi/2,0)$ and $(0,3\pi/2,0)$ so to make it round; with the family of tori being locally isometric to a family of standard great spheres in such neighborhoods. The construction is performed by explicitly interpolating between the metric of round $S^3$ and the product metric of $S^2\times S^1$.}	
\end{itemize}	

\begin{remark}\label{rmk:divide}
We observe that:
\begin{itemize}
\item {In $(S^3,g_1)$ the surface 
\[
\tilde{\Sigma}\eqdef (\cc{-\pi/2}{\pi/2}\times\left\{\pi/2\right\}\times S^1 \cup \cc{-\pi/2}{\pi/2}\times\left\{3\pi/2\right\}\times S^1) /\sim
\]  
is a totally geodesic $2$-sphere, which divides the closed manifold into two (pairwise isometric) three-dimensional balls.}
\item {Denoted by $\Omega$ one of such balls, for any $\theta\in(-\theta_0,\theta_0)$ the intersection $\Xi_\theta\eqdef \Pi_\theta \cap \overline{\Omega}$ is checked to be a free boundary minimal annulus.}
\end{itemize}

Hence, the Riemannian manifold $(\overline{\Omega}, g_1)$ contains a family of minimal annuli $\Xi_{\theta}$ parametrized by $\theta\in (-\theta_0,\theta_0)$, for some $\theta_0>0$; the metric $g_1$ coincides with the unit round metric in a neighborhood of boundary points $x_1$ and $y_1$ (these points belonging to the two boundary circles of $\Xi_{0}$) and on both those neighborhoods $\partial\Omega$ is isometric to an equatorial $2$-sphere and the minimal annuli provide \emph{half of} a local foliation by great spheres parallel at $x_1$ and $y_1$, respectively.
\end{remark}

\subsection{The building blocks: Free boundary minimal discs}\label{subs:BlocksDisk}

Let us now, instead, move to the free boundary models. We prove this ancillary result.

\begin{lemma}\label{lem:fbpiece}For any $\eps\in (0,\pi/4)$ there exists a smooth Riemannian metric $g_2=g_2(\eps)$ on the closed ball $\overline{B^3}= (\cc{0}{1+\pi/2}\times S^2)/\sim$ with coordinates $(r,\omega)$ (where $\sim$ is the equivalence relation collapsing $\left\{1+\pi/2\right\}\times S^2$ to a point) having positive scalar curvature, and such that the following properties are satisfied:
\begin{enumerate} [label={\normalfont(\arabic*)}]
\item {$g_2$ coincides with the unit round metric of $S^3$ on the domain $(\cc{1+\eps}{1+\pi/2}\times S^2)/\sim$, and coincides with the cylindrical metric on the domain $([0,1/3]\times S^2)/\sim$;}	
\item {the resulting manifold $(\overline{B^3}, g_2)$ contains a one-parameter family $\Delta^{\euno}_\theta$ of embedded free boundary minimal discs (here $\theta\in (-\theta_0,\theta_0)$ for some $\theta_0>0$);}
\item {there exist two points $x^{\euno}_2, y^{\euno}_2$ with $r(x^{\euno}_2)= r(y^{\euno}_2)=1/2$ and $\omega(x^{\euno}_2)=-\omega(y^{\euno}_2)$, and open neighborhoods $\Omega(x^{\euno}_2)$, $\Omega(y^{\euno}_2)$ respectively, where $g_2$ is round (isometric to domains of the unit round metric of $S^3$) and the minimal discs above restrict to give two local foliations by great spheres that are parallel at $x^{\euno}_2, y^{\euno}_2$ respectively.}
\end{enumerate}
\end{lemma}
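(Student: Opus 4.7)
The plan is to construct $g_2$ as a rotationally symmetric warped product $g_2 = dr^2 + \varphi(r)^2 g_{S^2}$ on $\overline{B^3} = (\cc{0}{1+\pi/2}\times S^2)/\sim$, for a smooth function $\varphi \colon [0, 1+\pi/2] \to \oc01$ carefully chosen so that all three asserted properties hold simultaneously. Specifically, I would set $\varphi \equiv 1$ on $\cc0{1/3}$ (giving the cylindrical collar), $\varphi(r) = \sin(1 + \pi/2 - r)$ on $\cc{1+\eps}{1+\pi/2}$ (giving exactly the round spherical cap of unit $S^3$, with smooth closure at the collapsed tip automatic), and crucially $\varphi(r) = \cos(1/2 - r) = \sin(r + \pi/2 - 1/2)$ on an intermediate slab $\cc{1/2 - \delta}{1/2 + \delta}$ for some small $\delta > 0$; this last choice makes $g_2$ isometric, on that slab, to a portion of unit round $S^3$ via the embedding $(r, \omega) \mapsto (\sin(r + \pi/2 - 1/2)\omega, \cos(r + \pi/2 - 1/2)) \in S^3 \subset \R^4$. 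On the two transition intervals I would interpolate smoothly between the endpoint values of $\varphi$ and its first two derivatives. Since the scalar curvature of such a warped product equals $R = -4\varphi''/\varphi + 2(1 - (\varphi')^2)/\varphi^2$, yielding $R = 2$ on the cylindrical region and $R = 6$ on the two round regions, one can arrange (taking $\delta, \eps$ small enough so all matching data are close to the cylindrical values $\varphi \approx 1$, $\varphi' \approx 0$) that $R > 0$ holds uniformly on the interpolation regions as well.

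Next I would construct the family of free boundary minimal discs. Fix an orthonormal pair $\omega_+, \omega_\times \in S^2 \subset \R^3$, and let $\gamma_\theta \subset S^2$ be the one-parameter family of great circles whose defining $2$-planes $P_\theta \subset \R^3$ all contain the line $\R \omega_\times$, with $P_0 = \mathrm{span}(\omega_+, \omega_\times)$ and $\theta \in \oo{-\theta_0}{\theta_0}$ parametrizing the rotation of the plane about that axis. Define $\Delta^{\euno}_\theta \eqdef \{(r,\omega) \st r \in [0, 1+\pi/2],\, \omega \in \gamma_\theta\}/\sim$ as the suspension of $\gamma_\theta$. A standard computation in warped products shows that such a suspension over a closed geodesic of the base $S^2$ is totally geodesic (equivalently, the fixed-point set of the $\Z/2$ reflection symmetry of $g_2$ across the disc), hence minimal, and the free boundary condition at $\{r = 0\} \times S^2$ follows because the tangent space to $\Delta^{\euno}_\theta$ contains $\partial_r$, which coincides with the outward conormal to $\partial \overline{B^3}$ along $\partial \Delta^{\euno}_\theta$. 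Setting $x^{\euno}_2 \eqdef (1/2, \omega_+)$ and $y^{\euno}_2 \eqdef (1/2, -\omega_+)$ yields $r(x^{\euno}_2) = r(y^{\euno}_2) = 1/2$ and $\omega(x^{\euno}_2) = -\omega(y^{\euno}_2)$ as required, and the neighborhoods $\Omega(x^{\euno}_2), \Omega(y^{\euno}_2)$ can be chosen inside the round slab $\cc{1/2 - \delta}{1/2 + \delta} \times S^2$, where $g_2$ is round by construction.

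The conceptual core of the argument is verifying that the restriction of the family $\{\Delta^{\euno}_\theta\}$ to $\Omega(x^{\euno}_2)$ forms a local foliation by great spheres parallel at $x^{\euno}_2$ (and similarly at $y^{\euno}_2$). Under the isometric embedding into $S^3 \subset \R^4$ above, each $\Delta^{\euno}_\theta$ is mapped into the great $2$-sphere $V_\theta \cap S^3$, where $V_\theta \eqdef P_\theta \oplus \R e_4$. All $V_\theta$ contain the fixed $2$-plane $\R \omega_\times \oplus \R e_4$, so the great $2$-spheres share a common great circle $C \eqdef (\R \omega_\times \oplus \R e_4) \cap S^3$; since $\omega_+ \perp \omega_\times$ in $\R^3$ and $\omega_+ \perp e_4$ in $\R^4$, the point $x^{\euno}_2 \leftrightarrow (\omega_+, 0)$ does not lie on $C$, and the family therefore foliates a small neighborhood of $x^{\euno}_2$ with $\Delta^{\euno}_0$ as unique central leaf. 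The parallelism condition $\sup_{w \in \Delta^{\euno}_0} d(w, \Delta^{\euno}_\theta) = d(x^{\euno}_2, \Delta^{\euno}_\theta)$ then reduces, via the standard distance formula for great $2$-spheres meeting along a common great circle in round $S^3$ (to leading order $d(w, V_\theta) \approx \abs{\sin\theta}\sin(d(w, C))$), to the fact that $d(\cdot, C)$ is maximized on $V_0 \cap S^3$ precisely at the two antipodal poles of $C$ within that great $2$-sphere. A direct linear-algebraic computation inside $V_0 = \mathrm{span}(\omega_+, \omega_\times, e_4)$ identifies those poles with $\pm \omega_+$, i.e.\ exactly $x^{\euno}_2$ and $y^{\euno}_2$; the verification at $y^{\euno}_2$ is completely analogous. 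The main (though essentially routine) technical obstacle I foresee is controlling the scalar curvature during the two interpolations of $\varphi$; once that is in place, both the minimality of the family $\Delta^{\euno}_\theta$ (from the warped product symmetry) and the parallel-foliation statement (from the great-sphere geometry in $\R^4$) are conceptually clean.
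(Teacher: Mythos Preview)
Your proposal is correct and takes a genuinely different route from the paper's proof. The paper first builds a $C^{1,1}$ warped product by capping the cylinder $[0,1]\times S^2$ with a round hemisphere, then invokes Miao's smoothing theorem to obtain a smooth warped product $\check g = dr^2 + f^2(r)\,g_{S^2}$ with $f\equiv 1$ on $[0,1-\eps]$ (so the metric is cylindrical all the way up to the cap), and only \emph{afterwards} performs a local, non-rotationally-symmetric deformation near the two antipodal points $(1/2,p')$, $(1/2,q')$ using the Colding--De Lellis Appendix~B argument to make the metric round there. You instead keep the construction entirely within the warped-product class by inserting a round slab $[1/2-\delta,1/2+\delta]\times S^2$ directly into the profile $\varphi$. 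This avoids both black boxes (Miao's theorem and the Colding--De Lellis local modification) and makes property~(3) completely explicit via the embedding into $S^3\subset\R^4$; your verification of the parallel-foliation condition through the common great circle $C$ and the location of its poles is cleaner than what one extracts from the reference. The price is that your one-line justification of $R>0$ on the transition intervals is slightly optimistic: at the edge of the round slab the second-derivative datum is $\varphi''\approx -1$, which is \emph{not} close to the cylindrical value $0$, and a naive cutoff interpolation produces $\varphi''$ of order one with the wrong sign. The claim is nonetheless true --- for small $\delta$ one can arrange the interpolation with $\varphi''<1/2$ throughout (e.g.\ by letting $\varphi'$ dip slightly negative before rising to $\sin\delta$), since the required net change in $\varphi'$ is only $O(\delta)$ over an interval of fixed length --- but this step deserves a sentence more care than ``essentially routine''.
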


\begin{proof}
	
Consider the cylinder $I\times S^2$ (where $I=[0,1]$ and the sphere $S^2$ is endowed with the standard unit round metric), fix two antipodal points $p, q\in S^2$ and consider the family of circles passing through $(0,p)$ and $(0,q)$. Said $\Gamma$ any of those circles, then $I\times \Gamma$ is a smooth, free boundary minimal surface $\Delta$ in $I\times S^2$ (with two boundary components). Now, cap off $I\times S^2$ by identifying its upper boundary with the boundary of a hemisphere in $S^3$ (with its unit round metric). The resulting metric $\hat{g}$ is smooth away from the interface (and has, near it, the form of a warped product with the warping factor being a $C^{1,1}$ function of the distance coordinate from the interface). Furthermore, one can attach a two-dimensional half-hemisphere to $\Delta$ so to get a free boundary minimal surface, which we shall not rename, that is not yet smooth along the connecting circle but has a mild singularity there. 
Now, since (in the resulting 3-manifold) the mean curvature of the interface on both sides match (for the interface is totally geodesic on both sides), we can apply the smoothing theorem by P. Miao \cite{Mia02} in the simplest possible case (see in particular Theorem 1 therein) to get a smooth metric $\check{g}$ on the closed 3-dimensional ball, that coincides with $\hat{g}$ away from a small neighborhood of the gluing interface, and whose scalar curvature is positive. In fact, by the very way the construction is defined, namely by fiberwise convolution, it follows that the regularized metric $\check{g}$ takes the form of a warped product, i.e. we have (in the coordinates $(r,\omega)$ introduced in the statement)
\[
\check{g}=\d r^2+f^2(r)g_{S^2}
\]
where 
\[
f(r)=
\begin{cases}
1 & \text{if $r\in \cc{0}{1-\eps}$} \\
\sin(r-1+\pi/2) & \text{if $r\in \cc{1+\eps}{1+\pi/2}$}
\end{cases}
\]
for small $\eps>0$. The \emph{set} $\Delta$ is then a totally geodesic surface in this smooth Riemannian manifold, and since the metric has not been modified near the boundary component which has not been capped off one has that the free boundary condition is still fulfilled.

At this stage, let us consider the construction above starting from a one parameter family of circles $\Gamma_{\theta}$ (for $\theta$ varying in a subset of $S^1$, say $\theta\in (-\theta_0,\theta_0)$ for some $\theta_0>0$) passing through the points $(0,p), (0,q) \in \left\{0\right\} \times S^2$ and let $p', q' \in S^2$ be two antipodal points (on the great circle equidistant from $p$ and $q$) chosen so that $(0,p'), (0,q')$ have small neighborhoods that are foliated by such a family. Let $\Delta^{\euno}_{\theta}$ be the corresponding free boundary minimal surfaces.

Since the cylindrical metric has, so far, not been modified away from a small neighborhood of the gluing interface we can consider the points $x^{\euno}_2=(1/2,p')$, $y^{\euno}_2=(1/2,q')$, and open neighborhoods thereof (named $\Omega(x^{\euno}_2)$, $\Omega(y^{\euno}_2)$ respectively) that are foliated by the surfaces $\Delta^{\euno}_{\theta}$ as $\theta$ varies. Hence, we perform a local deformation of the metric in each of these neighborhoods to make it round (for which it is enough to follow, without any modification, the argument given in the second part of Appendix B of \cite{ColDeL05}). Possibly renaming those open neighborhoods (to be taken slightly smaller than we had originally defined), the metric $g_2=g_2(\eps)$, resulting from such local modifications, satisfies all desired properties.
\end{proof}

\begin{remark}\label{rmk:bmodel}
Said $(M^{\euno}_2, g^{\euno}_2)$ the manifold constructed in \cref{lem:fbpiece}, we will also need the following variant: the local modifications of the metric are performed at \emph{one interior point} (as above) and at \emph{one boundary point}. Thereby, one obtains a 3-manifold $(M^{\euno}_{2,\text{bdry}}, g^{\euno}_{2,\text{bdry}})$ of positive scalar curvature which contains a family of free boundary minimal discs, still denoted by ${\Delta^{\euno}_{\theta}}$, and having two points:
\begin{itemize}
\item $x^{\euno}_{2,\text{bdry}}$ (in the interior) having a full neighborhood where such minimal discs provide a local foliation by great spheres (parallel at $x^{\euno}_{2,\text{bdry}}$) and 
\item $y^{\euno}_{2,\text{bdry}}$ (on the boundary) having a half neighborhood where such minimal discs provide  a local foliation by half great spheres (parallel at $y^{\euno}_{2,\text{bdry}}$).
\end{itemize}
With respect to the notation employed in the proof above, one can take (for instance)
\[
x^{\euno}_{2,\text{bdry}}=(1/2,p')\comma \quad  y^{\euno}_{2,\text{bdry}}=(0,q') \point
\]
\end{remark}

\subsection{The building blocks: Free boundary minimal \texorpdfstring{$k$}{k}-annuli}\label{subs:BlocksAnnulus}

In order to prove \cref{thm:Counterexample}, we also need to construct metrics on the closed 3-ball having positive scalar curvature and containing families of free boundary minimal surfaces of genus zero and any pre-assigned number $\notb$ of boundary components. So far, this has only been accomplished for $\notb=1$ (in \crefnoname{subs:BlocksDisk}) and for $\notb=2$, as a result of \cref{rmk:divide}. To proceed we need the following \emph{free boundary analogue} of Lemma 11 in \cite{ColDeL05}.

\begin{lemma}\label{lem:join}
Let $\delta>0$ and let $\Omega^+_1\eqdef  B_{\delta}(u_1)$, $\Omega^+_2\eqdef  B_{\delta}(u_2)$ be two (relatively) open half-balls of round unit hemispheres $N^+_1, N^+_2$, centered at boundary points $u_1, u_2$ respectively. Suppose that $\mathscr{F}^+_1$ and $\mathscr{F}^+_2$ are (locally defined) foliations of $\Omega_1$ and, respectively, $\Omega_2$ by half great spheres parallel at $u_1$ and, respectively, $u_2$. Then we can join those hemispheres to obtain a smooth Riemannian manifold with boundary (having the topology of $\overline{B^3}$) of positive scalar curvature, and (possibly by considering smaller neighborhoods) the leaves of $\mathscr{F}^+_1$ and $\mathscr{F}^+_2$ can pairwise be matched to obtain free boundary minimal surfaces in the ambient manifold.
\end{lemma}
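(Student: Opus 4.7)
The plan is to reduce the lemma to its closed counterpart, namely \cite[Lemma 11]{ColDeL05}, by means of a reflection-equivariant doubling--gluing--quotient procedure. First I would double each hemisphere $N^+_i$ across its totally geodesic boundary $\partial N^+_i$ to obtain a round unit $3$-sphere $(N_i, g_i)$, naturally endowed with an isometric involution $\rho_i$ whose fixed point set is exactly $\partial N^+_i \subset N_i$. Since $u_i$ lies in $\mathrm{Fix}(\rho_i)$, the half-ball $\Omega^+_i = B_\delta(u_i)$ doubles to a genuine metric ball $\Omega_i \subset N_i$, and each leaf of $\mathscr{F}^+_i$, being a half great sphere meeting $\partial N^+_i$ orthogonally, doubles to a full great sphere; thus $\mathscr{F}^+_i$ doubles to a foliation $\mathscr{F}_i$ of $\Omega_i$ by great spheres, parallel at $u_i$ in the sense of \cite{ColDeL05}.

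Next I would apply \cite[Lemma 11]{ColDeL05} to the two closed triples $(N_i, \Omega_i, \mathscr{F}_i)$, obtaining a smooth Riemannian manifold $(\widetilde{M}, \tilde g)$ of positive scalar curvature (diffeomorphic to $N_1 \# N_2 \cong S^3$) in which the two foliations are matched leaf-by-leaf to produce a family of embedded minimal $2$-spheres, after possibly restricting to smaller neighborhoods of $u_1, u_2$. Crucially, this gluing should be performed so that the two involutions $\rho_1$ and $\rho_2$ extend to a single global isometric involution $\tilde\rho$ of $(\widetilde{M}, \tilde g)$; this is possible because the neck used in \cite{ColDeL05} is a warped product $dt^2 + f^2(t) g_{S^2}$ on $[0,1] \times S^2$, which automatically inherits any reflection symmetry of the $S^2$-factor, and the attaching diffeomorphisms at the two ends can be arranged so that $\rho_1|_{\partial \Omega_1}$ and $\rho_2|_{\partial \Omega_2}$ correspond to the very same equatorial reflection of the cross-sectional $S^2$.

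At that point I would set $M_\infty \eqdef \widetilde{M}/\tilde\rho$ endowed with the induced Riemannian metric $g_\infty$. This manifold is diffeomorphic to the boundary connected sum $\overline{B^3} \natural \overline{B^3} \cong \overline{B^3}$, its scalar curvature is pointwise equal to that of $(\widetilde{M}, \tilde g)$ and hence positive, and its boundary coincides with $\mathrm{Fix}(\tilde\rho)$, which is totally geodesic as the fixed set of an isometric involution. The matched minimal $2$-spheres from the previous step are $\tilde\rho$-invariant by construction (being the union of $\rho_i$-invariant leaves of $\mathscr{F}_i$ joined through a $\tilde\rho$-invariant bridge in the neck); they therefore descend to embedded minimal surfaces in $M_\infty$ meeting $\partial M_\infty$ orthogonally, and their restrictions to $\Omega^+_1, \Omega^+_2$ recover the prescribed leaves of $\mathscr{F}^+_1, \mathscr{F}^+_2$ paired as required.

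The hard part will be verifying this equivariance of the gluing. Once one knows that the Colding--De Lellis neck can be realized with an isometric involution compatible with both $\rho_1$ and $\rho_2$ at the two ends, the quotient construction yields everything else essentially for free. This compatibility can be arranged either by directly inspecting the warped-product metric on the neck (which trivially carries any prescribed $\mathbb{Z}/2$-symmetry of the $S^2$-factor) or, alternatively, by averaging the ambient metric over the candidate $\mathbb{Z}/2$-action, which preserves positive scalar curvature provided the perturbation is sufficiently small in a high regularity norm.
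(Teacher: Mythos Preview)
Your approach is correct and is essentially the same as the paper's: both double the hemispheres to round spheres, invoke \cite[Lemma 11]{ColDeL05} on the closed doubles, and then halve by a reflection symmetry of the neck. The only difference is packaging: the paper works in explicit spherical coordinates $(r,\phi,\theta)$ on the neck, observes that each matched minimal surface has the rotationally symmetric form $\{(r,\sigma(r),\theta):\theta\in S^1\}$, and then restricts to $\theta\in S^1_+$ to cut out the desired free boundary half; you instead phrase the same symmetry as a global isometric involution $\tilde\rho$ and pass to the quotient. One point you leave slightly implicit is why the matched minimal spheres in the neck are themselves $\tilde\rho$-invariant (not just the ambient metric); the paper handles this by exhibiting their explicit $\theta$-rotationally-invariant form, whereas in your framing you would need to say that the bridging surface solves an ODE with $\tilde\rho$-symmetric data and hence is $\tilde\rho$-symmetric by uniqueness.
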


\begin{proof}
Let us double each of the two given manifolds with boundary to round spheres $N_1$ and $N_2$ and consider the families $\mathscr{F}_1, \mathscr{F}_2$ obtained by extending $\mathscr{F}^+_1$ and $\mathscr{F}^+_2$ in the obvious fashion (each leaf of $\mathscr{F}^+_i$ is extended to an equatorial $2$-sphere in $N_i$, for $i=1,2$). Since our construction is purely local, this does not affect the generality of the argument.

We know (by Lemma 11 in \cite{ColDeL05}) that one can construct a connected sum $N_1\# N_2$ and pairwise match the leaves of the foliations $\mathscr{F}_1$ and $\mathscr{F}_2$. The connecting neck $N$, diffeomorphic to $\cc{-\tau}{\tau}\times S^2$, can be described by means of spherical coordinates
\[ 
(r,\phi,\theta) \in \cc{-\tau}{\tau}\times \cc 0\pi \times S^1
\]
and each minimal surface that we produce is, when restricted to the neck, the lift of a \emph{graphical} curve $\sigma:\cc{-\tau}{\tau}\to \cc{0}{\pi}$ solving a suitable ODE (that is nothing but a geodesic equation in a degenerate metric). In other words, each such minimal surface takes (in the neck) the form
\[
\Sigma\eqdef \left\{(r,\sigma(r),\theta) \st r\in \cc{-\tau}\tau \comma \theta\in S^1 \right\} \point
\] 
That being said, these coordinates can be chosen so that the condition $\theta\in S^1_+$ (for $S^1_+\subset S^1$ a half-circle that is fixed now and for all) identifies the half-sphere $\partial\Omega^+_1$, and the same conclusion holds true for $\partial\Omega^+_2$ as well. Now, if we define 
$
N^+\eqdef \left\{(r,\phi,\theta) \in \cc{-\tau}{\tau}\times \cc 0\pi \times S^1_{+} \right\}
$,
the totally geodesic $2$-sphere $(\partial N^+_1 \setminus\Omega^+_1 ) \cup \partial_{\ell} N^+ \cup (\partial N^+_2\setminus \Omega^+_2), \text{where} \ \partial_{\ell} N^{+}=[{-\tau},{\tau}]\times \cc 0\pi \times \partial S^1_{+}$, divides the connected sum into two, mutually isometric balls. If we consider the one, among those, containing $N^+_1 \setminus\Omega^+_1$ (which we might call the \emph{upper copy}) it is straightforward to check that
\[
\Sigma^+\eqdef \left\{(r,\sigma(r),\theta) \st r\in\cc{-\tau}{\tau}\comma \theta\in S^1_+ \right\}
\] 
is indeed a free boundary minimal surface. This correspondence holds true for any closed minimal surface that is obtained by means of the \emph{wire-matching} argument by Colding-De Lellis; in particular, the matching is certainly possible for the central leaves and a family of nearby leaves, so the proof is complete.
\end{proof}

We shall now present the main, straightforward application of this gluing lemma.

\begin{lemma}
\label{lem:2model} Given any $\notb\geq 2$ there exists, 
on $\overline{B^3}$, a Riemannian metric $g^{\ebi}_{2}$ of positive scalar curvature such that:
\begin{enumerate} [label={\normalfont(\arabic*)}]
\item {$(\overline{ B^3}, g^{\ebi}_{2})$ contains a family of embedded, free boundary minimal surfaces $\Delta^{\ebi}_\theta$, having genus zero and $b$ boundary components, parametrized by $\theta\in (-\theta_0,\theta_0)$ for some $\theta_0>0$;}
\item {the metric in question coincides with the unit round metric in a neighborhood of given points $x^{\ebi}_2$ (in the interior) and $y^{\ebi}_2$ (on the boundary) and on both those neighborhoods such minimal annuli provide a local foliation by (half-)great spheres parallel at $x^{\ebi}_2$ and $y^{\ebi}_2$, respectively.}	
\end{enumerate}	
\end{lemma}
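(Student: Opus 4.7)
The plan is to construct $(\overline{B^3}, g^{\ebi}_2)$ by iteratively applying the boundary connect-sum operation of \cref{lem:join} to the elementary blocks already at our disposal: the free boundary minimal annulus piece $(\overline{\Omega}, g_1)$ of \cref{rmk:divide}, which is a closed ball of positive scalar curvature carrying a $\theta$-family of free boundary minimal annuli and two distinguished \emph{boundary} points with round neighborhoods where the family restricts to half-foliations by parallel half great spheres; and the free boundary minimal disc piece of \cref{rmk:bmodel}, which is a closed ball carrying a $\theta$-family of free boundary minimal discs together with one distinguished \emph{interior} point (with a full round neighborhood foliated by parallel great spheres) and one distinguished \emph{boundary} point (with a half round neighborhood foliated by parallel half great spheres).

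The actual construction takes $b-1$ copies of the annulus block, denoted $A_1,\ldots,A_{b-1}$, and one copy $D$ of the disc block, and combines them via $b-1$ successive applications of \cref{lem:join} arranged as the path $D - A_1 - A_2 - \cdots - A_{b-1}$: the first join attaches $D$ to $A_1$ at $D$'s boundary distinguished point and one of the two boundary distinguished points of $A_1$, while for $i=1,\ldots,b-2$ the $(i{+}1)$-th join attaches $A_{i+1}$ to the current manifold at the \emph{remaining} boundary distinguished point of $A_i$ and one of the two boundary distinguished points of $A_{i+1}$. Each step preserves positive scalar curvature and the topology of a closed ball (being a boundary connect-sum of balls), and pairs the two locally defined foliations into a single one-parameter family of free boundary minimal surfaces, ultimately yielding the desired family $\Delta^{\ebi}_\theta$ parametrized by a possibly shrunken interval $(-\theta_0,\theta_0)$.

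The topological bookkeeping is direct from \crefnoname{sec:EulChar}: annulus pieces contribute $\chi=0$, the disc piece contributes $\chi=1$, and each of the $b-1$ joins identifies one boundary arc of each of the two pieces glued (merging two boundary circles into one and leaving the genus unchanged); summing up gives genus zero and exactly $2(b-1)+1-(b-1)=b$ boundary components. For the distinguished points, the key observation is that both $D$ and $A_{b-1}$ are leaves of the path and hence participate in only one join each: the join at $D$ consumes only its boundary distinguished point, leaving its interior distinguished point as $x^{\ebi}_2$, with its round neighborhood and parallel foliation by great spheres unaffected; similarly, $A_{b-1}$ has one of its two boundary distinguished points left unused, which we take as $y^{\ebi}_2$, together with its round half-neighborhood and half great sphere foliation, all untouched by joins performed elsewhere.

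The main technical point to verify is the consistency of the iterated wire-matching: the pairing of leaves by parameter $\theta$ provided by \cref{lem:join} must be globally coherent across the entire path. This is however essentially automatic, since each join acts in a small neighborhood of its pair of joining points, disjoint from the regions involved in any other join, so the compound effect on $\theta$ is at most a finite shrinkage of the parameter range, which is harmless.
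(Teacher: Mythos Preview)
Your proposal is correct and follows essentially the same approach as the paper: one copy of the disc block from \cref{rmk:bmodel} is joined via \cref{lem:join} at its boundary distinguished point to the annulus block $(\overline{\Omega},g_1)$ of \cref{rmk:divide}, and the case $b>2$ follows by iterating with further copies of the annulus block. Your write-up simply makes explicit the topological bookkeeping (the arc-gluing count from \crefnoname{sec:EulChar}) and the identification of the surviving distinguished points $x^{\ebi}_2$ and $y^{\ebi}_2$, which the paper leaves implicit.
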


\begin{proof}
When $\notb=2$ this follows by applying \cref{lem:join} to the blocks $(M^{\euno}_{2,\text{bdry}}, g^{\euno}_{2,\text{bdry}})$ near $y^{\euno}_{2,\text{bdry}}$ (see \cref{rmk:bmodel}) and $(\overline{\Omega},g_1)$ near $x_1$ (see \cref{rmk:divide}).
The case $\notb>2$ follows by simply repeating the operation, namely joining the resulting manifold with further copies of $(\overline{\Omega},g_1)$. 
\end{proof}

\subsection{Construction of the counterexamples}

\begin{proof}[Proof of \crefnoname{thm:Counterexample}] It is convenient to divide the argument in two steps.

{\vspace{2mm}\textbf{Step 1.}}
Given integers $\notg\geq 0$ and $\notb>0$ as in the statement, let us consider:
\begin{itemize}
\item {one copy of the Riemannian 3-sphere of positive scalar curvature as per \crefnoname{lem:intblock}, which we shall refer to as $(M_0,g_0)$ and let $(x_0, y_0)$ be the pair of points mentioned in that statement;}
\item{$\notg$ copies of the Riemannian 3-sphere of positive scalar curvature produced by \crefnoname{lem:tori}, which we shall refer to as $(M^1_1, g^1_1),\ldots ,(M^{\notg}_1, g^{\notg}_1)$, and let $(x^1_1, y^1_1),\ldots ,(x_1^{\notg},y_1^{\notg})$ be the pairs of points mentioned in that statement, respectively; each $(M_1^{i}, g_1^i)$ contains a family of minimal tori that provide a foliation by great $2$-spheres of suitably small neighborhoods of $x_1^i$ and $y_1^i$;}
\item {one copy of the Riemannian (closed) $3$-ball of positive scalar curvature and totally geodesic boundary produced via \cref{lem:2model}, which we shall refer to as $(M^{\ebi}_2, g^{\ebi}_2)$ and let $(x^{\ebi}_2, y^{\ebi}_2)$ be the pair of points mentioned in that statement.}
\end{itemize}	

\begin{figure}[htpb]
\centering
\begin{tikzpicture}[scale=0.34]
\newcommand\connect{%
    \draw (-0.4,0.2) to[bend right=30] (0.4,0.2);
    \draw (-0.4,-0.2) to[bend left=30] (0.4,-0.2);
}
\pgfmathsetmacro\R{3}
\pgfmathsetmacro\a{0.6}
\pgfmathsetmacro\b{1.3}
\pgfmathsetmacro\c{1.2}

\begin{scope} [xshift=0]
\coordinate (C1) at (0,0);

\draw (C1) circle (\R);
\begin{scope}[blue]
\draw (-0.8,0.05) to[bend left=20] (0.8,0.05);
  \draw (-0.9,.1) to[bend right=25] (0.9,.1);
\draw (C1) ellipse ({\R-0.2} and {\R/3});
\end{scope}
\node at ($(C1)+(\a*\R,\b*\R)$) {$(\amb_1^1,g_1^1)$};
\node at ($(C1)+(-\c*\R,0)$) {$x_1^1$};
\node at ($(C1)+(\c*\R,0)$) {$y_1^1$};
\end{scope}

\begin{scope} [xshift=140] \connect \end{scope}

\begin{scope} [xshift=280]
\coordinate (C1) at (0,0);
\draw (C1) circle (\R);
\begin{scope}[blue]
\draw (-0.8,0.05) to[bend left=20] (0.8,0.05);
\draw (-0.9,.1) to[bend right=25] (0.9,.1);
\draw (C1) ellipse ({\R-0.2} and {\R/3});
\end{scope}
\node at ($(C1)+(\a*\R,\b*\R)$) {$(\amb_1^2,g_1^2)$};
\node at ($(C1)+(-\c*\R,0)$) {$x_1^2$};
\node at ($(C1)+(\c*\R,0)$) {$y_1^2$};
\end{scope}

\begin{scope} [xshift=420] \connect \end{scope}

\begin{scope} [xshift=560]
\coordinate (C1) at (0,0);
\draw (C1) circle (\R);
\begin{scope}[blue]
\draw plot [smooth cycle, tension = 0.55] coordinates {(-0.9*\R, 0) (-0.65*\R, 0.4*\R) (-0.5*\R, 0*\R) (0,0.5*\R) (0.3*\R, -0.1*\R) (0.6*\R, 0.3*\R) (0.95*\R, 0) (0.6*\R, 0*\R) (0.3*\R, -0.5*\R) (-0.05*\R,0.05*\R) (-0.5*\R, -0.4*\R) (-0.7*\R, 0.1*\R)};
\end{scope}
\node at ($(C1)+(\a*\R,\b*\R)$) {$(\amb_0,g_0)$};
\node at ($(C1)+(-\c*\R,0)$) {$x_0$};
\node at ($(C1)+(\c*\R,0)$) {$y_0$};
\end{scope}

\begin{scope} [xshift=700] \connect \end{scope}

\begin{scope} [xshift=865]
\coordinate (C1) at (0,-0.5*\R);
\pgfmathsetmacro\r{1.1*\R}

\draw ($(C1)+(-\r,{0.4*\r})$) arc (180:0:{\r});
\draw ($(C1)+(-\r,{0.4*\r})$) -- ($(C1)+(-\r,0)$);
\draw ($(C1)+(\r,{0.4*\r})$) -- ($(C1)+(\r,0)$);

\draw ($(C1)+(-\r,0)$) arc (180:360:{\r} and {0.3*\r});
\draw[dashed] ($(C1)+(-\r,0)$) arc (180:0:{\r} and {0.3*\r});

\begin{scope}[blue]

\coordinate (C2) at (-0.7*\r,-0.5*\R);
\coordinate (C3) at (-0.25*\r,-0.5*\R);
\coordinate (C4) at (0.25*\r,-0.5*\R);
\coordinate (C5) at (0.7*\r,-0.5*\R);

\pgfmathsetmacro\l{0.15*\R}

\draw ($(C2)+(-\l,0)$) arc (180:360:{\l} and {0.3*\l});
\draw[dashed] ($(C2)+(-\l,0)$) arc (180:0:{\l} and {0.3*\l});

\draw ($(C3)+(-\l,0)$) arc (180:360:{\l} and {0.3*\l});
\draw[dashed] ($(C3)+(-\l,0)$) arc (180:0:{\l} and {0.3*\l});

\draw ($(C4)+(-\l,0)$) arc (180:360:{\l} and {0.3*\l});
\draw[dashed] ($(C4)+(-\l,0)$) arc (180:0:{\l} and {0.3*\l});

\draw ($(C5)+(-\l,0)$) arc (180:360:{\l} and {0.3*\l});
\draw[dashed] ($(C5)+(-\l,0)$) arc (180:0:{\l} and {0.3*\l});

\draw plot [smooth, tension = 1] coordinates {($(C2)+(\l,0)$)  (-0.48*\r,-0.35*\R) ($(C3)+(-\l,0)$) };

\draw plot [smooth, tension = 1] coordinates {($(C3)+(\l,0)$)  (0*\r,-0.35*\R) ($(C4)+(-\l,0)$) };

\draw plot [smooth, tension = 1] coordinates {($(C4)+(\l,0)$)  (0.48*\r,-0.35*\R) ($(C5)+(-\l,0)$) };

\draw plot [smooth, tension = 1] coordinates {($(C2)+(-\l,0)$)  (-0.5*\r,0.3*\R) (0.5*\r,0.3*\R) ($(C5)+(\l,0)$) };

\end{scope}
\node at ($(0,0.1)+(\a*\R,\b*\R)$) {$(\amb_2^{\scriptscriptstyle(4)},g_2^{\scriptscriptstyle(4)})$};
\node at ($(-0.5,0)+(-\c*\R,0)$) {$x_2^{\scriptscriptstyle(4)}$};
\end{scope}

\begin{scope} [xshift=990,yshift=-40] \connect \end{scope}

\begin{scope} [xshift=1080,scale=1.2]
\coordinate (C1) at (0,-0.5*\R);
\pgfmathsetmacro\r{0.5*\R}

\draw ($(C1)+(-\r,0)$) arc (180:360:{\r} and {0.3*\r});
\draw[dashed] ($(C1)+(-\r,0)$) arc (180:0:{\r} and {0.3*\r});

\draw plot [smooth, tension = 0.75] coordinates {($(C1)+(-\r,0)$)  ($(C1)+(-0.9*\r,1*\R)$) ($(C1)+(0.2*\r,1.5*\R)$) ($(C1)+(1.8*\r,1.15*\R)$) ($(C1)+(3*\r,1.5*\R)$) ($(C1)+(3.9*\r,1.2*\R)$) ($(C1)+(3.5*\r,0.5*\R)$) ($(C1)+(2.5*\r,0.25*\R)$) ($(C1)+(1.3*\r,0.3*\R)$) ($(C1)+(\r,0)$)};

\begin{scope}[scale=0.8,rotate=10,yshift=20,xshift=10]
\draw (-0.8,0.05) to[bend left=20] (0.8,0.05);
  \draw (-0.9,.1) to[bend right=25] (0.9,.1);
\end{scope}

\begin{scope}[rotate=20,yshift=-10,xshift=110]
\draw (-0.8,0.05) to[bend left=20] (0.8,0.05);
  \draw (-0.9,.1) to[bend right=25] (0.9,.1);
\end{scope}

\node at (1.4*\R,-0.6*\R) {$(\amb,g_T)$};

\end{scope}

\draw [decorate,decoration={brace,mirror,amplitude=10pt}] (-3,-4) -- (13,-4) node [black,midway,yshift=-20] {$(\amb',g')$};

\draw [decorate,decoration={brace,mirror,amplitude=10pt}] (-3,-6.5) -- (23,-6.5) node [black,midway,yshift=-20] {$(\amb'',g'')$};

\draw [decorate,decoration={brace,mirror,amplitude=10pt}] (-3,-9) -- (33,-9) node [black,midway,yshift=-20] {$(\amb''',g''')$};
\end{tikzpicture}
\caption{Scheme of the construction in the proof of \crefnoname{thm:Counterexample} for $\notg=2$ and $\notb=4$.} \label{fig:CountEx}
\end{figure}

Invoking Lemma 11 in \cite{ColDeL05}, we proceed as follows (see \crefnoname{fig:CountEx}).
We first attach $(M^i_1,g^i_1)$, near $y_1^i$, to $(M^{i+1}_1, g^{i+1}_1)$, near $x_1^{i+1}$, as $i$ varies from $1$ to $\notg-1$; let $(M',g')$ be the resulting manifold (of positive scalar curvature and empty boundary); let $y'\in M'$ be the point corresponding to $y^{\notg}_1\in M^{\notg}_1$, thus with a neighborhood that is foliated by great spheres.
Similarly we attach $(M',g')$, near $y'$, to $(M_0, g_0)$ near $x_0$; let $(M'', g'')$ be the resulting manifold (of positive scalar curvature and empty boundary) and let $y''\in M''$ be the point corresponding to $y_0\in M_0$, thus with a neighborhood that is foliated by great spheres.
Lastly, we attach $(M'', g'')$, near $y''$, to $(M^{\ebi}_2, g^{\ebi}_2)$, near $x^{\ebi}_2$; let $(M''', g''')$ be the resulting manifold (of positive scalar curvature and totally geodesic boundary).
The manifold $(M''',g''')$ is connected, has the topology of a ball, and it contains a sequence of free boundary minimal surfaces of genus $\notg$, exactly $\notb$ boundary components, that have unbounded area and Morse index (cf. \cref{rmk:index} above).

\vspace{2mm}\textbf{Step 2.}
Let $M$ be as in the statement: possibly applying Lemma C.1 in \cite{CarLi19} we can, and we shall, assume that this manifold comes endowed with a Riemannian metric of positive scalar curvature, and such that $\partial M$ is strictly mean convex. At that stage we know, by virtue of Theorem 5.7 in \cite{GroLaw80a}, that there exists a new metric $g_{T}$ on $M$ still having positive scalar curvature but totally geodesic boundary (in fact this manifold can be \emph{doubled} to a smooth Riemannian manifold $(M_D, g_D)$ without boundary).
Hence, we just observe that one can perform the Gromov-Lawson connected sum of $(M''',g''')$ and $(M,g_T)$ so to obtain a compact 3-manifold with positive scalar curvature and totally geodesic boundary. 

The combination of the two steps above allows to complete the proof of the first assertion of \cref{thm:Counterexample}. Instead, to obtain strictly positive mean curvature, it suffices to have the previous construction followed by the perturbation argument given in Lemma C.1 in \cite{CarLi19}. 
\end{proof}

Concerning the claim in \cref{rmk:ConvexBC}, it suffices to observe that when $\notb=1$ one modifies the block $(M^{\euno}_2, g^{\euno}_2)$ constructed in \crefnoname{subs:BlocksDisk} as follows: by the statement of \cref{lem:fbpiece}, the metric we obtained equals that of the cylinder $I\times S^2$ near the boundary sphere, thus we can just consider a smooth warping factor $f_W:\cc 0\eps \times S^2\to \mathbb{R}$, only depending on the first coordinate, that is monotone decreasing and equals $f$ on $\cc{\eps/2}{\eps}$. For \emph{any} such choice the boundary is convex (umbilic, with constant mean curvature); furthermore if the derivative of $f_W$ is small enough then the scalar curvature of the ambient manifold shall still be positive.


\appendix

\section{Free boundary minimal surfaces and Morse index} \label{sec:Properness}

Let $(\amb^3,g)$ be a compact Riemannian manifold with non-empty boundary. Given an embedded surface $\ms^2$ in $\amb$ with $\partial\ms\subset \partial\amb$, we wish to compare different definitions of free boundary minimality and Morse index when one allows for an (arbitrary) contact set of $\ms$ along $\partial\amb$, as in \cref{fig:NonProp}. The aim of this section is to analyze this matter, providing some flavour of the rather subtle nature of the question. To avoid ambiguities, throughout this appendix we will always use the expression \emph{free boundary minimal surface} to refer to a surface with zero mean curvature that meets the boundary of the ambient manifold orthogonally along its own boundary.
\begin{remark}
Observe that local minimizers of the area functional are not necessarily free boundary minimal surfaces with respect to this definition. See \cref{fig:MinNonMin} for an illustrative picture of a local minimizer which is not a free boundary minimal surface.
\end{remark}

\begin{figure}[htpb]
\centering
\begin{minipage}[t]{.6\textwidth}
\centering
\begin{tikzpicture} [scale =0.5]
\pgfmathsetmacro{\R}{4}
\pgfmathsetmacro{\ta}{10}
\pgfmathsetmacro{\xa}{\R*sin(\ta)}
\pgfmathsetmacro{\ya}{\R*cos(\ta)}
\pgfmathsetmacro{\tb}{40}
\pgfmathsetmacro{\xb}{\R*sin(\tb)}
\pgfmathsetmacro{\yb}{\R*cos(\tb)}
\pgfmathsetmacro{\g}{0.02}

\draw (-\xa,-\ya) arc(-90-\ta:-360-90+\ta:\R);
\draw plot [smooth, tension = 0.45] coordinates {(-\xa,-\ya) ({-3/5*\xa},{-8/9*\ya}) (0,-\g) ({3/5*\xa},{-8/9*\ya}) (\xa,-\ya)};
\draw[blue,thick] (-\R,0) -- (\R,0);

\node at (0.85*\R,0.8*\R) {$\amb_0$};
\node[blue] at ({-3/4*\R}, 0.5) {$\ms$};

\begin{scope}[xshift=300]
\draw (-\xb,-\yb) arc(-90-\tb:-360-90+\tb:\R);
\draw plot [smooth, tension = 0.5] coordinates {(-\xb,-\yb) ({-5/6*\xb},{-\yb}) ({-3/4*\xb},{-1/8*\yb}) ({-2/3*\xb}, -\g)};
\draw plot [smooth, tension = 0.5] coordinates {({2/3*\xb}, -\g) ({3/4*\xb},{-1/8*\yb}) ({5/6*\xb},{-\yb}) (\xb,-\yb)};
\draw[blue,thick] (-\R,0) -- (\R,0);
\draw ({-2/3*\xb}, -\g) -- ({2/3*\xb}, -\g);

\node at (0.85*\R,0.8*\R) {$\amb_{r_0}$};
\node[blue] at ({-3/4*\R}, 0.5) {$\ms$};
\end{scope}
\end{tikzpicture}
\caption{Modified unit ball with non-properly embedded free boundary minimal surface.} \label{fig:NonProp}
\end{minipage}\hfill
\begin{minipage}[t]{.4\textwidth}
\centering
\begin{tikzpicture} [scale =0.5]
\pgfmathsetmacro{\R}{4}
\pgfmathsetmacro{\r}{1}
\pgfmathsetmacro{\t}{30}
\pgfmathsetmacro{\x}{\R*sin(\t)}
\pgfmathsetmacro{\y}{\R*cos(\t)}
\pgfmathsetmacro{\xa}{\R/4*8/3}
\pgfmathsetmacro{\ya}{\R/4*sqrt(80/9)}
\pgfmathsetmacro{\xb}{\xa*sqrt(5/64)}
\pgfmathsetmacro{\yb}{\ya*sqrt(5/64)}
\pgfmathsetmacro{\s}{asin(2/3)}
\pgfmathsetmacro{\g}{0.02}

\draw (-\x,-\y) arc(-90-\t:-360-90+\t:\R);

\draw (-\r,-1.5*\r) arc(180:0:\r);
\draw plot [smooth, tension = 0.7] coordinates {(-\x,-\y) ({-1.1*\r},{-\y}) ({-\r},{-1.5*\r})};
\draw plot [smooth, tension = 0.7] coordinates {(\x,-\y) ({1.1*\r},{-\y}) ({\r},{-1.5*\r})};

\draw[blue,thick] (-\xa,-\ya) -- (-\xb,-\yb);
\draw[blue,thick] (\xa,-\ya) -- (\xb,-\yb);
\draw[blue,thick] (-\xb,-\yb) arc(180-\s:\s:\r);

\node at (0.85*\R,0.8*\R) {$\amb$};
\node[blue] at ({-1/2*\R}, {-1/3*\R}) {$\ms$};
\end{tikzpicture}
\caption{A local minimizer of the area functional that is not a minimal surface.} 
\label{fig:MinNonMin}
\end{minipage}
\end{figure}

\begin{example}
A useful example to keep in mind is the following one.
Let us denote by $D$ the horizontal equatorial disc in the three-dimensional unit ball $B^3\subset\mathbb{R}^3$.
Given any closed subset $C\subset D$ with $C\cap\partial D = \emptyset$, consider a smooth compact ambient manifold $\amb_C$ obtained from $B^3$ by removing a portion of the lower half-ball in such a way that $\partial\amb_{C}$ intersects the interior of $D$ exactly in $C$. This can be done for every such choice of $C$.
Observe that $D$ is a non-properly embedded free boundary minimal surface in $\amb_C$. As a special instance, we consider $\amb_C$ for $C = \bar B_{r_0}(0)\subset \R^2$ where $0\le r_0<1$ and we denote it simply by $\amb_{r_0}$ (see \cref{fig:NonProp}).
\end{example}

Given a surface $\ms\subset\amb$, there are several possible families of variations along which we can deform $\ms$. Some natural choices are:
\begin{align}
\vf_e(\amb,\ms) &\eqdef \{ X\in\vf(\amb) \st X(x) \in T_x\partial\amb\ \forall x\in\partial\ms  \}; \label{eq:vfe} \\
\vf_i(\amb, \ms) &\eqdef \{ X\in\vf(\amb) \st X(x) \in T_x\partial\amb\ \forall x\in\partial\ms\comma \ g(X(x), \hat\eta(x))\leq 0 \ \forall x\in\partial\amb \}; \label{eq:vfi} \\ 
\vf_\partial(\amb) &\eqdef \{ X\in\vf(\amb) \st X(x) \in T_x\partial\amb\ \forall x\in\partial\amb \}; \label{eq:vfd}\\
\vf_c(\amb,\ms) &\eqdef \{ X\in \vf(\amb) \st X(x) \in T_x\partial\amb\ \forall x\in\partial\ms\comma \  \supp(X)\cap \ms \Subset \ms\setminus (\operatorname{int}(\ms)\cap \partial\amb) \}. \label{eq:vfc}
\end{align}
Recall that $\hat \eta$ is the outward unit co-normal to $\partial\amb$.
Moreover observe that 
\[
\vf_e(\amb,\ms)\supset \vf_i(\amb,\ms) \supset \vf_\partial(M) \supset \vf_c(\amb,\ms).
\]
Hereafter, we will write $\vf_*(\amb,\ms)$ to denote any of the previous subsets of $\vf(M)$.
\begin{figure}[htpb]
\centering
\begin{tikzpicture} [scale=0.41]
\pgfmathsetmacro{\R}{4}
\pgfmathsetmacro{\ta}{10}
\pgfmathsetmacro{\xa}{\R*sin(\ta)}
\pgfmathsetmacro{\ya}{\R*cos(\ta)}
\pgfmathsetmacro{\tb}{40}
\pgfmathsetmacro{\xb}{\R*sin(\tb)}
\pgfmathsetmacro{\yb}{\R*cos(\tb)}
\pgfmathsetmacro{\g}{0.02}
\pgfmathsetmacro{\h}{0.1*\R}

\begin{scope}[xshift=0]
\draw (-\xb,-\yb) arc(-90-\tb:-360-90+\tb:\R);
\draw plot [smooth, tension = 0.5] coordinates {(-\xb,-\yb) ({-5/6*\xb},{-\yb}) ({-3/4*\xb},{-1/8*\yb}) ({-2/3*\xb}, -\g)};
\draw plot [smooth, tension = 0.5] coordinates {({2/3*\xb}, -\g) ({3/4*\xb},{-1/8*\yb}) ({5/6*\xb},{-\yb}) (\xb,-\yb)};
\draw[blue,thick] (-\R,0) -- (\R,0);
\draw ({-2/3*\xb}, -\g) -- ({2/3*\xb}, -\g);

\def\up{{-0.9, -0.7, -0.5, -0.3,-0.1, 0.1, 0.3, 0.5, 0.7, 0.9}}
\foreach \i in {0,...,9}
  \draw[->] (\up[\i]*\R, 0) -- (\up[\i]*\R, \h);

\def\down{{-0.8, -0.6, -0.4, -0.2, 0, 0.2, 0.4, 0.6, 0.8}}
\foreach \i in {0,...,8}
  \draw[->] (\down[\i]*\R, 0) -- (\down[\i]*\R, -\h);
  
\node at (-0.9*\R, 0.9*\R) {\eqref{eq:vfe}};
\end{scope}

\begin{scope}[xshift=300] 
\draw (-\xb,-\yb) arc(-90-\tb:-360-90+\tb:\R);
\draw plot [smooth, tension = 0.5] coordinates {(-\xb,-\yb) ({-5/6*\xb},{-\yb}) ({-3/4*\xb},{-1/8*\yb}) ({-2/3*\xb}, -\g)};
\draw plot [smooth, tension = 0.5] coordinates {({2/3*\xb}, -\g) ({3/4*\xb},{-1/8*\yb}) ({5/6*\xb},{-\yb}) (\xb,-\yb)};
\draw[blue,thick] (-\R,0) -- (\R,0);
\draw ({-2/3*\xb}, -\g) -- ({2/3*\xb}, -\g);

\def\up{{-0.9, -0.7, -0.5, -0.3,-0.1, 0.1, 0.3, 0.5, 0.7, 0.9}}
\foreach \i in {0,...,9}
  \draw[->] (\up[\i]*\R, 0) -- (\up[\i]*\R, \h);

\def\down{{-0.8, -0.6, 0.6, 0.8}}
\foreach \i in {0,...,3}
  \draw[->] (\down[\i]*\R, 0) -- (\down[\i]*\R, -\h);
  
\node at (-0.9*\R, 0.9*\R) {\eqref{eq:vfi}};
\end{scope}

\begin{scope}[xshift=600]
\draw (-\xb,-\yb) arc(-90-\tb:-360-90+\tb:\R);
\draw plot [smooth, tension = 0.5] coordinates {(-\xb,-\yb) ({-5/6*\xb},{-\yb}) ({-3/4*\xb},{-1/8*\yb}) ({-2/3*\xb}, -\g)};
\draw plot [smooth, tension = 0.5] coordinates {({2/3*\xb}, -\g) ({3/4*\xb},{-1/8*\yb}) ({5/6*\xb},{-\yb}) (\xb,-\yb)};
\draw[blue,thick] (-\R,0) -- (\R,0);
\draw ({-2/3*\xb}, -\g) -- ({2/3*\xb}, -\g);

\def\up{{-0.9, -0.7, -0.5, 0.5, 0.7, 0.9}}
\foreach \i in {0,...,5}
  \draw[->] (\up[\i]*\R, 0) -- (\up[\i]*\R, \h);

\def\down{{-0.8, -0.6, 0.6, 0.8}}
\foreach \i in {0,...,3}
  \draw[->] (\down[\i]*\R, 0) -- (\down[\i]*\R, -\h);

\node at (-0.9*\R, 0.9*\R) {\eqref{eq:vfd}};
\end{scope}

\begin{scope}[xshift=900]
\draw (-\xb,-\yb) arc(-90-\tb:-360-90+\tb:\R);
\draw plot [smooth, tension = 0.5] coordinates {(-\xb,-\yb) ({-5/6*\xb},{-\yb}) ({-3/4*\xb},{-1/8*\yb}) ({-2/3*\xb}, -\g)};
\draw plot [smooth, tension = 0.5] coordinates {({2/3*\xb}, -\g) ({3/4*\xb},{-1/8*\yb}) ({5/6*\xb},{-\yb}) (\xb,-\yb)};
\draw[blue,thick] (-\R,0) -- (\R,0);
\draw ({-2/3*\xb}, -\g) -- ({2/3*\xb}, -\g);

\def\up{{-0.9, -0.7, 0.7, 0.9}}
\foreach \i in {0,...,3}
  \draw[->] (\up[\i]*\R, 0) -- (\up[\i]*\R, \h);

\def\down{{-0.8, -0.6, 0.6, 0.8}}
\foreach \i in {0,...,3}
  \draw[->] (\down[\i]*\R, 0) -- (\down[\i]*\R, -\h);
  
\node at (-0.9*\R, 0.9*\R) {\eqref{eq:vfc}};
\end{scope}
\end{tikzpicture}
\caption{Visualization of the different possible sets of variations.} \label{fig:PropScheme}
\end{figure}
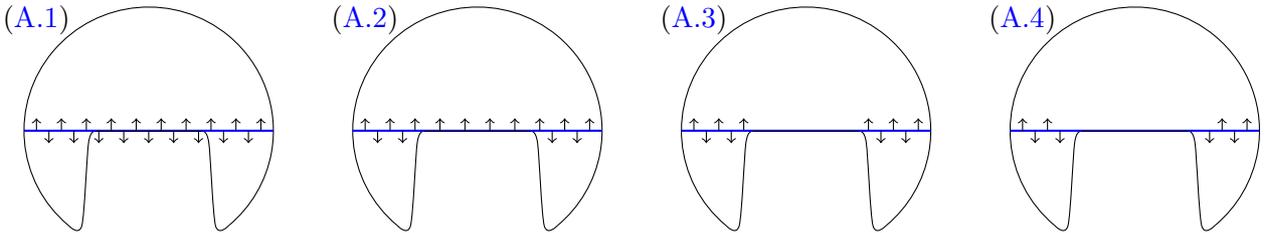

\begin{proposition} \label{prop:FirstSecMakeSense}
Let $\amb$, $\ms$ be as above and let $X\in \vf_e(\amb,\ms)$.
Consider $\check \amb$ to be a compact manifold in which $\amb$ embeds as a regular domain and such that $\ms$ is properly embedded in $\check \amb$, namely $\partial\ms = \ms\cap \partial\check\amb$. Moreover, let $\check X\in \vf(\check\amb)$ be a smooth extension of the vector field $X$ to all $\check \amb$. Then the first variation of the area of $\ms$ with respect to $\check X$ does not depend on the choice of the extensions $\check\amb$ and $\check X$, and is given by \eqref{eq:FirstVar}. 
Furthermore, if $\ms$ is a free boundary minimal surface, then we can compute the second variation of the area with respect to $\check X$ and the result again does not depend on the choice of the extensions and is given by \eqref{eq:SecondVar}.
\end{proposition}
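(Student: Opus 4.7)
The plan is to reduce to the classical, properly embedded setting by a local thickening argument, so that the standard variation formulas apply, and then to check that the right-hand sides of \eqref{eq:FirstVar} and \eqref{eq:SecondVar} only depend on data intrinsic to $\ms$ and $X|_\ms$.

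First I would construct the extension $(\check\amb,\check X)$. Let $C\eqdef \operatorname{int}(\ms)\cap\partial\amb$ be the (relatively closed) interior contact set. Since $\ms$ is embedded and $\partial\amb$ is smooth, in a tubular neighborhood of $C$ (disjoint from $\partial\ms$, using that $\partial\ms\subset\partial\amb$ is closed and $C\cap\partial\ms=\emptyset$) one can identify a collar of $\partial\amb$ with $\partial\amb\times\co0\delta$ and glue on $\partial\amb\times\oc{-\delta}0$ to obtain a smooth ambient manifold $\check\amb$ extending $\amb$, performing the gluing only near $C$ so that $\partial\check\amb$ coincides with $\partial\amb$ on a neighborhood of $\partial\ms$. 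Choosing $\delta$ small enough, $\ms$ is properly embedded in $\check\amb$ in the sense $\partial\ms=\ms\cap\partial\check\amb$, and by the very choice of the gluing, $\partial\ms$ sits in $\partial\amb\cap\partial\check\amb$, with the outward co-normals to $\partial\ms$ inside $\ms$ computed with respect to $\amb$ and $\check\amb$ being the same. A smooth extension $\check X$ of $X$ to $\check\amb$ exists by standard partition of unity arguments, and since $X$ is tangent to $\partial\amb$ along $\partial\ms$ and $\partial\amb=\partial\check\amb$ near $\partial\ms$, the extension can be chosen tangent to $\partial\check\amb$ along $\partial\ms$.

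Next I would apply the classical first variation formula to the properly embedded surface $\ms\subset\check\amb$ deformed by the flow $\check\psi_t$ of (a compactly supported cutoff of) $\check X$. This gives
\[
\frac{\d}{\d t}\Big|_{t=0}\Haus^2(\check\psi_t(\ms)) = \int_\ms \div_\ms(\check X)\de\Haus^2 = -\int_\ms \scal H{\check X}\de\Haus^2 + \int_{\partial\ms} \scal{\check X}{\eta}\de\Haus^1,
\]
where the second equality uses the standard tangential divergence identity and integration by parts, valid for any smooth surface with boundary. Since $\div_\ms(\check X)$ depends only on the $1$-jet of $\check X$ along $\ms$, and this is determined by $X|_\ms$ (recall $\check X|_\ms=X|_\ms$ and $\ms$ is codimension one in $\amb$ near $\operatorname{int}(\ms)\setminus\partial\amb$, with tangent deformations contributing intrinsically), the right-hand side coincides with \eqref{eq:FirstVar} and is manifestly independent of the extensions $(\check\amb,\check X)$.

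For the second variation, assume $\ms$ is free boundary minimal. Here the key observation is that under the hypothesis $H=0$ together with the orthogonal meeting condition along $\partial\ms$, the tangential part $X^\top$ of $X$ (along $\ms$) contributes nothing to $\frac{\d^2}{\d t^2}|_{t=0}\Haus^2(\check\psi_t(\ms))$: indeed, the flow generated by a vector field purely tangent to $\ms$ (modified near $\partial\ms$ so as to remain tangent to $\partial\amb$) simply reparametrizes $\ms$, hence preserves its area, while a standard integration-by-parts expansion, using $H=0$ and the free boundary condition, shows that the cross-term between $X^\top$ and $X^\perp$ vanishes too (cf. the classical derivation of \eqref{eq:SecondVar}). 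We are thus reduced to computing the second variation for the normal vector field $X^\perp$, for which the standard formula on properly embedded free boundary minimal surfaces in $\check\amb$ yields exactly \eqref{eq:SecondVar}. As above, the formula involves only $X^\perp$, the induced connection $\cov^\perp$ on $N\ms$, the ambient Ricci curvature along $\ms$, $|A|^2$, and the second fundamental form of $\partial\amb=\partial\check\amb$ along $\partial\ms$; all these are intrinsic to $\ms\subset\amb$ and $X|_\ms$, so the value is independent of the chosen extension.

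The main (rather mild) obstacle is the construction of the extension $\check\amb$: one must verify that $\ms$ can indeed be made properly embedded in a smooth thickening of $\amb$ for an \emph{arbitrary} contact set $C$, and that the thickening can be performed locally away from $\partial\ms$ so as to preserve $\partial\amb$ near $\partial\ms$, which is what guarantees that the co-normal $\eta$ and the boundary second fundamental form $\II^{\partial\amb}$ appearing on the right-hand sides are those of the original manifold $\amb$.
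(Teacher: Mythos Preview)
Your argument is essentially correct and reaches the same conclusion as the paper, but it is considerably more elaborate than necessary and contains one imprecision worth noting.

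The paper's proof is a one-line inspection: since $\ms$ is properly embedded in $\check\amb$, the classical formulas \eqref{eq:FirstVar} and \eqref{eq:SecondVar} hold, and one simply observes that every term on the right-hand side depends only on $\check X|_\ms = X|_\ms$ and tangential derivatives thereof. The only term requiring a moment's thought is $|\cov^\perp \check X^\perp|^2$, and this is determined by $\check X^\perp$ along $\ms$ because $\cov^\perp$ differentiates only in directions tangent to $\ms$. By contrast, you (i) construct the extension $\check\amb$, which is unnecessary since the statement \emph{gives} $\check\amb$ and $\check X$ as hypotheses (your last paragraph flags this as the ``main obstacle'', but there is nothing to prove here); and (ii) re-derive the reduction to $X^\perp$ in the second variation via the reparametrization and cross-term arguments, whereas the paper simply cites the formula \eqref{eq:SecondVar}, which is already written purely in terms of $X^\perp$.

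One small slip: you write that $\div_\ms(\check X)$ depends on the $1$-jet of $\check X$ along $\ms$ and that this $1$-jet is determined by $X|_\ms$. The second claim is false as stated---the normal derivative of $\check X$ is not determined by $X|_\ms$---but it does not matter, because $\div_\ms(\check X)=\sum_i\langle D_{e_i}\check X,e_i\rangle$ involves only derivatives in directions $e_i\in T\ms$, and those \emph{are} determined by $\check X|_\ms$. The same remark applies to $\cov^\perp X^\perp$ in the second variation.
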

\begin{proof}
The result follows by simply inspecting equations \eqref{eq:FirstVar} and \eqref{eq:SecondVar} for the first and second variation, respectively, applied to $\check X$.
Indeed, the only possible term in the formulae that is not obviously independent of the extensions is $\abs{\cov^\perp \check X^\perp}^2$. 
However, keeping in mind that the extensions $\check\amb$ and $\check X$ are smooth, the covariant derivatives of $\check X^{\perp}$ are uniquely determined by their values along $\ms$.
\end{proof}

\begin{remark}
Given \cref{prop:FirstSecMakeSense}, and thanks to \eqref{eq:FirstVar}, we can say that:
\begin{itemize}
\item If we consider as variations $\vf_e(\amb,\ms)$ or $\vf_i(\amb,\ms)$, the critical points are exactly all and only the free boundary minimal surfaces.
\item In the other two cases $\vf_\partial(\amb)$ and $\vf_c(\amb,\ms)$, free boundary minimal surfaces are critical points but the other implication is not true.
Indeed, being a critical point with respect to these variations does not impose \emph{any} condition on $\ms\cap (\partial\amb\setminus\partial\ms)$. In particular, surfaces as in \cref{fig:MinNonMin} are critical points.
\end{itemize}
\end{remark}

\subsection{Morse index - four definitions}
Given a free boundary minimal surface $\ms$, we can now try to define the Morse index of $\ms$ with respect to variations in $\vf_*(\amb,\ms)$.

In the cases when $\vf_*(\amb,\ms)$ is a vector space, we can just mimic the classical definition of Morse index. 
Namely, denoting by $\vf_*^R(\amb,\ms)\subset \Gamma(T\ms)$ the set of restrictions of elements of $\vf_*(\amb,\ms)$ to $\Gamma(T\ms)$, we define the Morse index $\ind_*(\ms)$ with respect to the variations in $\vf_*(\amb,\ms)$ as the maximal dimension of a linear subspace of $\Gamma(N\ms)\cap \vf_*^R(\amb,\ms)$ where the second variation of the area (given by \eqref{eq:SecondVar}) is negative definite.
In this way we define $\ind_e(\ms)$, $\ind_\partial(\ms)$ and $\ind_c(\ms)$.
\begin{remark}
This definition of Morse index $\ind_c(\ms)$ coincides with the one given by Guang-Wang-Zhou in \cite{GuaWanZho19} and employed in \cite{GuaLiWanZho19}.
\end{remark}

Observe that $\vf_i^R(\amb,\ms)$ is only a convex cone in $\vf(M)$, thus the conceptual scheme above is not immediately applicable. So, let us recall some terminology and employ it to suggest a natural candidate for the definition of $\ind_i(\ms)$.
\begin{definition}
Given a surface $\ms\subset\amb$, we say that a set $\Theta\subset\Gamma(N\ms)\cap \vf_i^R(\amb,\ms)$ is a \emph{convex subcone} if it is closed under linear combinations with non-negative coefficients.
The \emph{dimension} of a convex subcone $\Theta$ is the minimal dimension of a linear subspace of $\Gamma(N\ms)$ that contains $\Theta$.
\end{definition}

\begin{definition}
Given an embedded free boundary minimal surface $\ms\subset\amb$, we define $\ind_i(\ms)$ as the maximal dimension of a convex subcone of $\Gamma(N\ms)\cap \vf_i^R(\amb,\ms)$ where the second variation of the area functional is negative definite.
\end{definition}

Unfortunately, this definition is not always meaningful as shown in the following proposition.
\begin{proposition}
Let $\ms = D$ be the equatorial disc in the ambient manifold $\amb = \amb_{r_0}$ defined above, for some $0<r_0<1$.
Then $\ms$ is a free boundary minimal surface in $\amb$ and $\ind_i(\ms)=\infty$.
\end{proposition}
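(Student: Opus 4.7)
The plan is, for every $N\in\mathbb N$, to exhibit a convex subcone $\Theta_N\subset \Gamma(ND)\cap \vf_i^R(\amb_{r_0}, D)$ of dimension $N+1$ on which the quadratic form associated with the second variation is strictly negative on each of its nonzero elements; sending $N\to\infty$ then gives $\ind_i(D)=\infty$.

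First I would identify the admissible cone of normal variations. Since $\amb_{r_0}$ is obtained from $B^3$ by removing a portion of the \emph{lower} half-ball meeting $D$ precisely along $C=\bar B_{r_0}(0)$, the outward unit conormal $\hat\eta$ to $\partial\amb_{r_0}$ along $C$ satisfies $\hat\eta=-\nu$, where $\nu=e_3$ is the unit upward normal to $D$. Hence if $X\in\vf_i(\amb_{r_0},D)$ has normal component $V=f\nu$ along $D$, the condition $g(X,\hat\eta)\le 0$ on $C$ forces $f\ge 0$ there; conversely, any smooth $f$ which is non-negative \emph{throughout} $D$ is realized as such a normal component by the ambient extension $X(x)=\eta(x)\,\tilde f(x)\bigl(e_3-x_3\psi(x)\,x\bigr)$, where $\tilde f$ is the vertical pullback of $f$, $\psi$ is a radial cutoff equal to $1$ near $\partial B^3$, and $\eta$ is a cutoff supported in a thin slab around $D$; a direct case-by-case verification on $C$, on $\partial B^3$ and on the ``wall'' of the removed region shows $g(X,\hat\eta)\le 0$ everywhere on $\partial\amb_{r_0}$. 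Using $\Ric\equiv 0$, $\A\equiv 0$ and $\II^{\partial\amb_{r_0}}(\nu,\nu)=-1$ along $\partial D\subset\partial B^3$ (the unit sphere being totally umbilic with $\II^{S^2}=-g$ on tangent vectors), \cref{prop:FirstSecMakeSense} yields
\[
 Q(f,f)=\int_D \abs{\grad f}^2 \de \Haus^2-\int_{\partial D} f^2 \de \Haus^1 \point
\]

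Next I would construct the cone explicitly. For each integer $k\ge 1$, set $g_k(r,\theta)\eqdef k^{-1/2}r^k\sin(k\theta)$: these are harmonic on $D$, satisfy $\abs{g_k}\le 1$, $\int_{\partial D} g_k=0$, $\int_D \abs{\grad g_k}^2=\pi$, and by $L^2(\partial D)$-orthogonality of distinct Fourier modes $Q(g_k,g_l)=0$ for $k\ne l$, while $Q(g_k,g_k)=\pi(1-1/k)\le \pi$. Set $\phi_0\equiv 1$ and $\phi_k\eqdef 1+g_k$ for $k=1,\ldots,N$; each $\phi_k\ge 0$ throughout $D$ (so all $\phi_k\nu$ lie in $\Gamma(ND)\cap \vf_i^R$ by the previous step), and $\{1,g_1,\ldots,g_N\}$ are linearly independent, so the convex cone $\Theta_N$ generated by $\phi_0,\ldots,\phi_N$ has dimension $N+1$. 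For non-negative scalars $\alpha_0,\ldots,\alpha_N$, not all zero, let $\beta\eqdef \sum_{k=0}^N \alpha_k>0$ and $h\eqdef \sum_{k=1}^N \alpha_k g_k$, so $\sum_k \alpha_k \phi_k=\beta+h$; exploiting $\int_{\partial D} h=0$ to cancel the mixed term, diagonalizing via orthogonality, and using $\sum_{k=1}^N \alpha_k^2\le \beta^2$ (valid since all $\alpha_k\ge 0$) gives
\[
 Q\Bigl(\sum_k \alpha_k \phi_k,\sum_k \alpha_k \phi_k\Bigr)=-2\pi\beta^2+\sum_{k=1}^N \alpha_k^2\,Q(g_k,g_k)\le -\pi\beta^2<0\point
\]
Hence $\ind_i(D)\ge N+1$ for every $N\in\mathbb N$, so $\ind_i(D)=\infty$. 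The main (conceptual) obstacle lies entirely in the first step: the contact set $C$ does \emph{not} contribute as an integration domain in the second-variation formula, yet it produces a genuine one-sided sign constraint on admissible normal variations, and it is precisely the interplay between this constraint and the Hersch-type negativity of $Q$ on constants that drives $\ind_i$ to infinity (while $\ind_c$, $\ind_\partial$ and $\ind_e$ remain finite).
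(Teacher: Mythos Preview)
Your proof is correct and takes a genuinely different route from the paper's. Both arguments start from the same two observations: the admissible cone $\Gamma(ND)\cap\vf_i^R(\amb_{r_0},D)$ contains (at least) all $f\nu$ with $f\ge 0$ on $D$, and the constant function has $Q(1,1)=-2\pi<0$. From there the constructions diverge. The paper plants $N$ disjoint bump functions $\rho_1,\dots,\rho_N$ inside $B_{r_0}(0)$ with $\int_D|\nabla\rho_k|^2<2\pi/N$ (using the vanishing capacity of points in dimension two), sets $\psi_k\eqdef 1-\sum_{h\ne k}\rho_h\ge 0$, and checks that $Q$ is negative on the cone they generate. You instead perturb the constant by the explicit harmonic functions $g_k=k^{-1/2}r^k\sin(k\theta)$, set $\phi_0=1$, $\phi_k=1+g_k\ge 0$, and exploit the $L^2(\partial D)$-orthogonality of the Fourier modes to diagonalize $Q$ exactly; the key inequality $\sum_{k\ge 1}\alpha_k^2\le(\sum_{k\ge 0}\alpha_k)^2$ for non-negative $\alpha_k$ then gives $Q\le -\pi\beta^2<0$. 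Your approach is more explicit and avoids the (implicit) log-cutoff construction needed for the paper's condition~(iii); the paper's approach, on the other hand, makes the role of the contact set more visible by supporting the perturbations entirely inside $C$. One small comment: your extension of $f\nu$ to an ambient field in $\vf_i$ is a sketch, and the verification on the ``wall'' portion of $\partial\amb_{r_0}$ relies on the smoothness of $\partial\amb_{r_0}$ at $\partial C$ (so that $\hat\eta\approx -e_3$ there); this is fine but worth saying explicitly. The paper simply asserts the identification of the cone without further justification, so your level of detail already exceeds it.
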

\begin{proof}
First of all, observe that $\Gamma(N\ms)\cap \vf_i^R(\amb,\ms)$ can be identified with the set of functions $\{f\in C^\infty(\ms) \st \text{$f\ge 0$ on $\bar B_{r_0}(0)$} \}$ and that the second variation of the area along any function $f$ in this set can be written as
\begin{equation} \label{eq:SecVarDisk}
Q_\ms(f,f) = \int_\ms\abs{\grad f}^2 \de \Haus^2 - \int_{\partial\ms} f^2\de\Haus^1 \point
\end{equation}

Now, given any $N\in\N$, let us consider functions $\rho_1,\ldots,\rho_N \in C^\infty(\ms)$, $0\le \rho_k\le1$ for $k=1,\ldots,N$, such that
\begin{enumerate} [label=(\roman*)]
\item $\supp(\rho_k)\subset B_{r_0}(0)\subset \ms$ for $k=1,\ldots,N$; \label{bf:supp}
\item $\supp(\rho_k)\cap \supp(\rho_h) = \emptyset$ for any $1\le k<h \le N$; \label{bf:disj}
\item $\int_\ms\abs{\grad\rho_k}^2\de\Haus^2<2\pi/N$ for $k=1,\ldots,N$; \label{bf:norm}
\item there exist $x_1,\ldots,x_N\in\ms$ such that $\rho_k(x_k) = 1$ for $k=1,\ldots,N$. \label{bf:value}
\end{enumerate}

Then define $\psi_k\eqdef 1-\sum_{h\not=k} \rho_h$ for $k=1,\ldots,N$ and denote by $\Theta\subset \Gamma(N\ms)\cap \vf_i^R(\amb,\ms)$ the convex subcone of dimension $N$ generated by $\{\psi_k\}_{k=1,\ldots,N}$ (observe that $\psi_k\ge 0$ in $\ms$). We want to prove that $Q_\ms$ is negative definite on $\Theta$, which would conclude the proof since $\ind_i(\ms)\ge \dim\Theta = N$.

Pick any $\psi\eqdef \sum_{k=1}^Na_k\psi_k \in \Theta$: using \ref{bf:disj} and \ref{bf:norm}, together with the fact that $\supp(\rho_k)\cap\partial\ms=\emptyset$ for $k=1,\ldots,N$ by \ref{bf:supp}, we have that
\[
\begin{split}
Q_\ms(\psi,\psi) &= \int_\ms \sum_{k=1}^N\abs{\grad\rho_k}^2\Bigg(\sum_{h\not=k}a_h\Bigg)^2 \de\Haus^2 - \int_{\partial\ms} \Bigg(\sum_{k=1}^N a_k \Bigg)^2\de\Haus^1\\
&\le \left(\sum_{k=1}^Na_k \right)^2\left( \int_\ms \sum_{k=1}^N\abs{\grad\rho_k}^2 \de\Haus^2 -2\pi\right) < 0 \comma
\end{split}
\]
which concludes the proof.
\end{proof}

\subsection{Morse index - basic comparison results} We now present the simplest inequality involving $\ind_e(\ms)$, $\ind_\partial(\ms)$, $\ind_c(\ms)$ and compute these three quantities in an explicit example.

\begin{proposition} \label{prop:IneqIndices}
Given an embedded free boundary minimal surface $\ms\subset\amb$, we have that $\ind_e(\ms)$, $\ind_\partial(\ms)$ and $\ind_c(\ms)$ are well-defined (finite) numbers and it holds
\[
\ind_e(\ms) \ge \ind_\partial(\ms) = \ind_c(\ms)\point
\]
\end{proposition}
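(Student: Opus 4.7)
The plan is to convert each index into a maximization of $Q_\ms$ over a function space on $\ms$ and then reduce the only non-trivial relation $\ind_\partial(\ms)=\ind_c(\ms)$ to a cutoff approximation argument. Set $C\eqdef\operatorname{int}(\ms)\cap\partial\amb$, a closed subset of $\ms$ disjoint from $\partial\ms$. The first step is to identify each $\vf_*^R(\amb,\ms)\cap\Gamma(N\ms)$ via the normal decomposition $V=f\nu$: along $\partial\ms$ the free boundary condition gives $\nu\in T\partial\amb$, so tangency to $\partial\amb$ places no restriction on $f$ there, while at any interior contact point $p\in C$ the inclusion $\ms\subset\amb$ forces $T_p\ms=T_p\partial\amb$ (otherwise $\ms$ would transversally cross $\partial\amb$) and hence $\nu_p=\pm\hat\eta_p$. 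These two observations yield the bijective correspondences
\begin{align*}
\vf_e^R(\amb,\ms)\cap\Gamma(N\ms)&\ \longleftrightarrow\ C^\infty(\ms)\comma\\
\vf_\partial^R(\amb,\ms)\cap\Gamma(N\ms)&\ \longleftrightarrow\ \{f\in C^\infty(\ms)\st f|_C=0\}\comma\\
\vf_c^R(\amb,\ms)\cap\Gamma(N\ms)&\ \longleftrightarrow\ \{f\in C^\infty(\ms)\st f\equiv 0 \text{ on an open neighborhood of } C\}\comma
\end{align*}
surjectivity in each case being checked by extending $f\nu$ to a tubular neighborhood of $\ms$ and multiplying by a cutoff. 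From this, finiteness of the three indices and the easy chain $\ind_e(\ms)\ge\ind_\partial(\ms)\ge\ind_c(\ms)$ are immediate: $\ind_e(\ms)$ is the standard Morse index of the Jacobi operator $\jac_\ms$ with its Robin boundary condition (finite by self-adjoint elliptic theory), and the other two are realized on strictly smaller subspaces with the same quadratic form.

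The content of the proposition is thus the reverse inequality $\ind_c(\ms)\ge\ind_\partial(\ms)$. I would fix a subspace $V\subset\{f\in C^\infty(\ms)\st f|_C=0\}$ of dimension $k=\ind_\partial(\ms)$ on which $Q_\ms$ is strictly negative definite, together with a basis $f_1,\dots,f_k$, and construct, for every small $\eps>0$, smooth functions $f_i^\eps$ vanishing on an open neighborhood of $C$ such that $f_i^\eps\to f_i$ in $H^1(\ms)$. Since the coefficients of $Q_\ms$ are bounded on the compact surface $\ms$, the form extends continuously to $H^1(\ms)\times H^1(\ms)$; hence the Gram matrix $(Q_\ms(f_i^\eps,f_j^\eps))_{i,j}$ converges to the negative definite $(Q_\ms(f_i,f_j))_{i,j}$. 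For $\eps$ small enough this matrix is itself negative definite and the $\{f_i^\eps\}$ remain linearly independent, so their span produces a $k$-dimensional subspace of $\vf_c^R\cap\Gamma(N\ms)$, giving $\ind_c(\ms)\ge k$.

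The main obstacle I expect is producing the approximations $f_i^\eps$ under the sole assumption that $C$ is closed, since $C$ may have positive area and highly irregular boundary, precluding trace or capacity estimates. The construction is nevertheless classical: set $d_C(x)\eqdef\dist_g(x,C)$ ($1$-Lipschitz), let $\tilde d_C$ be a smooth approximation with $\lvert\tilde d_C-d_C\rvert\le\eps/4$ and uniformly bounded gradient, choose a cutoff $\eta:\R\to\cc 01$ with $\eta=0$ on $\oc{-\infty}1$ and $\eta=1$ on $\co 2\infty$, and set $\chi_\eps\eqdef\eta(\tilde d_C/\eps)$, $f_i^\eps\eqdef\chi_\eps f_i$. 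Since $C\cap\partial\ms=\emptyset$, for $\eps$ small enough $\chi_\eps\equiv 1$ near $\partial\ms$, so no boundary contribution to $Q_\ms$ is affected. The key analytic input is the Lipschitz estimate $\lvert f_i(x)\rvert\le L_i\, d_C(x)$ with $L_i\eqdef\|\grad f_i\|_\infty$ (valid because $f_i$ is smooth on compact $\ms$ and vanishes on $C$), which on the thin layer $A_\eps\eqdef\{x\in\ms\st 0<d_C(x)<3\eps\}$ where $\grad\chi_\eps$ is supported yields the pointwise bound
\[
\lvert f_i\grad\chi_\eps\rvert^2\ \lesssim\ \eps^{-2}(L_i\eps)^2\mathds 1_{A_\eps}\ =\ L_i^2\mathds 1_{A_\eps}\point
\]
As $C$ is closed, $\bigcap_\eps A_\eps=\emptyset$ and hence $\lvert A_\eps\rvert\to 0$ by monotone convergence; combined with $(1-\chi_\eps)f_i\to 0$ in $L^2(\ms)$ by dominated convergence, this yields $f_i^\eps\to f_i$ in $H^1(\ms)$ and closes the argument.
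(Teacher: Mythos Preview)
Your overall architecture matches the paper's: reduce to scalar functions via $V=f\nu$, get finiteness and the easy chain from the inclusions of variation spaces, and prove $\ind_\partial(\ms)=\ind_c(\ms)$ by showing that any negative definite subspace for $\vf_\partial$ can be $H^1$-approximated inside $\vf_c$. The paper packages the approximation as a separate lemma (truncation $(u-\eps)_+$ followed by mollification, applied after a partition of unity on $\Omega=\ms\setminus C$), whereas you multiply by a distance cutoff $\chi_\eps=\eta(\tilde d_C/\eps)$; these are equivalent devices.

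There is, however, a genuine gap in your $H^1$-convergence step. You expand $\grad(f_i-f_i^\eps)=-f_i\grad\chi_\eps+(1-\chi_\eps)\grad f_i$ and correctly bound the first summand by $L_i^2\mathds 1_{A_\eps}$ with $|A_\eps|\to 0$. But you never treat the second summand $(1-\chi_\eps)\grad f_i$: the factor $1-\chi_\eps$ equals $1$ on all of $C$, and $C$ may have positive area, so this term does \emph{not} go to zero by the mechanism you describe. What is needed is the (non-trivial) fact that for $f_i\in C^\infty(\ms)\subset W^{1,2}(\ms)$ with $f_i|_C=0$ one has $\grad f_i=0$ $\Haus^2$-a.e.\ on $C$; then dominated convergence on $(1-\chi_\eps)\grad f_i$ finishes the job. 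This is exactly the ingredient the paper isolates and cites (their lemma invokes the Sobolev-space result that $\grad u=0$ a.e.\ on $\{u=0\}$). Once you insert this line, your argument is complete and essentially identical to the paper's.
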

\begin{proof}
First observe that, by \cref{prop:FirstSecMakeSense}, $\ind_e(\ms)$ coincides with the Morse index of $\ms$ seen as a properly embedded free boundary minimal surface in any extension $\check\amb$ of $\amb$ as in the statement of the proposition. Therefore $\ind_e(\ms)$ is a well-defined number.
Moreover, since $\vf_c(\amb,\ms) \subset \vf_\partial(\amb) \subset \vf_e(\amb,\ms)$, we have that 
$\ind_c(\ms)$ and $\ind_\partial(\ms)$ are well-defined as well, and $\ind_c(\ms)\le \ind_\partial(\ms)\le \ind_e(\ms)$.

We only need to prove that $\ind_c(\ms)$ is actually equal to $\ind_\partial(\ms)$. 
We first show that $\vf_c^R(\amb,\ms)$ is dense in $\vf_\partial^R(\amb)$ with respect to the $H^1$ norm, which is a consequence of the following lemma thanks to a standard partition of unity argument on $\Omega\eqdef \ms\setminus (\operatorname{int}(\ms)\cap \partial\amb)$ (in particular around $\partial\Omega\setminus\partial\ms$).

\begin{lemma}
Let $\Omega\subset \R^n$ be a bounded open domain and consider $u\in \operatorname{Lip}(\R^n)$ such that $u=0$ in $\Omega^{c}$. Then there exists a sequence of functions $u_k\in C^\infty_c(\Omega)$ that converge to $u$ in the $H^1(\R^n)$ norm, i.e.
\[
\int_{\R^n}\abs{u-u_k}^2 + \abs{\grad u-\grad {u_k}}^2 \de x  \to 0 \point 
\]
\end{lemma}
\begin{proof}
Let $u_+,u_-\in \operatorname{Lip}(\R^n)$ be the positive and the negative part of $u$, respectively. Moreover, let us define $\rho \eqdef d(\cdot,\Omega^c)$, for which we have that $\rho\in \operatorname{Lip}(\R^n)$, $\rho = 0$ in $\Omega^c$ and $\rho>0$ in $\Omega$. The functions $u_1\eqdef \rho + u_+, u_2\eqdef \rho + u_- \in \operatorname{Lip}(\R^n)$ are zero in $\Omega^c$ and are strictly positive in $\Omega$. Note that it is sufficient to prove the result for these two functions.

Therefore, without loss of generality, let us assume that $u\ge d(\cdot,\Omega^c)>0$ in $\Omega$. Moreover, to simplify the notation, let us assume that the Lipschitz constant of $u$ is $1$. Then, let us consider $u_\eps\eqdef (u-\eps)_+ \in \operatorname{Lip}(\R^n)$. Observe that $u_\eps(x) = 0$ for every $x\in\Omega$ such that $d(x,\Omega^c) \le \eps$.
Furthermore, it holds that (see e.g. \cite[Theorem 3.38]{AmbCarMas18})
\[
\int_{\overline\Omega} \abs{u-u_\eps}^2 \de x +  \int_{\overline\Omega} \abs{\grad u-\grad u_\eps}^2 \de x \le \eps^2\abs{\Omega} + \int_{\{u< \eps\}} \abs{\grad u}^2 \de x\comma
\]
which converges to $0$ as $\eps\to 0$ since $u\in H^1(\R^n)$ and $\bigcap_{\eps>0}\{u<\eps\} = \{u=0\}$.
Thus, let us choose a sequence $\eps_k\to 0$ and define $u_k\eqdef u_{\eps_k} \ast \varphi_{\eps_k/2}$, where $\varphi_{\eps}(x) \eqdef \eps^{-1}\varphi(x/\eps)$ and $\varphi\in C^\infty_c(B_1(0))$ is a cutoff function with $\varphi(0)=1$, $\varphi\le 1$.
The conclusion is then straightforward.
\end{proof}

Now, let $\Delta$ be an $(\ind_\partial(\ms))$-dimensional vector subspace of $\Gamma(N\ms)\cap \vf_\partial^R(\amb)$ where $Q_\ms$ is negative definite.
By density of $\vf_c^R(\amb,\ms)$ in $\vf_\partial^R(\amb)$, we can find an $(\ind_\partial(\ms))$-dimensional vector subspace $\tilde\Delta\subset \Gamma(N\ms)\cap \vf_c^R(\amb,\ms)$ such that $\tilde\Delta\cap \{ V\in \Gamma(N\ms)\st \norm V_{H^1} = 1\}$ is as close as we want to $\Delta\cap \{ V\in \Gamma(N\ms)\st \norm V_{H^1} = 1\}$ in the $H^1(\bar\ms)$ norm. In particular, we can choose $\tilde\Delta$ such that $Q_\ms$ is negative definite in $\tilde\Delta\cap \{ V\in \Gamma(N\ms)\st \norm V_{H^1} = 1\}$, and so in $\tilde\Delta$.
Therefore, we have that $\ind_\partial(\ms) = \dim\Delta=\dim\tilde\Delta \le \ind_c(\ms)$, which concludes the proof.
\end{proof}

We conclude this section by showing that the inequality above is actually strict in cases of interest.

\begin{proposition}
Let $\ms = D$ be the equatorial disc in the ambient manifold $M=M_{r_0}$ with $0\le r_0 <1$, defined above. Then $\ms$ is a free boundary minimal surface in $M_{r_0}$ with $\ind_e(\ms) = 1$ for all $0\le r_0 < 1$ and
\[
\ind_\partial(\ms) = \ind_c(\ms) = \begin{cases}
                                   1 & \text{if $r_0<e^{-1}$}\comma\\
                                   0 & \text{if $r_0\ge e^{-1}$} \point
                                   \end{cases}
\]
\end{proposition}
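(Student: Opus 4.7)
The identity $\ind_\partial(\Sigma) = \ind_c(\Sigma)$ is free from \cref{prop:IneqIndices}, so it suffices to determine $\ind_e$ and $\ind_c$ separately. For $\ind_e(\Sigma)$ I would observe that $\Sigma = D$ is properly embedded in the unit ball $B^3$, which plays the role of a natural extension $\check{\amb}$ of $\amb_{r_0}$; hence, by \cref{prop:FirstSecMakeSense}, $\ind_e(\Sigma)$ equals the Morse index of the flat equatorial disc viewed as a free boundary minimal surface in the Euclidean ball $B^3$. This is a classical computation yielding the value $1$: the constant function $f\equiv 1$ gives $Q_\Sigma(1,1)=-2\pi<0$, forcing $\ind\ge 1$, while the Fourier analysis sketched below (applied with $r_0=0$) produces the matching upper bound.

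For $\ind_c(\Sigma)$ I would first specialise the second-variation formula to our surface. On $\Sigma$ one has $\A\equiv 0$ and $\Ric_{\R^3}\equiv 0$, while along $\partial\Sigma\subset S^2\cap \partial\amb_{r_0}$ the second fundamental form satisfies $\II^{\partial\amb}(\nu,\nu)=-1$. Admissible test functions are smooth $f\colon\Sigma\to\R$ with $\supp(f)\Subset \Sigma\setminus \bar B_{r_0}(0)$, so the contact region contributes no boundary term and
\[
Q_\Sigma(f,f) = \int_\Sigma \abs{\grad f}^2\,\de\Haus^2 - \int_{\partial\Sigma} f^2\,\de\Haus^1.
\]
Therefore $\ind_c(\Sigma)$ counts, with multiplicity, the negative eigenvalues of the mixed-boundary problem $-\lapl f = \lambda f$ on the annulus $\{r_0<r<1\}$, with $f=0$ on $\{r=r_0\}$ and $\partial_\eta f = f$ on $\{r=1\}$. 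Decomposing $f(r,\theta) = \sum_{k\ge 0}(a_k(r)\cos k\theta + b_k(r)\sin k\theta)$ in Fourier modes decouples the problem into the Sturm--Liouville problems $-(r u')' + (k^2/r)\,u = \lambda\, r\, u$ on $[r_0,1]$ with $u(r_0)=0$ and $u'(1)=u(1)$; let $\lambda_{k,j}(r_0)$ denote the $j$-th eigenvalue.

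The signs of the $\lambda_{k,j}(r_0)$ are read off from the explicit $\lambda=0$ solutions. For $k=0$ these are $u = A + B\log r$, and imposing both boundary conditions singles out $r_0=e^{-1}$. Combined with domain-monotonicity $\lambda_{0,1}(r_0')\le \lambda_{0,1}(r_0)$ for $r_0'\le r_0$ (admissible test functions extend by zero), this yields $\lambda_{0,1}(r_0)<0$ iff $r_0<e^{-1}$. For the second radial eigenvalue, any $\lambda=0$ eigenfunction $A+B\log r$ is strictly monotone and hence has no interior zeros, so Sturm's oscillation theorem forbids $\lambda_{0,2}(r_0)=0$; together with continuity in $r_0$ and the inequality $\lambda_{0,2}(e^{-1})>\lambda_{0,1}(e^{-1})=0$, this gives $\lambda_{0,2}(r_0)>0$ for every $r_0\in(0,1)$.

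The crux is to rule out negative eigenvalues for modes $k\ge 1$, where the naive Cauchy--Schwarz bound $u(1)^2\le (-\log r_0)\int r(u')^2$ fails precisely when $r_0<e^{-1}$. I would instead exploit the $k^2/r^2$ angular contribution via a weighted AM--GM inequality: writing $u(1)^2 = 2\int_{r_0}^1 u u'\,\de r$ and using the pointwise bound $2\abs{u u'}\le r(u')^2 + r^{-1} u^2$ produces
\[
u(1)^2 \;\le\; \int_{r_0}^1 r (u')^2\,\de r + \int_{r_0}^1 r^{-1} u^2\,\de r,
\]
where equality forces $u=Cr^{\pm 1}$, which under $u(r_0)=0$ (with $r_0>0$) reduces to $u\equiv 0$. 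This gives precisely $\lambda_{1,1}(r_0)>0$ for every $r_0\in(0,1)$, and a fortiori $\lambda_{k,1}(r_0)>0$ for $k\ge 2$, as the quadratic form gains the extra non-negative term $(k^2-1)\int r^{-1} u^2$. Assembling the contributions of all Fourier modes (remembering that $k\ge 1$ modes come with multiplicity two) one concludes that $\lambda_{0,1}$ is the only potentially negative eigenvalue, whence $\ind_c(\Sigma)=1$ for $r_0<e^{-1}$ and $\ind_c(\Sigma)=0$ for $r_0\ge e^{-1}$, as claimed.
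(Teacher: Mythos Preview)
Your argument is correct, but it works harder than necessary because you never exploit the inequality $\ind_c(\Sigma)\le\ind_e(\Sigma)=1$ that you already have from \cref{prop:IneqIndices}. The paper's proof uses precisely this: once $\ind_c\le 1$, the only question is whether there exists \emph{some} test function $f$ vanishing on $\bar B_{r_0}(0)$ with $Q_\Sigma(f,f)<0$. Dropping the non-negative angular term $r^{-2}|\partial_\theta f|^2$ and applying the radial Cauchy--Schwarz inequality $f(1,\theta)^2\le \ln(r_0^{-1})\int_{r_0}^1 r|\partial_r f|^2\,\de r$ gives
\[
Q_\Sigma(f,f)\ge\Bigl(\tfrac{1}{\ln(r_0^{-1})}-1\Bigr)\int_0^{2\pi} f(1,\theta)^2\,\de\theta,
\]
with equality for the radially symmetric function $f_{r_0}(r)=\ln r-\ln r_0$. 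This single estimate decides the dichotomy in one stroke: $Q_\Sigma\ge 0$ when $r_0\ge e^{-1}$, and $f_{r_0}$ (smoothed) is a negative direction when $r_0<e^{-1}$.

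By contrast, your full Fourier/Sturm--Liouville analysis has to independently rule out contributions from the second radial eigenvalue and from every angular mode $k\ge 1$, which is where your weighted AM--GM argument enters. That argument is neat and correct (and your remark that the ``naive'' Cauchy--Schwarz bound alone is insufficient for the $k\ge1$ modes when $r_0<e^{-1}$ is exactly right), but it is only needed because you have not used the a priori upper bound. The upside of your route is that it is self-contained---it computes $\ind_c$ directly without reference to $\ind_e$---and the same Fourier analysis indeed yields the upper bound $\ind_e(\Sigma)\le 1$ at $r_0=0$, as you note.
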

Informally, this means that the free boundary minimal surfaces in \cref{fig:NonProp} are respectively unstable (the one on the left) and stable (the one on the right) with respect to the variations $\vf_\partial(\amb)$ and $\vf_c(\amb,\ms)$.

\begin{proof}
First observe that $\ind_e(\ms)=1$, since it coincides with the index of the equatorial disc in $B^3$. Therefore, we only need to compute $\ind_\partial(\ms)= \ind_c(\ms)$, which are equal by \cref{prop:IneqIndices}.

Since $\ind_\partial(\ms) \le \ind_e(\ms)=1$, it is sufficient to determine whether $\ind_\partial(\ms)$ is $0$ or $1$, i.e. whether there exists $f\in C^\infty(\ms)$ such that $f =0$ on $\bar B_{r_0}(0)$ and $Q_\ms(f,f) < 0$.
For any such function $f\in C^\infty(\ms)$ it holds that
\begin{align*}
Q_\ms(f,f) &= \int_0^{2\pi} \int_{r_0}^1 (\abs{\partial_r f(r,\theta)}^2 + r^{-2} \abs{\partial_\theta f(r,\theta)}^2)r \de r \de \theta - \int_0^{2\pi} f(1,\theta)^2\de\theta\\
&\ge \int_0^{2\pi} \int_{r_0}^1 \abs{\partial_r f(r,\theta)}^2 r \de r \de \theta - \int_0^{2\pi} f(1,\theta)^2\de\theta= \int_0^{2\pi}\left( \int_{r_0}^1 \abs{\partial_r f(r,\theta)}^2 r \de r - f(1,\theta)^2\right)\de\theta \point
\end{align*}
However, observe that by the Cauchy-Schwartz inequality
\[
\abs{f(1,\theta)} = \abs{f(1,\theta) - f(r_0,\theta)} = \left \lvert \int_{r_0}^1 \partial_r f(r,\theta) \de r \right\rvert \le \left(\int_{r_0}^1 \frac 1r\de r \right)^{1/2} \left( \int_{r_0}^1 \abs{\partial_r f(r,\theta)}^2 r\de r\right)^{1/2}\comma
\]
thus
\[
f(1,\theta)^2 \le \ln(r_0^{-1}) \int_{r_0}^1 \abs{\partial_r f(r,\theta)}^2 r\de r \comma
\]
which implies that
\[
Q_\ms(f,f) \ge \left(\frac1{\ln(r_0^{-1})} - 1 \right) \int_0^{2\pi} f(1,\theta)^2\de\theta 
\]
with equality if and only if $f = f_{r_0}$, where $f_{r_0}(r,\theta)\eqdef c (\ln(r) - \ln (r_0))$ in $B_1(0)\setminus B_{r_0}(0)$ for some constant $c\in\R$ and $f_{r_0}(r,\theta) \eqdef 0$ in $\bar B_{r_0}(0)$.
Therefore, the index is $1$ if and only if $\ln(r_0^{-1}) >  1$ (observe that, rigorously, a smoothing of the function $f_{r_0}$ is needed), that means $r_0<e^{-1}$ as we wanted.
\end{proof}

\section{Reflecting free boundary minimal surfaces in a half-space} \label{sec:ReflPrinc}

We start recalling a standard reflection lemma, which is useful to transfer information about minimal surfaces in $\R^3$ to free boundary minimal surfaces in a half-space. The proof consists in a well-known argument based on elliptic estimates.
\begin{lemma} \label{lem:ReflectionPrinciple}
If $\ms^2$ is an embedded free boundary minimal surface in $\Xi\subset \R^3$ (that is $\ms$ has zero mean curvature and meets $\Pi$ orthogonally along $\partial\ms$), 
then the union of $\ms$ with its reflection with respect to $\Pi$ is a smooth, embedded minimal surface in $\R^3$ without boundary.
\end{lemma}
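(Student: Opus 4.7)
The plan is to reduce the global statement to a local assertion in a neighborhood of an arbitrary boundary point $p \in \partial \Sigma$, since away from $\partial\Sigma$ there is nothing to check (the two halves are smooth minimal surfaces meeting along nothing, inside their respective open half-spaces). The global embeddedness will then follow from the local smoothness together with the fact that $\Sigma$ is embedded in $\overline{\Xi}$ and is reflected to the complementary open half-space.

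First, I would fix $p \in \partial\Sigma \subset \Pi$ and exploit the orthogonality condition $T_p\Sigma \perp \Pi$ to pick a tangent plane $T_p\Sigma$ which is itself orthogonal to $\Pi$. In suitable Euclidean coordinates $(x_1,x_2,x_3)$ with $\Pi = \{x_1 = 0\}$ and $T_p\Sigma = \{x_3 = 0\}$, a sufficiently small neighborhood of $p$ in $\Sigma$ is the graph of a smooth function $u:\Omega^{+} \to \R$, where $\Omega^{+} = \Omega \cap \{x_1 \geq 0\}$ for some open disk $\Omega \subset \{x_3 = 0\}$ centered at the origin. The minimality of $\Sigma$ translates into the quasilinear elliptic equation
\[
\operatorname{div}\!\left(\frac{\nabla u}{\sqrt{1+|\nabla u|^2}}\right) = 0 \qquad \text{in } \Omega^{+},
\]
and the orthogonal meeting of $\Sigma$ with $\Pi$ translates into the Neumann boundary condition $\partial_{x_1} u = 0$ on $\Omega \cap \{x_1 = 0\}$.

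Second, I would perform an even reflection by defining $\tilde u : \Omega \to \R$ via $\tilde u(x_1, x_2) \eqdef u(|x_1|, x_2)$. The Neumann condition guarantees that $\tilde u \in C^1(\Omega)$ with $\partial_{x_1}\tilde u = 0$ along $\{x_1 = 0\}$, and $\tilde u$ is a weak solution of the minimal surface equation in all of $\Omega$ (the boundary term in the weak formulation vanishes because of the Neumann condition). By standard elliptic regularity for quasilinear equations of mean curvature type, $\tilde u$ is in fact smooth on $\Omega$. The graph of $\tilde u$ over $\Omega$ is then a smooth minimal surface in $\R^3$ that, by construction, coincides with $\Sigma$ over $\Omega^{+}$ and with the Euclidean reflection of $\Sigma$ across $\Pi$ over $\Omega \cap \{x_1 \leq 0\}$.

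Finally, I would piece together the local graphs along $\partial\Sigma$ with the smooth minimal surfaces $\Sigma$ and its reflection (which are already smooth in the respective open half-spaces) to obtain a smooth minimal surface without boundary in $\R^3$. Embeddedness in $\R^3$ would follow by observing that $\Sigma$ is embedded in $\overline{\Xi}$, its image under reflection is embedded in the closure of the complementary half-space, and the two pieces intersect exactly along $\partial\Sigma \subset \Pi$; transversality of $\Sigma$ with $\Pi$ (another consequence of the orthogonality of contact) ensures there is no self-intersection across the mirror. The only delicate step is the regularity upgrade from $C^1$ (across the mirror) to $C^\infty$, but this is precisely where the Neumann condition combined with elliptic regularity for the minimal surface equation does the job cleanly.
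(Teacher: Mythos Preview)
Your proposal is correct and is exactly the ``well-known argument based on elliptic estimates'' that the paper invokes without further detail: local graph representation over $T_p\Sigma$, Neumann condition from orthogonality, even reflection to a $C^1$ weak solution of the minimal surface equation, and elliptic regularity to recover smoothness across $\Pi$. The paper gives no proof beyond that one-line description, so your write-up is essentially an expansion of what the authors had in mind.
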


Here is instead a regularity result for free boundary minimal immersions in a half-space of $\R^3$.

\begin{proposition} \label{cor:MinSurfInR3WithFiniteIndex}
Let $\ms\subset \Xi\subset\R^3$ be a complete free boundary minimal injective immersion with finite index and with $\partial\ms = \Pi\cap \ms$. Then $\ms$ is two-sided, has finite total 
curvature and is properly embedded.
\end{proposition}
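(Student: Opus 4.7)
The plan is to reduce to the well-established theory of complete minimal surfaces in $\R^3$ with finite Morse index, via a doubling across $\Pi$. First, I would apply \cref{lem:ReflectionPrinciple} to produce a complete minimal injective immersion $\check\ms\subset\R^3$ without boundary, obtained by gluing $\ms$ to its mirror image $\varrho(\ms)$ along $\partial\ms$; the reflection $\varrho$ restricts to an isometric involution $\tau\colon\check\ms\to\check\ms$ with fixed-point set exactly $\partial\ms$, and $\check\ms$ is complete as an abstract Riemannian manifold.

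The heart of the argument is a bound $\ind(\check\ms)\le 2\ind(\ms)$, which I would derive as follows. The Jacobi operator of $\check\ms$ commutes with pull-back by $\tau$, so any finite-dimensional subspace of test functions on $\check\ms$ on which $Q^{\check\ms}$ is negative definite splits orthogonally into its $\tau$-symmetric and $\tau$-antisymmetric summands. A $\tau$-symmetric test function restricts to a variation on $\ms$ satisfying homogeneous Neumann conditions along $\partial\ms$; because $\Pi$ is totally geodesic, the Robin term $\II^{\Pi}(\nu,\nu)$ appearing in $Q_{\ms}$ vanishes identically, so Neumann and free boundary conditions coincide and the symmetric part contributes at most $\ind(\ms)$ negative directions. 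A $\tau$-antisymmetric test function vanishes on $\partial\ms$, hence its restriction is a Dirichlet test function on $\ms$, which is a fortiori a free boundary test function, contributing another at-most $\ind(\ms)$ directions. This is exactly the decomposition mechanism exploited in Section~2 of \cite{AmbBuzCarSha18}, to which I would refer for the technical details.

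Once finite index of $\check\ms$ is established, I would invoke the classical theorem of Fischer-Colbrie \cite{Fis85} together with the Chern-Osserman properness result: any complete (injectively immersed) minimal surface in $\R^3$ with finite Morse index is necessarily two-sided, has finite total curvature, and is properly embedded. Applying this to $\check\ms$ and restricting to the half, the three required conclusions transfer immediately to $\ms\subset\check\ms$.

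The main obstacle is the index comparison in the second step: one must verify that both the $\tau$-symmetric and $\tau$-antisymmetric negative-definite subspaces for $Q^{\check\ms}$ inject into the space of negative-definite free boundary variations on $\ms$. The vanishing of $\II^{\Pi}$ plays a double role here, matching the Neumann boundary condition with the free boundary Robin condition for symmetric extensions and simultaneously killing the boundary integrand in $Q_{\ms}$ for antisymmetric test functions; without this, the split would not cleanly control $\ind(\check\ms)$.
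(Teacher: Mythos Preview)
Your proposal is correct and follows essentially the same route as the paper: reflect via \cref{lem:ReflectionPrinciple}, transfer the index bound to the doubled surface using the symmetric/antisymmetric decomposition of \cite[Section~2]{AmbBuzCarSha18}, and then invoke the classical theory of complete minimal surfaces in $\R^3$ with finite index. The paper compresses the final step into a single citation of \cite[Theorem~B.1]{ChoKetMax17}, which packages exactly the conclusions you extract from Fischer-Colbrie and Osserman-type results; your more detailed unpacking of the index comparison is accurate and matches the mechanism behind the cited reference.
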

\begin{proof}
This is a consequence of Theorem B.1 in \cite{ChoKetMax17}, after applying the reflection principle \cref{lem:ReflectionPrinciple} and the argument of \cite[Section 2.2]{AmbBuzCarSha18}, which implies that the reflected surface has finite index.
\end{proof}

We further apply \cite[Section 2.3]{AmbBuzCarSha18} together with \cite{ChoMax16} to obtain topological information from index bounds for free boundary minimal surfaces in a half-space of $\R^3$.

\begin{definition}
Given a complete, connected, properly embedded, free boundary minimal surface $\ms^2$ in $\Xi\subset \R^3$, the number of ends of $\ms$ is the number of connected components of $\ms$ outside any sufficiently large compact set. We will denote this number by $\mathrm{ends}(\ms)$.
\end{definition}
Observe that the number of ends of a properly embedded free boundary minimal surface in $\Xi\subset \R^3$ as above is indeed well-defined (see \cite[Remark 26]{AmbBuzCarSha18}).

\begin{proposition} \label{prop:GeometryOfHalfBubble}
Given $I\ge 0$, there exists $\kappa(I)\ge 0$ such that every complete, connected, properly embedded, free boundary minimal surface  $\ms^2$ in $\Xi\subset \R^3$ of index at most $I$ 
has genus, number of ends and number of boundary components (in $\Pi$) all bounded by $\kappa(I)$.
\end{proposition}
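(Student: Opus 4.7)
The plan is to reduce the statement to the already-known topological bounds for complete, embedded, finite-index minimal surfaces in $\R^3$, via the reflection principle. Given $\Sigma\subset\Xi$ satisfying the hypotheses with $\ind(\Sigma)\leq I$, I would first apply \cref{lem:ReflectionPrinciple} to produce the complete, embedded, boundaryless minimal surface $\check{\Sigma}\subset\R^3$ obtained by doubling $\Sigma$ across $\Pi$; if $\partial\Sigma=\emptyset$ then $\Sigma$ itself is already a complete embedded minimal surface in all of $\R^3$ and one proceeds directly with it.

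Next, following the arguments of Section 2 of \cite{AmbBuzCarSha18} (already used in establishing \cref{cor:MinSurfInR3WithFiniteIndex}), I would verify that $\ind(\check{\Sigma})\leq f(I)$ for some explicit function $f$ of $I$. The key observation is that any compactly supported normal variation of $\check{\Sigma}$ decomposes into its even and odd parts with respect to the reflection involution, each part being determined by a section on $\Sigma$ satisfying either a Dirichlet condition (odd part) or the natural free-boundary Robin-type condition (even part); since $\Pi$ is totally geodesic, these two quadratic problems decouple cleanly, so the negative directions on $\check{\Sigma}$ are bounded by the negative directions of at most two index problems on $\Sigma$, both controlled by $I$ plus a universal constant.

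At this point I would invoke the main theorem of Chodosh--Maximo \cite{ChoMax16}: any complete, embedded minimal surface in $\R^3$ of finite Morse index has finite total curvature, and both its genus and its number of ends are bounded in terms of that Morse index. Applied to $\check{\Sigma}$ this yields $\genus(\check{\Sigma})+\mathrm{ends}(\check{\Sigma})\leq \kappa_0(I)$ for an explicit function $\kappa_0$.

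Finally, the desired bounds on $\Sigma$ should follow from elementary surface topology. The number of boundary components of $\Sigma$ equals the number of connected components of the fixed locus of the reflection on $\check{\Sigma}$; compact components of this fixed locus are pairwise disjoint simple closed curves in $\check{\Sigma}$ (so their number is controlled by the first Betti number, hence by $\genus(\check{\Sigma})$), while non-compact components are proper arcs each producing a distinct end of $\check{\Sigma}$. Combined with the obvious facts that $\genus(\Sigma)\leq \genus(\check{\Sigma})$ and $\mathrm{ends}(\Sigma)\leq \mathrm{ends}(\check{\Sigma})$, this furnishes the required $\kappa(I)$. The main technical subtlety lies in the second step, namely the careful transfer of the index upper bound through the doubling construction; once this is accomplished, the remainder is a direct application of Chodosh--Maximo together with the topological accounting in \crefnoname{sec:EulChar}.
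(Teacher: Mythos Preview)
Your overall strategy---reflect across $\Pi$, transfer the index bound to the doubled surface via \cite[Section 2]{AmbBuzCarSha18}, and then invoke Chodosh--Maximo---is exactly what the paper does (and the paper obtains the sharp transfer $\ind(\check\Sigma)\le 2I$). The divergence is only in your final topological step, and there your justification does not work as written.

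The claim that the compact components of the fixed locus are ``pairwise disjoint simple closed curves in $\check\Sigma$, so their number is controlled by the first Betti number'' is incorrect reasoning: a surface of any fixed genus admits arbitrarily many pairwise disjoint simple closed curves (think of parallel circles on a sphere). What actually constrains the count is the involution structure, not mere disjointness. Similarly, the assertion that each non-compact arc of the fixed locus ``produces a distinct end of $\check\Sigma$'' is unjustified: an arc has two ends, and a priori several arc-ends could accumulate at the same end of $\check\Sigma$.

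The paper bypasses these issues entirely by using the Euler-characteristic identity \cite[Lemma 29]{AmbBuzCarSha18},
\[
\chi(\check\Sigma)-\mathrm{ends}(\check\Sigma)=2\bigl(\chi(\Sigma)-\mathrm{ends}(\Sigma)\bigr),
\]
which, combined with the Chodosh--Maximo bound $\tfrac{2}{3}(\genus(\check\Sigma)+\mathrm{ends}(\check\Sigma))-1\le 2I$, yields directly
\[
2\genus(\Sigma)+\xi(\Sigma)+\mathrm{ends}(\Sigma)\le 3I+\tfrac{5}{2},
\]
where $\xi(\Sigma)$ is the number of boundary components of $\Sigma\cap B_R(0)$ for $R$ large; since $\bdry(\Sigma)\le \xi(\Sigma)+\mathrm{ends}(\Sigma)$, all three quantities are bounded by $\kappa(I)=3I+5/2$. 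Replacing your fixed-locus counting by this Euler-characteristic computation makes the argument complete and is the content of the ``topological accounting'' you allude to.
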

\begin{proof}
Let $\check\ms$ be the union of $\ms$ with its reflection with respect to $\Pi$, as above.
Then, thanks to \cite[Section 2.3]{AmbBuzCarSha18} (in particular equation (2.5) therein), $\check\ms$ is a complete, connected, properly embedded, minimal surface of $\R^3$ with index less or equal than $2I$.
Hence, using the main estimate in \cite{ChoMax16}, we obtain that
\[
\frac 23 (\genus(\check\ms) + \mathrm{ends}(\check\ms)) -1 \le 2I\comma
\]
where $\mathrm{ends}(\check\ms)$ denotes the number of ends of $\check\ms$.

Moreover, by Lemma 29 in \cite{AmbBuzCarSha18}, it holds that $\chi(\check\ms)-\mathrm{ends}(\check\ms) = 2(\chi(\ms)-\mathrm{ends}(\ms))$. Therefore, we obtain
\[
\chi(\ms)-\mathrm{ends}(\ms) = \frac 12(\chi(\check\ms)-\mathrm{ends}(\check\ms)) = 1 - \genus(\check\ms) - \mathrm{ends}(\check\ms) \ge 1 -\frac 32(2I+1) = -3I-\frac 12 \point
\]
Observe that $\chi(\ms) = 2-2\genus(\ms) - \xi(\ms)$, where $\xi(\ms)$ is the number of boundary components of $\ms\cap B^{\R^3}_R(0)$, for any $R>0$ sufficiently large. Thus we get
\[
2\genus(\ms) + \xi(\ms) +\mathrm{ends}(\ms) \le 3I+\frac 52\comma
\]
from which it follows directly that $\genus(\ms)$ and $\mathrm{ends}(\ms)$ are both bounded by $3I+5/2$.

Finally, note that $\bdry(\ms)\le \xi(\ms)+\mathrm{ends}(\ms)$ and so
\[
3I+\frac 52 \ge 2\genus(\ms) + \xi(\ms) +\mathrm{ends}(\ms) \ge \xi(\ms) +\mathrm{ends}(\ms) \ge \bdry(\ms)\comma
\]
which concludes the proof once we choose $\kappa(I) = 3I +5/2$. 
\end{proof}

\section{Some Morse-theoretic arguments} \label{sec:MorseTheory}

In this section we collect a few lemmata that will be useful to obtain topological information at intermediate scales. The basic idea is that if a surface fulfills suitable curvature estimates then it is `locally simple'.

\begin{lemma} \label{lem:GradEst}
Let $M^3 = \{\abs x\le 2\}\cap \Xi(a)\subset \R^3$ be endowed with the Euclidean metric. Fix $p\in B_{1/4}(0)\cap M$ and $r>0$.
Let $\ms^2\subset \amb\setminus B_r(p)$ be a connected embedded surface, having free boundary with respect to $\Pi(a)$, with $\partial\ms = \ms\cap \partial (\amb\setminus B_r(p))$. Assume that for every $x\in\ms$ it holds $\abs{\A_\ms}(x) \abs{x-p} \le \eps$, for some constant $0<\eps\le 1/3$. Moreover suppose that $\ms$ intersects $\partial B_r(p)$.
Then, denoting by $f:\ms\to\R$ the function $f(x)\eqdef \abs{x-p}^2$, we have that
\[
\abs{\grad_\ms f(x)} \ge 2(1-\eps) (\abs{x-p} - r) \semicolon  
\]
furthermore, if $\ms\cap\partial B_r(p)$ contains a compact component $\Gamma$, then in fact
\[
\abs{\grad_\ms f(x)} \ge 2(1-\eps) (\abs{x-p} - r) + \min_{y\in \Gamma}\ \abs{\grad_\ms f(y)}\point
\]
\end{lemma}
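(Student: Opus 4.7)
The key observation is that the curvature hypothesis forces $f(x) \eqdef \abs{x-p}^2$ to be strongly convex on $\ms$. Indeed, for any $X \in T_x\ms$, the standard identity
\[
\cov^2_\ms f(X,X) = \cov^2_{\R^3} f(X,X) + \A(X,X)\,\scal{\grad f}{\nu} = 2\abs{X}^2 + 2\, \A(X,X)\,\scal{x-p}{\nu},
\]
combined with $\abs{\A(X,X)\,\scal{x-p}{\nu}} \le \abs{\A}\,\abs{x-p}\,\abs{X}^2 \le \eps \abs{X}^2$, yields $\cov^2_\ms f \ge 2(1-\eps)\, g_\ms$ pointwise. In parallel, the radial function $\rho(y) \eqdef \abs{y-p}$ is $1$-Lipschitz in the intrinsic metric, because $\grad_\ms \rho$ is the tangential projection of a unit vector of $\R^3$; combined with $\ms \subset \amb \setminus B_r(p)$ and $\ms \cap \partial B_r(p) \ne \emptyset$, this implies that $\rho$ attains its minimum $r$ on $\ms$ precisely along $\ms \cap \partial B_r(p)$.

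My plan is to integrate this convexity inequality along a unit-speed minimizing geodesic $\beta\colon [0,L]\to\ms$ realizing the intrinsic distance $L = d_\ms(\ms \cap \partial B_r(p),\,x)$, with $\beta(0) = y_0 \in \ms \cap \partial B_r(p)$ and $\beta(L) = x$. At a regular point $y_0$ (i.e. where $\abs{\grad_\ms \rho}(y_0) > 0$, a condition that may be arranged via Sard's theorem), standard first variation forces $\beta'(0)$ to coincide with the inward unit normal to the level curve $\ms \cap \partial B_r(p)$, namely $\beta'(0) = \grad_\ms \rho(y_0)/\abs{\grad_\ms \rho(y_0)}$; using $\grad_\ms f = 2\rho\, \grad_\ms \rho$ this yields $(f \circ \beta)'(0) = 2\rho(y_0)\abs{\grad_\ms \rho}(y_0) = \abs{\grad_\ms f(y_0)}$. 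Since $\beta$ is a unit-speed geodesic, the convexity gives $(f \circ \beta)''(s) \ge 2(1-\eps)$, so by integration $(f \circ \beta)'(L) \ge \abs{\grad_\ms f(y_0)} + 2(1-\eps)\, L$. Cauchy--Schwarz then provides $(f \circ \beta)'(L) \le \abs{\grad_\ms f(x)}$ while the Lipschitz property gives $L \ge \rho(x) - r$; combining these,
\[
\abs{\grad_\ms f(x)} \ge \abs{\grad_\ms f(y_0)} + 2(1-\eps)(\abs{x-p}-r) \ge 2(1-\eps)(\abs{x-p}-r),
\]
which is the first claim. The refined second assertion follows immediately by minimizing from the compact component $\Gamma$ in place of $\ms \cap \partial B_r(p)$: the minimizer lands at some $y_0 \in \Gamma$ and $\abs{\grad_\ms f(y_0)} \ge \min_{y \in \Gamma} \abs{\grad_\ms f(y)}$ inserts as the additive contribution.

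The principal technical obstacle I anticipate is ensuring that the minimizing curve $\beta$ is in fact a smooth geodesic meeting $\ms \cap \partial B_r(p)$ orthogonally at a regular point, unobstructed by the other components of $\partial\ms$. The free boundary $\ms \cap \Pi(a)$ is dealt with by passing to the doubled surface via \cref{lem:ReflectionPrinciple}, where this boundary is erased and the Hessian inequality survives on each half. Interaction with the outer edge $\ms \cap \partial B_2(0)$ is controlled through the geometry imposed by $\abs{p}\le 1/4$ and $\abs{y}=2$, which keeps $y - p$ sufficiently transverse to the tangent planes of $\ms$ for the minimizer to remain in the interior. Regularity of the endpoint $y_0$ is then a consequence of a Sard-type density argument on $\ms \cap \partial B_r(p)$, and the remaining degenerate configurations (where $\ms$ is tangent to $\partial B_r(p)$ at some boundary point) are handled by a continuity/approximation argument.
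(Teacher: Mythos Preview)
Your strategy coincides with the paper's --- compute $\mathrm{Hess}_\ms f \ge 2(1-\eps)g_\ms$ and integrate along a geodesic from $\ms\cap\partial B_r(p)$ to $x$ --- but you overcomplicate the endpoint analysis and leave one case unaddressed. The Sard/approximation maneuver is unnecessary: for the first inequality you only need $(f\circ\beta)'(0)\ge 0$, which is immediate because $f(y_0)=r^2$ is the global minimum of $f$ on $\ms$; for the second, minimality of $\beta$ gives $\beta'(0)\perp T_{y_0}\Gamma$, and since $\grad_\ms f(y_0)$ is also orthogonal to $\Gamma$ (as $f|_\Gamma\equiv r^2$) in the two-dimensional $T_{y_0}\ms$, one obtains $(f\circ\beta)'(0)=\abs{\grad_\ms f(y_0)}$ directly, the case $\grad_\ms f(y_0)=0$ being trivial. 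Note also that \cref{lem:ReflectionPrinciple} is stated only for \emph{minimal} surfaces, which $\ms$ is not assumed to be; the paper instead observes that the free boundary condition alone makes $\ms\cap\Pi(a)$ totally geodesic in $\ms$ (since $\Pi(a)$ is flat), and shows that $\ms\cap\{\abs{x}=2\}$ is strictly convex in $\ms$ as a direct consequence of the curvature bound --- so a minimizing curve cannot touch either tangentially.

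The actual gap lies in the refined estimate: when you minimize length from the single component $\Gamma$, the curve $\beta$ could touch a \emph{different} component of $\ms\cap\partial B_r(p)$ at some $t_0>0$, and there minimality from $\Gamma$ no longer forces $\beta$ to be a geodesic. The paper excludes this with the convexity itself: if $\beta$ returned to $\partial B_r(p)$ at time $t_0>0$, then $f\circ\beta$ would have an interior local maximum on $(0,t_0)$, whereas $(f\circ\beta)''\ge 2(1-\eps)>0$ together with $(f\circ\beta)'(0)\ge 0$ forces $(f\circ\beta)'>0$ on $(0,t_0]$ --- a contradiction.
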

\begin{proof}
Consider a point $x\in\ms\setminus\partial\amb$ and take a unit-speed geodesic $\gamma:\cc 0l\to \ms$ such that $\gamma(0)\in \ms\cap \partial B_r(p)$ and $\gamma(l)=x$.
Note that $\gamma$ exists since $\ms$ is connected and a geodesic starting inside $\ms$ cannot touch $\ms\cap\partial\amb$ tangentially. Indeed $\ms\cap\Pi(a)$ is a union of geodesics in $\ms$ thanks to the free boundary condition and a simple computation shows that $\ms\cap(\partial\amb\setminus\Pi(a))$ is strictly convex in $\ms$ by the curvature estimate.

Then observe that $\grad_\ms f(x)$ is the projection of the vector $2(x-p)$ on $\ms$ for all $x\in\ms$.
In addition, for every $v\in T_x\ms$, it holds
\[
(\mathrm{Hess}_\ms f)_x(v,v)  = 2(\abs v ^2 - \scal{A_\ms(x)(v,v)}{x-p}) \ge 2\abs v ^2 (1 - \abs{A_\ms}(x)\abs{x-p}) \ge  2\abs v^2 (1-\eps) \point
\]
Thus we have that
\[
\frac{\d}{\d t} (f\circ \gamma) = \scal{\grad_\ms f(\gamma)}{\gamma'} \quad\text{and} \quad \frac{\d^2}{\d t^2}(f\circ\gamma) = (\mathrm{Hess}_\ms f)_\gamma (\gamma',\gamma') \ge 2(1-\eps) \point
\]
Therefore, since clearly $\scal{\grad_\ms f(\gamma(0))}{ \gamma'(0)} \ge 0$, we obtain that
\[
\begin{split}
\abs{\grad_\ms f(x)} &\ge \scal{\grad_\ms f(x)}{\gamma'(l)} = \frac{\d}{\d t}\Big|_{t=l} (f\circ \gamma) \ge 2(1-\eps)l \ge  2(1-\eps)(\abs{x-p}-r)\point
\end{split}
\]

Now assume that $\ms\cap\partial B_r(p)$ contains a compact component $\Gamma$. Take a point $x\in\ms$ and consider a \emph{length minimizing} unit-speed \emph{curve} $\gamma:\cc 0l \to\ms$ with $\gamma(0) \in \Gamma$ and $\gamma(l) =x$. 
Note that $\gamma$ cannot touch $\ms\cap \partial B_r(p)$ at time larger than zero. Indeed, if that happened, there would exist an intermediate time $t_0>0$ such that $f\circ \gamma$ has a (local) maximum at $t_0$ and $\gamma$ is a geodesic between $0$ and $t_0$. However this fact leads to a contradiction with the previous computation since we would have at the same time $(f\circ\gamma)'(t_0) =0$ and $(f\circ\gamma)'(t_0) \ge 2(1-\eps)t_0>0$. Therefore $\gamma$ is a geodesic, since it cannot touch $\partial\ms$ in its interior.

Performing the same computation as above, this time using that $\scal{\grad_\ms f(\gamma(0))}{\gamma'(0)} = \abs{\grad_\ms f(\gamma(0))}$ (by minimality of $\gamma$), we thus obtain
\[
\abs{\grad_\ms f(x)} \ge 2(1-\eps)(\abs{x-p}-r) + \abs{\grad_\ms f(\gamma(0))} \ge 2(1-\eps)(\abs{x-p}-r) + \min_{y\in\Gamma}\ \abs{\grad_\ms f(y)}\comma
\]
which concludes the proof.
\end{proof}

\begin{corollary} \label{cor:14PuncturedBall}
Let $g$ be a metric on $\{\abs x\le 2\}\cap \Xi(0)\subset\R^3$ sufficiently close to the Euclidean one, and denote by $M^3\eqdef B_1(0)\subset \Xi(0)$ the unit ball with respect to this metric\footnote{\label{fn:MetricBalls}That is to say: here $B_1(0)$ is the metric ball with respect to the metric $g$, and the same comment applies to the balls $B_1(0), B_r(p)$ in \cref{cor:14TopInfo}.}.
Assume that being orthogonal to $\Pi(0)$ with respect to the Euclidean metric is equivalent\footnote{\label{fn:FermiChart}In the applications we obtain this orthogonality condition working in a Fermi chart, both here and in \cref{cor:14TopInfo} below.} to being orthogonal to $\Pi(0)$ with respect to $g$.
Given $0<r<1/4$, consider a connected, embedded surface $\ms^2\subset \amb\setminus B_r^{\R^3}(0)$ with $\partial\ms=\ms\cap\partial(\amb\setminus B_r^{\R^3}(0))$ such that
\begin{enumerate} [label={\normalfont(\roman*)}]
\item \label{pb:FB} $\ms$ is free boundary with respect to $\Pi(0)$;
\item \label{pb:cptcomp} $\ms\cap \partial B_r^{\R^3}(0)$ contains a compact component $\Gamma$ and $\ms$ intersects $\partial B_r^{\R^3}(0)$ transversely along $\Gamma$; 
\item \label{pb:curv} for every $x\in\ms$ it holds $\abs{\A_\ms}(x) d_g(x,0) \le 1/4$.
\end{enumerate}
Then $\ms$ is properly embedded in $B_1(0)\setminus B_r^{\R^3}(0)$ and is either a topological disc or a topological annulus.
\end{corollary}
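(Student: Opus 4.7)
The strategy is to use $f(x)=|x|^2$ (Euclidean radius squared) as a Morse function on $\Sigma$ which possesses no critical points, and then read off the topology from a Morse-theoretic deformation argument. Since $g$ is $C^2$-close to the Euclidean metric, $d_g(x,0)$ and the $g$-second fundamental form are comparable to their Euclidean counterparts, so the hypothesis $|A_\Sigma|(x)\,d_g(x,0)\le 1/4$ translates to $|A^{\mathrm{Eucl}}_\Sigma|(x)\,|x|\le\varepsilon$ with $\varepsilon\le 1/3$ (provided $g$ is close enough). Applying \cref{lem:GradEst} with $p=0$ and using its ``furthermore'' clause with the compact component $\Gamma$ yields
\[
|\nabla_\Sigma f(x)|\ge 2(1-\varepsilon)(|x|-r) + \min_{\Gamma} |\nabla_\Sigma f|,\qquad \forall\,x\in\Sigma.
\]
By the transversality hypothesis \ref{pb:cptcomp}, $\min_\Gamma |\nabla_\Sigma f|>0$, so $f$ has no critical points on $\Sigma$.

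Next, I would verify that the free boundary condition forces $\nabla_\Sigma f$ to be tangent to $\partial\Sigma\cap\Pi(0)$. At $p\in\partial\Sigma\cap\Pi(0)$ the unit normal $\nu$ to $\Sigma$ lies in $T_p\Pi(0)$ (by \ref{pb:FB}, together with the orthogonality-compatibility assumption), and $p$ itself lies in $T_p\Pi(0)$ since $p\in\Pi(0)$. Therefore $\nabla_\Sigma f(p) = 2p-2\langle p,\nu\rangle\,\nu$ lies in $T_p\Sigma\cap T_p\Pi(0) = T_p(\partial\Sigma\cap\Pi(0))$. In particular, the smooth, nowhere-vanishing vector field $X \eqdef -\nabla_\Sigma f/|\nabla_\Sigma f|^2$ is tangent to the free boundary and its flow therefore remains in $\Sigma$ for all non-negative times, while decreasing $f$ at unit rate.

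I would then invoke Morse theory. Note $\Sigma$ is compact, being a closed subset of the compact set $M\setminus B_r^{\R^3}(0)$ (via the assumption $\partial\Sigma=\Sigma\cap\partial(M\setminus B_r^{\R^3}(0))$); the minimum value of $f$ on $\Sigma$ equals $r^2$, attained precisely on $\Sigma\cap\partial B_r^{\R^3}(0)$. The flow of $X$ retracts $\Sigma$ continuously onto $\Sigma\cap\partial B_r^{\R^3}(0)$ in time at most $\max f-r^2$. Since the preimage of each connected component of $\Sigma\cap\partial B_r^{\R^3}(0)$ under this retraction is clopen in $\Sigma$, connectedness of $\Sigma$ forces $\Sigma\cap\partial B_r^{\R^3}(0)$ to be connected, hence to equal $\Gamma$. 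Reversing the flow from $\Gamma$ and reparametrizing produces a diffeomorphism $\Phi:\Gamma\times [0,1]\to\Sigma$ of manifolds with corners. The compact connected $1$-manifold $\Gamma$ is then either a circle (when $\Gamma$ is disjoint from $\Pi(0)$) or a compact arc with endpoints in $\Pi(0)\cap\partial B_r^{\R^3}(0)$: correspondingly $\Sigma\cong S^1\times [0,1]$ is a topological annulus, or $\Sigma\cong [0,1]^2$ is a topological disc. Properness of $\Sigma$ in $B_1(0)\setminus B_r^{\R^3}(0)$ is a direct restatement of the hypothesis on $\partial\Sigma$.

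The main technical obstacle will be carrying out the Morse-theoretic retract rigorously in the presence of three distinct boundary pieces (on $\partial B_r^{\R^3}(0)$, on $\partial B_1(0)$, and along $\Pi(0)$) and the corners between them, and in particular upgrading the deformation retract $\Sigma\simeq\Gamma$ to the \emph{diffeomorphism} $\Sigma\cong\Gamma\times [0,1]$. This upgrade is precisely what rules out non-orientable possibilities such as the Möbius band, which would otherwise be consistent with the purely homotopy-theoretic conclusion $\Sigma\simeq S^1$ (when $\Gamma\cong S^1$); the triviality of the resulting $[0,1]$-bundle over $\Gamma$ is what forces the annulus rather than Möbius outcome.
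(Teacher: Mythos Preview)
Your approach is essentially the same as the paper's: use \cref{lem:GradEst} to see that $f(x)=|x|^2$ has no critical points on $\Sigma$, check that $\nabla_\Sigma f$ is tangent to $\Pi(0)$ by the free boundary condition, and then run a Morse-theoretic flow argument to exhibit $\Sigma$ as a product $\Gamma\times[0,1]$. The paper flows \emph{outward} along $\nabla_\Sigma f$ from $\Gamma$ (using a rescaled exit-time to parametrise the second factor), whereas you flow \emph{inward} along $-\nabla_\Sigma f/|\nabla_\Sigma f|^2$ and then reverse; this is a cosmetic difference.

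There is one logical slip worth fixing. You assert that ``$\Sigma$ is compact, being a closed subset of the compact set $M\setminus B_r^{\R^3}(0)$ (via the assumption $\partial\Sigma=\Sigma\cap\partial(M\setminus B_r^{\R^3}(0))$)'' and later that ``properness of $\Sigma$ \ldots\ is a direct restatement of the hypothesis on $\partial\Sigma$''. Neither is correct: the hypothesis $\partial\Sigma=\Sigma\cap\partial(\cdot)$ is the \emph{boundary-matching} sense of properness, not topological properness of the inclusion (cf.\ \cref{rem:ConfusionProper}); in particular it does not by itself force $\Sigma$ to be closed in the ambient. Fortunately your argument does not actually need compactness a priori: the inward flow reaches $f=r^2$ in time at most $\sup_\Sigma f-r^2$, which is finite simply because $\Sigma\subset B_1(0)$ (so $f$ is bounded above), and once you have the diffeomorphism $\Sigma\cong\Gamma\times[0,1]$ compactness (hence topological properness) follows as a \emph{consequence}. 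Just reorder the logic accordingly.
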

\begin{proof}
Taking $g$ sufficiently close to the Euclidean metric we can assume that $\abs{A_\ms^{\R^3}}(x)\abs{x}\le 1/3$.
Let $f:\ms\to \R$ be given by $f(x) \eqdef \abs{x}^2$. Then, thanks to \cref{lem:GradEst} and \ref{pb:cptcomp}, $f$ has no critical points on $\ms$; in particular it holds
\begin{equation}\label{eq:GradAwayFromZero}
\abs{\grad_\ms^{\R^3}f(x)} \ge \min_{y\in\Gamma}\ \abs{\grad_\ms^{\R^3}f(y)} > 0 \quad \text{for all $x\in\ms$}\point
\end{equation}
Moreover we obtain that $\abs{\grad_\ms^{\R^3}f(x)} \ge 1/2$ for all $x\in\ms\cap (\partial B_1(0)\setminus \Pi(0))$;
hence, taking $g$ possibly closer to the Euclidean metric (independently of $\ms$), we can assume that $\grad_\ms^{\R^3}f(x)$ points strictly out of $\ms$ along $\partial B_1(0)\setminus \Pi(0)$.
Observe also that $\grad_\ms^{\R^3}f$ is parallel to $\Pi(0)$ along $\ms\cap\Pi(0)$ by \ref{pb:FB}.

Thus, defining $\Phi(t,x)$ as the flow of the vector field $\grad_\ms^{\R^3}f$ on $\ms$,
we have that the exit-time map $\ms\to \co 0\infty$ given by $x\mapsto t(x)$ (i.e. $t(x)$ is the supremum of the times $t$ for which $\Phi(t,x)$ is well-defined in $\ms$) is continuous. Furthermore $0 < t(y) <\infty$ for every $y\in\Gamma$ by \eqref{eq:GradAwayFromZero}.
Hence the map $\mathcal F:\Gamma\times\cc 01 \to \ms$ given by $\mathcal F(y,t) \eqdef \Phi(t(y)t,y)$ is a homeomorphism with its image, which must coincide with all of $\ms$ by connectedness.
This concludes the proof, showing that $\ms$ is a properly embedded topological disc if $\Gamma$ is an arc and a properly embedded topological annulus if $\Gamma$ is a circle.
\end{proof}
\begin{remark} \label{rem:14PunctBallVariant}
A variation of the previous proof works in the case when $\ms^2\subset \amb \setminus\{0\}$ is a connected embedded surface satisfying assumptions \ref{pb:FB}, \ref{pb:curv} and such that $\ms\cap (\partial B_1(0)\setminus\Pi(0))$ contains a compact component $\Gamma$. What we obtain is that $\ms$ is properly embedded in $B_1(0)\setminus\{0\}$ and is either a punctured disc (the puncture being in the interior) or a punctured half-disc (the puncture being on the boundary).
Observe that, in this case, a combination of the first estimate of \cref{lem:GradEst} and a flow from $\Gamma$ along $-\grad_\ms^{\R^3}f$ is needed.
\end{remark}

We shall further need the following refinement of the previous statement in order to transfer topological information from a small to a bigger scale. Observe that in the free boundary case a standard Morse-theoretic argument as in \cite[Lemma 3.1]{ChoKetMax17} is not sufficient. Indeed, given (for example) a surface $\ms\subset B_1^{\R^3}(0)\cap\Xi(0)\subset\R^3$, the number of connected components of $\ms\cap\partial B_1^{\R^3}(0)$ can be arbitrarily large even if $\ms\cap (\partial B_1^{\R^3}(0)\cup \Pi(0))$ consists of a single connected component.

\begin{definition} \label{def:StrongEq}
Let $g$ be a metric on $\amb\eqdef \{\abs x\le 2\}\cap \Xi(a)\subset\R^3$ sufficiently close to the Euclidean one, for some $0\ge a\ge-\infty$, and let $B_r(p)\subset \amb$ be the metric ball with respect to $g$ with center $p\in\amb$ and radius $r>0$. Given a surface $\ms\subset \amb$, we say that $\ms\cap (\partial B_r(p)\setminus\Pi(a))$ is \emph{$\eps$-strongly equatorial} for some $\eps>0$ if the following properties hold:
\begin{enumerate} [label={\normalfont(\roman*)}]
    \item each connected component of $\ms$ intersects $\partial B_r(p)\setminus\Pi(a)$;
    \item there exists a plane $\Delta\subset \R^3$ passing through $p$ which is either orthogonal or parallel to $\Pi(a)$ such that each component of $\ms\cap (B_{2r}(p)\setminus B_{r/2}(p))$ is a graph over $\Delta\cap (B_{2r}(p)\setminus B_{r/2}(p))$ with $C^0$ and $C^1$ norm less than $2\eps r$ and $2\eps$ respectively.
\end{enumerate}
\end{definition}

\begin{corollary} \label{cor:14TopInfo}
There exists a universal constant $\mu_0>0$ with the following property.

Let $g$ be a metric on $\{\abs x \le 2\}\cap \Xi(a) \subset \R^3$ sufficiently close to the Euclidean metric, for some $0\ge a \ge -\infty$, and denote by $M^3\eqdef B_1(0)\subset \Xi(a)$ the unit ball with respect to this metric.
Suppose that being orthogonal to $\Pi(a)$ with respect to the Euclidean metric is equivalent to being orthogonal to $\Pi(a)$ with respect to $g$.
Let $\ms^2\subset \amb$ be a compact, connected, embedded surface having free boundary with respect to $\Pi(a)$, with $\partial\ms=\ms\cap \partial\amb$.
Assume that there exist $p\in B_{\mu_0}(0)$  and $0< r\le \mu_0$ so that:
\begin{enumerate} [label={\normalfont(\roman*)}]
    \item $\abs{A_\ms}(x) d_g(x,p) \le \mu_0$ for all $x\in\ms\setminus B_r(p)$; \label{tp:curv}
    \item \label{tp:equat1} $\ms\cap (\partial B_r(p)\setminus\Pi(a))$ is $\mu_0$-strongly equatorial.
\end{enumerate}
Then, if the genus of $\ms\cap B_r(p)$ and the number of connected components of $\ms\cap (\partial B_r(p)\setminus\Pi(a))$ are bounded by $\kappa$, then also $\ms$ has genus and number of connected components of $\ms\cap(\partial B_1(0)\setminus\Pi(a))$ bounded by $\kappa$.
Moreover, if we have also that the number of connected components of $\ms\cap (\Pi(a)\cap B_r(p))$ is bounded by $\kappa$, then the number of connected components of $\ms\cap \Pi(a)$ is bounded by $2\kappa$.
\end{corollary}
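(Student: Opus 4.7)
The plan is to adapt the gradient-flow argument of \cref{cor:14PuncturedBall} to the present setting, exhibiting the annular region $A\eqdef \ms\cap \overline{B_1(0)\setminus B_r(p)}$ as a topological product over $\ms\cap \partial B_r(p)$. All relevant topological data of $\ms$ will then descend from the controlled information inside $B_r(p)$.

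After choosing $\mu_0$ sufficiently small so that $g$ is close to the Euclidean metric, a direct adaptation of \cref{lem:GradEst} applied to $f(x)\eqdef d_g(x,p)^2$ under \ref{tp:curv} yields
\[
\abs{\grad_\ms f(x)}\ge c_0(d_g(x,p)-r)>0\qquad \forall\,x\in\ms\setminus B_r(p)
\]
for some universal $c_0>0$; together with \ref{tp:equat1} this also gives that $\grad_\ms f$ points transversally out of $\amb$ across $\ms\cap(\partial B_1(0)\setminus \Pi(a))$. The delicate point is tangentiality along $\partial\ms\cap \Pi(a)$: by the free boundary condition and the coincidence of $g$- and Euclidean-orthogonalities to $\Pi(a)$, the outward conormal $\eta$ to $\partial\ms\cap\Pi(a)$ is normal to $\Pi(a)$, so $\scal{\grad_\ms f}{\eta}=2\,d_g(p,\Pi(a))$ at every point of $\partial\ms\cap\Pi(a)$. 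When $p\in\Pi(a)$ this vanishes identically and $\grad_\ms f$ is automatically tangent to the free boundary; in the general case one replaces $\grad_\ms f$, in a tubular neighborhood of $\partial\ms\cap\Pi(a)$, by a modified vector field $V$ obtained by projecting out the $\eta$-component and interpolating back to $\grad_\ms f$ away from the free boundary. The strong-equatoriality hypothesis \ref{tp:equat1} furnishes the near-planar local model that allows $V(f)>0$ to be preserved throughout, including in the corner region near $\partial B_r(p)\cap\Pi(a)$.

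The flow $\Phi$ of $V$ on $A$ then preserves $\partial\ms\cap\Pi(a)$ and has finite positive exit time $t(y)$ at every $y\in \ms\cap \partial B_r(p)$; the map $\Psi(y,s)\eqdef \Phi(st(y),y)$ thus defines a homeomorphism
\[
\Psi:(\ms\cap \partial B_r(p))\times \cc{0}{1}\xrightarrow{\cong} A\point
\]
Hence $A$ is a disjoint union of topological cylinders (over the circle components of $\ms\cap \partial B_r(p)$) and rectangles (over the arc components), contributing no genus. The three conclusions follow at once: via $\Psi$, components of $\ms\cap(\partial B_1(0)\setminus\Pi(a))$ are in bijection with those of $\ms\cap(\partial B_r(p)\setminus\Pi(a))$, giving the bound $\le\kappa$; the Euler-characteristic formulas of \cref{sec:EulChar}, combined with the product structure of $A$, give $\genus(\ms)\le \genus(\ms\cap B_r(p))\le \kappa$; and restricting $\Phi$ to $\ms\cap\Pi(a)$ exhibits $\ms\cap\Pi(a)\cap A$ as a disjoint union of arcs joining $\ms\cap\Pi(a)\cap\partial B_r(p)$ to $\ms\cap\Pi(a)\cap\partial B_1(0)$, so separating the components of $\ms\cap\Pi(a)$ into those entirely inside $B_r(p)$ (at most $\kappa$) and those reaching $A$ (each containing at least one component of $\ms\cap\Pi(a)\cap B_r(p)$ with boundary on $\partial B_r(p)$, so again at most $\kappa$) yields the bound $2\kappa$.

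The main technical obstacle is the construction of the modified vector field $V$ respecting the free boundary, particularly ensuring $V(f)>0$ near the corner $\partial B_r(p)\cap\Pi(a)$ where both $\abs{\grad_\ms f}$ becomes small (being close to $\partial B_r(p)$) and the conormal error $\scal{\grad_\ms f}{\eta}$ may be comparable. This is precisely where the strong-equatorial hypothesis \ref{tp:equat1} and the smallness of $\mu_0$ combine to make the interpolation feasible; beyond this step, the argument is a routine gradient-flow / Morse-theoretic deformation-retract in the spirit of \cref{cor:14PuncturedBall}.
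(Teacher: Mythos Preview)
Your overall strategy—use the gradient flow of $f(x)=\abs{x-p}^2$ to exhibit $A=\ms\cap(\overline{B_1(0)}\setminus B_r(p))$ as a product—is the same as the paper's, and the genus conclusion follows from it exactly as you say. The gap is in your treatment of the free boundary when $p\notin\Pi(a)$, and this is precisely the step that distinguishes the free boundary case from the closed one.

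Your proposed modification $V$, obtained by projecting out the $\eta$-component of $\grad_\ms f$ near $\partial\ms\cap\Pi(a)$, can vanish. Along $\partial\ms\cap\Pi(a)$ one has $\scal{\grad_\ms f}{\eta}=2\,d(p,\Pi(a))$, so on the boundary $V(f)=\abs{\grad_\ms f}^2-4\,d(p,\Pi(a))^2$. In the flat model (take $\ms=\Delta$ a half-plane orthogonal to $\Pi(a)$ through $p$) this equals $4t^2$ along $\Delta\cap\Pi(a)$, parametrized by signed distance $t$ from the foot $x_0$ of the perpendicular from $p$; thus $V(x_0)=0$. Whenever $d(p,\Pi(a))\ge r$—which is the generic situation (e.g.\ $a\approx -1/2$ while $r\le\mu_0$)—the point $x_0$ lies in $A$, the flow of $V$ has a fixed point there, the exit time blows up, and the product structure $\Psi$ fails. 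Your remark that strong equatoriality and smallness of $\mu_0$ ``make the interpolation feasible'' does not address this: the zero of $V$ is present already in the exact flat model and is forced by the geometry, not by a curvature error that shrinks with $\mu_0$.

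The paper does \emph{not} attempt to make the flow tangent to the free boundary. It uses the unmodified $\grad_\ms f$, which (when $p\notin\Pi(a)$) points strictly out of $\ms$ along $\Pi(a)\setminus B_r(p)$; flow lines may therefore exit through $\Pi(a)$, and a single inner component of $\ms\cap\partial B_r(p)$ could a priori produce several arcs on $\partial B_1(0)\setminus\Pi(a)$. The paper rules this out by a separate geometric argument: from $\abs{\grad_\ms f - 2(x-p)}\le 8\sqrt{\mu_0}$ one shows that each arc of $\ms\cap(\partial B_1(0)\setminus\Pi(a))$ has geodesic curvature $\le\eps_1$ in $\partial B_1(0)$ and meets $\partial(\Pi(a)\cap B_1(0))$ almost orthogonally, hence passes within $\eps_2$ of the north pole of $\partial B_1(0)$; if two such arcs arose from the same inner component, flowing their near-pole points back to $\partial B_r(p)$ and using strong equatoriality connects them on $\partial B_1(0)$, a contradiction. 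This ``north-pole'' step is the genuinely new ingredient in the free boundary setting, and your proposal does not supply a substitute for it.
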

\begin{proof}
First observe that, taking $g$ sufficiently close to the Euclidean metric, by \ref{tp:curv} and \ref{tp:equat1} we can assume that $\abs{A_\ms^{\R^3}}(x) \abs{x-p} \le 2\mu_0$ for all $x\in\ms\setminus B_r^{\R^3}(p)$ and that $\ms\cap (\partial B_r^{\R^3}(p)\setminus\Pi(a))$ is $\mu_0$-strongly equatorial.
That being said, and since (as will be clear) the argument we are about to present runs exactly the same in $B_1(0)$ and $B_1^{\R^3}(0)$, for notational convenience we will work with respect to the Euclidean metric (in particular $B_r(p)$, $B_1(0)$ will be balls in $\Xi(a)\subset \R^3$, the Euclidean space). 

By \cref{lem:GradEst}, denoting by $f:\ms\to\R$ the function $f(x)\eqdef\abs{x-p}^2$, for all $x\in\ms\setminus B_r(p)$ we have
\begin{equation*}
\abs{\grad_\ms f(x)} \ge 2(1-2\mu_0)(\abs{x-p}-r) + \min_{y\in\ms\cap \partial B_r(p)}\  \abs{\grad_\ms f(y)} \ge 2(1-2\mu_0)\abs{x-p} > 0\comma
\end{equation*}
where we have used that $\grad_\ms f(x)$ is the orthogonal projection of the vector $2(x-p)$ on $T_x\ms$ and thus we can choose $\mu_0$ small enough that $\abs{\grad_\ms f(y)} \ge 2(1-2\mu_0)r$ for all $y\in\ms\cap (\partial B_r(p)\setminus\Pi(a))$, by definition of $\mu_0$-strongly equatorial.
Hence, we have that
\begin{equation} \label{eq:AlmostTangent}
\begin{split}
\abs{\grad_\ms f(x)-2(x-p)} &= \sqrt{4\abs{x-p}^2 - \abs{\grad_\ms f(x)}^2} \le \sqrt{4\abs{x-p}^2 - 4(1-2\mu_0)^2\abs{x-p}^2} \\
&= 2\abs{x-p}\sqrt{4\mu_0-4\mu_0^2} \le 4 \abs{x-p}\sqrt{\mu_0} \le 8 \sqrt{\mu_0} 
\end{split}
\end{equation}
for all $x\in \ms\setminus B_r(p)$.

Now let us denote by $\Phi(t,x)$ the flow of the vector field $\grad_\ms f$ on $\ms$.
Observe that $\grad_\ms f$ points (strictly) towards $\ms\setminus B_r (p)$ along $\partial B_r (p)\setminus\Pi(a)$ and out of $\ms$ along $\partial B_1(0)\setminus\Pi(a)$ (as long as $\mu_0$ is sufficiently small). Moreover, $\grad_\ms f$ points (strictly) out of $\ms$ along $\Pi(a)\setminus \partial B_r (p)$ if $p\not\in \Pi(a)$ and is parallel to $\Pi(a)$ if $p\in\Pi(a)$ thanks to the free boundary property.
Therefore we can argue similarly to \cref{cor:14PuncturedBall} (keeping in mind that, this time, the exit-time is zero in particular for points in $\ms\cap \partial (\Pi(a)\cap B_r (p))$ to obtain that $\ms$ has genus and number of boundary components in $\partial B_1(0)\setminus B_r (p)$ bounded by $\kappa$.
We only need to show that the number of connected components of $\ms\cap(\partial B_1(0)\setminus \Pi(a))$ is also bounded by $\kappa$.

Since we can work separately on each connected component of $\ms\setminus B_r (p)$ and different connected components correspond to different connected components of $\ms\cap (\partial B_r (p)\setminus\Pi(a))$, for the sake of simplicity we can assume that $\Gamma\eqdef \ms\cap (\partial B_r (p)\setminus\Pi(a))$ consists of only one compact curve (i.e. an $S^1$ or an arc).
Then we want to prove that $\ms\cap (\partial B_1(0)\setminus \Pi(a))$ also consists of a single connected component.

If $\ms\cap (\partial B_1(0)\setminus \Pi(a))$ contains a closed curve (i.e. an $S^1$), then this must be the only connected component of $\ms\cap(\partial B_1(0)\setminus B_r (p))$ and thus we obtain what we want.
Otherwise all components of $\ms\cap (\partial B_1(0)\setminus \Pi(a))$ are arcs with endpoints in $\partial(\Pi(a)\cap B_1(0))$.
Consider one of these arcs and parametrize it with a unit-speed curve $\gamma:\cc 0l\to\partial B_1(0)\setminus \Pi(a)$ in such a way that $\gamma(0),\gamma(l)\in \partial(\Pi(a)\cap B_1(0))$.

Denote by $\beta:\cc 0l\to S^2$ a choice of the unit normal vector field along $\gamma$ orthogonal to $\gamma'$ in $\partial B_1(0)$. Then, thanks to \eqref{eq:AlmostTangent} and for $\mu_0$ sufficiently small, we can assume that $\scal{\beta(t)}{\nu(\gamma(t))}\ge 1/2$ for all $t\in\cc 0l$, where $\nu$ is a choice unit normal to $\ms$.
Therefore, we have that
\[
\begin{split}
\abs{\cov_{\gamma'}^{\partial B_1(0)}{\gamma'}} &= \abs{\scal{\cov_{\gamma'}^{\partial B_1(0)} \gamma'}{\beta}} = \scal{\beta}{\nu(\gamma)}^{-1} \abs{\scal{\cov_{\gamma'}^{\partial B_1(0)} \gamma'}{\nu(\gamma)}} \\
&\le 2(\abs{\scal{\Diff_{\gamma'}\gamma'}{\nu(\gamma)}} + \abs{\scal{\cov_{\gamma'}^{\partial B_1(0)}\gamma' - \Diff_{\gamma'}\gamma'}{\nu(\gamma)}})\le 2(\abs {A_{\ms}} + \abs{\scal{\gamma}{\nu(\gamma)}}) \point
\end{split}
\]
Hence note that we can choose $\mu_0$ small enough that $\abs{\cov_{\gamma'}^{\partial B_1(0)}{\gamma'}}\le \eps_1$, for some constant $\eps_1>0$ to be chosen later.

Therefore, since $\gamma'(0)$ is almost (depending on $\mu_0$) orthogonal to $\partial(\Pi(a)\cap B_1(0))$ by the free boundary condition, $\gamma$ remains close to the geodesic in $\partial B_1(0)\setminus\Pi(a)$ connecting $\gamma(0)$ and the north pole of $\partial B_1(0)$, which is the intersection between the normal to $\Pi(a)$ passing through the origin and $\partial B_1(0)\setminus\Pi(a)$. In particular, given $\eps_2>0$, we can choose $\eps_1$ (and $\mu_0$) in such a way that the $\eps_2$-neighborhood of the north pole of $\partial B_1(0)$ contains $\gamma(t_0)$ for some $t_0\in\cc 0l$.

Now assume by contradiction that $\ms\cap (\partial B_1(0)\setminus\Pi(a))$ consists of two or more connected components. Performing the above argument for two of them, we find two points $y,z\in\ms\cap (\partial B_1(0)\setminus\Pi(a))$ in two different connected components of $\ms\cap (\partial B_1(0)\setminus\Pi(a))$ that are both contained in an $\eps_3$-neighborhood of the north pole of $\partial B_1(0)$.

Consider the points $y',z'\in \ms\cap(\partial B_r (p)\setminus\Pi(a))$ such that $\Phi(t(y'),y') = y$ and $\Phi(t(z'),z') = z$, where $t(x)$ is the exit-time of $x\in\ms\setminus B_r(p)$ as in \cref{cor:14PuncturedBall}. By \eqref{eq:AlmostTangent}, given any constant $\eps_3>0$, we can choose $\mu_0$ sufficiently small such that $\abs{\Phi(t,x) - \hat\Phi(t,x)}\le \eps_3$ for all $x\in\ms$ and $t\le t(x)$, where $\hat\Phi$ is the flow of the vector field $2(x-p)$ in $\R^3$.
Then we have that
\[
\abs{\hat \Phi(t(y'),y') - \hat\Phi(t(z'),z')} \le 2(\eps_2+\eps_3).
\]
Now, choose $\eps_2,\eps_3$ so that $\abs{y'-z'} \le r/20$.
Hence, since $\ms\cap(\partial B_r (p)\setminus\Pi(a))$ consists of a single connected component and it is $\mu_0$-strongly equatorial, there exists an arc $\Gamma'\subset \ms\cap(\partial B_r (p)\setminus\Pi(a))$ of length less than $r/10$ that connects $y'$ and $z'$.
Therefore $\Gamma'' \eqdef \{ \Phi(t(x),x) \st x\in\Gamma'\}$ is a connected subset of $\ms\cap (\partial B_1(0)\setminus\Pi(a))$ containing $y$ and $z$, and contained in a $(1/4)$-neighborhood of the north pole of $\partial B_1(0)$. However, this contradicts the fact that $y$ and $z$ are contained in two different connected components of $\ms\cap (\partial B_1(0)\setminus \Pi(a))$ and thus proves what we wanted. Finally, the last assertion of the statement is easily verified given all the previous information.
\end{proof}

\section{Multiplicity one convergence} \label{sec:Mult1Conv}

In this section we prove that convergence to a surface with multiplicity one well-behaves in presence of isolated singularities, which essentially follows from Allard's regularity theory (see \cite{All72}).

\begin{lemma} \label{lem:MultOneConvExtends}
Let $\ms_j^2\subset\amb^3$ be a sequence of connected free boundary minimal surfaces in a three-dimensional complete Riemannian manifold $\amb$. Moreover let $\ms^2\subset \amb$ be an embedded free boundary minimal surface in $\amb$. Assume that the sequence $\ms_j$ converges locally smoothly to $\ms$, with multiplicity one away from a finite set of points\footnote{Here we mean that for every $x\in\ms$ there exists a neighborhood $U$ of $x$ such that $\ms_j\cap U$ converges graphically smoothly to $\ms\cap U$ with multiplicity one.} $\set$.
Then $\ms_j$ converges locally smoothly to $\ms$ everywhere.
\end{lemma}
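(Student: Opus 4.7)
The plan is to show that, for each point $x \in \set$, the sequence $\ms_j$ converges to $\ms$ smoothly with multiplicity one in some neighborhood $U \ni x$ as well. The argument is based on Allard's regularity theorem applied to the free boundary setting (cf. \cite{All72} and its free boundary counterpart, e.g. as used in \cite{GuaLiZho17}), so the key is to verify the density hypothesis for the varifolds $|\ms_j|$ at $x$.

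First, I would fix $x\in\set$ and a relatively open neighborhood $U$ of $x$ in $\amb$ such that $U\cap\set=\{x\}$, such that $\ms\cap U$ is a smooth embedded free boundary minimal surface (a disc or half-disc, up to shrinking $U$), and such that $U$ is foliated by concentric geodesic (resp. Fermi) balls $B_r(x)$ for $r \in (0, r_0)$, each meeting $\ms$ and $\partial\amb$ transversely for a.e.\ $r$. For every such $r$, consider the annular region $A_r \eqdef B_r(x)\setminus B_{r/2}(x)$. Since $A_r$ is disjoint from $\set$ and the convergence $\ms_j \to \ms$ is locally smooth with multiplicity one on $\amb\setminus\set$, we have
\[
\lim_{j\to\infty}\area(\ms_j\cap A_r)=\area(\ms\cap A_r).
\]

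Next I would use the (free boundary) monotonicity formula for minimal surfaces to bound $\area(\ms_j\cap B_r(x))$. Indeed, the monotonicity formula implies that $r\mapsto e^{Cr}r^{-2}\area(\ms_j\cap B_r(x))$ is non-decreasing (up to a constant depending on the ambient geometry, with the correct factor $\tfrac12$ when $x\in\partial\amb$, by the standard reflection-type argument). Combining this with the preceding equality yields, for a.e.\ $r\in(0,r_0)$,
\[
\limsup_{j\to\infty}\frac{\area(\ms_j\cap B_r(x))}{\omega_r(x)}\le \frac{\area(\ms\cap B_r(x))}{\omega_r(x)}+o(1)\quad\text{as }r\to 0,
\]
where $\omega_r(x)=\pi r^2$ if $x\in \amb\setminus\partial\amb$ and $\omega_r(x)=\pi r^2/2$ if $x\in\partial\amb$. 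Since $\ms$ is smooth at $x$ (and meets $\partial\amb$ orthogonally there in the boundary case), its density at $x$ equals $1$; combined with the lower semicontinuity of density and the multiplicity-one convergence on $\ms\cap A_r$, the corresponding density of $\ms_j$ at $x$ can be forced to be arbitrarily close to $1$ by choosing $r$ small and then $j$ large.

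At this point I would invoke Allard's $\eps$-regularity theorem (in the interior case) or its free boundary version (in the boundary case, cf.\ \cite[Theorem 4.1]{FraLi14}, as already used in the proof of \cref{prop:RemovableSingStab}): the density estimate just obtained, together with the curvature control on $\partial\amb$, guarantees that for $j$ sufficiently large the surface $\ms_j\cap B_{r/2}(x)$ is a single $C^{1,\alpha}$ graph over $\ms\cap B_{r/2}(x)$ of arbitrarily small norm. Standard elliptic regularity for the minimal surface equation (with the oblique boundary condition arising from the free boundary property) then upgrades the $C^{1,\alpha}$ convergence to $C^\infty$, and the multiplicity-one conclusion is built into the application of Allard. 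Repeating this for each point of $\set$ completes the proof.

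The main obstacle is the precise justification of the density computation at \emph{boundary} points of $\set$, where one must carefully combine the smooth free boundary behaviour of $\ms$ near $x$ with the free boundary monotonicity formula in order to derive the density-one bound; however this is by now rather standard, so no genuinely new ingredient is required.
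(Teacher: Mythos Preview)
Your proposal is correct and follows essentially the same route as the paper's proof: both arguments control the area ratio of $\ms_j$ in small balls around a point of $\set$ by combining the multiplicity-one convergence on annuli with the (free boundary) monotonicity formula, and then conclude via Allard-type regularity. The paper is slightly more explicit in separating the interior and boundary density thresholds $1+\eps$ versus $\tfrac12(1+\eps)$ and invokes Theorem~17 of \cite{AmbCarSha18-Compactness} for the final step, but the substance is identical.
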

\begin{proof}
Let $p$ be a point in $\ms\cap\set$ and let ${r_0}>0$ be sufficiently small such that $B_{4{r_0}}(p)$ does not contain any other point of $\set$ apart from $p$. Fix $\eps>0$ and take ${r_0}$ possibly smaller in such a way that 
\[
\frac{\Haus^2(\ms\cap (B_{2{r_0}}(p)\setminus B_{r_0}(p)))}{\omega_2(4{r_0}^2-{r_0}^2)} < 
\begin{cases}
1+\eps/4 & \text{if $p\in \ms\setminus\partial\ms$}\\
\frac 12(1+\eps/4) & \text{if $p\in\partial\ms$} \point
\end{cases}
\]
Since the convergence of $\ms_j$ to $\ms$ is smooth and graphical in $B_{2{r_0}}(p)\setminus B_{r_0}(p)$, then we can assume that the same estimate holds for every $j$ sufficiently large substituting $\eps/4$ with $\eps/2$. By the extended monotonicity formula (cf. \cite{GuaLiZho17}) this implies that
\[
\frac{\Haus^2(\ms_j\cap B_{r_0}(p))}{\omega_2{r_0}^2} < 
\begin{cases}
1+\eps/2 & \text{if $p\in \ms\setminus\partial\ms$}\\
\frac 12(1+\eps/2) & \text{if $p\in\partial\ms$} \point
\end{cases}
\]
Thus, again by the same monotonicity formula, taking $r_0>0$ possibly smaller, we have
\[
\frac{\Haus^2(\ms_j\cap B_r(p))}{\omega_2r^2} < 
\begin{cases}
1+\eps & \text{if $p\in \ms\setminus\partial\ms$}\\
\frac 12(1+\eps) & \text{if $p\in\partial\ms$}\comma
\end{cases}
\]
for all $r<r_0$ and $j$ sufficiently large.
This concludes the proof by virtue of Theorem 17 in \cite{AmbCarSha18-Compactness}.
\end{proof}

\bibliography{biblio}
\end{spacing}

\end{document}